\DeclareMathOperator{\diag}{diag} 
\newcommand{\er}{\mathbb{R}}
\newcommand{\cee}{\mathbb{C}}
\newcommand{\zet}{\mathbb{Z}}
\newcommand{\lam}{\lambda}
\newcommand{\Lam}{\Lambda}
\newcommand{\bol}{\hfill\square\\}
\newcommand{\til}{\tilde}
\newcommand{\wtil}{\widetilde}
\newcommand{\dee}{{\mathbb D}}
\renewcommand{\Re}{\mathrm{Re}\,}
\renewcommand{\Im}{\mathrm{Im}\,}
\newcommand{\err}{\mathcal{R}}
\newcommand{\hh}{\widehat}
\newcommand{\squ}{_{\mathrm{sq}}}
\newcommand{\al}{\alpha}
\newcommand{\ud}{\,\mathrm{d}}
\newcommand{\Aa}{\mathcal A}
\newcommand{\what}{\widehat}
\newtheorem{theorem}{Theorem}[section]
\newtheorem{lemma}[theorem]{Lemma}
\newtheorem{proposition}[theorem]{Proposition}
\newtheorem{corollary}[theorem]{Corollary}
\newtheorem{rhp}[theorem]{RH problem}
\theoremstyle{definition}
\theoremstyle{remark}
\newtheorem{remark}[theorem]{Remark}
\numberwithin{equation}{section}
\title{Non-intersecting squared Bessel paths at a hard-edge tacnode}
\author{Steven Delvaux\footnotemark[1]}
\date{\today}
\begin{document}

\maketitle
\renewcommand{\thefootnote}{\fnsymbol{footnote}}
\footnotetext[1]{Department of Mathematics, University of Leuven (KU Leuven),
Celestijnenlaan 200B, B-3001 Leuven, Belgium. email:
steven.delvaux\symbol{'100}wis.kuleuven.be. The author is a Postdoctoral Fellow
of the Fund for Scientific Research - Flanders (Belgium). \\
}

\begin{abstract} The squared Bessel process is a $1$-dimensional diffusion process related
to the squared norm of a higher dimensional Brownian motion. We study a model
of $n$ non-intersecting squared Bessel paths, with all paths starting at the
same point $a>0$ at time $t=0$ and ending at the same point $b>0$ at time
$t=1$. Our interest lies in the critical regime $ab=1/4$, for which the paths
are tangent to the hard edge at the origin at a critical time $t^*\in (0,1)$.
The critical behavior of the paths for $n\to\infty$ is studied in a scaling
limit with time $t=t^*+O(n^{-1/3})$ and temperature $T=1+O(n^{-2/3})$. This
leads to a critical correlation kernel that is defined via a new
Riemann-Hilbert problem of size $4\times 4$. The Riemann-Hilbert problem gives
rise to a new Lax pair representation for the Hastings-McLeod solution to the
inhomogeneous Pain\-lev\'e~II equation $q''(x) = xq(x)+2q^3(x)-\nu,$ where
$\nu=\alpha+1/2$ with $\alpha>-1$ the parameter of the squared Bessel process.
These results extend our recent work with Kuijlaars and Zhang \cite{DKZ} for
the homogeneous case $\nu = 0$.\smallskip

\textbf{Keywords}: Squared Bessel process, non-intersecting paths,
Painlev\'e~II equation, determinantal point process, Rie\-mann-Hil\-bert
problem, Deift-Zhou steepest descent analysis, correlation kernel
(Chris\-tof\-fel-Darboux kernel), multiple orthogonal polynomials, modified
Bessel function.

\end{abstract}

\setcounter{tocdepth}{2} \tableofcontents

\section{Introduction}

The motivation of this paper is the recent surge of interest in a model of
non-intersecting Brownian motions at a tacnode. The model is illustrated in the
third picture of Figure~\ref{fig:3cases0}, where we have two groups of Brownian
motions which are asymptotically supported inside two touching ellipses in the
time-space plane. The interest lies in the microscopic behavior of the paths
near the touching point of these two ellipses, i.e., near the tacnode.

The tacnode model was recently studied via different methods by several groups
of authors. The model is studied in a discrete symmetric setting by Adler,
Ferrari and Van Moerbeke \cite{AFvM11}. A different approach in this case is
due to Johansson \cite{Joh11}. Further developments are the study of a double
Aztec diamond model by Adler, Johansson and van Moerbeke \cite{AJvM}, and the
non-symmetric tacnode by Ferrari and Veto \cite{FV}.
Another model with a
tacnode but with very different properties is discussed in \cite{BorodinDuits}.

In a joint work with Kuijlaars and Zhang \cite{DKZ} we study the continuous
non-symmetric version of the tacnode model. Our results in \cite{DKZ} express
the critical correlation kernel in terms of a certain Riemann-Hilbert problem
(RH problem) of size $4\times 4$, related to a new Lax pair representation for
the Hastings-McLeod solution to the Painlev\'e~II equation. Recall that the
standard RH problem for the Painlev\'e~II equation has only size $2\times 2$
\cite{FN}; see also \cite{JKT} for a Lax pair with $3\times 3$ matrices.

Recently Duits and Geudens \cite{DG} use the $4\times 4$ RH problem from
\cite{DKZ} to describe a new critical phenomenon in the two-matrix model.
Interestingly the kernels in \cite{DKZ,DG} are built from (basically) the same
$4\times 4$ RH problem but in an essentially different way.

%Recently Duits and Geudens \cite{DG} use the $4\times 4$ RH problem from
%\cite{DKZ} to describe a new critical phenomenon in the two-matrix model.
%Interestingly the kernels in \cite{DKZ,DG} are built in two essentially
%different ways from (basically) the same $4\times 4$ RH problem.

\begin{figure}[t]
\begin{center}
\subfigure{\label{figlargesep0}}\includegraphics[scale=0.3]{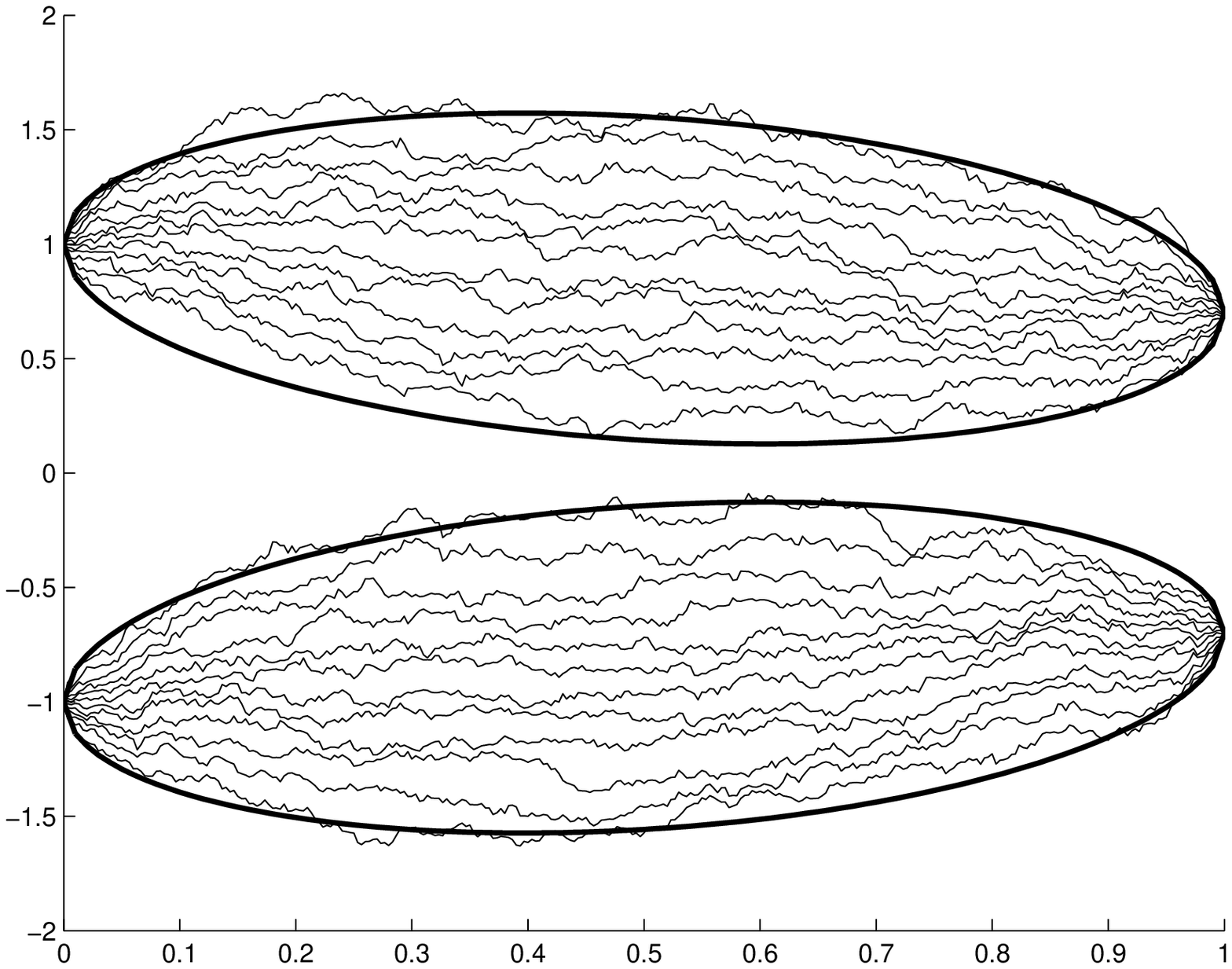}\hspace{5mm}
\subfigure{\label{figsmallsep0}}\includegraphics[scale=0.3]{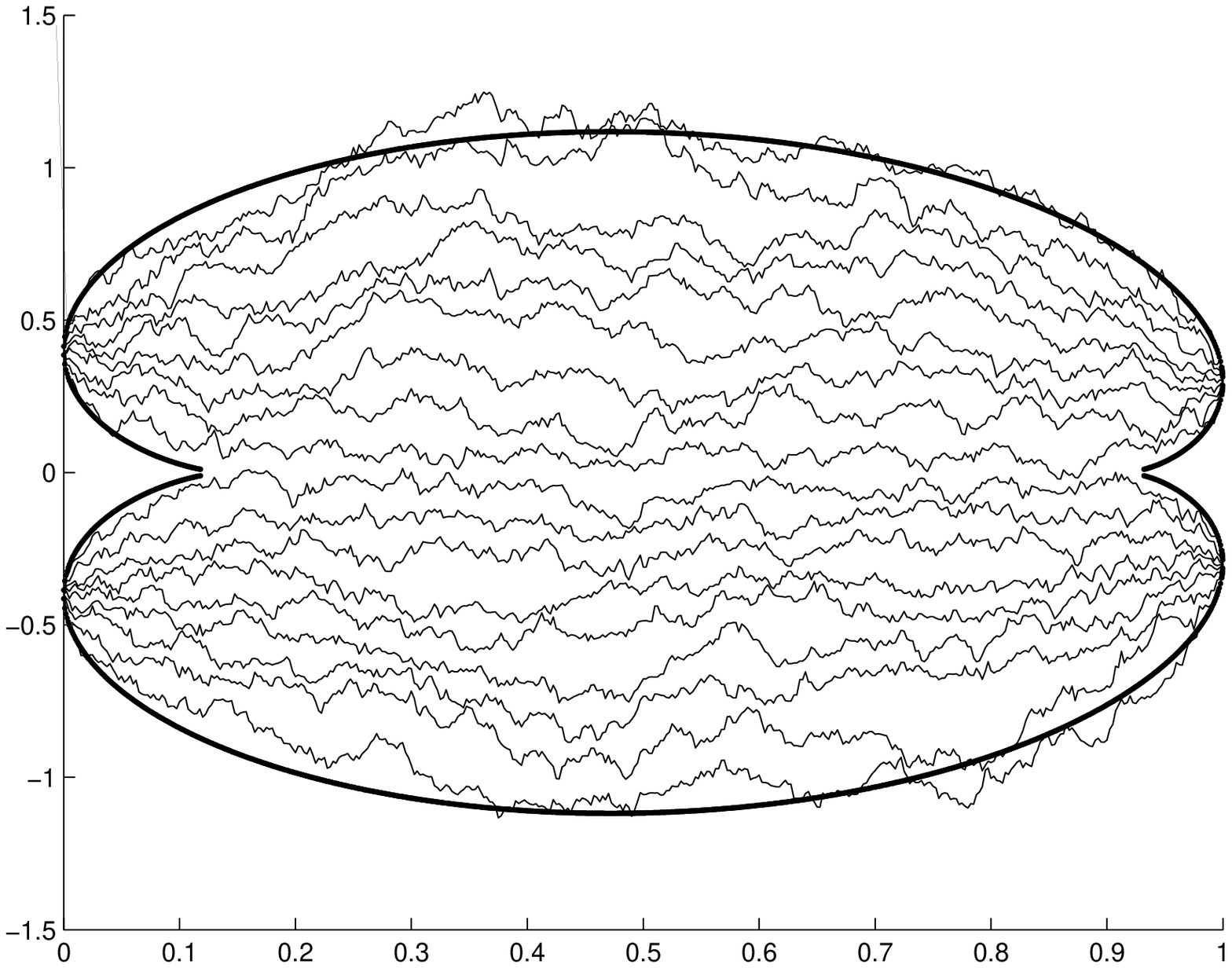}
\subfigure{\label{figcriticalsep0}}\includegraphics[scale=0.3]{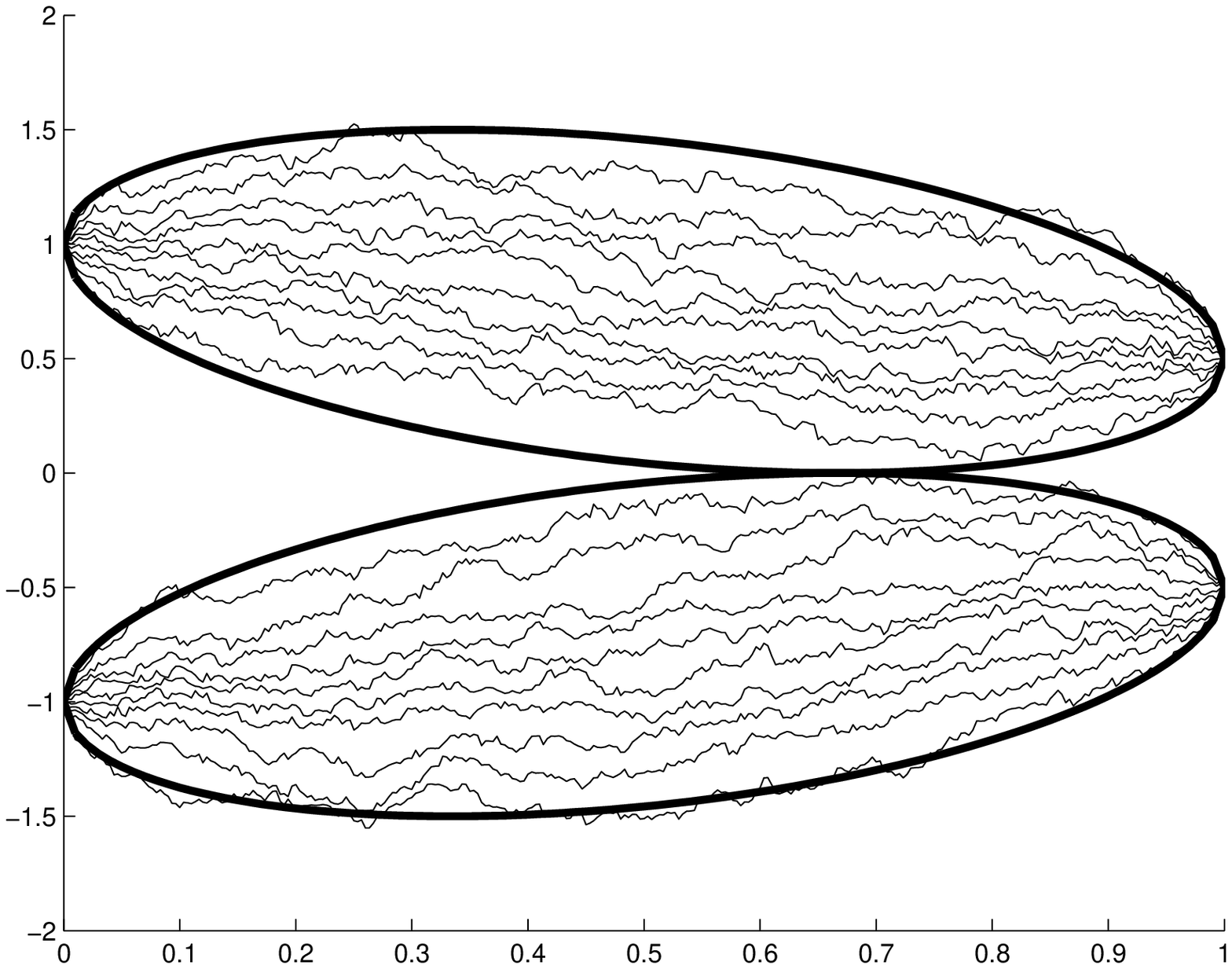}
\end{center}
\caption{$n=20$ non-intersecting Brownian motions at temperature $T=1$ with two
starting and two ending positions in the case of (a) large, (b) small, and (c)
critical separation between the endpoints. The horizontal axis stands for the
time, $t\in[0,1]$, and the vertical axis shows the positions of the Brownian
motions at time $t$. For $n\to\infty$ the Brownian motions fill a prescribed
region in the time-space plane which is bounded by the boldface lines in the
figure. In the case (c), the limiting support consists of two touching ellipses
which touch each other at a critical point which is a tacnode.}
\label{fig:3cases0}
\end{figure}

The goal of this paper is to extend the approach of \cite{DKZ} to a hard-edge
situation. To this end we consider a model of non-intersecting squared Bessel
paths \cite{DKRZ}, see also \cite{KMW,KMW2,KIK08,KT04,KT11}. In the critical
case we can get a situation where the limiting hull of the paths is touching
the hard edge at a certain critical time $t^*$. This is shown in the third and
fourth picture of Figure~\ref{fig:3cases}. We will refer to the touching point
as a \emph{hard-edge tacnode}.
%, although it bears some similarity with a higher-order tacnode.}
 (This is an analogue of the \emph{higher-order tacnode} in the classification
scheme in \cite[p.~72]{Mir}.) We will see that the microscopic behavior of the
paths near the hard-edge tacnode is again expressed by a limiting kernel
defined in terms of a $4\times 4$ RH problem, which is now related to the
Hastings-McLeod solution to the \emph{inhomogeneous} Painlev\'e~II equation.

Our scaling limit near the hard-edge tacnode will be more general than the one
in \cite{DKZ}. In the latter paper we used a $O(n^{-2/3})$ scaling of the
endpoints, or equivalently of the temperature $T$. In the present paper we will
use an extra $O(n^{-1/3})$ scaling of the \emph{time} $t$, similarly to
\cite{AFvM11,DG,Joh11}. At the level of the RH problem this leads to an extra
parameter $\tau$, as in \cite{DG}. While the extension of the RH problem with
the parameter $\tau$ was already known to the authors of \cite{DKZ}, the
solvability of the extended RH problem is a quite non-trivial fact. This
solvability was recently established by Duits and Geudens in the symmetric case
\cite{DG}. We will make use of their ideas. We note that the solvability of the
extended RH
problem  in the general non-symmetric case has not been settled yet.\\

\begin{figure}[t]
\begin{center}
\subfigure{\label{figlargesep}}\includegraphics[scale=0.3]{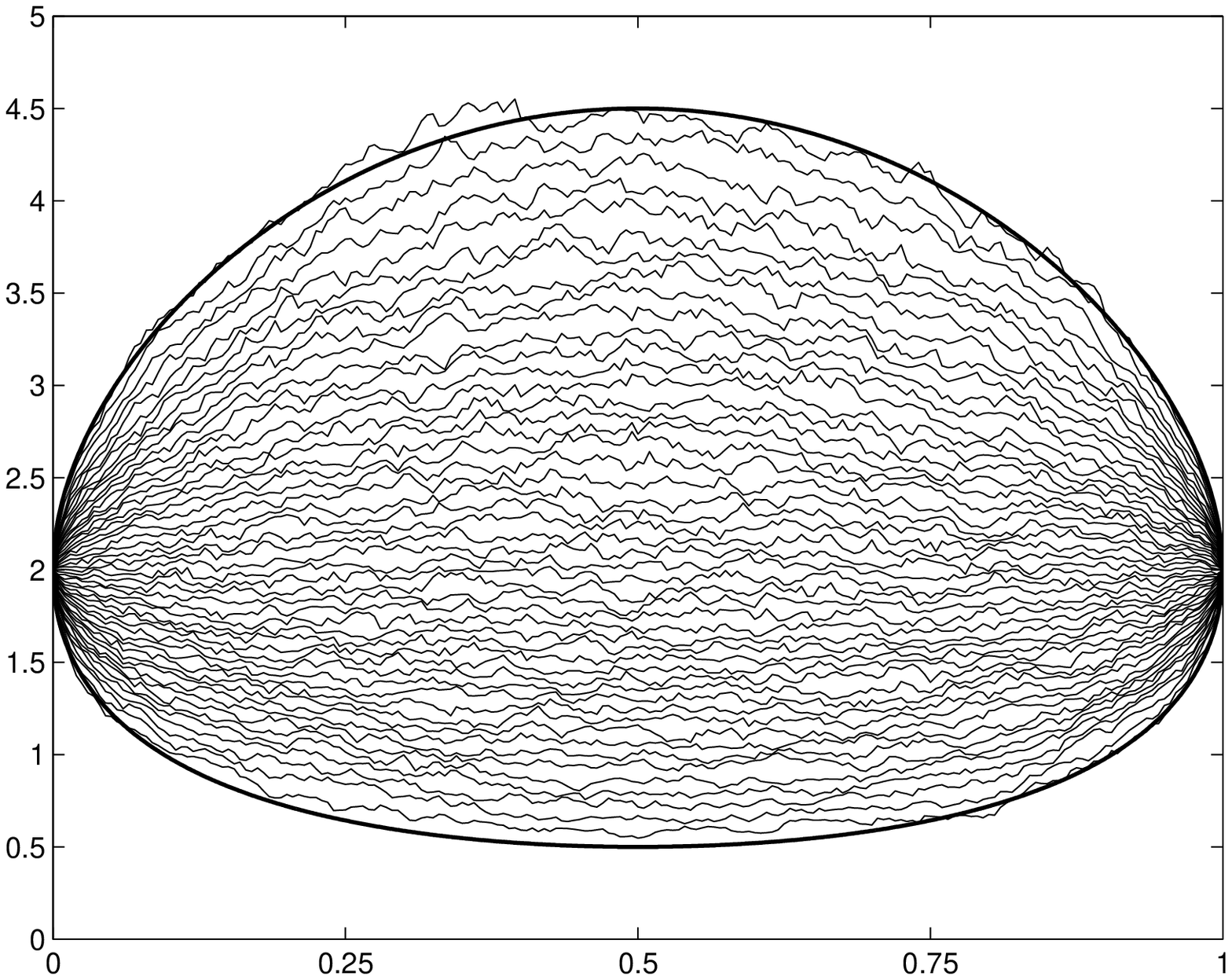}\hspace{5mm}
\subfigure{\label{figsmallsep}}\includegraphics[scale=0.3]{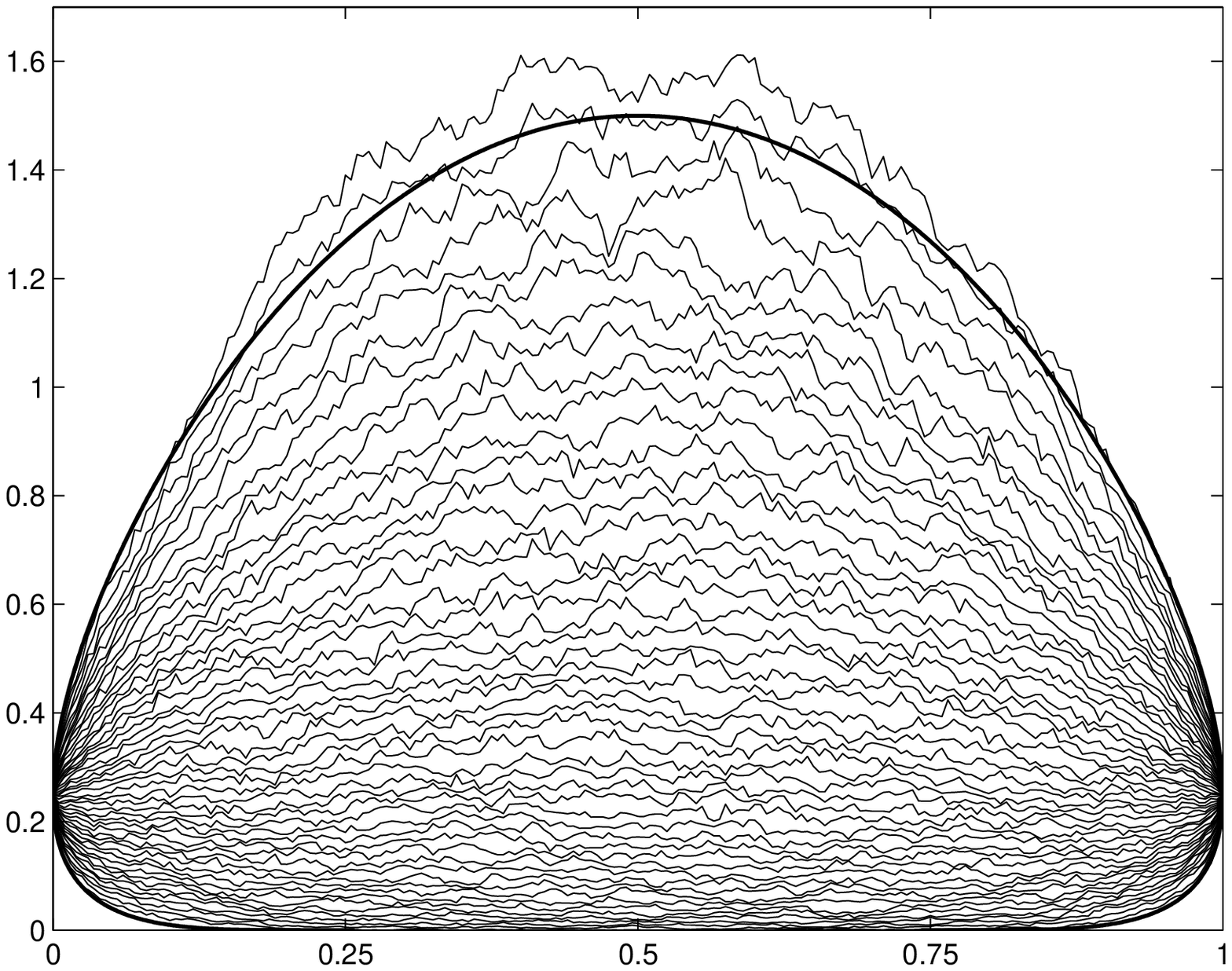}
\subfigure{\label{figcriticalsep}}\includegraphics[scale=0.3]{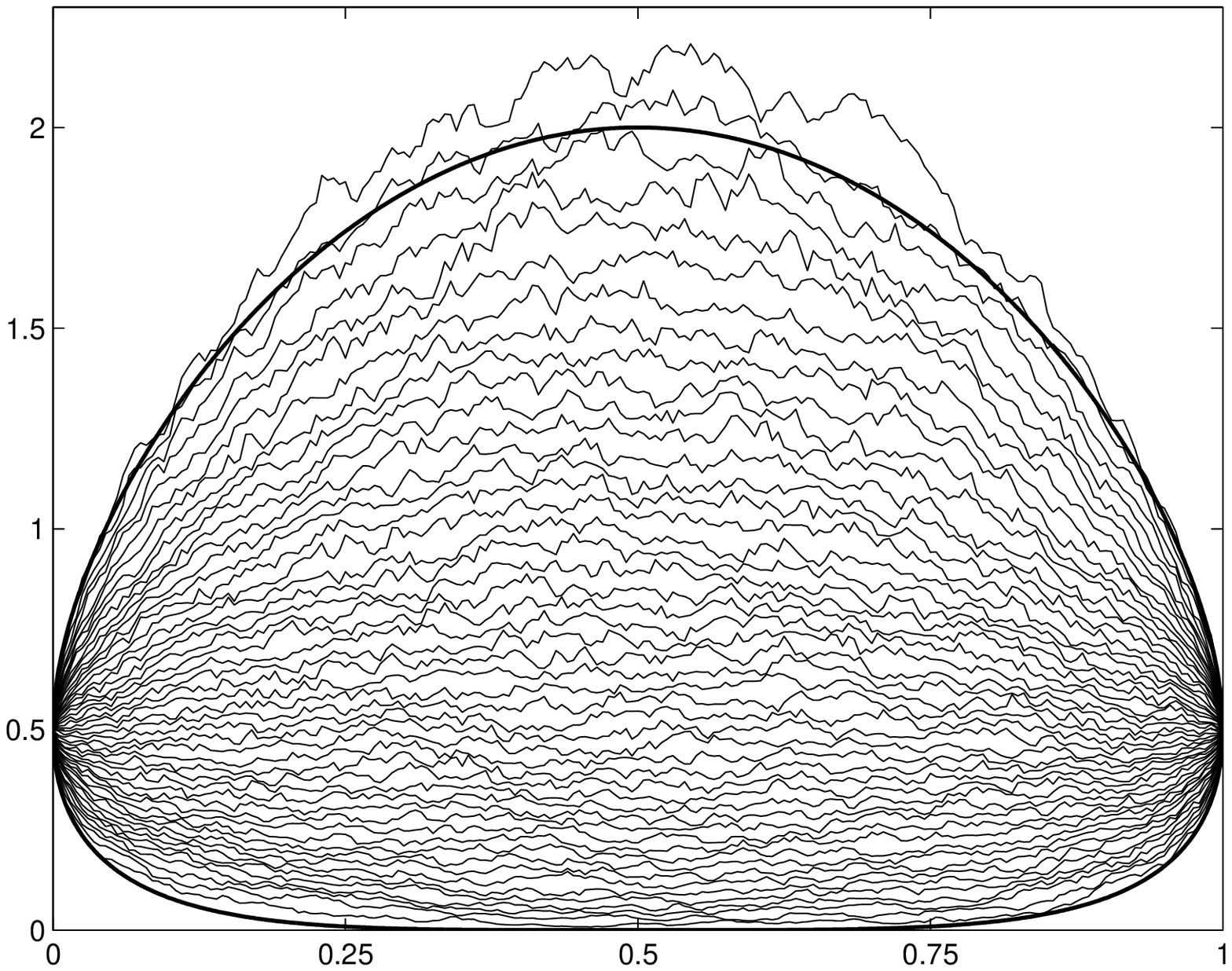}
\subfigure{\label{figcriticalzoom}}\includegraphics[scale=0.36]{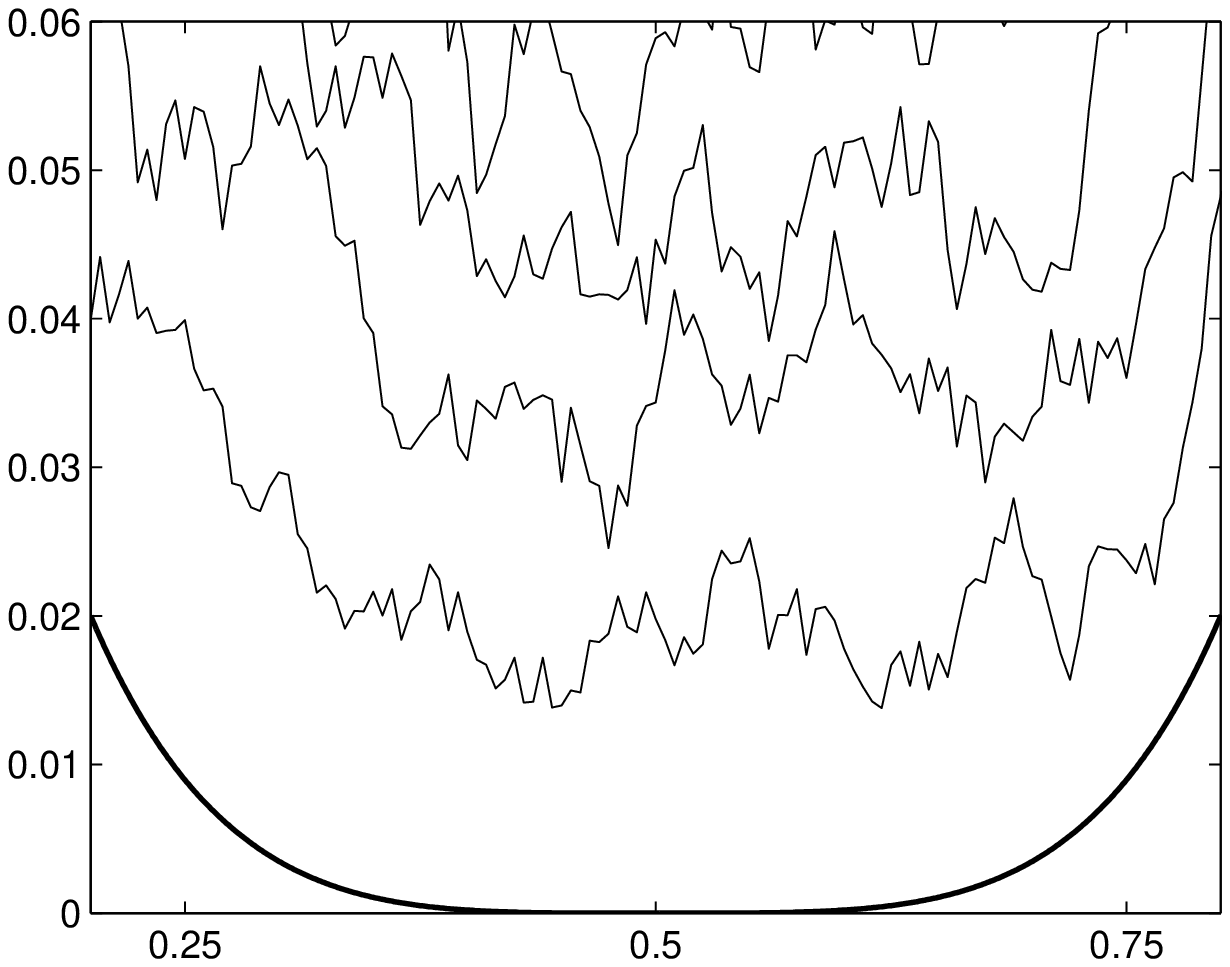}
\end{center}
\caption{$n=50$ non-intersecting squared Bessel paths at temperature $T=1$
starting in $a>0$ and ending in $b>0$ with (a) $a=b=2$, (b) $a=b=0.25$, and (c)
$a=b=0.5$, illustrating the case of large, small and critical ending positions,
respectively. In the case (c), the paths are tangent to the hard edge at time
$t^*=1/2$ (hard-edge tacnode) with a fourth order contact. A magnified picture
near the hard-edge tacnode is shown in (d).} \label{fig:3cases}
\end{figure}

Now we describe our model in more detail, following \cite{DKRZ}. Let $X(\tau)$
be a squared Bessel process with parameter $\alpha>-1$, i.e., a diffusion
process (a strong Markov process on $\er$ with continuous sample paths) with
transition probability density \cite{KarShr,KOC}
\begin{align}
p_{\tau}^{\alpha}(x,y)&=\frac{1}{2\tau}\left(\frac{y}{x}\right)^{\alpha/2}e^{-\frac{x+y}{2\tau}}
I_{\alpha}\left(\frac{\sqrt{xy}}{\tau}\right), &\qquad x,y>0,\label{probdensity:alpha} \\
p_{\tau}^{\alpha}(0,y)&=\frac{y^{\alpha}}{(2\tau)^{\alpha+1}\Gamma(\alpha+1)}e^{-\frac{y}{2\tau}},
&\qquad y>0,\label{probdensity:alpha0}
\end{align}
where $\tau$ stands for the time, $x,y$ for the position and where
\begin{equation}
I_{\alpha}(z)=\sum_{k=0}^{\infty}\frac{(z/2)^{2k+\alpha}}{k!\Gamma(k+\alpha+1)}
\end{equation}
is the modified Bessel function of the first kind of order $\alpha$; see
\cite[p.375]{AS}. If $d=2(\alpha+1)$ is an integer, the squared Bessel process
behaves like the square of the distance to the origin of a $d$-dimensional
Brownian motion. Some applications of this diffusion process in mathematical
finance and other fields can be found in \cite{KarShr}, see also
\cite{GJY,KT04,KT11,Platen}.

We will consider $n$ independent squared Bessel processes $X_j(\tau)$,
$j=1,\ldots,n$. It is then convenient to perform a rescaling of the time
$\tau\mapsto t$ by \begin{equation}\label{time:rescaling} \tau = \frac{T}{2n}t.
\end{equation}
The proportionality parameter $T>0$ can be interpreted as the temperature. It
has a natural physical interpretation, in the sense that high temperature
corresponds to a large variance of the squared Bessel process and vice versa.

We consider a model of $n$ independent squared Bessel processes, starting in
$a>0$ at time $t=0$, ending in $b>0$ at time $t=1$, and conditioned (in the
sense of Doob) not to intersect in the whole time interval $t\in (0,1)$. This
model was studied by Kuijlaars, Mart\'inez-Finkelshtein and Wielonsky
\cite{KMW,KMW2} for $b=0$ and for general $a,b>0$ in the non-critical case in
\cite{DKRZ}. We also mention the large deviation principle recently established for a
closely related model in
\cite{HK}.

%\begin{figure}[t]
%\centering
%\begin{overpic}[width=.4\textwidth]{a=.5b=.5p.eps}
%\small{\put(50,-2){$t^*$}}
%\end{overpic}
%\hspace{5mm}
%\begin{overpic}[width=.4\textwidth]{a=.5b=.5_zoom.eps}
%\end{overpic}
%\caption{\label{fig:ab=.25} Left picture: 50 rescaled non-intersecting squared
%Bessel paths with $a=b=1/2$ and $t^*=1/2$. Right picture: magnified image
%around the critical time $t=t^*$.}
%\end{figure}

Let us fix the temperature $T=1$. % endpoints $a$ and $b$ such that $ab=1/4$.
There are three cases to distinguish, see Figure~\ref{fig:3cases}. If $ab>1/4$
then the paths do not reach the hard edge at the origin. If $ab<1/4$ then they
hit the hard edge and stick to it during a certain time interval $[t_1,t_2]$.
Finally, for the critical separation $ab=1/4$ the paths are touching to the
hard edge at one particular time $t=t^*\in [0,1]$, given by
\begin{equation}\label{def:tcrit}
t^* = \frac{\sqrt{a}}{\sqrt{a}+\sqrt{b}}.
\end{equation}
These three cases are illustrated in Figure~\ref{fig:3cases}(a)--(c).

\begin{figure}[tbp]
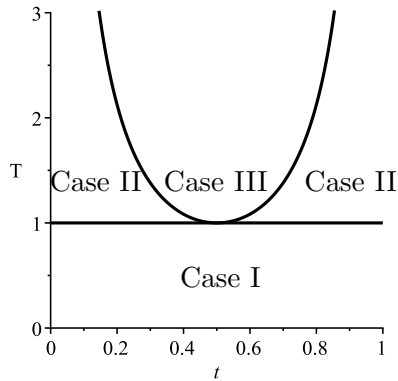

\hspace{-15mm} \centering \begin{overpic}[scale=0.27]%
{PhaseDiagram}%
        %\put(90,80){\small{Phase transition \cite{KMW2}}}
        %\put(95,78){\vector(-1,-1){9}}
        %\put(90,35){\small{Inhomogeneous Painlev\'e II}}
        %\put(112,29.5){\small{phase transition}}
        %\put(95,33){\vector(-2,-1){9}}
        %\put(33,40){\small{Hard-edge tacnode}}
        %\put(59,38){\vector(-1,-2){5}}
      %\put(29,65){\small{Paths reach hard edge}}
      \put(42,50){Case III}
      %\put(90,58.5){\small{Paths away from hard edge}}
      %\put(90,53){\small{but no MP density}}
      \put(13.5,50){Case II}
      \put(77,50){Case II}
      %\put(95,51){\vector(-1,-1){9}}
      %\put(24,20.5){\small{Paths away from hard edge}}
      %\put(24,15){\small{with MP density}}
      \put(46,26){Case I}
      %\put(52,2){\tiny{$t_{\crit}$}
\end{overpic}
\caption{Phase diagram in the $(t,T)$ plane for the non-intersecting squared
Bessel paths with $a=b=1/2$. The critical tacnode point is located at
$(t,T)=(1/2,1)$.} \label{fig:phasediagram}
\end{figure}

In this paper we find it convenient to take \emph{fixed} endpoints $a,b$
satisfying $ab=1/4$. We consider the time $t\in [0,1]$ and temperature $T>0$ to
be varying. Figure~\ref{fig:phasediagram} shows the phase diagram in the
$(t,T)$-plane if $a=b=1/2$. The diagram shows three regions. Case~I corresponds
to temperature $T<1$. Then the paths do not reach the hard edge and at each
fixed time $t\in (0,1)$ they are asymptotically distributed on the interval
$[p,q]=[p(t),q(t)]$ with \cite[Sec.~4.3]{DKRZ}
\begin{align}\label{MP:endpoint1}
\sqrt{p}:=&(1-t)\sqrt{a}+t\sqrt{b}-\sqrt{2t(1-t)T}>0,\\
\label{MP:endpoint2} \sqrt{q}:=&(1-t)\sqrt{a}+t\sqrt{b}+\sqrt{2t(1-t)T}.
\end{align} The limiting density of the paths on $[p,q]$ is the
Marcenko-Pastur density \cite[Remark~1]{DKRZ}.

In Cases~II and III we have $T>1$, with the paths reaching the hard edge in
Case~III and staying away from it in Case~II. The limiting density is a
transformation of the one in \cite{DKV}; it is different from the
Marcenko-Pastur density. We note that the limiting density and support are
always independent of~$\alpha$.

The critical curves in the phase diagram are exactly like in the corresponding
model of non-intersecting Brownian motions \cite[Sec.~1.9]{DelKui1}, thanks to
\cite[Remark~1]{DKRZ}. But the nature of the phase transitions is different due
to the presence of the hard edge.

At the transition of Cases~I and II we expect a phase transition in terms of
the inhomogeneous Painlev\' e~II equation \eqref{def:Painleve2} with
$\nu=\alpha+1/2$. This transition may not be felt in the correlation kernel but
should certainly manifest itself in the critical limits of the recurrence
coefficients of the multiple orthogonal polynomials, as in \cite{DelKui1}. At
the transition of Cases~II and III we expect the same phase transition as in
\cite{KMW2}. We note that the transition curve is given by the
equation~\cite[Sec.~1.9]{DelKui1}
\begin{equation}\label{vglboundarycurve} T = \frac{a(1-t)^2+bt^2}{t(1-t)}.
\end{equation}
The topic of the present paper is the multi-critical tacnode point
$(t,T)=(t^*,1)$ which lies at the transition of each of the Cases~I, II and III
in the phase diagram.

%By varying $t$ and $T$ around their critical
%values $t=t^*$ and $T=1$ we obtain a phase transition.

%The main object of study of this paper is the correlation kernel near the
%critical time $t^*$. We show that in an appropriate scaling limit, the kernel
%has an integrable form related to a new $4\times 4$ Riemann-Hilbert (RH)
%problem which is an analogue of the $4\times 4$ model RH problem introduced in
%\cite{DKZ}. The RH problem generalizes the one in \cite{DKZ} in the sense that
%it contains non-trivial \lq Stokes multipliers\rq. We show that the RH problem
%is related to the Hastings-McLeod solution to the inhomogeneous Painlev\'e ~II
%equation. In this way, we generalize the results in \cite{DKZ}, which
%correspond to the homogeneous case $\nu = 0$.

\section{Statement of results}

\subsection{A new $4\times 4$ Riemann-Hilbert problem}

We introduce a new $4\times 4$ RH problem, which is a variant of the RH problem
in \cite{DKZ,DG}. The RH problem generalizes the one in \cite{DKZ,DG} in the
sense that it contains non-trivial \lq Stokes multipliers\rq\ $e^{\nu\pi i}$
and $e^{-\nu\pi i}$. As in \cite{DG} we also have an extra parameter $\tau$.

The RH problem will have jumps on a contour in the complex plane consisting of
$10$ rays emanating from the origin. More precisely, let us fix two numbers
$\varphi_1,\varphi_2$ such that
\begin{equation}\label{def:angles}
0<\varphi_1<\varphi_2<\pi/2.
\end{equation}
Then we define the half-lines $\Gamma_k$, $k=0,\ldots,9$, by
\begin{equation}\label{def:rays1}
\Gamma_0=\er_+,\quad \Gamma_1=e^{i\varphi_1}\er_+,\quad
\Gamma_2=e^{i\varphi_2}\er_+,\quad \Gamma_3=e^{i(\pi-\varphi_2)}\er_+,\quad
\Gamma_4=e^{i(\pi-\varphi_1)}\er_+,
\end{equation}
and
\begin{equation}\label{def:rays2}
\Gamma_{5+k}=-\Gamma_k,\qquad k=0,\ldots,4.
\end{equation}
All rays $\Gamma_k$, $k=0,\ldots,9$, are oriented towards infinity, as shown in
Figure~\ref{fig:modelRHP}. We also denote by $\Omega_k$ the region in $\cee$
which lies between the rays $\Gamma_k$ and $\Gamma_{k+1}$, for $k=0,\ldots,9$,
where we identify $\Gamma_{10}:=\Gamma_0$.

Using this definition of the rays $\Gamma_k$, we now consider the following RH
problem.

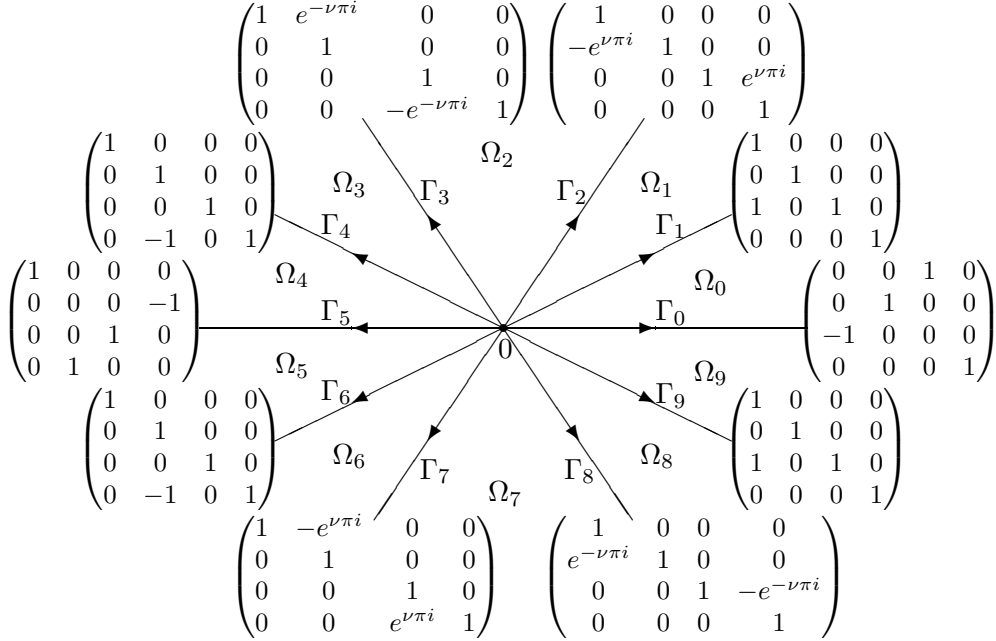
\begin{figure}[t]
\vspace{14mm}
\begin{center}
   \setlength{\unitlength}{1truemm}
   \begin{picture}(100,70)(-5,2)
       \put(40,40){\line(1,0){40}}
       \put(40,40){\line(-1,0){40}}
       \put(40,40){\line(2,1){30}}
       \put(40,40){\line(2,-1){30}}
       \put(40,40){\line(-2,1){30}}
       \put(40,40){\line(-2,-1){30}}
       \put(40,40){\line(2,3){18.5}}
       \put(40,40){\line(2,-3){17}}
       \put(40,40){\line(-2,3){18.5}}
       \put(40,40){\line(-2,-3){17}}
       \put(40,40){\thicklines\circle*{1}}
       \put(39.3,36){$0$}
       \put(60,40){\thicklines\vector(1,0){.0001}}
       \put(20,40){\thicklines\vector(-1,0){.0001}}
       \put(60,50){\thicklines\vector(2,1){.0001}}
       \put(60,30){\thicklines\vector(2,-1){.0001}}
       \put(20,50){\thicklines\vector(-2,1){.0001}}
       \put(20,30){\thicklines\vector(-2,-1){.0001}}
       \put(50,55){\thicklines\vector(2,3){.0001}}
       \put(50,25){\thicklines\vector(2,-3){.0001}}
       \put(30,55){\thicklines\vector(-2,3){.0001}}
       \put(30,25){\thicklines\vector(-2,-3){.0001}}

       \put(60,41){$\Gamma_0$}
       \put(60,52.5){$\Gamma_1$}
       \put(47,57){$\Gamma_2$}
       \put(29,57){$\Gamma_3$}
       \put(16,52.5){$\Gamma_4$}
       \put(16,41){$\Gamma_5$}
       \put(16,30.5){$\Gamma_6$}
       \put(29,20){$\Gamma_7$}
       \put(48,20){$\Gamma_8$}
       \put(60,30){$\Gamma_{9}$}

       \put(65,45){$\small{\Omega_0}$}
       \put(58,58){$\small{\Omega_1}$}
       \put(37,62){$\small{\Omega_2}$}
       \put(17.5,58){$\small{\Omega_3}$}
       \put(10,46){$\small{\Omega_4}$}
       \put(10,34){$\small{\Omega_5}$}
       \put(17.5,22){$\small{\Omega_6}$}
       \put(38,17){$\small{\Omega_7}$}
       \put(58,22){$\small{\Omega_8}$}
       \put(65,33){$\small{\Omega_{9}}$}

       \put(78.5,40){$\small{\begin{pmatrix}0&0&1&0\\ 0&1&0&0\\ -1&0&0&0\\ 0&0&0&1 \end{pmatrix}}$}
       \put(69,57){$\small{\begin{pmatrix}1&0&0&0\\ 0&1&0&0\\ 1&0&1&0\\ 0&0&0&1 \end{pmatrix}}$}
       \put(45,74){$\small{\begin{pmatrix}1&0&0&0\\ -e^{\nu\pi i}&1&0&0\\ 0&0&1&e^{\nu\pi i}\\ 0&0&0&1 \end{pmatrix}}$}
       \put(4,74){$\small{\begin{pmatrix}1&e^{-\nu\pi i}&0&0\\ 0&1&0&0\\ 0&0&1&0\\ 0&0&-e^{-\nu\pi i}&1 \end{pmatrix}}$}
       \put(-16,57){$\small{\begin{pmatrix}1&0&0&0\\ 0&1&0&0\\ 0&0&1&0\\ 0&-1&0&1\end{pmatrix}}$}
       \put(-26,40){$\small{\begin{pmatrix}1&0&0&0\\ 0&0&0&-1\\ 0&0&1&0\\ 0&1&0&0 \end{pmatrix}}$}
       \put(-16,23){$\small{\begin{pmatrix}1&0&0&0\\ 0&1&0&0\\ 0&0&1&0\\ 0&-1&0&1 \end{pmatrix}}$}
       \put(4,6){$\small{\begin{pmatrix}1&-e^{\nu\pi i}&0&0\\ 0&1&0&0\\ 0&0&1&0\\ 0&0&e^{\nu\pi i}&1 \end{pmatrix}}$}
       \put(45,6){$\small{\begin{pmatrix}1&0&0&0\\ e^{-\nu\pi i}&1&0&0\\ 0&0&1&-e^{-\nu\pi i}\\ 0&0&0&1 \end{pmatrix}}$}
       \put(69,23){$\small{\begin{pmatrix}1&0&0&0\\ 0&1&0&0\\ 1&0&1&0\\ 0&0&0&1\end{pmatrix}}$}

  \end{picture}
   \vspace{0mm}
   \caption{The figure shows the jump contours $\Gamma_k$ in the complex $\zeta$-plane and the corresponding jump
   matrix
   $J_k$ on $\Gamma_k$, $k=0,\ldots,9$, in the RH problem for  $M = M(\zeta)$. We denote by $\Omega_k$ the region between
    the rays $\Gamma_k$ and $\Gamma_{k+1}$.}
   \label{fig:modelRHP}
\end{center}
\end{figure}

\begin{rhp}\label{rhp:modelM} We look for a $4\times 4$ matrix valued function
$M(\zeta)$ (which also depends parametrically on $\nu>-1/2$ and on the complex
parameters $r_1,r_2,s,\tau\in\cee$) satisfying
\begin{itemize}
\item[(1)] $M(\zeta)$ is analytic for $\zeta\in\cee\setminus\left(\bigcup_{k=0}^{9}
\Gamma_k\right)$.
\item[(2)] For $\zeta\in\Gamma_k$, the limiting values
\[ M_+(\zeta) = \lim_{z \to \zeta, \,  z\textrm{ on $+$-side of }\Gamma_k} M(z), \qquad
    M_-(\zeta) = \lim_{z \to \zeta, \, z\textrm{ on $-$-side of }\Gamma_k} M(z) \]
exist, where the $+$-side and $-$-side of $\Gamma_k$ are the sides which lie on
the left and right of $\Gamma_k$, respectively, when traversing $\Gamma_k$
according to its orientation. These limiting values satisfy the jump relation
\begin{equation}\label{jumps:M}
M_{+}(\zeta) = M_{-}(\zeta)J_k(\zeta),\qquad k=0,\ldots,9,
\end{equation}
where the jump matrix $J_k(\zeta)$ for each ray $\Gamma_k$ is shown in
Figure~\ref{fig:modelRHP}.
\item[(3)] As $\zeta\to\infty$ we have
\begin{multline}
\label{M:asymptotics} M(\zeta) =
\left(I+\frac{M_1}{\zeta}+\frac{M_2}{\zeta^2}+O\left(\frac{1}{\zeta^3}\right)\right)
\diag((-\zeta)^{-1/4},\zeta^{-1/4},(-\zeta)^{1/4},\zeta^{1/4})
\\
\times
\Aa\diag\left(e^{-\theta_1(\zeta)+\tau\zeta},e^{-\theta_2(\zeta)-\tau\zeta},e^{\theta_1(\zeta)+\tau\zeta},e^{\theta_2(\zeta)-\tau\zeta}\right),
\end{multline}
where the coefficient matrices $M_1,M_2,\ldots$ are independent of $\zeta$, and
with
\begin{equation}\label{mixing:matrix}
\Aa:=\frac{1}{\sqrt{2}}\begin{pmatrix} 1 & 0 & -i & 0 \\
0 & 1 & 0 & i \\
-i & 0 & 1 & 0 \\
0 & i & 0 & 1 \\
\end{pmatrix},
\end{equation}
\begin{equation}\label{def:theta1}
\theta_1(\zeta) = \frac{2}{3}r_1(-\zeta)^{3/2}+2s(-\zeta)^{1/2},\qquad
\theta_2(\zeta) = \frac{2}{3}r_2\zeta^{3/2}+2s\zeta^{1/2}.
\end{equation}
Here we use the principal branches of the fractional powers, i.e., $\zeta^{a}:=|\zeta|^{a}e^{i a\arg\zeta}$ with argument function $|\arg\zeta|<\pi$, for any $a\in\er$.
Note that $\zeta^a$ and $(-\zeta)^a$ have a branch cut for $\zeta$ along the negative and positive real line respectively.
%and
%\begin{equation}\label{def:theta2}
%\end{equation}
\item[(4)] As $\zeta\to 0$ we have
\begin{align}\label{Mzero:smaller} & M(\zeta) = O(\zeta^{\nu}),\quad M^{-1}(\zeta) =
O(\zeta^{\nu}),&\qquad \hbox{if $\nu\leq 0$},
\end{align}
and \begin{align}\label{Mzero:greater} \left\{\begin{array}{ll}
M(\zeta)\diag(\zeta^{-\nu},\zeta^{\nu},\zeta^{\nu},\zeta^{-\nu})=O(1),& \quad
\zeta\in \Omega_1\cup\Omega_8,\\
M(\zeta)\diag(\zeta^{\nu},\zeta^{-\nu},\zeta^{-\nu},\zeta^{\nu})=O(1),&\quad
\zeta\in \Omega_3\cup\Omega_6,\\
M(\zeta)=O(\zeta^{-\nu}),&\quad\textrm{elsewhere,}
%\zeta\not\in (\Omega_1\cup\Omega_3\cup\Omega_6\cup\Omega_8),
\end{array}\right. \hbox{if
$\nu\geq 0$},
\end{align}
where the $O$-symbol is defined entrywise.
\end{itemize}
\end{rhp}

More detailed information on the behavior of $M(\zeta)$ for $\zeta\to 0$ is
given in Proposition~\ref{prop:behaviorMzero}.

It follows from standard arguments (e.g.~\cite{Dei}) that the solution to the
RH problem~\ref{rhp:modelM} is unique if it exists. The existence of the
solution is non-trivial and is stated in the following theorem, whose proof
will be given in Section~\ref{section:vanishing}.

\begin{theorem}(Existence).\label{theorem:solvability} Assume $\nu>-1/2$,
$r_1=r_2=:r>0$, $s,\tau\in\er$. Then the RH problem~\ref{rhp:modelM} for
$M(\zeta)$ is uniquely solvable. The same is true when $r_1,r_2,s,\tau$ belong
to a sufficiently small complex neighborhood of the above values.
\end{theorem}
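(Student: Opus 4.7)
My proof plan is to follow the Zhou vanishing-lemma strategy that has become standard for $4\times 4$ RH problems of tacnode type. Uniqueness is automatic from the asymptotics: $\det M(\zeta)$ is analytic across every ray (each jump matrix in Figure~\ref{fig:modelRHP} has determinant $1$), tends to $1$ at infinity by (3), and has at worst an integrable singularity at the origin by (4), so Liouville forces $\det M\equiv 1$; any two candidate solutions then differ by an entire function with trivial asymptotics and must coincide. Existence is the real content, and by the general theory of Cauchy-type singular integral operators on the contour $\bigcup_{k=0}^{9}\Gamma_k$ (cf.~\cite{Dei}, \cite{DG}) it reduces to the \emph{vanishing lemma}: the modified RH problem in which $I$ is removed from the prefactor of (3), so that $M$ has strictly decaying asymptotics at infinity, admits only the trivial solution.

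To prove the vanishing lemma I would follow the Duits--Geudens argument, adapted to the presence of the Stokes multipliers $e^{\pm\nu\pi i}$. The hypothesis $r_1=r_2=r>0$ and $s,\tau\in\er$ is precisely what makes $\theta_1(\bar\zeta)=\overline{\theta_1(\zeta)}$ and $\theta_2(\bar\zeta)=\overline{\theta_2(\zeta)}$, which is what allows a complex-conjugation symmetry to close. I would introduce an auxiliary matrix
\[
H(\zeta) := M(\zeta)\,\mathcal{S}\,M(\bar\zeta)^{*},
\]
with a constant signature matrix $\mathcal{S}$ chosen so that the jumps of the two factors conspire to cancel: across the real axis one exploits the permutation structure of the jumps $J_0$ on $\Gamma_0$ and $J_5$ on $\Gamma_5$, while across each of the eight off-axis rays the jumps of $M(\bar\zeta)^{*}$ are the Hermitian transposes of those of $M(\zeta)$ on the reflected ray, so that $H$ is continuous across $\Gamma_k\cup\Gamma_{10-k}$ for $k=1,\ldots,4$. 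Combined with the decay of $M$ away from the real line coming from the exponential factors $e^{\pm\theta_j(\zeta)+\tau\zeta}$ (which holds thanks to the vanishing at infinity assumed in the reduced RH problem), this yields an entire $H$ that vanishes at infinity, hence vanishes identically by Liouville. Feeding $H\equiv 0$ back and integrating the identity $M(\zeta)\mathcal{S}M(\bar\zeta)^{*}=0$ against $M$ along the real axis, with the help of the jump $J_0$ to rewrite $M_+=M_-J_0$, forces a positive semidefinite quadratic form in the boundary values of $M$ to vanish, and one concludes $M\equiv 0$.

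The main obstacle, and where this proof departs from \cite{DG}, is in simultaneously handling the multipliers $e^{\pm\nu\pi i}$ on the rays $\Gamma_2,\Gamma_3,\Gamma_7,\Gamma_8$ and the $\nu$-dependent singular behavior (4) at the origin. For the jumps, since these rays are paired by reflection and the triangular factors transpose into each other's complex conjugates, the cancellation in $\mathcal{S}$ is preserved provided one chooses signs in $\mathcal{S}$ carefully, and I would verify this case-by-case from Figure~\ref{fig:modelRHP}. For the origin the difficulty is that when $\nu>0$ the entries of $M$ grow like $\zeta^{-\nu}$ in certain sectors, so $H$ could a priori have a pole at $\zeta=0$. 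One must show that the sectorial structure prescribed by \eqref{Mzero:smaller}--\eqref{Mzero:greater}, refined by Proposition~\ref{prop:behaviorMzero}, is precisely such that the singularities cancel in the product $M(\zeta)\,\mathcal{S}\,M(\bar\zeta)^{*}$, leaving only a removable singularity and hence preserving the application of Liouville. Finally, once solvability at the given real $(r,s,\tau)$ is established, the extension to a small complex neighborhood follows from a standard perturbation argument: the singular integral operator associated with the RH problem is Fredholm of index zero and depends continuously on $(r_1,r_2,s,\tau)$, so invertibility at one point is an open condition.
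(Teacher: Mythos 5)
Your uniqueness argument and your reduction of existence to a vanishing lemma via Fredholm theory are both fine and match the paper's outline. But there is a genuine gap in the way you try to run the vanishing lemma uniformly for all $\tau\in\er$.

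The symmetrization $H(\zeta)=M(\zeta)\mathcal S\,M(\overline\zeta)^*$ pairs the asymptotic exponential factors of $M(\zeta)$ with those of $M(\overline\zeta)^*$, and for the argument to close one needs the pairing to produce bounded (ideally modulus~$1$) factors on the real axis, so that the resulting sesquilinear form is integrable and positive semidefinite. When $\tau=0$ this works: on $\er_+$ the function $\theta_1(x)$ is purely imaginary (since $(-x)^{1/2},(-x)^{3/2}\in i\er$), and similarly for $\theta_2$ on $\er_-$, so the relevant exponentials are unimodular and the product telescopes. But the asymptotics \eqref{M:asymptotics} also carry the factors $e^{\pm\tau\zeta}$. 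On the real axis these have modulus $e^{\pm\tau x}$, which is unbounded as $x\to\pm\infty$ for $\tau\neq 0$, and no choice of signature matrix $\mathcal S$ pairs a ``$+\tau\zeta$'' column with a ``$-\tau\zeta$'' column without simultaneously mismatching $\theta_1$ against $\theta_2$. Your sentence about ``decay of $M$ away from the real line coming from the exponential factors $e^{\pm\theta_j(\zeta)+\tau\zeta}$'' is where the argument silently fails: the vanishing lemma needs control \emph{on} $\er$, and there the $\tau$-exponentials spoil it. This is precisely why the paper's Section~\ref{section:vanishing} proves the vanishing lemma only for $\tau=0$ (Section~\ref{subsection:exist:1}) and then switches to a completely different, \emph{constructive} route for $\tau\neq 0$ (Section~\ref{subsection:exist:2}, following Duits--Geudens): one \emph{defines} the entries of the Lax matrix $U$ in \eqref{def:U:Lax} via the Hastings--McLeod solution $q$ of the inhomogeneous Painlev\'e~II equation, uses the fact that $q$ has no real poles to make this globally well posed, builds $M(\zeta,\tau)$ from the Lemma~\ref{lemma:DG2} fundamental solutions in the four Stokes sectors, and then propagates analyticity at $\zeta=0$ from $\tau=0$ to all real $\tau$ using $\partial E/\partial\tau = WE$. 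Your proposal has none of this: without that step you have established existence only on the line $\tau=0$.

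Two smaller remarks. First, the discussion of the behavior at the origin is directionally right: as you say, the sectorial structure from \eqref{Mzero:greater} and Proposition~\ref{prop:behaviorMzero} is exactly what lets the $\zeta^{\pm\nu}$ singularities cancel in the Hermitian product, and the paper exhibits the concrete rank-one identity $J(x)+J^H(x)=2\Theta_-^{-1}(x)(1,e^{-\nu\pi i},0,0)^T(1,e^{\nu\pi i},0,0)\Theta_-^{-H}(x)$ that makes this work; you would need to produce an analogue of it to complete even the $\tau=0$ case. Second, your last step (openness of invertibility in the Fredholm setting to get the complex neighborhood) is correct and matches the paper.
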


The next theorem provides a connection with the \emph{inhomogeneous
Painlev\'e~II equation}
\begin{equation}\label{def:Painleve2}
    q''(x) = xq(x)+2q^3(x)-\nu,
\end{equation}
where the prime denotes the derivative with respect to $x$. The
\emph{Hastings-McLeod solution} \cite{FIKN,HML} is the special solution $q(x)$
of \eqref{def:Painleve2} which is real for real $x$ and satisfies
\begin{align}\label{def:HastingMcLeod1}
q(x)\sim \frac{\nu}{x}\qquad x\to +\infty, \\
\label{def:HastingMcLeod2} q(x)\sim \sqrt{\frac{-x}{2}}\qquad x\to -\infty.
\end{align}
We also define the \emph{Hamiltonian} $u(x)$ by
\begin{equation} \label{def:Hamiltonean}
    u(x) := (q'(x))^2-xq^2(x)-q^4(x)+2\nu q(x)
\end{equation}
and note that
\begin{equation} \label{Hamiltonean:der}
    u'(x) = -q^2(x).
\end{equation}
%In what follows we will have $\nu=\alpha+1/2$ with $\alpha>-1$ the parameter of
%the squared Bessel paths.

\begin{theorem}\label{theorem:Painleve2modelrhp} ($M_1$ vs.~the Painlev\'e~II
equation).
Let the parameters $\nu>-1/2$, $r_1=r_2=1$, $s,\tau\in\er$ in
\eqref{M:asymptotics}--\eqref{def:theta1} be fixed. The residue matrix $M_1$ in
\eqref{M:asymptotics} takes the form
\begin{equation}\label{M1:explicit}
M_1 = \begin{pmatrix} a & b & ic & id \\
-b & -a & id & ic \\
ie & if & -g & h \\
if & ie & -h & g
\end{pmatrix}
\end{equation}
where $a,b,c,\ldots$ are real valued constants depending on $\nu$, $s$ and
$\tau$. We have
\begin{align}
\label{d:Painleve2} d &= 2^{-1/3} q\left(2^{2/3} (2s-\tau^2)\right),
\\
\label{c:Hamiltonean} c &= -2^{-1/3}u(2^{2/3} (2s-\tau^2))+s^2,
\end{align}
with $q$ the Hastings-McLeod solution to the inhomogeneous Painlev\'e~II
equation \eqref{def:Painleve2}--\eqref{def:HastingMcLeod2}, and with $u$ the
Hamiltonian in \eqref{def:Hamiltonean}.
%, and with
%\begin{equation}\label{def:sigma}
%\sigma:=2^{2/3} (2s-\tau^2).
%\end{equation}
\end{theorem}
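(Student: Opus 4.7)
The natural approach is an isomonodromic deformation analysis in the parameters $s$ and $\tau$, in the spirit of \cite{DKZ,DG} but adapted to the non-trivial Stokes factors $e^{\pm\nu\pi i}$ and the resulting hard-edge behavior at the origin.

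First, since the jump matrices $J_k$ in RH problem~\ref{rhp:modelM} are independent of $\zeta$, $s$ and $\tau$, the logarithmic derivatives
\begin{equation*}
A(\zeta) := \frac{\partial M}{\partial \zeta}\,M^{-1}(\zeta), \qquad B(\zeta) := \frac{\partial M}{\partial s}\,M^{-1}(\zeta), \qquad C(\zeta) := \frac{\partial M}{\partial \tau}\,M^{-1}(\zeta)
\end{equation*}
have no jumps across $\bigcup_{k}\Gamma_k$; combined with the controlled local behavior at $\zeta=0$ given by \eqref{Mzero:smaller}--\eqref{Mzero:greater}, they are rational in $\zeta$ with at most a simple pole at $0$ and polynomial growth at infinity whose degree is dictated by the exponential factors $e^{\pm\theta_j \pm \tau\zeta}$ in \eqref{M:asymptotics}. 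Direct differentiation of \eqref{M:asymptotics} then expresses the polynomial parts of $A$, $B$, $C$ explicitly in terms of the entries of $M_1, M_2$, of $\theta_j'(\zeta)$ and of $\tau$. The special form \eqref{M1:explicit} of $M_1$ in turn follows from two symmetries of the RH problem: Schwarz reflection $\zeta\mapsto \overline{\zeta}$ (which conjugates the jump matrices) together with the rotation $\zeta\mapsto -\zeta$ (which permutes the sectors $\Omega_k$ and the corresponding jumps in a controlled way). Inserting the resulting transformation laws for $M$ into \eqref{M:asymptotics} pins down the real/imaginary pattern and sign structure of the entries of $M_1$.

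The Painlev\'e~II equation is then extracted from the compatibility relation $\partial_s A - \partial_\zeta B + [A,B] = 0$. After matching coefficients in $\zeta$ and eliminating the auxiliary entries, one obtains a single scalar second-order ODE in $s$ for the entry $d$ of $M_1$. The affine change of variable $x = 2^{2/3}(2s-\tau^2)$ converts this ODE into the inhomogeneous Painlev\'e~II equation \eqref{def:Painleve2}, giving \eqref{d:Painleve2}; the factor $2^{2/3}$ arises from the coefficient $2/3$ of $(\pm\zeta)^{3/2}$ in \eqref{def:theta1}, and the combination $2s-\tau^2$ reflects the fact that evolution along the $\tau$-flow is equivalent (modulo lower-order terms) to a shift in $s$, as one sees formally by comparing the exponential factor $e^{-\theta_1(\zeta)+\tau\zeta}$ with its saddle-point location. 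Identity \eqref{c:Hamiltonean} for $c$ follows by matching the subleading $1/\zeta^2$ coefficient of $A(\zeta)$ with the Hamiltonian \eqref{def:Hamiltonean} and using \eqref{Hamiltonean:der}; the additive $s^2$ term absorbs the contribution produced by the $\tau\zeta$ factor.

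The principal obstacle is the identification of $q(x)$ as precisely the Hastings-McLeod solution, i.e.\ the verification of the boundary conditions \eqref{def:HastingMcLeod1}--\eqref{def:HastingMcLeod2}. This requires a global-in-$s$ asymptotic analysis of RH problem~\ref{rhp:modelM}. For $x\to +\infty$ (equivalently $s\to +\infty$ with $\tau$ bounded), the exponentials $e^{-\theta_j}$ are decaying in the appropriate sectors, and a small-norm argument yields $M_1 \to 0$ at the rate $\nu/x$, matching \eqref{def:HastingMcLeod1}. For $x\to -\infty$, one opens lenses and reduces the $4\times 4$ RH problem to a pair of $2\times 2$ Airy-type model problems, along the lines of \cite{DG} but with the additional $e^{\pm\nu\pi i}$ Stokes factors, from which the square-root asymptotics \eqref{def:HastingMcLeod2} is read off. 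Establishing the required uniformity in the presence of the hard-edge singularity at $\zeta=0$ is the most delicate point, and it ultimately rests on the solvability result Theorem~\ref{theorem:solvability}.
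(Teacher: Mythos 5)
Your proof plan reproduces the overall architecture of the paper's argument: symmetries of the RH problem pin down the real/imaginary pattern of $M_1$; isomonodromic deformation in $\zeta$, $s$ and $\tau$ gives the Lax pair; the compatibility condition produces a second-order ODE for $d$ that becomes Painlev\'e~II under the change of variable $x=2^{2/3}(2s-\tau^2)$; and the Hastings--McLeod identification requires asymptotics at \emph{both} $x\to+\infty$ and $x\to-\infty$. You also correctly flag that for $\nu\ne 0$ the $+\infty$ behavior alone does not single out Hastings--McLeod, which is the essential new difficulty relative to the homogeneous case.

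Where your plan has a real gap is in the $x\to-\infty$ analysis, which you dismiss as ``open lenses and reduce to a pair of $2\times2$ Airy-type model problems, along the lines of \cite{DG}.'' That description does not match what is actually needed here. In the presence of the $e^{\pm\nu\pi i}$ Stokes factors and the hard-edge singularity at $\zeta=0$, the RH problem for $s\to-\infty$ does \emph{not} decouple into $2\times2$ Airy problems. The paper deforms contours onto segments meeting the imaginary axis at $\pm 2i$, constructs a genuinely $4\times4$ global parametrix via a bijective map of a four-sheeted Riemann surface onto the Riemann sphere (functions $f_j(v_k(\zeta))$ with $\sqrt{P(v)}$, $P(v)=2v(v^4+1)$), and then has to install the $e^{\pm\nu\pi i}$ jump entries through a separate Cauchy-integral correction $h(v)$ on the unit circle together with a left factor $\mathcal K$; only the two local parametrices at $\pm 2i$ are Airy-type, while the one at the origin is built from modified Bessel functions after a square-root change of variables. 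It is exactly from the $(1,4)$ entry of the residue of this Riemann-surface global parametrix (equal to $i$) that $d=\sqrt{-s}+O(s^{-1})$ is read off. A second, smaller inaccuracy: at $x\to+\infty$ the small-norm argument only gives $d=\kappa/s+O(s^{-5/2})$ for \emph{some} constant $\kappa$; the value $\kappa=\nu/4$ is not produced by the RH estimate itself but is forced by substituting the asymptotics back into the Painlev\'e~II ODE. Finally, for $\tau\neq 0$ the paper does not rerun the asymptotic analysis in $s$ with $\tau$ bounded (as your parenthetical suggests); instead it \emph{defines} $M$ by integrating the Lax pair from $\tau=0$ with the Hastings--McLeod data inserted, and then verifies that the resulting matrix solves the RH problem. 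Without these ingredients your sketch does not close the identification step, which the paper itself singles out as the crux of the theorem.
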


Theorem~\ref{theorem:Painleve2modelrhp} will be proved in
Section~\ref{section:proofTheoremP2}. In the homogeneous case $\nu=0$ the
theorem recovers some results from \cite{DKZ}, see also \cite{DG}. In the proof
we also obtain identities relating the other entries of $M_1$ in terms of the
entries $c$ and $d$, see e.g.\ \eqref{DG:entries}. We trust that the notations
$a,b$ in \eqref{M1:explicit} will not lead to confusion with the endpoints of
the squared Bessel paths.

The most difficult part of Theorem~\ref{theorem:Painleve2modelrhp} is proving
that $q(x)$ is precisely the \emph{Hastings-McLeod solution} to the Painlev\'
e~II equation. This will require asymptotic results that we derive in
Sections~\ref{section:asyM} and \ref{section:asyM:minusinfty}.

%\begin{remark} (Open problems).
%We mention some open problems related to the $4\times 4$ RH problem for
%$M(\zeta)$:
%\begin{enumerate}\item[(a)] Express $M(\zeta)$
%in terms of the classical $2\times 2$ RH problem for Painlev\'e~II due to
%Flaschka-Newell \cite{FN}, if possible. \item[(b)] Obtain a similar $4\times 4$
%RH problem for the other solutions of the inhomogeneous Painleve II equation,
%i.e., the solutions different from the Hastings-McLeod solution.
%\item[(c)] Obtain a connection of $M(\zeta)$ with the Painleve XXXIV equation, see also \cite{IKO}.
%\end{enumerate}
%\end{remark}

Finally, we transform the RH matrix $M(\zeta)$ into a new matrix $\what M(\zeta)$ as follows
\begin{equation}\label{Mhat}
\what M(\zeta) := \diag(\zeta^{1/4},\zeta^{-1/4},\zeta^{1/4},\zeta^{-1/4})\diag\left(\begin{pmatrix} 1 & -1 \\
1 & 1\end{pmatrix},\begin{pmatrix} 1 & 1 \\
-1 & 1\end{pmatrix}\right)M(\zeta^{1/2}).
\end{equation}
The transformed matrix $\what M(\zeta)$ depends on the same parameters $r_1,r_2,s,\tau,\nu$ as $M(\zeta)$.
The matrix $\what M(\zeta)$ satisfies a RH problem by itself but we will not state
it here.

\subsection{Critical asymptotics of squared Bessel paths}

Now we describe the critical asymptotics of the non-intersecting squared Bessel
paths. We will work under the triple scaling limit where the endpoints $a$, $b$
are fixed such that
\begin{equation}
    \label{doublescaling:ab} ab =
    1/4.
\end{equation}
We will let the time $t$ and the temperature $T$ depend on $n$ such that
\begin{eqnarray}
    \label{doublescaling:t} t &=& \frac{\sqrt{a}}{\sqrt{a}+\sqrt{b}} + K n^{-1/3},\\
    \label{doublescaling:T} T &=& 1 + L n^{-2/3},\end{eqnarray}
where $K,L$ are arbitrary real constants.

Denote again by $\alpha$ the parameter of the non-intersecting squared Bessel
paths. It turns out that under the scaling limit
\eqref{doublescaling:ab}--\eqref{doublescaling:T},  the analysis of the squared
Bessel paths for large $n$ leads to the RH problem~\ref{rhp:modelM}, with the
parameters $r_1=r_2=1$ and with $\nu$, $s=s^*$ and $\tau=\tau^*$ given by
\begin{align}
\nu &=\alpha+1/2,\\ \label{doublescaling:s} s^* &= \frac{K^2(\sqrt{a}+\sqrt{b})^4 -L}{2},\\
\label{doublescaling:tau} \tau^* &= -K(\sqrt{a}+\sqrt{b})^2.
\end{align}
%The constants $K$ and $\sigma$ are defined by
%\begin{eqnarray}\label{doublescaling:K}
%    K &:=& \frac{(p_1^*p_2^*)^{1/6}}{(\sqrt{p_1^*}+\sqrt{p_2^*})^{4/3}} > 0,
%\\ \label{doublescaling:sigma}
%    \sigma &:=& \frac{(p_1^*p_2^*)^{1/6}}{(\sqrt{p_1^*}+\sqrt{p_2^*})^{4/3}}(L_5-L_6)\in\er.
%\end{eqnarray}

For $u,v>0$, we define the \emph{tacnode kernel} $K^{tacnode}(u,v; s^*,\tau^*,\alpha)$ as
\begin{equation}\label{tacnode:kernel}
    K^{tacnode}(u,v; s^*,\tau^*,\alpha)
    = \frac{1}{2\pi i(u-v)} \begin{pmatrix} -1 & 0 & 1 & 0 \end{pmatrix}
     \what M_+^{-1}(v)  \what M_+(u) \begin{pmatrix} 1 & 0 & 1 & 0 \end{pmatrix}^T,
\end{equation}
where the superscript ${}^T$ stands for the transpose. Here $\what M(\zeta)$ is defined in \eqref{Mhat} where the RH matrix
$M(\zeta)$ is defined with respect to the parameters $r_1=r_2=1$, $s=s^*$,
$\tau=\tau^*$ and $\nu:=\alpha+1/2.$
%From \eqref{tacnode:kernel}, one deduces
%that the kernel has the integrable form
%\[ \frac{f_1(u) g_1(v) + f_2(u) g_2(v) + f_3(u) g_3(v) + f_4(u) g_4(v)}{u-v}, \]
%for certain entire functions $f_j$ and $g_j$, $j=1,\ldots,4$, with
%$\sum_{j=1}^4 f_j(u) g_j(u) = 0.$ These functions depend on $s^*$, $\tau^*$ and
%$\alpha$.

The squared Bessel paths at each fixed time $t\in (0,1)$ are a determinantal
point process with correlation kernel $K_n(x,y)$, see
Section~\ref{subsection:aboutproof} below. Now we state our main result.

\begin{theorem} \label{theorem:kernelpsi}
(Asymptotics of the correlation kernel). Consider $n$ non-intersecting squared
Bessel paths on the time interval $[0,1]$ with transition probability density
\eqref{probdensity:alpha}--\eqref{time:rescaling} with given starting point
$a>0$ and given endpoint $b
> 0$. Assume that
\eqref{doublescaling:ab}--\eqref{doublescaling:T} hold. Then the correlation
kernel $K_n$ for the positions of the paths at time $t$ has the following
scaling limit as $n \to \infty$ with $n$ even,
\begin{align}\label{kernel at tacnode}
    \lim_{n \to \infty} \frac{1}{\kappa^2 n^{4/3}}
    K_n \left( \frac{u}{\kappa^2 n^{4/3}}, \frac{v}{\kappa^2 n^{4/3}}\right)
        = u^{\alpha/2}v^{-\alpha/2} K^{tacnode}(u,v; s^*,\tau^*,\alpha),
\end{align}
for any fixed $u,v>0$, where $s^*$ and $\tau^*$ are given in
\eqref{doublescaling:s}--\eqref{doublescaling:tau} and with
\begin{equation}\label{def:kappa}\kappa =
%\frac{1}{2^{1/6}\sqrt{t^*(1-t^*)}}=
%2^{5/6}(\sqrt{a}+\sqrt{b}).\end{equation}
2(\sqrt{a}+\sqrt{b}).\end{equation}
\end{theorem}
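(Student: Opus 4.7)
The correlation kernel $K_n(x,y)$ for the non-intersecting squared Bessel paths at time $t$ admits a Christoffel-Darboux type expression in terms of multiple orthogonal polynomials with weights built from the modified Bessel functions appearing in \eqref{probdensity:alpha}-\eqref{probdensity:alpha0}; this representation was derived in \cite{KMW,KMW2,DKRZ}. The starting point is therefore the standard RH characterization $Y(z)$ of those polynomials, from which $K_n$ is recovered as
\[
K_n(x,y) = \frac{x^{\alpha/2}y^{-\alpha/2}}{2\pi i(x-y)}\begin{pmatrix} \cdots \end{pmatrix} Y_+^{-1}(y)Y_+(x)\begin{pmatrix} \cdots \end{pmatrix}^T,
\]
with explicit vectors dictated by the weight structure of \cite{DKRZ}. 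The plan is to perform a Deift-Zhou steepest descent analysis on $Y(z)$, tailored to the critical regime \eqref{doublescaling:ab}-\eqref{doublescaling:T}, and to convert the resulting asymptotic formula for $Y_+$ into the stated limit for $K_n$ after the rescaling $x,y \mapsto u,v/(\kappa^2 n^{4/3})$.

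The steepest descent analysis consists of the usual cascade of transformations $Y \to T \to S \to R$. The first step $Y\to T$ normalises at infinity via the $g$-functions associated to the equilibrium problem of \cite{DKRZ,DelKui1}; in the critical regime the equilibrium measure is the Marcenko-Pastur-type density in \eqref{MP:endpoint1}-\eqref{MP:endpoint2} evaluated at the critical parameters, whose right endpoint $p$ coincides with the hard edge $0$. The step $T \to S$ opens lenses around the support of the equilibrium measure. Outside neighbourhoods of the critical points one then constructs a global parametrix on the support; away from the tacnode one uses the usual Airy parametrix at the soft edge and a modified-Bessel parametrix at the hard edge, in the spirit of \cite{KMW,KMW2}. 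Inside a shrinking disk around the hard-edge tacnode $(x,t) = (0,t^*)$, the soft and hard edge merge and the usual parametrices are no longer available; this disk must instead be filled in by a local parametrix built from the matrix $\what M(\zeta)$ defined in \eqref{Mhat}.

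The construction of the tacnode parametrix is the heart of the argument and the main obstacle. One chooses a conformal map $\zeta = \zeta_n(z)$ centred at $z = 0$ such that (i) the ten rays in Figure~\ref{fig:modelRHP} pull back to the jump contour of $S$ inside the disk, and (ii) the exponents $\theta_1,\theta_2$ of \eqref{def:theta1} at $r_1=r_2=1$, together with the $\tau\zeta$-terms in \eqref{M:asymptotics}, reproduce the phase functions coming from the $g$-function. Expanding $\zeta_n(z)$ and the $g$-function simultaneously in powers of $n^{-1/3}$ near the critical time, one finds that the scaling \eqref{doublescaling:t}-\eqref{doublescaling:T} produces precisely the values $s=s^*$ and $\tau=\tau^*$ given in \eqref{doublescaling:s}-\eqref{doublescaling:tau}, with the $n^{4/3}$-scaling of the spatial variable controlled by $\kappa = 2(\sqrt a+\sqrt b)$ as in \eqref{def:kappa}. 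The local parametrix is then defined by $P(z) = E_n(z)\,\what M(n^{2/3}\zeta_n(z);\nu,s^*,\tau^*)$ with a suitable analytic prefactor $E_n(z)$ matching the global parametrix on the boundary of the disk; solvability of the underlying RH problem for $M$ is guaranteed by Theorem~\ref{theorem:solvability}, and behaviour \eqref{Mzero:smaller}-\eqref{Mzero:greater} at $\zeta=0$ produces exactly the right $\zeta^\nu$ singular behaviour at the hard edge, with $\nu = \alpha+1/2$.

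With the global and local parametrices in hand, the ratio $R = S \cdot (\text{parametrix})^{-1}$ has jumps that are $I + O(n^{-1/3})$ uniformly on the contour, and standard small-norm theory yields $R(z) = I + O(n^{-1/3})$ uniformly in $z$. Reversing the chain of transformations expresses $Y_+$ inside the tacnode disk in terms of $\what M$, and inserting this into the Christoffel-Darboux formula for $K_n$ at the rescaled arguments $u/(\kappa^2 n^{4/3})$ and $v/(\kappa^2 n^{4/3})$ produces the tacnode kernel \eqref{tacnode:kernel} in the limit; the prefactor $x^{\alpha/2}y^{-\alpha/2}$ from the Christoffel-Darboux representation accounts for the factor $u^{\alpha/2}v^{-\alpha/2}$ on the right-hand side of \eqref{kernel at tacnode}. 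The main technical burden beyond the construction of the tacnode parametrix is verifying that the remaining jumps (matching condition on the disk boundary and negligibility of exponentially small jumps elsewhere) are controlled uniformly; this is where the fine information about $\what M$ at infinity, ultimately encoded in \eqref{M:asymptotics}-\eqref{def:theta1}, enters the argument.
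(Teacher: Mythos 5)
Your high-level plan is the right one and matches the paper's strategy: start from the Christoffel--Darboux formula \eqref{kernel:Y:0}, run a Deift--Zhou analysis, and build the local parametrix at the hard-edge tacnode out of the model RH problem for $M(\zeta)$. But there are two concrete places where your description of the local parametrix would break down if carried out literally, plus one deviation that works but differs from what the paper does.

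\textbf{Geometry of the local parametrices.} You say that ``away from the tacnode one uses the usual Airy parametrix at the soft edge and a modified-Bessel parametrix at the hard edge.'' In the critical regime \eqref{doublescaling:ab}--\eqref{doublescaling:T} the support of the transformed problem is the single interval $[0,q]$ with $q=\gamma^2$; the origin \emph{is} simultaneously the hard edge and the tacnode, so there is no separate hard-edge Bessel point to handle. The only other special point is the soft edge $q$, treated with the standard Airy parametrix. The Bessel functions you have in mind do appear, but only \emph{inside} the model problem for $M$ (in the construction of its local parametrix at $\zeta=0$ in Sections~\ref{section:asyM}--\ref{section:asyM:minusinfty}); they are not a separate ingredient of the outer steepest-descent analysis.

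\textbf{Freezing the parameters of $M$.} You define the tacnode parametrix as $P(z)=E_n(z)\,\what M(n^{2/3}\zeta_n(z);\nu,s^*,\tau^*)$ with the parameters fixed at the limiting values $s^*,\tau^*$ and (implicitly) $r_1=r_2=1$. This does not produce a parametrix with the correct jumps: the phase functions $\lambda_j$ coming from the $g$-function are \emph{not} exactly of the polynomial form $\tfrac{2}{3}(\pm\zeta)^{3/2}+2s(\pm\zeta)^{1/2}\pm\tau\zeta$ with fixed coefficients, and if you freeze the parameters you pick up an uncontrolled jump mismatch on the rays inside the disk. The construction must instead evaluate the model problem at \emph{$n$- and $z$-dependent} parameters $r_1(z),r_2(z)\to 1$, $n^{2/3}s\to s^*$, $n^{1/3}\tau\to \tau^*$ (see \eqref{r12stau}--\eqref{lambdas:modelrhp}) chosen so that the phases match \emph{exactly}; the solvability of the extended RH problem in a small complex neighbourhood of $(1,1,s^*,\tau^*)$ guaranteed by Theorem~\ref{theorem:solvability} is what makes this possible. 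Only in the final passage to the limit does one replace these $n$-dependent quantities by their limits, using the uniform estimate \eqref{R:estimate} and the analyticity of the prefactor $E$.

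\textbf{Shrinking vs.\ fixed disk.} You follow \cite{DKZ,DG} in placing the tacnode parametrix in a shrinking disk of radius $n^{-1/3}$; the paper instead works on a \emph{fixed} disk $B_\rho$, which is possible here and substantially simplifies the matching on $\partial B_\rho$ (it becomes a one-step $O(n^{-1/3})$ estimate rather than a delicate interplay between the shrinking radius and the asymptotics of $M$). The shrinking-disk route can also be made to work, but if you choose it you need the more involved matching argument of \cite{DKZ}; as written, you would have to supply those estimates, which are not automatic.
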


%Note that the factor $u^{1/4}v^{1/4}$ in \eqref{kernel at tacnode} cancels with
%\eqref{tacnode:kernel}. The reason why we write this factor is to ensure that
%$K^{tacnode}$ is a genuine correlation kernel of a determinantal point process;
%see Remark~\ref{Bessel:sqrt} below.

We expect the conclusion of Theorem~\ref{theorem:kernelpsi} to remain valid if $n\to\infty$ with $n$ odd.

Note that the factor $u^{\alpha/2}v^{-\alpha/2}$ in \eqref{kernel at tacnode}
has no influence on the correlation functions of the determinantal point
process induced by $K^{tacnode}$. Moreover, when comparing $K^{tacnode}$ to the
kernel in \cite[Eq.~(2.48)]{DKZ}, note that the latter has a typo: it has \lq
$M_+^{-1}(u) M_+(v)$\rq\ instead of \lq $M_+^{-1}(v) M_+(u)$\rq.

\begin{remark} (Varying endpoints).
Fix $a^*,b^*>0$ such that $a^* b^* = 1/4$ and assume that the endpoints $a,b$ vary with $n$ as
\begin{align}
\label{doublescaling:a} a &= a^*(1+2L_1 n^{-2/3}),\\
\label{doublescaling:b} b &= b^*(1+2L_2 n^{-2/3}),
\end{align}
for certain real constants $L_1,L_2$. Assume again that the time $t$ and
temperature $T$ vary with $n$ as in
\eqref{doublescaling:t}--\eqref{doublescaling:T}, but with $a,b$ replaced by
$a^*,b^*$. Then the conclusion of Theorem~\ref{theorem:kernelpsi} remains
valid, with now \eqref{doublescaling:s} replaced by
\begin{align}
\label{doublescaling:sbis} s^* &= \frac{K^2(\sqrt{a}+\sqrt{b})^4
-L+L_1+L_2}{2}.
%\label{doublescaling:taubis} \tau^* &= -K(\sqrt{a}+\sqrt{b})^2.
\end{align}
\end{remark}

\begin{remark}\label{Bessel:sqrt} (Bessel process).
Theorem~\ref{theorem:kernelpsi} was formulated for the \emph{squared} Bessel
process \eqref{probdensity:alpha}--\eqref{time:rescaling}. By taking square
roots, one obtains a similar statement for the (ordinary) \emph{Bessel
process}. The correlation kernel $\wtil K_n(x,y)$ for the positions of $n$
non-intersecting Bessel processes on $[0,1]$, starting at $\sqrt{a}$ and ending
at $\sqrt{b}$, is expressed in terms of the above $K_n$ by $$\wtil
K_n(x,y)=2\sqrt{xy}K_n(x^2,y^2),$$ see \cite[Remark~2.10]{KMW}. The analogue of
\eqref{kernel at tacnode} in this case is
%The critical limit of the kernel then becomes
\begin{align}%\label{kernel at tacnode:Bessel}
    \lim_{n \to \infty} \frac{1}{\kappa n^{2/3}}
\wtil K_n \left( \frac{u}{\kappa n^{2/3}}, \frac{v}{\kappa n^{2/3}}\right)=
2u^{\alpha}v^{-\alpha}\sqrt{uv}K^{tacnode}(u^2,v^2; s^*,\tau^*,\alpha).
\end{align}
\end{remark}

\begin{remark} (Chiral 2-matrix model). The RH problem~\ref{rhp:modelM} for $M(\zeta)$ was recently used to study a critical phenomenon in the chiral 2-matrix model \cite{DGZ}. This leads to a critical correlation kernel which is essentially \emph{different} from the kernel $K^{tacnode}$ in \eqref{tacnode:kernel} 
and which is a hard edge analogue of the Duits-Geudens kernel~\cite{DG}.
\end{remark}

\subsection{About the proof}\label{subsection:aboutproof}

The positions of the non-intersecting squared Bessel paths at each particular
time $t\in (0,1)$ constitute a determinantal point process. It is a multiple
orthogonal polynomial (MOP) ensemble with respect to the weight functions
\cite{DKRZ}
\begin{align}\begin{array}{ll}\label{w11}
w_{1,1}(x)=x^{\frac{\alpha}{2}}e^{-\frac{nx}{Tt}}I_{\alpha}\left(\frac{2n\sqrt{ax}}{Tt}\right),
&  w_{1,2}(x)=x^{\frac{\alpha+1}{2}}e^{-\frac{nx}{Tt}}
I_{\alpha+1}\left(\frac{2n\sqrt{ax}}{Tt}\right),
\\ %\label{w2}
w_{2,1}(x)=x^{-\frac{\alpha}{2}}e^{-\frac{nx}{T(1-t)}}
I_{\alpha}\left(\frac{2n\sqrt{bx}}{T(1-t)}\right), &
w_{2,2}(x)=x^{-\frac{\alpha-1}{2}}e^{-\frac{nx}{T(1-t)}}I_{\alpha-1}
\left(\frac{2n\sqrt{bx}}{T(1-t)}\right).
\end{array}\end{align}
Here $T>0$ is the temperature.

This MOP ensemble is related to the following RH problem \cite{DKRZ}.

\begin{rhp} \label{rhp:Y}
We look for a $4 \times 4$ matrix valued function $Y$ satisfying
\begin{enumerate}
  \item[(1)] $Y$ is analytic in $\cee \setminus \er_{+}$.

  \item[(2)] For $x >0 $, $Y$ possesses continuous boundary values
  $Y_{+}$ (from the upper half plane) and $Y_{-}$ (from the lower half
  plane), which satisfy
  \begin{equation}\label{jump:Y}
  Y_+(x)=Y_-(x)
  \begin{pmatrix}
  I_2 & W(x) \\
  0 & I_2
  \end{pmatrix},
  \end{equation}
  where $I_k$ denotes the  identity matrix of size $k$ and $W(x)$ is the rank-one matrix
  \begin{align*}%\label{jump:Ybis}
  W(x) =\begin{pmatrix} w_{1,1}(x)& w_{1,2}(x) \end{pmatrix}^T \begin{pmatrix} w_{2,1}(x)&
  w_{2,2}(x)\end{pmatrix}.
  \end{align*}

  \item[(3)] As $z\to \infty$, $z\in\cee\setminus \er_{+}$, we have
  \begin{equation*}
  Y(z)=(I+O(1/z)) ~\diag(z^{n_1}, z^{n_2}, z^{-n_1},
  z^{-n_2}),
  \end{equation*}
  where $n_1:=\lceil n/2\rceil$ and $n_2:=n-n_1$.

  \item[(4)] $Y(z)$ has the following behavior near the origin:
  \begin{equation*}%\label{Y:zerobeh}
  Y(z)\diag(1,1,h^{-1}(z),h^{-1}(z))=O(1),\quad
  Y^{-T}(z)\diag(h^{-1}(z),1,1,1)=O(1),
  %Y(z)=O\begin{pmatrix}
%  1 & 1 & h(z) & h(z) \\
%  1 & 1 & h(z) & h(z) \\
%  1 & 1 & h(z) & h(z) \\
%  1 & 1 & h(z) & h(z) \\
%  \end{pmatrix},
%  Y(z)^{-1}=O\begin{pmatrix}
%  h(z) & h(z) & h(z) & h(z) \\
%  1 & 1 & 1 & 1 \\
%  1 & 1 & 1 & 1 \\
%  1 & 1 & 1 & 1 \\
%  \end{pmatrix}
  \end{equation*}
  as $z\to 0$, $z\in\cee\setminus\er_+$, where the $O$-symbol is defined
  entrywise, where
  the superscript ${}^{-T}$ denotes the inverse transpose and with
  \begin{equation}\label{def:h}
  h(z)=\left\{
         \begin{array}{ll}
           |z|^{\alpha}, & \quad  \hbox{if $-1<\alpha<0$,} \\
           \log|z|, & \quad\hbox{if $\alpha=0$,} \\
           1, & \quad \hbox{if $\alpha>0$}.
         \end{array}
       \right.
  \end{equation}
\end{enumerate}
\end{rhp}

There is a unique solution $Y$ to the above RH problem. The matrix $Y$ is constructed using
multiple orthogonal polynomials of mixed type with respect to the modified Bessel
weights \eqref{w11}; see \cite{DK,DKRZ}. Due to the singularity of the weight matrix near the origin,
we need the condition $(4)$ to ensure the uniqueness of the solution.

The correlation kernel admits the following representation in terms of $Y$
\cite{DK,DKRZ}:
\begin{equation}\label{kernel:Y:0}
K_{n}(x,y)=\frac{1}{2\pi i(x-y)}\begin{pmatrix}0 &0 & w_{2,1}(y)&
w_{2,2}(y)\end{pmatrix} Y_{+}^{-1}(y)Y_{+}(x)
\begin{pmatrix}
w_{1,1}(x) & w_{1,2}(x) & 0 & 0
\end{pmatrix}^T.
\end{equation}
%where the superscript ${}^T$ stands for the transpose.

%\subsection*{Organization of the paper}
%
%The rest of this paper is organized as follows. In Sections~\ref{section:asyM}
%and~\ref{section:asyM:minusinfty} we perform the large~$s$ asymptotics of the
%model RH problem for $M$. Section~\ref{section:proofTheoremP2} proves
%Theorem~\ref{theorem:Painleve2modelrhp} on the relation of $M(\zeta)$ with the
%Painlev\'e~II equation. In Section~\ref{section:vanishing} we prove
%Theorem~\ref{theorem:solvability} on the solvability of the model RH problem.
%Section~\ref{section:steepest:DKRZ} deals with the steepest descent analysis of
%the RH matrix $Y(z)$. The main technical step is the construction of the local
%parametrix at the origin in the critical case. This leads us to the proof of
%Theorem~\ref{theorem:kernelpsi}.

\subsection*{Outline of the paper} The remainder of this paper is organized as
follows. In Sections~\ref{section:asyM} and~\ref{section:asyM:minusinfty} we
calculate the large~$s$ asymptotics of the model RH problem for $M$.
Section~\ref{section:proofTheoremP2} proves
Theorem~\ref{theorem:Painleve2modelrhp} on the relation of $M(\zeta)$ with the
inhomogeneous Painlev\'e~II equation. Section~\ref{section:vanishing} proves
Theorem~\ref{theorem:solvability} on the solvability of the model RH problem.
In the final Section~\ref{section:steepest:DKRZ} we analyze the
non-intersecting squared Bessel paths near the hard-edge tacnode and we prove
Theorem~\ref{theorem:kernelpsi}.

%\part{Analysis of the Riemann-Hilbert problem for $M(\zeta)$}
%\label{part:modelRHP}

\section{Asymptotics of $M(\zeta)$ for $s\to+\infty$}
\label{section:asyM}

In this and the next section we will analyze the model RH problem
\ref{rhp:modelM} for $M(\zeta)$ if $r_1=r_2=1$ and $\tau=0$ in the limit $s \to
\infty$. We will prove the solvability of the RH problem for $s\in\er$ with
$|s|$ sufficiently large, and also establish the large $s$ asymptotics for the
quantities $c$ and $d$ in \eqref{M1:explicit}. More precisely, we will prove
the following proposition.

\begin{proposition}\label{prop:large s solvability}
    Let $\nu>-1/2$ be fixed and suppose that $r_1=r_2=1$ and $\tau=0$.
    Then for $s \in \er$ with $|s|$ large enough, the RH problem for
    $M( \cdot)$ is uniquely solvable.

    Moreover, the numbers $d = - i \left(M_1\right)_{1,4}$ and $c = -i \left(M_1\right)_{1,3}$ in \eqref{M1:explicit}
    have the large $s$ asymptotics
    \begin{eqnarray} \label{eq:asy of d}
    d &=& \frac{\nu}{4s}+O(s^{-5/2}), \qquad s\to +\infty,
       \\ \label{eq:asy of c} c &=&
        s^2 + O(s^{-1}),\qquad s\to +\infty,
        \\ \label{eq:asy of d:bis}
     d &=& \sqrt{-s}+O(s^{-1}),\qquad s\to -\infty.
\end{eqnarray}

\end{proposition}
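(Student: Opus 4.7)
The strategy is a Deift--Zhou steepest descent analysis of RH problem~\ref{rhp:modelM} with $r_1=r_2=1$ and $\tau=0$, performed separately in the two regimes $s\to+\infty$ and $s\to-\infty$. In each regime the analysis reduces $M$ to a small-norm RH problem from which the solvability assertion follows, and the asymptotic formulas \eqref{eq:asy of d}, \eqref{eq:asy of c}, \eqref{eq:asy of d:bis} are then extracted by reading off the $\zeta^{-1}$-coefficient of the resulting global parametrix.

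For $s\to+\infty$ the natural rescaling is $\zeta=w/s^2$: under this substitution the exponents $\theta_1,\theta_2$ reduce to $\pm 2(\mp w)^{1/2}+O(s^{-3})$, so the rescaled RH problem collapses to a $4\times 4$ hard-edge Bessel model whose solution can be built explicitly from $I_\nu$ and $K_\nu$ in the spirit of the $2\times 2$ Bessel parametrix of~\cite{KMW}, arranged so as to reproduce both the Stokes multipliers $e^{\pm\nu\pi i}$ on $\Gamma_2\cup\Gamma_3\cup\Gamma_7\cup\Gamma_8$ and the prescribed singular behavior \eqref{Mzero:smaller}--\eqref{Mzero:greater}. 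Using this Bessel model as a local parametrix at $\zeta=0$ and the bare asymptotic factor of~\eqref{M:asymptotics} as the outer parametrix, one obtains a jump mismatch of order $s^{-1/2}$ on the boundary of a fixed disk, and a standard small-norm argument then yields unique solvability for all $|s|$ large enough.

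To extract $c$ and $d$ one reads off the $\zeta^{-1}$-coefficient of the global parametrix. The large-$\lambda$ expansion of the Bessel parametrix (in the rescaled variable $\lambda=s^2\zeta$) feeds back, through the conformal map $\lambda=s^2\zeta$, into powers of $s$ in the reconstituted $\zeta^{-1}$-coefficient: the first sub-leading Bessel term supplies the leading $\nu/(4s)$ in~\eqref{eq:asy of d}, and a second round of sub-leading matching refines the error to $O(s^{-5/2})$. The same computation for the $(1,3)$-entry of $M_1$ produces the leading $s^2$ in~\eqref{eq:asy of c}, accompanied by the $O(s^{-1})$ correction coming from the sub-leading Bessel expansion.

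For $s\to-\infty$ the phase $\theta_2$ now develops a critical point on the positive real axis at $\zeta=|s|$, so one must first introduce a $g$-function transformation that balances the exponentials along an interval $[0,c|s|]$ (for some explicit $c>0$), analogously to the hard-edge equilibrium problem of~\cite{KMW,DKV}. After the $g$-function step the steepest descent produces a hard-edge Bessel parametrix at $\zeta=0$ and a soft-edge Airy parametrix at the outer endpoint, and matching the former with the $g$-dressed outer parametrix yields the leading $\sqrt{-s}$ in~\eqref{eq:asy of d:bis}. The main obstacle throughout is the explicit construction of the $4\times 4$ hard-edge parametrix carrying the Stokes multipliers $e^{\pm\nu\pi i}$, together with the verification of its matching with the outer parametrix; in the $s\to-\infty$ regime a secondary obstacle is the precise identification of the $g$-function, whose support and variational inequalities must be checked by a delicate equilibrium analysis.
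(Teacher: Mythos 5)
Your overall plan — Deift--Zhou steepest descent in each regime $s\to\pm\infty$, Bessel local parametrix at the origin carrying the Stokes multipliers $e^{\pm\nu\pi i}$, and a small-norm conclusion — is the right framework and matches the paper's. However, several of your concrete technical choices do not work and diverge from what the paper actually does.

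For $s\to+\infty$ you propose the rescaling $\zeta=w/s^2$. While this does collapse $\theta_1,\theta_2$ to Bessel-type exponents $\pm 2(\mp w)^{1/2}$ for \emph{fixed} $w$, it zooms too far in: in the $w$-plane the exponent $2(-w)^{1/2}$ has positive real part on the inner rays, so conjugating the constant jump matrices by your ``bare asymptotic factor'' produces jumps that \emph{grow}, not decay, and there is a whole intermediate range $s^2\lesssim|w|\lesssim s^3$ where neither the Bessel model nor the outer factor is a good approximation. What the paper does instead is rescale $\zeta\mapsto s\zeta$ (so the large parameter is $\lambda=s^{3/2}$), perform contour deformations so that the rays emanate from $\pm 2$, introduce $g$-functions $g_{1,2}(\zeta)=\tfrac23(2\mp\zeta)^{3/2}$, and then construct \emph{three} local parametrices: Airy at $\pm 2$ and modified Bessel at $0$. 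Your proposal has no analogue of the Airy parametrices or of the $g$-function step, and these are not optional — without them there is no small-norm problem.

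For $s\to-\infty$ you propose a $g$-function supported on a real interval $[0,c|s|]$, with a soft edge on the positive real axis. This is also not what happens. After rescaling $\zeta\mapsto -s\zeta$, the paper deforms $\Gamma_2,\Gamma_3$ and $\Gamma_7,\Gamma_8$ to emanate from $\pm 2i$ and constructs $g$-functions from algebraic functions $\xi_{1,2}$ (roots of $\xi^4+4\xi^2=\zeta^2$) living on a $4$-sheeted Riemann surface with cuts along $\er\cup[-2i,2i]$; the soft edges are at $\pm 2i$, not on the positive real axis, and the phase structure is not that of a single real support interval. The leading term $\sqrt{-s}$ in \eqref{eq:asy of d:bis} comes from the explicit $\zeta^{-1}$-coefficient of this global parametrix — the paper computes $(\hh C_1^{(\infty)})_{1,4}=i$ via a rational parametrization of the Riemann surface — and not, as you suggest, from ``matching the Bessel parametrix with the $g$-dressed outer parametrix''.

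Finally, you claim the coefficient $\nu/4$ in \eqref{eq:asy of d} ``comes from the first sub-leading Bessel term''. The paper deliberately avoids this computation: the sub-leading term of the small-norm solution $D_1$ at order $\lambda^{-1}$ picks up contributions from all three local parametrices (two Airy, one Bessel), and rather than disentangle them, the paper only proves $d=\kappa/s+O(s^{-5/2})$ for some constant $\kappa$, and then pins down $\kappa=\nu/4$ by substituting this ansatz into the inhomogeneous Painlev\'e~II equation $q''=xq+2q^3-\nu$ (already established via the Lax pair in Theorem~\ref{theorem:Painleve2modelrhp}): any other value of $\kappa$ would force $q(s)\to\infty$, contradicting $q=O(1/s)$. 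You would need to either reproduce this Painlev\'e argument or actually carry out the sub-leading matching, which your proposal does not do.
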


Proposition~\ref{prop:large s solvability} will be needed in the proofs of
Theorems \ref{theorem:solvability} and \ref{theorem:Painleve2modelrhp}. Note
that in \cite{DKZ} we had a stronger variant of \eqref{eq:asy of d} with
$\nu=0$. This stronger variant allowed us to conclude directly that $q$ in
\eqref{d:Painleve2} is the \emph{Hastings-McLeod solution} to Painlev\'e~II.
%without the need of asymptotics for $s\to -\infty$.
In contrast, the condition \eqref{eq:asy of d} is insufficient to characterize
the Hastings-McLeod solution, for any parameter $\nu>-1/2$, which is why we
also need the asymptotics \eqref{eq:asy of d:bis} for $s\to -\infty$. Actually,
it \emph{is} possible to characterize the Hastings-McLeod solution from its
behavior for $s\to +\infty$ alone (without the need for $s\to -\infty$
asymptotics) \cite[Sec.~11.7]{FIKN}; but the latter requires an extremely
detailed asymptotic analysis that is hard to perform in our setting.
%Thus we will need both \eqref{eq:asy of d} and \eqref{eq:asy of
%d:bis}.

The proof of Proposition~\ref{prop:large s solvability} is based on the
Deift-Zhou steepest descent analysis of the RH problem for $M(\zeta)$. In this
section we will perform the analysis for $s\to +\infty$, in the next section we
will consider the case where $s\to -\infty$.

%We wish to point out that in the steepest descent analysis for $s\to +\infty$,
%we will need to construct a local parametrix at the origin with the help of
%modified Bessel functions. This kind of construction did not occur in
%\cite{DKZ}.

We now turn to the analysis for $s\to +\infty$. We will apply a series of
transformations $M \mapsto A \mapsto B \mapsto C \mapsto D$, so that the
matrix-valued function $D$ tends uniformly to the identity matrix as $s \to
+\infty$. The transformations will be very similar to \cite[Sec.~3]{DKZ} and
therefore we only give a brief description. The main difference with \cite{DKZ}
is in the construction of a local parametrix at the origin with the help of
modified Bessel functions; see Section~\ref{subsection:localPII}.

\subsection{First transformation: $M \mapsto A$}
\label{subsection:MtoA}

The first three transformations will be almost identical as in
\cite[Sec.~3]{DKZ}. The first transformation $M\mapsto A$ is a rescaling of the
RH problem for $M$:
%in exactly the same way as in \cite{DKZ}:
\begin{equation}\label{M to A}
A(\zeta)=\diag(s^{1/4},s^{1/4},s^{-1/4},s^{-1/4})M(s\zeta),\qquad \zeta\in\cee
\setminus\bigcup_{k=0}^9 \Gamma_k.
\end{equation}
The matrix $A$ satisfies a RH problem with exactly the same jumps and behavior
near the
origin as $M(\zeta)$. The %exponential part of the
large $\zeta$ asymptotics %, i.e., the rightmost factor in
in \eqref{M:asymptotics} changes to
\begin{multline}
\label{A:asymptotics} A(\zeta) =
\left(I+\frac{A_1}{\zeta}+O\left(\frac{1}{\zeta^2}\right)\right)
\diag((-\zeta)^{-1/4},\zeta^{-1/4},(-\zeta)^{1/4},\zeta^{1/4})
\\ \times \Aa
\diag\left(e^{-\lam\til\theta_1(\zeta)},e^{-\lam\til\theta_2(\zeta)},e^{\lam\til\theta_1(\zeta)},e^{\lam\til\theta_2(\zeta)}\right)
\end{multline}
%\begin{align}\diag(e^{-\lambda \tilde
%\theta_1(\zeta)},e^{-\lambda \tilde\theta_2(\zeta)} ,e^{\lambda
%\tilde\theta_1(\zeta)},
%  e^{\lambda \tilde\theta_2(\zeta)}),
%  \end{align}
with $\Aa$ as in \eqref{mixing:matrix} and with
%\begin{equation}\label{def:lambda:s}
%\lambda = s^{3/2},\end{equation} and
\begin{align}\label{tilde theta}
\lambda = s^{3/2},\qquad  \tilde
\theta_1(\zeta)=\frac{2}{3}(-\zeta)^{3/2}+2(-\zeta)^{1/2},\qquad\quad \tilde
\theta_2(\zeta)=\frac{2}{3}\zeta^{3/2}+2\zeta^{1/2}.
  \end{align}

\subsection{Second transformation: $A \mapsto B$}
\label{subsection:AtoB}

\begin{figure}[t]
\vspace{-15mm}
\begin{center}
   \setlength{\unitlength}{1truemm}
   \begin{picture}(100,70)(-5,2)
       \put(40,40){\line(1,0){50}}
       \put(40,40){\line(-1,0){50}}
       \put(60,40){\line(2,1){30}}
       \put(60,40){\line(2,-1){30}}
       \put(20,40){\line(-2,1){30}}
       \put(20,40){\line(-2,-1){30}}
       \put(40,40){\line(2,3){10}}
       \put(40,40){\line(2,-3){10}}
       \put(40,40){\line(-2,3){10}}
       \put(40,40){\line(-2,-3){10}}
       \put(40,40){\thicklines\circle*{1}}
       \put(20,40){\thicklines\circle*{1}}
       \put(60,40){\thicklines\circle*{1}}
       %\put(25,40){\thicklines\circle*{1}}
       %\put(55,40){\thicklines\circle*{1}}
       \put(39.3,36){$0$}
       \put(15.5,36){$-2$}
       \put(59.3,36){$2$}
       %\put(22,36){$-2+\epsilon$}
       %\put(48,36){$2-\epsilon$}
       \put(75,40){\thicklines\vector(1,0){.0001}}
       \put(5,40){\thicklines\vector(1,0){.0001}}
       \put(52,40){\thicklines\vector(1,0){.0001}}
       \put(30,40){\thicklines\vector(1,0){.0001}}
       \put(80,50){\thicklines\vector(2,1){.0001}}
       \put(80,30){\thicklines\vector(2,-1){.0001}}
       \put(0,50){\thicklines\vector(2,-1){.0001}}
       \put(0,30){\thicklines\vector(2,1){.0001}}
       \put(45.2,48){\thicklines\vector(2,3){.0001}}
       \put(45.3,32){\thicklines\vector(2,-3){.0001}}
       \put(34.8,48){\thicklines\vector(-2,3){.0001}}
       \put(34.7,32){\thicklines\vector(-2,-3){.0001}}

       %\put(60,40.5){$\Gamma_0$}
       \put(80,52.5){$\til\Gamma_1$}
       \put(50,52){$\Gamma_2$}
       \put(26,52){$\Gamma_3$}
       \put(0.5,50){$\til \Gamma_4$}
       %\put(20,40.5){$\Gamma_5$}
       \put(0,27){$\til \Gamma_6$}
       \put(26,25){$\Gamma_7$}
       \put(50,25){$\Gamma_8$}
       \put(80,25){$\til\Gamma_{9}$}
  \end{picture}
  \vspace{-22mm}
   \caption{Jump contour $\Sigma_B$ for the RH problem for $B$.
   Note the reversion of the orientation of the rays $\til\Gamma_4$, $\til\Gamma_6$ and $(-\infty,0)$.}
   \label{fig:ContourB}
\end{center}
\end{figure}
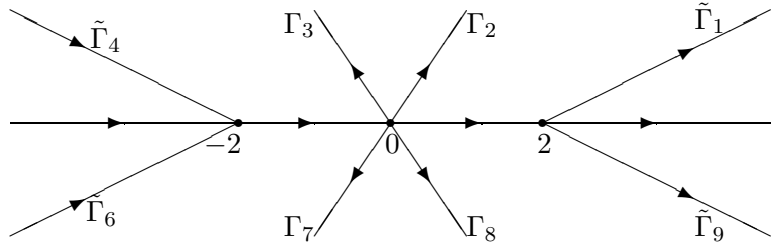

In the second transformation we apply contour deformations.
%; see also \cite{DKZ,IKO}.
The four rays $\Gamma_k$, $k=1,4,6,9$, emanating from the
origin are replaced by their parallel lines emanating from some special points
on the real line. More precisely, we replace $\Gamma_1$ and $\Gamma_9$ by their
parallel rays $\tilde\Gamma_1$ and $\tilde\Gamma_9$ emanating from the
point~$2$,
%replace $\Gamma_2$ and $\Gamma_8$ by their parallel rays
%$\til\Gamma_2$ and $\til\Gamma_8$ emanating from the point $2-\epsilon$,
and replace $\Gamma_4$ and $\Gamma_6$ by their parallel rays $\tilde\Gamma_4$
and $\tilde\Gamma_6$ emanating from the point~$-2$.
%and $\Gamma_3$ and
%$\Gamma_7$ by their parallel rays $\til\Gamma_3$ and $\til\Gamma_7$ emanating
%from $-2+\epsilon$, where $\epsilon$ is a small positive number.
See Figure~\ref{fig:ContourB}.

Denote by $E_{j,k}$ the $4 \times 4$ elementary matrix with entry $1$ at the
$(j,k)$th position and all other entries equal to zero. We define
%(compare with \cite{DKZ})
\begin{equation}
     B(\zeta)=\left\{
                \begin{array}{ll}
                  A(\zeta)(I+E_{3,1}), & \text{for $\zeta$ between $\Gamma_1$, $[0,2]$ and $\til\Gamma_1$}, \\
                  A(\zeta)(I-E_{3,1}), & \text{for $\zeta$  between $\Gamma_9$, $[0,2]$ and $\til\Gamma_9$}, \\
                  A(\zeta)(I+E_{4,2}), & \text{for $\zeta$ between $\Gamma_4$, $[-2,0]$ and $\til\Gamma_4$}, \\
                  A(\zeta)(I-E_{4,2}), & \text{for $\zeta$  between $\Gamma_6$, $[-2,0]$ and $\til\Gamma_6$}, \\
                  A(\zeta), & \text{elsewhere}.
                \end{array}.
              \right.
\end{equation}

The large $\zeta$ asymptotics of $B(\zeta)$ are the same as for $A(\zeta)$, see
\eqref{A:asymptotics}. The new jumps of $B$ on the interval $(-2,2)$ are given
by
\begin{align*}
  B_{+}(\zeta)=B_-(\zeta)(I+E_{1,3}),&\qquad \zeta\in (0,2),\\
  B_{+}(\zeta)=B_-(\zeta)(I+E_{2,4}),&\qquad \zeta\in (-2,0).
  \end{align*}
The jumps of $B$ on the remaining contours are the same as those for $A$,
provided that $\Gamma_j$ is replaced by $\til\Gamma_j$, $j\in\{1,4,6,9\}$, and
that the change of the orientation of $\til \Gamma_4$, $\til\Gamma_6$ and
$(-\infty,0)$ is taken into account. (The reversion of a contour implies that
the jump matrix is replaced by its inverse). The jump contour $\Sigma_B$ for
$B(\zeta)$ is shown in Figure~\ref{fig:ContourB}.

\subsection{Third transformation: $B \mapsto C$}

The third transformation is a normalization of the RH problem at infinity.
Define the \lq $g$-functions\rq
\begin{align}\label{g1}
g_1(\zeta)=\frac{2}{3}(2-\zeta)^{3/2},\qquad & \zeta\in\cee \setminus
[2,\infty),
\\ \label{g2}
g_2(\zeta)=\frac{2}{3} (2+\zeta)^{3/2}, \qquad & \zeta\in\cee \setminus
(-\infty, -2],
\end{align}
with the principal branch of the power $3/2$. We define the transformation
$B\mapsto C$ as
\begin{equation}\label{B to C} C(\zeta)= (I + i \lambda (E_{3,1}-E_{4,2}))
%\begin{pmatrix}
%  1 & 0 & 0 & 0\\
%  0 & 1 & 0 & 0\\
%  i\lambda & 0 & 1 & 0\\
%  0 & -i\lambda & 0 & 1
%  \end{pmatrix}
B(\zeta)\diag(e^{\lambda g_1(\zeta)}, e^{\lambda g_2(\zeta)},e^{-\lambda
g_1(\zeta)},e^{-\lambda g_2(\zeta)}),
\end{equation}
%The leftmost factor in the right hand side of \eqref{B to C} serves to get the
%correct asymptotics for $\zeta\to\infty$.
where again $\lam:=s^{3/2}$. As in \cite[Sec.~3.4]{DKZ}, one shows that $C$
satisfies the following RH problem.

\begin{rhp} \label{rhp:C} We look for a $4\times 4$ matrix valued function $C$ such that
%\paragraph{RH problem for $C$.}
\begin{enumerate}
  \item[(1)] $C(\zeta)$ is analytic for $\zeta\in\cee\setminus
  \Sigma_B$.
  \item[(2)] $C$ has the following jumps on $\Sigma_B$:
  \[ C_+(\zeta) = C_-(\zeta) J_C(\zeta), \]
   where $J_C$ is given by
  \begin{align}
  J_C(\zeta)& =
  E_{2,2}+E_{4,4}+E_{1,3}-E_{3,1},
  %\begin{pmatrix}
%  0 & 0 & 1 & 0\\
%  0 & 1 & 0 & 0\\
%  -1 & 0 & 0 & 0\\
%  0 & 0 & 0 & 1
%  \end{pmatrix},
  \qquad \text{for $\zeta \in (2,\infty)$}, \nonumber%\label{jump for D 1}
  \\ \nonumber
  J_C(\zeta) &=
  (I+e^{2\lambda g_1(\zeta)}E_{3,1}), \qquad \text{for $\zeta\in \tilde \Gamma_1\cup
  \tilde \Gamma_9$},
  \\   \nonumber
  J_C(\zeta) &=
  (I+e^{-2\lambda g_1(\zeta)}E_{1,3}),
  \qquad \text{for $\zeta\in(0,2)$},\\
  %\label{hhtil1}
  \label{jumps:C:G2}
  J_C(\zeta) &= (I + e^{\lambda (g_1 - g_2)(\zeta)}e^{\nu\pi i} (E_{3,4}-E_{2,1})),
  \qquad \text{for $\zeta\in\Gamma_2$},\\
  \label{jumps:C:G8}
  J_C(\zeta) &= (I - e^{\lambda (g_1 - g_2)(\zeta)}e^{-\nu\pi i} (E_{3,4}-E_{2,1})),
  \qquad \text{for $\zeta\in\Gamma_8$},\\
  %\end{align}
  %\begin{align}
  %\label{hhtil2}
  J_C(\zeta) &= (I - e^{-\lambda (g_1 - g_2)(\zeta)}e^{-\nu\pi i}(E_{4,3}-E_{1,2})),
  \qquad \text{for $\zeta\in\Gamma_3$},
  \label{jumps:C:G3} \\
  J_C(\zeta) &= (I + e^{-\lambda (g_1 - g_2)(\zeta)}e^{\nu\pi i}(E_{4,3}-E_{1,2})),
  \qquad \text{for $\zeta\in\Gamma_7$},
  \label{jumps:C:G7}
  \\
  \nonumber
  J_C(\zeta) &= (I+e^{-2\lambda g_2(\zeta)}E_{2,4}),
  \qquad \text{for $\zeta\in(-2,0)$},
  \\
  \nonumber
  J_C(\zeta) &= (I+e^{2\lambda g_2(\zeta)}E_{4,2}), \qquad \text{for $\zeta\in \til\Gamma_4 \cup \til\Gamma_6$},
  %\end{align}
  %  \begin{align}
  \\
  \nonumber %\label{jump for D neg}
  J_C(\zeta) &=
  E_{1,1}+E_{3,3}+E_{2,4}-E_{4,2},
  %\begin{pmatrix}
%  1 & 0 & 0 & 0\\
%  0 & 0 & 0 & 1\\
%  0 & 0 & 1 & 0\\
%  0 & -1 & 0 & 0
%  \end{pmatrix},
  \qquad \text{for $\zeta \in (-\infty, -2)$}.
  \end{align}

  \item[(3)] As $\zeta\to\infty$, we have
  \begin{align}\label{asy of C}
  C(\zeta)=&\left(I+\frac{C_1}{\zeta}+
  O\left(\frac{1}{\zeta^2}\right)\right)\diag((-\zeta)^{-1/4},\zeta^{-1/4},
  (-\zeta)^{1/4},\zeta^{1/4}) \Aa,
  \end{align}
  where
 \begin{equation}\label{C1}
 C_1 = (I + i \lambda (E_{3,1}-E_{4,2}))
  \left[A_1 (I - i \lambda (E_{3,1}-E_{4,2}))
  + \begin{pmatrix}
  - \frac{\lambda^2}{2} & 0 & -i\lambda  & 0\\
  0 & \frac{\lambda^2}{2} & 0 & -i\lambda \\
  \frac{i\lam^3-2i\lam}{6} & 0 & -\frac{\lambda^2}{2} & 0\\
  0 & \frac{i\lam^3-2i\lam}{6} & 0 & \frac{\lambda^2}{2}
  \end{pmatrix}\right],
 \end{equation}
 with $A_1$ the residue matrix in \eqref{A:asymptotics}.
 \item[(4)] The behavior of $C(\zeta)$ for $\zeta\to 0$ is the same as for
 $M(\zeta)$. More precisely,
 \begin{equation}\begin{array}{ll}
 \label{Czero} C(\zeta) = O(\zeta^{\nu}),\quad C^{-1}(\zeta) = O(\zeta^{\nu}), \qquad
\zeta\to 0,&\qquad \textrm{if $\nu\leq 0$},\\
C(\zeta)\diag(\zeta^{-\nu},\zeta^{\nu},\zeta^{\nu},\zeta^{-\nu})=O(1),\qquad
\zeta\to 0,&\qquad \textrm{if $\nu\geq 0$}, \end{array}\end{equation} where in
the second line we assume that $\zeta\to 0$ to the right of $\Gamma_2$ and
$\Gamma_8$.
\end{enumerate}
\end{rhp}

As in \cite{DKZ} we obtain the following expressions for the numbers $c$ and
$d$ in \eqref{M1:explicit}:
%It follows from \eqref{d in B},
%\eqref{C1} and $\lambda=s^{3/2}$ that
\begin{equation}\label{cctild:D}
c = -i\sqrt{s}(C_1)_{1,3}+s^2, \qquad d = -i\sqrt{s}(C_1)_{1,4}.
\end{equation}

\subsection{Global and local parametrices}

\subsubsection*{Global parametrix}

Away from the  points $2$, $-2$ and $0$, we will approximate $C$ by the \lq
global parametrix\rq\
\begin{equation}\label{C infty}
  C^{(\infty)}(\zeta)
  =\diag\left((2-\zeta)^{-1/4}, (2+\zeta)^{-1/4},
  (2-\zeta)^{1/4}, (2+\zeta)^{1/4}\right)\Aa,
  \end{equation}
with $\Aa$ as in \eqref{mixing:matrix}, and where, as usual, we take the principal
branches of $(2+\zeta)^{1/4}$ and $(2-\zeta)^{1/4}$ with cuts along
$(-\infty,-2]$ and $[2,\infty)$, respectively. See \cite[Sec.~3.6]{DKZ}.

\subsubsection*{Local parametrices at the points $-2$ and $2$}

In small neighborhoods of the points $-2$ and $2$ we construct local
parametrices $C^{(-2)}$, $C^{(2)}$ to the RH problem, respectively. These
parametrices are constructed with the help of Airy functions, exactly as in
\cite[Sec.~3.7]{DKZ}.

\subsubsection*{Local parametrix at the origin} \label{subsection:localPII}

The next step, which has no analogue in \cite[Sec.~3]{DKZ}, is the construction
of a local parametrix $C^{(0)}$ near the origin. The construction will rely on
the one in \cite[Sec.~5.6]{DKRZ}. The local parametrix $C^{(0)}$ will be
defined in the disk $B_\rho$ of radius $\rho$ around the origin in $\cee$,
with $\rho>0$ fixed and sufficiently small. It will solve the following RH
problem.

%\paragraph{RH problem for $C^{(0)}$.} \label{rhp:C0}
\begin{rhp}\label{rhp:C0} We look for a $4\times 4$ matrix valued function $C^{(0)}$ such that
\begin{itemize}
\item[(1)] $C^{(0)}(\zeta)$ is analytic for $\zeta\in B_\rho\setminus
\Sigma_B$.
\item[(2)] For $\zeta\in B_\rho\cap
(\Gamma_2\cup\Gamma_3\cup\Gamma_7\cup\Gamma_8)$, we have
$C^{(0)}_+(\zeta)=C^{(0)}_-(\zeta)J_C(\zeta)$ with $J_C$ given in
\eqref{jumps:C:G2}--\eqref{jumps:C:G7}. For $\zeta\in (0,\rho)$ we have
\begin{equation}\label{jumps:Cinfty:mod1} C^{(0)}_+(\zeta) =
C^{(0)}_-(\zeta)\times\left\{
\begin{array}{ll}
I+e^{-2\lambda g_1(\zeta)}E_{1,3},& \textrm{if $\nu-1/2\not\in\zet_{\geq 0}$},\\
I,& \textrm{if $\nu-1/2\in\zet_{\geq 0}$},
\end{array}\right. \end{equation}
and for $\zeta\in(-\rho,0)$ we have \begin{equation}\label{jumps:Cinfty:mod2}
C^{(0)}_+(\zeta) = C^{(0)}_-(\zeta)\times\left\{
\begin{array}{ll}
I+e^{-2\lambda g_2(\zeta)}E_{2,4},& \textrm{if $\nu-1/2\not\in\zet_{\geq 0}$},\\
I,& \textrm{if $\nu-1/2\in\zet_{\geq 0}$}.
\end{array}\right. \end{equation}
\item[(3)] Uniformly on the circle $|\zeta|=\rho$ we have for $\lam\to
+\infty$ that
\begin{equation}\label{matching:C0}
C^{(0)}(\zeta)=C^{(\infty)}(\zeta)(I+O(\lam^{-1})).\end{equation}
 \item[(4)] The behavior of $C^{(0)}(\zeta)$ for $\zeta\to 0$ is the same as for
 $C(\zeta)$, see \eqref{Czero}.
% , except for logarithmic terms if $\nu=1/2$.
 \item[(5)] $C^{(0)}$ satisfies the symmetry relation
\begin{equation}\label{symmetry:C0}C^{(0)}(-\zeta) =
\begin{pmatrix} J & 0 \\ 0 & -J \end{pmatrix}
C^{(0)}(\zeta)  \begin{pmatrix} J & 0 \\ 0 & -J
\end{pmatrix},\qquad \textrm{with }J:=\begin{pmatrix} 0 & 1 \\ 1 & 0 \end{pmatrix}. \end{equation}
\end{itemize}
\end{rhp}

The idea behind \eqref{jumps:Cinfty:mod1}--\eqref{jumps:Cinfty:mod2} is that we
can get the correct jumps $J_C$ as long as $\nu-1/2\not\in\zet_{\geq 0}$. If
the latter fails then we approximate $J_C$ by the identity matrix for
$\zeta\in(-\rho,\rho)$, but we will see that the resulting approximation error
can be controlled.

\begin{figure}[t]
\vspace{-13mm}
\begin{center}
   \setlength{\unitlength}{1truemm}
   \begin{picture}(100,70)(-5,2)
       \put(40,40){\line(1,0){20}}
       \put(40,40){\line(-1,0){20}}
       \put(40,40){\line(2,3){10}}
       \put(40,40){\line(2,-3){10}}
       \put(40,40){\line(-2,3){10}}
       \put(40,40){\line(-2,-3){10}}
       \put(40,40){\thicklines\circle*{1}}
       \put(39.3,36){$0$}
       \put(52,40){\thicklines\vector(1,0){.0001}}
       \put(30,40){\thicklines\vector(1,0){.0001}}
       \put(45.2,48){\thicklines\vector(2,3){.0001}}
       \put(45.3,32){\thicklines\vector(2,-3){.0001}}
       \put(34.8,48){\thicklines\vector(-2,3){.0001}}
       \put(34.7,32){\thicklines\vector(-2,-3){.0001}}

       \put(60,40.5){$\er$}
       \put(50,55){$\Gamma_2$}
       \put(26,55){$\Gamma_3$}
       %\put(16,40.5){$\gamma_3$}
       \put(26,25){$\Gamma_7$}
       \put(50,25){$\Gamma_8$}
       \put(56,48){$\omega_0$}
       \put(38,56){$\omega_1$}
       \put(21,48){$\omega_2$}
       \put(21,30){$\omega_3$}
       \put(38,23){$\omega_4$}
       \put(56,30){$\omega_5$}
  \end{picture}
  \vspace{-22mm}
   \caption{Jump contour and enclosed sectors $\omega_0,\ldots,\omega_5$ for the local parametrix~$C^{(0)}$.}
   \label{fig:Contour:local}
\end{center}
\end{figure}
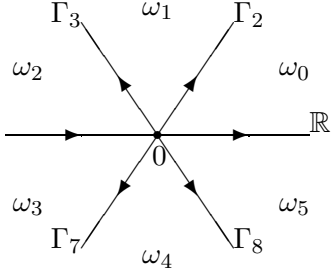

We will see below that the solution matrix $C^{(0)}$ indeed exists. For now, we
stress the symmetry relation \eqref{symmetry:C0} and note that we have the same
type of relation between the jump matrices on opposite rays. Then similarly as
in Proposition~\ref{prop:behaviorMzero} in Section~\ref{section:proofTheoremP2}
we can find the detailed local behavior of $C^{(0)}$ near the origin. The
outcome is that if $\nu-1/2\not\in\zet_{\geq 0}$, then there exist an analytic
matrix valued function $E$ and constant matrices $A_k$ such that, for the
sectors $\omega_k$ in Figure~\ref{fig:Contour:local}, we have
\begin{equation}\label{singularbehzero:1:0}
C^{(0)}(\zeta) =
E(\zeta)\diag(\zeta^{\nu},\zeta^{\nu},\zeta^{-\nu},\zeta^{-\nu})A_k\diag(e^{\lambda
g_1(\zeta)}, e^{\lambda g_2(\zeta)},e^{-\lambda g_1(\zeta)},e^{-\lambda
g_2(\zeta)}), %\qquad \zeta\in\omega_k,\qquad k=0,\ldots,5,
\end{equation}
as $\zeta\in\omega_k$, $k=0,\ldots,5$, with in particular
%Letting $J_k$ denote the jump matrix for $C^{(0)}$ on $\gamma_k$, we have
\begin{equation} \label{singularbehzero:2bis:0}
A_0 =
\begin{pmatrix} 0&0&-1 & 2\cos \nu\pi\\ 2\cos\nu\pi&1&e^{-\nu\pi i}&0 \\ 0&1&0&0 \\
0&0&1&0\end{pmatrix}.
%A_3 =
%\begin{pmatrix} 0&0&-2\cos \nu\pi&1\\ 1&2\cos\nu\pi&0&-e^{-\nu\pi i} \\ 1&0&0&0 \\
%0&0&0&1\end{pmatrix}.
\end{equation}
%\begin{equation}\label{singularbehzero:2:0} A_{k+1} = A_{k}J_{k+1}, \qquad k=0,1,3,4,5,
%\end{equation}
%where we understand that $A_6=A_0$.
Note that \eqref{singularbehzero:2bis:0} is the same as the matrix $A_1$ in
\eqref{singularbehzero:2bis}, up to an irrelevant diagonal factor $D$. In fact
the calculations in both cases are virtually the same.

On the other hand, if $\nu-1/2\in\zet_{\geq 0}$, then there exist an analytic
matrix valued function $E$ and constant matrices $A_k$ such that
\begin{equation}\label{singularbehzero:1zet:0}
C^{(0)}(\zeta) = E(\zeta)K(\zeta)A_k\diag(e^{\lambda g_1(\zeta)}, e^{\lambda
g_2(\zeta)},e^{-\lambda g_1(\zeta)},e^{-\lambda g_2(\zeta)}),
%\qquad
%\zeta\in\omega_k,\qquad k=0,\ldots,5,
\end{equation}
as $\zeta\in\omega_k$, $k=0,\ldots,5$, where now $K(\zeta)$ is defined in
\eqref{singularbehzero:1zet:K} and
\begin{equation} \label{singularbehzero:2biszet:0}
A_0 = E_{1,1}+E_{2,4}+E_{3,2}+E_{4,3}.
%\begin{pmatrix} 1&0&0&0\\ 0&0&0&1 \\ 0&1&0&0 \\
%0&0&1&0\end{pmatrix}.
%A_3 =
%\begin{pmatrix} 0&-1&0&0\\ 0&0&1&0 \\ 1&0&0&0 \\
%0&0&0&1\end{pmatrix}.
\end{equation}In all cases, we have for certain constants $*$ that
(see Remark~\ref{remark:E0:pattern})
\begin{equation}\label{E0:pattern:0} E(0) =
\begin{pmatrix} 1&0&-1&0\\1&0&1&0\\0&1&0&1\\0&-1&0&1
\end{pmatrix}
\begin{pmatrix}*&*&0&0 \\
*&*&0&0 \\
0&0&*&* \\
0&0&*&*
\end{pmatrix}.
\end{equation}
%and we again have \eqref{E0:pattern:0}.

\smallskip
Now we turn to the construction of $C^{(0)}$. It will be convenient to consider
the following \lq square root version\rq\ $C^{(0)}\squ$ of $C^{(0)}$:
\begin{multline}\label{squareroot:transfo} C^{(0)}\squ(\zeta) =
\diag(\zeta^{1/4},\zeta^{-1/4},\zeta^{1/4},\zeta^{-1/4})
\frac{1}{\sqrt{2}}\diag\left(\begin{pmatrix}1 & -1 \\ 1 &
1\end{pmatrix},\begin{pmatrix}1 & 1
\\ -1 & 1\end{pmatrix}\right)C^{(0)}(\zeta^{1/2})\\
\times \diag(J,1,1)\diag(-i,1,1,-i),
\end{multline}
where as usual we take the principal branches of all the powers, and with $J$
as in \eqref{symmetry:C0}. Similarly we define $C^{(\infty)}\squ$, by replacing
$C^{(0)}$ by $C^{(\infty)}$ in \eqref{squareroot:transfo}, recall \eqref{C
infty}. We also set
\begin{equation}\label{alpha:nu:0}\alpha:=\nu-1/2,\qquad \Delta^+ := \{z^2\mid z\in\Gamma_2\},\qquad
\Delta^- := \{z^2\mid z\in\Gamma_8\},\end{equation} with $\Delta^+$ and
$\Delta^-$ oriented towards the origin. Then  $C^{(0)}\squ$ solves the
following RH problem.

\begin{rhp} We look for a $4\times 4$ matrix valued function $C^{(0)}\squ$ such
that
%\paragraph{RH problem for $C^{(0)}\squ$.}
\label{rhp:C0sq}
\begin{itemize}
\item[(1)] $C^{(0)}\squ(\zeta)$ is analytic for $\zeta\in
B_{\rho^2}\setminus\left(\er\cup\Delta^+\cup\Delta^-\right)$. \item[(2)]
$C^{(0)}\squ$ has the following jumps on
$B_{\rho^2}\cap\left(\er\cup\Delta^+\cup\Delta^-\right)$:
\begin{equation}\label{jumps:C0sq} (C^{(0)}\squ)_+(\zeta) = (C^{(0)}\squ)_-(\zeta)
J\squ(\zeta),
\end{equation}
   where $J\squ$ is given by
   \begin{align}
  \nonumber
  J\squ(\zeta) &=
\left\{
\begin{array}{ll}
I+e^{-2\lambda g_1(\zeta^{1/2})}E_{2,3},& \textrm{if $\alpha\not\in\zet_{\geq 0}$},\\
I,& \textrm{if $\alpha\in\zet_{\geq 0}$},
\end{array}
\right.
\qquad \text{for $\zeta\in(0,\rho^2)$},\\
  %\label{hhtil1}
  \nonumber
  J\squ(\zeta) &= (I - e^{\lambda (g_1 - g_2)(\zeta^{1/2})}e^{\alpha\pi i} (E_{3,4}+E_{1,2})),
  \qquad \text{for $\zeta\in\Delta^+\cap B_{\rho^2}$},\\
  \nonumber
  J\squ(\zeta) &= (I - e^{\lambda (g_1 - g_2)(\zeta^{1/2})}e^{-\alpha\pi i} (E_{3,4}+E_{1,2})),
  \qquad \text{for $\zeta\in\Delta^-\cap B_{\rho^2}$},\\ \label{jumps:C0sq:Rmin}
  J\squ(\zeta) &= \diag\left(\begin{pmatrix}0 & 1 \\ -1 & 0\end{pmatrix},\begin{pmatrix}0 &
  1
\\ -1 & 0\end{pmatrix}\right), \qquad \text{for $\zeta\in
  (-\rho^2,0)$}.
  \end{align}
   \item[(3)] $C^{(0)}\squ(\zeta)$ behaves near the origin $\zeta\to 0$ as follows:
\begin{align}\label{squareroot:zero1} C^{(0)}\squ(\zeta) = O(\zeta^{\alpha/2}),
\quad (C^{(0)}\squ)^{-1}(\zeta) = O(\zeta^{\alpha/2}),\qquad
\alpha<0,\\
C^{(0)}\squ(\zeta) = O(\log(|\zeta|)),\quad (C^{(0)}\squ)^{-1}(\zeta) =
O(\log(|\zeta|)),\qquad \alpha=0,\\
\label{squareroot:zero2}
C^{(0)}\squ(\zeta)\diag(\zeta^{\alpha/2},\zeta^{-\alpha/2},\zeta^{\alpha/2},\zeta^{-\alpha/2})=O(1),\qquad
\alpha>0,
%\quad (C^{(0)}\squ)^{-T}(\zeta)\diag(\zeta^{\frac{\alpha}{2}},\zeta^{-\frac{\alpha}{2}},\zeta^{-\frac{\alpha}{2}},\zeta^{\frac{\alpha}{2}})=O(1),
\end{align}
where in \eqref{squareroot:zero2} we assume that $\zeta\to 0$ in the region to
the right of $\Delta^+$ and $\Delta^-$.
\item[(4)] Uniformly on the circle $|\zeta|=\rho^2$ %\setminus(\er\cup\Delta^+\cup\Delta^-)$,
we have as $\lam \to
+\infty$ that
 \begin{align}\label{matching condition Q}
C^{(0)}\squ(\zeta)=C\squ^{(\infty)}(\zeta)\left(I+O\!\left(\lam^{-1}\right)\right).
  \end{align}
  \end{itemize}
  \end{rhp}

\begin{proof}
The jumps of $C^{(0)}\squ$ on $\Delta^+$, $\Delta^-$ and $\er_+$  easily follow
from $\alpha=\nu-1/2$, \eqref{squareroot:transfo} and the jumps of $C^{(0)}$.
Now we check the jump on $\er_-$. With the negative real axis oriented from
left to right as usual, we have
\begin{align*}
\left[\diag(\zeta^{1/4},\zeta^{-1/4},\zeta^{1/4},\zeta^{-1/4})
\diag\left(\begin{pmatrix}1 & -1 \\ 1 & 1\end{pmatrix},\begin{pmatrix}1 & 1
\\ -1 & 1\end{pmatrix}\right) C^{(0)}(\zeta^{1/2}) \right]_+ \\ =
\left[\diag(\zeta^{1/4},\zeta^{-1/4},\zeta^{1/4},\zeta^{-1/4})
\diag\left(\begin{pmatrix}1 & -1 \\ 1 & 1\end{pmatrix},\begin{pmatrix}1 & 1
\\ -1 & 1\end{pmatrix}\right)\right]_-
i\diag\left(-J,J\right)[C^{(0)}(\zeta^{1/2})]_+.
\end{align*}
On account of the symmetry \eqref{symmetry:C0}, we obtain
$$ =
\left[\diag(\zeta^{1/4},\zeta^{-1/4},\zeta^{1/4},\zeta^{-1/4})
\diag\left(\begin{pmatrix}1 & -1 \\ 1 & 1\end{pmatrix},\begin{pmatrix}1 & 1
\\ -1 & 1\end{pmatrix}\right)C^{(0)}(\zeta^{1/2})\right]_-
i\diag\left(-J,J\right).
$$
By using this in \eqref{squareroot:transfo}, it is then straightforward to
obtain the jump \eqref{jumps:C0sq}--\eqref{jumps:C0sq:Rmin}.
%$$ (C^{(0)}\squ)_+(\zeta) = (C^{(0)}\squ)_-(\zeta) \diag\left(\begin{pmatrix}0 & 1 \\ -1 & 0\end{pmatrix},\begin{pmatrix}0 &
%  1
%\\ -1 & 0\end{pmatrix}\right).
%$$

The behavior of $C^{(0)}\squ(\zeta)$ as $\zeta\to 0$ follows from
$\alpha=\nu-1/2$, \eqref{squareroot:transfo} and a careful inspection of
\eqref{singularbehzero:1:0}--\eqref{E0:pattern:0}.
\end{proof}

%Similarly we define the following \lq square root version\rq\
%$C^{(\infty)}\squ$ of the global parametrix:
%\begin{equation}\label{squareroot:transfo:bis} C^{(\infty)}\squ(\zeta) =
%\diag(\zeta^{1/4},\zeta^{-1/4},\zeta^{1/4},\zeta^{-1/4})
%\frac{1}{\sqrt{2}}\diag\left(\begin{pmatrix}1 & -i \\ -i &
%1\end{pmatrix},\begin{pmatrix}1 & -i
%\\ -i & 1\end{pmatrix}\right)DC^{(\infty)}(\zeta^{1/2})D^{-1}.
%\end{equation}
%Then $ C^{(\infty)}\squ$ is analytic in $B_\rho\setminus [-\rho,0]$ and has
%the following jump on $(-\rho,0)$:
%$$ (C^{(\infty)}\squ)_+(\zeta)= (C^{(\infty)}\squ)_-(\zeta)\diag\left(\begin{pmatrix}0 & 1 \\
%-1 & 0\end{pmatrix},\begin{pmatrix}0 &
%  1
%\\ -1 & 0\end{pmatrix}\right), \qquad \text{for $\zeta\in
%  (-\rho,0)$}.$$

The properties of $C^{(0)}\squ$ and $C^{(\infty)}\squ$ are exactly like those
for \lq $T$\rq\ and \lq $N_{\alpha}$\rq\ in \cite[Sec.~5.6]{DKRZ},
respectively, where we are dealing with \lq Case~I\rq. We use the construction
%\marginpar{In \cite{DKRZ} we have $\alpha\geq 0$ instead of $\alpha\in\zet_{\geq 0}$}
in that paper to build the matrix $C^{(0)}\squ$ solving the RH
problem~\ref{rhp:C0sq}, see also \cite[Sec.~5.6]{DGZ}.

Note that for $\alpha\in\zet_{\geq 0}$ the RH problem for $C^{(0)}\squ$
basically decouples into two RH problems of size $2\times 2$ which can be
solved using modified Bessel functions. On the other hand, if
$\alpha\not\in\zet_{\geq 0}$ then we have a genuine $4\times 4$ RH problem and
its solution requires some nontrivial modifications \cite[Sec.~5.6]{DKRZ}. In
the latter paper we use these modifications only if $\alpha<0$; but the same
construction works as long as $\sin(\pi\alpha)\neq 0$, i.e.,
$\alpha\not\in\zet$.

\smallskip
Finally, we lift $C^{(0)}\squ$ back to the original setting by inverting
\eqref{squareroot:transfo}: we put
\begin{multline}\label{squareroot:transfo:inverse} C^{(0)}(\zeta)
= \frac{1}{\sqrt{2}} \diag\left(\begin{pmatrix}1 & 1 \\ -1 &
1\end{pmatrix},\begin{pmatrix}1 & -1
\\ 1 & 1\end{pmatrix}\right)\diag(\zeta^{-1/2},\zeta^{1/2},\zeta^{-1/2},\zeta^{1/2})
C^{(0)}\squ(\zeta^2)\\
\times \diag(i,1,1,i)\diag(J,1,1),\end{multline} if $\Re\zeta>0$, and
$$ C^{(0)}(\zeta) = \diag(J,-J)C^{(0)}(-\zeta)\diag(J,-J),\qquad \textrm{if }\Re\zeta<0,
$$
recall \eqref{symmetry:C0}.
%This will be the required local parametrix near the origin. It has the right
We claim that this matrix satisfies the RH problem~\ref{rhp:C0} for $C^{(0)}$.
It is easily seen that it has the right jumps. It also satisfies
\eqref{matching:C0} and \eqref{symmetry:C0}. It remains to check the behavior
of $C^{(0)}$ near the origin. Recall that $C^{(0)}(\zeta)$ should have the same
behavior for $\zeta\to 0$ as $C(\zeta)$ in \eqref{Czero}. Let us check this if
$\nu<0$. It is clear from \eqref{squareroot:transfo:inverse} that $C^{(0)}(\zeta)=O(\zeta^{\nu-1})$. Then we can again
derive the formulas \eqref{singularbehzero:1:0}--\eqref{E0:pattern:0}, with now
each term $\zeta^{\nu}$ or $\zeta^{-\nu}$ replaced by $\zeta^{\nu-1}$ or
$\zeta^{-\nu-1}$ respectively. Using these formulas together with
\eqref{squareroot:transfo} and \eqref{squareroot:zero1} we see that all the
entries of $E(\zeta)$ have a zero at $\zeta=0$ and so each term $\zeta^{\nu-1}$
or $\zeta^{-\nu-1}$ can be replaced by $\zeta^{\nu}$ or $\zeta^{-\nu}$
respectively, which yields the desired conclusion. Similar arguments apply if
$\nu\geq 0$.

\subsection{Fourth transformation: $C \mapsto D$}
We define the fourth transformation
\begin{equation}\label{def:R:Lun}
D(\zeta)=\left\{
       \begin{array}{ll}
         C(\zeta)(C^{(-2)})^{-1}(\zeta), & \text{if  $|\zeta+2|<\rho$,} \\
         C(\zeta)(C^{(2)})^{-1}(\zeta), & \text{if  $|\zeta-2|<\rho$,} \\
         C(\zeta)(C^{(0)})^{-1}(\zeta), & \text{if  $|\zeta|<\rho$,} \\
         C(\zeta)(C^{(\infty)})^{-1}(\zeta), & \text{elsewhere.}
       \end{array}
     \right.
\end{equation}

Then $D$ satisfies the following RH problem.
%\paragraph{RH problem for $D$.}
\begin{rhp} We look for a $4\times 4$ matrix valued function $D$ such that
\begin{enumerate}
\item[(1)] $D$ is analytic in $\cee\setminus \Sigma_D$, where $\Sigma_D$ is
shown in Figure \ref{fig:ContourD}.

\item[(2)] $D$ has jumps $D_+(\zeta)=D_-(\zeta)J_D(\zeta)$ for
 $\zeta\in \Sigma_D$, where
\begin{equation}\label{jump for D}
J_D=\left\{
      \begin{array}{ll}
        C^{(-2)} \left(C^{(\infty)} \right)^{-1}, & \text{on $|\zeta+2| = \rho$,} \\
        C^{(0)} \left(C^{(\infty)} \right)^{-1}, & \text{on $|\zeta| = \rho$,} \\
        C^{(2)} \left(C^{(\infty)} \right)^{-1}, & \text{on $|\zeta - 2| = \rho$,} \\
        C^{(\infty)} J_C \left(C^{(\infty)}\right)^{-1}, & \text{on the rest of $\Sigma_D$}.
      \end{array}
    \right.
\end{equation}
\item[(3)] As $\zeta \to
\infty$, we have
\begin{equation}\label{eq: D asy}
D(\zeta)=I+\frac{D_1}{\zeta}+O\left(\frac{1}{\zeta^2}\right).
\end{equation}
\item[(4)] $D(\zeta)$ is bounded in a neighborhood of $\zeta=0$.
\end{enumerate}
\end{rhp}

\begin{figure}[t]
\vspace{-16mm}
\begin{center}
   \setlength{\unitlength}{1truemm}
   \begin{picture}(100,70)(-5,2)
       \put(42.5,40){\line(1,0){15}}
       \put(37.5,40){\line(-1,0){15}}
       \put(62.1,41){\line(2,1){28}}
       \put(62.1,39){\line(2,-1){28}}
       \put(17.9,41){\line(-2,1){28}}
       \put(17.9,39){\line(-2,-1){28}}
       \put(41.4,42.1){\line(2,3){8}}
       \put(41.4,37.9){\line(2,-3){8}}
       \put(38.6,42.1){\line(-2,3){8}}
       \put(38.6,37.9){\line(-2,-3){8}}
       \put(40,40){\thicklines\circle*{1}}
       \put(20,40){\thicklines\circle*{1}}
       \put(60,40){\thicklines\circle*{1}}
       %\put(25,40){\thicklines\circle*{1}}
       %\put(55,40){\thicklines\circle*{1}}
       \put(39.3,34){$0$}
       \put(15.5,34){$-2$}
       \put(59.3,34){$2$}
       %\put(22,36){$-2+\epsilon$}
       %\put(48,36){$2-\epsilon$}
       \put(30,40){\thicklines\vector(1,0){.0001}}
       \put(53,40){\thicklines\vector(1,0){.0001}}
       \put(80,50){\thicklines\vector(2,1){.0001}}
       \put(80,30){\thicklines\vector(2,-1){.0001}}
       \put(0,50){\thicklines\vector(2,-1){.0001}}
       \put(0,30){\thicklines\vector(2,1){.0001}}
       \put(45.3,48){\thicklines\vector(2,3){.0001}}
       \put(45.3,32){\thicklines\vector(2,-3){.0001}}
       \put(34.6,48){\thicklines\vector(-2,3){.0001}}
       \put(34.7,32){\thicklines\vector(-2,-3){.0001}}

       \put(21.2,42.3){\thicklines\vector(1,0){.0001}}
       \put(41.2,42.3){\thicklines\vector(1,0){.0001}}
       \put(61.2,42.3){\thicklines\vector(1,0){.0001}}

       \put(20,40){\circle{5}}
       \put(40,40){\circle{5}}
       \put(60,40){\circle{5}}

       %\put(60,40.5){$\Gamma_0$}
       \put(80,52.5){$\til\Gamma_1$}
       \put(50,52){$\Gamma_2$}
       \put(26,52){$\Gamma_3$}
       \put(0.5,50){$\til \Gamma_4$}
       %\put(20,40.5){$\Gamma_5$}
       \put(0,27){$\til \Gamma_6$}
       \put(26,25){$\Gamma_7$}
       \put(50,25){$\Gamma_8$}
       \put(80,25){$\til\Gamma_{9}$}
  \end{picture}
  \vspace{-22mm}
   \caption{Contour  $\Sigma_D$ for the RH problem for $D$ in the case $\nu-1/2\not\in\zet_{\geq 0}$.
   If $\nu-1/2\in\zet_{\geq 0}$ then there is an additional jump contour on the line segment $(-\rho,\rho)$.}
   \label{fig:ContourD}
\end{center}
\end{figure}

Note that, if $\nu-1/2\not\in\zet_{\geq 0}$, then $D$ has no jumps in the disk
around the origin. From the asymptotics of $C(\zeta)$ and $C^{(0)}(\zeta)$ at
the origin we find that $D(\zeta)=O(1)$ as $\zeta\to 0$ (if $\nu\geq 0$) or
$D(\zeta)=O(\zeta^{2\nu})$ as $\zeta\to 0$ (if $\nu\leq 0$). In both cases we
conclude that the singularity at the origin is removable and so $D(\zeta)$ is
analytic at $\zeta=0$.

On the other hand, if $\nu-1/2\in\zet_{\geq 0}$, then $D$ has a jump on
$(-\rho,\rho)$ and so it is not analytic in the disk around the origin. But
then we conclude as in the previous paragraph that $D(\zeta)=O(1)$ as $\zeta\to
0$ and so $D(\zeta)$ is bounded near the origin.

The jump matrix for $D$ satisfies
\begin{equation}\label{jumpD:small}
J_D(\zeta)=I+O(\lambda^{-1}), \qquad \text{as $\lambda \to +\infty$},
\end{equation}
uniformly for $\zeta$ on the circles $|\zeta|=\rho$, $|\zeta+2| = \rho$ and
$|\zeta-2| = \rho$, and the jumps on the remaining contours of $\Sigma_D$ are
uniformly bounded and exponentially converging to the identity matrix as
$\lam\to +\infty$.
%Note that if $\nu-1/2\in\zet_{\geq 0}$ then there is also a
%jump on the line segment $(-\rho,\rho)$ and it is uniformly bounded near
%the origin.
(This holds in particular for the jump matrix on $(-\rho,\rho)$ if
$\nu-1/2\in\zet_{\geq 0}$.) By standard arguments \cite{Dei,DKMVZ1} we then
conclude that
\begin{equation}\label{RLun:small}
D(\zeta)=I+O\left(\frac{1}{\lambda(|\zeta|+1)}\right),
\end{equation}
as $\lambda\to +\infty$, uniformly for $\zeta\in\cee \setminus \Sigma_D$. Moreover, the matrix $D_1$ in \eqref{eq: D asy}
satisfies
\begin{equation}\label{R1Lun:small}
D_1=K\lam^{-1}+O(\lam^{-2}),\quad \lam\to\infty,
\end{equation}
for a certain constant matrix $K$.

It follows from \eqref{cctild:D} and the above constructions that
\begin{equation}\label{cctild:R} c =
-i\sqrt{s}(D_1)_{1,3}+s^2,\qquad d =-i\sqrt{s}(D_1)_{1,4},
\end{equation}
where we used \eqref{def:R:Lun} for large $\zeta$ and the fact that the large
$\zeta$ expansion of $C^{(\infty)}$ in \eqref{C infty} has a diagonal form.
Consequently, using \eqref{R1Lun:small} and $\lam=s^{3/2}$ we get
for a certain constant $\kappa\in\er$,
\begin{equation}\label{cctild:R:bis} c =
s^2+O(s^{-1}),\qquad d =\frac \kappa s +O(s^{-5/2}), \qquad s\to +\infty.
\end{equation}

\subsection{Proof of Proposition~\ref{prop:large s solvability}}

From the above results we obtain the solvability of the RH problem for
$M(\zeta)$ for $s>>0$ sufficiently large. We also obtain \eqref{eq:asy of c}.

Eq.~\eqref{d:Painleve2} shows that $d = 2^{-1/3} q\left(2^{5/3}s\right)$ with
$q=q(s)$ a certain solution to the Painlev\'e~II equation. The asymptotics of
$d=d(s)$ for $s\to+\infty$ in \eqref{cctild:R:bis} transform into similar
asymptotics for $q(s)$. Substituting these asymptotics in the Painlev\'e~II
equation \eqref{def:Painleve2}, we find that the constant $\kappa$ in
\eqref{cctild:R:bis} must necessarily equal $\nu/4$. For example, if $\kappa>\nu/4$
then \eqref{def:Painleve2} would imply that $q''(s)$ is positive and bounded
away from zero for all $s\in\er$ large enough. This would imply that $q(s)\to
\infty$ as $s\to\infty$, which contradicts the fact that $q(s)=O(1/s)$.
Similarly one can rule out the case where $\kappa<\nu/4$. Hence we have
$\kappa=\nu/4$ and we get \eqref{eq:asy of d}.

Finally, the proof of \eqref{eq:asy of d:bis} will be given in the next
section.

\section{Asymptotics of $M(\zeta)$ for $s\to -\infty$}
\label{section:asyM:minusinfty}

In this section we analyze the RH problem for $M(\zeta)$ if $r_1=r_2=1$ and
$\tau=0$ in the limit where $s\to -\infty$, thereby completing the proof of
Proposition~\ref{prop:large s solvability}. We will apply a series of
transformations $M \mapsto \hh A \mapsto \hh B \mapsto \hh C \mapsto \hh D$, so
that the matrix-valued function $\hh D$ uniformly tends to the identity matrix
as $s \to -\infty$. The analysis will be markedly different from
Section~\ref{section:asyM}. This holds in particular for the contour
deformation, the definition of the $g$-functions and the construction of the
global and local parametrices.

\subsection{First transformation: $M \mapsto \hh A$}

The first transformation is again a rescaling of the RH problem for $M$. Define
\begin{equation}\label{M to A:min}
\hh
A(\zeta)=\diag((-s)^{1/4},(-s)^{1/4},(-s)^{-1/4},(-s)^{-1/4})M(-s\zeta),\qquad
\zeta\in\cee \setminus\bigcup_{k=0}^9 \Gamma_k,
\end{equation}
where we assume that $s<0$. Then $\hh A$ satisfies
\begin{rhp} We look for a $4\times 4$ matrix valued function $\hh A$ satisfying
%\paragraph{RH problem for $\hh A$.}
\begin{enumerate}
  \item[(1)] $\hh A(\zeta)$ is analytic for $\zeta\in\cee\setminus
  \bigcup_{k=0}^9 \Gamma_k$.
  \item[(2)] $\hh A$ has the same jump matrix $J_k$ on $\Gamma_k$ as $M$.
  \item[(3)] As $\zeta\to\infty$, we have
  \begin{align}\label{Aasy:min}
  \hh A(\zeta)=&\left(I+\frac{\hh A_1}{\zeta}+
  O\left(\frac{1}{\zeta^2}\right)\right)\diag((-\zeta)^{-1/4},\zeta^{-1/4},
  (-\zeta)^{1/4},\zeta^{1/4}) \nonumber \\
  &\times \Aa\diag(e^{-\hh\lambda \hh\theta_1(\zeta)},e^{-\hh\lambda \hh\theta_2(\zeta)} ,e^{\hh\lambda \hh\theta_1(\zeta)},
  e^{\hh\lambda \hh\theta_2(\zeta)}),
  \end{align}
  with
  \begin{align}\label{tilde theta:min}
  \hh\lambda = (-s)^{3/2},\qquad
  \hh \theta_1(\zeta)=\frac{2}{3}(-\zeta)^{3/2}-2(-\zeta)^{1/2},\qquad \hh \theta_2(\zeta)=\frac{2}{3}\zeta^{3/2}-2\zeta^{1/2}.
  \end{align}
  \item[(4)] The behavior of $\hh A$ near the origin is the same as for $M$.
\end{enumerate}
\end{rhp}

The number $d$ in \eqref{M1:explicit} now satisfies:
\begin{equation}\label{d in A:min}
d =-i\sqrt{-s}(\hh A_1)_{1,4}.
\end{equation}

\subsection{Second transformation: $\hh A \mapsto \hh B$}

\begin{figure}[t]
\vspace{-5mm}
\begin{center}
   \setlength{\unitlength}{1truemm}
   \begin{picture}(100,70)(-5,2)
       \put(40,40){\line(1,0){30}}
       \put(40,40){\line(-1,0){30}}
       \put(40,40){\line(0,1){10}}
       \put(40,40){\line(0,-1){10}}
       \put(40,40){\line(2,1){30}}
       \put(40,40){\line(2,-1){30}}
       \put(40,40){\line(-2,1){30}}
       \put(40,40){\line(-2,-1){30}}
       \put(40,50){\line(2,3){10}}
       \put(40,30){\line(2,-3){10}}
       \put(40,50){\line(-2,3){10}}
       \put(40,30){\line(-2,-3){10}}
       \put(40,40){\thicklines\circle*{1}}
       \put(40,30){\thicklines\circle*{1}}
       \put(40,50){\thicklines\circle*{1}}
       %\put(25,40){\thicklines\circle*{1}}
       %\put(55,40){\thicklines\circle*{1}}
       \put(40.1,36){$0$}
       \put(70,41){$\er$}
       \put(41,30){$-2i$}
       \put(41,50){$2i$}
       %\put(22,36){$-2+\epsilon$}
       %\put(48,36){$2-\epsilon$}
       \put(71,40){\thicklines\vector(1,0){.0001}}
       \put(25,40){\thicklines\vector(1,0){.0001}}
       \put(40,47){\thicklines\vector(0,1){.0001}}
       \put(40,35){\thicklines\vector(0,1){.0001}}
       %\put(50,40){\thicklines\vector(1,0){.0001}}
       %\put(34,40){\thicklines\vector(1,0){.0001}}
       \put(60,50){\thicklines\vector(2,1){.0001}}
       \put(60,30){\thicklines\vector(2,-1){.0001}}
       \put(20,50){\thicklines\vector(2,-1){.0001}}
       \put(20,30){\thicklines\vector(2,1){.0001}}
       \put(45.2,58){\thicklines\vector(2,3){.0001}}
       \put(45.3,22){\thicklines\vector(2,-3){.0001}}
       \put(34.8,58){\thicklines\vector(-2,3){.0001}}
       \put(34.7,22){\thicklines\vector(-2,-3){.0001}}

       %\put(60,40.5){$\Gamma_0$}
       \put(60,52.5){$\Gamma_1$}
       \put(50,62){$\til\Gamma_2$}
       \put(26,62){$\til\Gamma_3$}
       \put(20.5,50){$\Gamma_4$}
       %\put(20,40.5){$\Gamma_5$}
       \put(20,27){$\Gamma_6$}
       \put(26,15){$\til\Gamma_7$}
       \put(50,15){$\til\Gamma_8$}
       \put(60,25){$\Gamma_{9}$}
  \end{picture}
  \vspace{-14mm}
   \caption{Contour $\Sigma_{\hh B}$ for the RH problem for $\hh B$.
   Note the reversion of the orientation of some of the rays.}
   \label{fig:ContourB:min}
\end{center}
\end{figure}
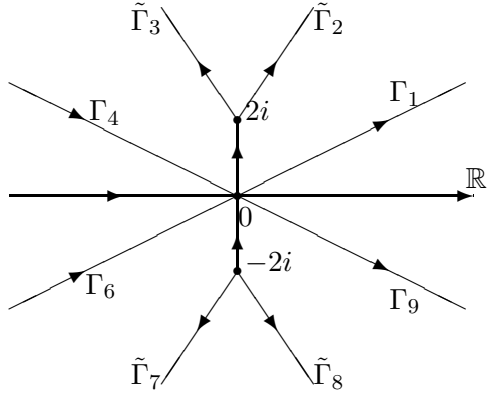

In the second transformation we again apply contour deformations, although in a
different way than in Section~\ref{section:asyM}. The four rays $\Gamma_k$,
$k=2,3,7,8$, emanating from the origin are replaced by their parallel lines
emanating from some special points on the imaginary axis. More precisely, we
replace $\Gamma_2$ and $\Gamma_3$ by their parallel rays $\tilde\Gamma_2$ and
$\tilde\Gamma_3$ emanating from the point~$2i$, and we replace $\Gamma_7$ and
$\Gamma_8$ by their parallel rays $\tilde\Gamma_7$ and $\tilde\Gamma_8$
emanating from the point~$-2i$. See Figure~\ref{fig:ContourB:min}.

We define
\begin{equation}
     \hh B(\zeta)=\left\{
                \begin{array}{ll}
                  \hh A(\zeta)J_2^{-1}, & \text{for $\zeta$ between $\Gamma_2$, $[0,2i]$ and $\til\Gamma_2$}, \\
                  \hh A(\zeta)J_3, & \text{for $\zeta$  between $\Gamma_3$, $[0,2i]$ and $\til\Gamma_3$}, \\
                  \hh A(\zeta)J_7^{-1}, & \text{for $\zeta$ between $\Gamma_7$, $[-2i,0]$ and $\til\Gamma_7$}, \\
                  \hh A(\zeta)J_8, & \text{for $\zeta$  between $\Gamma_8$, $[-2i,0]$ and $\til\Gamma_8$}, \\
                  \hh A(\zeta), & \text{elsewhere},
                \end{array}.
              \right.
\end{equation}

Now $\hh B$ is analytic in $\cee\setminus \Sigma_{\hh B}$, where $\Sigma_{\hh
B}$ is the contour shown in Figure~\ref{fig:ContourB:min}. Note that in this
figure we reverse the orientation on some of the rays. Then $\hh B$ satisfies
the following RH problem.
\begin{rhp} We look for a $4\times 4$ matrix valued function $\hh B$ satisfying
%\paragraph{RH problem for $\hh B$.}
\begin{enumerate}
  \item[(1)] $\hh B(\zeta)$ is analytic for $\zeta\in\cee\setminus
  \Sigma_{\hh B}$.
  %see Figure \ref{fig:ContourB}.
  \item[(2)] $\hh B$ has the following jumps on $\Sigma_{\hh B}$:
  \begin{equation*}
  \hh B_{+}(\zeta)=\hh B_-(\zeta)J_{\hh B}(\zeta),
  \end{equation*}
  where $J_{\hh B}$ is defined by
  \begin{align*}
  J_{\hh B}(\zeta)&=
  E_{2,2}+E_{4,4}+E_{1,3}-E_{3,1},
  %\begin{pmatrix}
%  0 & 0 & 1 & 0\\
%  0 & 1 & 0 & 0\\
%  -1 & 0 & 0 & 0\\
%  0 & 0 & 0 & 1
%  \end{pmatrix},
\qquad \text{for $\zeta \in (0,+\infty)$},
  \\
  J_{\hh B}(\zeta)&=(I+E_{3,1}),
\qquad \text{for $\zeta\in \Gamma_1\cup
  \Gamma_9$},\\
  J_{\hh B}(\zeta)&=(I+E_{4,2}),
  \qquad \text{for $\zeta\in \Gamma_4
  \cup \Gamma_6$},
  \\
  J_{\hh B}(\zeta)&=
  E_{1,1}+E_{3,3}+E_{2,4}-E_{4,2},
  %\begin{pmatrix}
%  1 & 0 & 0 & 0\\
%  0 & 0 & 0 & 1\\
%  0 & 0 & 1 & 0\\
%  0 & -1 & 0 & 0
%  \end{pmatrix},
  \qquad \text{for $\zeta \in (-\infty, 0)$}\\
  J_{\hh B}(\zeta)&=
  \diag\left(\begin{pmatrix}
  1 & e^{-\nu\pi i}\\ -e^{\nu\pi i} & 0\end{pmatrix},\begin{pmatrix}
  0 & e^{\nu\pi i}\\ -e^{-\nu\pi i} & 1\end{pmatrix}\right),
  %\begin{pmatrix}
%  1 & e^{-\nu\pi i} & 0 & 0\\
%  -e^{\nu\pi i} & 0 & 0 & 0\\
%  0 & 0 & 0 & e^{\nu\pi i}\\
%  0 & 0 &-e^{-\nu\pi i} & 1
%  \end{pmatrix},
  \qquad \text{for $\zeta\in(0,2i)$},\\
  J_{\hh B}(\zeta)&= (I + e^{\nu\pi i}(E_{3,4}-E_{2,1})), \qquad \text{for $\zeta\in\til\Gamma_2$},
  \\
  J_{\hh B}(\zeta)&= (I - e^{-\nu\pi i}(E_{4,3}-E_{1,2})), \qquad \text{for
  $\zeta\in\til\Gamma_3$},\\
  %\end{align*}
  %\begin{align*}
  J_{\hh B}(\zeta)&=
\diag\left(\begin{pmatrix}
  1 & e^{\nu\pi i}\\ -e^{-\nu\pi i} & 0\end{pmatrix},\begin{pmatrix}
  0 & e^{-\nu\pi i}\\ -e^{\nu\pi i} & 1\end{pmatrix}\right),
  %\begin{pmatrix}
%  1 & e^{\nu\pi i} & 0 & 0\\
%  -e^{-\nu\pi i} & 0 & 0 & 0\\
%  0 & 0 & 0 & e^{-\nu\pi i}\\
%  0 & 0 & -e^{\nu\pi i} & 1
%  \end{pmatrix},
    \qquad \text{for $\zeta\in(-2i,0)$},\\
  J_{\hh B}(\zeta)&= (I + e^{\nu\pi i}(E_{4,3}-E_{1,2})), \qquad \text{for
  $\zeta\in\til\Gamma_7$},\\
  J_{\hh B}(\zeta)&= (I - e^{-\nu\pi i}(E_{3,4}-E_{2,1})), \qquad \text{for
  $\zeta\in\til\Gamma_8$}.
  \end{align*}

  \item[(3)] As $\zeta\to\infty$, we have
  \begin{align}\label{asy of B:min}
  \hh B(\zeta)=&\left(I+\frac{\hh B_1}{\zeta}+
  O\left(\frac{1}{\zeta^2}\right)\right)\diag((-\zeta)^{-1/4},\zeta^{-1/4},
  (-\zeta)^{1/4},\zeta^{1/4}) \nonumber \\
  &\times \Aa\diag(e^{-\hh\lambda \hh\theta_1(\zeta)},e^{-\hh\lam\hh\theta_2(\zeta)} ,e^{\hh\lambda \hh\theta_1(\zeta)},
  e^{\hh\lambda \hh\theta_2(\zeta)}),
  \end{align}
  where $\hh\lambda = (-s)^{3/2}$ and $\hh\theta_{1}(\zeta)$, $\hh\theta_{2}(\zeta)$ are
  given in \eqref{tilde theta:min}.
\end{enumerate}
\end{rhp}

Now \eqref{d in A:min} transforms as follows:
\begin{equation}\label{d in B:min}
d =-i\sqrt{-s}(\hh B_1)_{1,4}.
\end{equation}

\subsection{Third transformation: $\hh B \mapsto \hh C$}

In this transformation we normalize the RH problem at infinity by means of \lq
$g$-functions\rq. The construction of the $g$-functions will be markedly
different from Section~\ref{section:asyM}.

Using the principal branches of the square root, we first define two functions
$\xi_1(\zeta)$ and $\xi_2(\zeta)$ by
\begin{align}\label{xi12a:min}
\xi_1(\zeta)=-\sqrt{-2-\sqrt{4+\zeta^2}},\quad
\xi_2(\zeta)=\sqrt{-2+\sqrt{4+\zeta^2}},\qquad \textrm{if }\Re\zeta>0,
\\ \label{xi12b:min} \xi_1(\zeta)=-\sqrt{-2+\sqrt{4+\zeta^2}},\quad
\xi_2(\zeta)=\sqrt{-2-\sqrt{4+\zeta^2}},\qquad \textrm{if }\Re\zeta<0.
\end{align}
Note that
\begin{align}\begin{array}{ll}
\xi_1(z) = -\sqrt{-z}\left(1+\frac
1z+\frac{1}{2z^2}-\frac{1}{2z^3}+O(z^{-4})\right),& \quad
z\to\infty,\\
\xi_2(z) = \sqrt{z}\left(1-\frac
1z+\frac{1}{2z^2}+\frac{1}{2z^3}+O(z^{-4})\right),& \quad z\to\infty.
\end{array}
\end{align}

The $g$-functions are defined as the following anti-derivatives of the
$\xi$-functions:
\begin{equation}
g_1(\zeta) = \int_{0^+}^{\zeta} \xi_1(x)\ \ud x,\qquad g_2(\zeta) =
\int_{0^-}^{\zeta} \xi_2(x)\ \ud x,
\end{equation}
where $0^+$ is the origin reached from the first quadrant of the complex plane
and $0^-$ from the second quadrant, and where the integration path of $g_1$ (or
$g_2$) is not allowed to cross $\er_+\cup [-2i,2i]$ (or $\er_-\cup [-2i,2i]$
respectively).
%We also set
%\begin{equation}
%g_3(\zeta) = -g_1(\zeta),\qquad g_4(\zeta) = -g_2(\zeta),
%\end{equation}
%throughout the domains of definition.
We have
\begin{align}\begin{array}{ll}
\label{g1:hat:asy} g_1(z) = \frac 23
(-z)^{3/2}-2(-z)^{1/2}-(-z)^{-1/2}+O((-z)^{-3/2}),& \quad z\to\infty,\\
g_2(z) = \frac 23 z^{3/2}-2z^{1/2}-z^{-1/2}+O(z^{-3/2}),& \quad z\to\infty.
\end{array}
\end{align}
Observe that there is no integration constant in \eqref{g1:hat:asy}. Indeed by
taking the limits $z\to +\infty$ and $z\to -\infty$ along the real line we see
that the integration constant must be simultaneously real and purely imaginary
and therefore it is zero.

We need the following relations:
\begin{itemize}
\item $g_{1,\pm}(z)=g_{2,\mp}(z)$,\quad for $z\in(-2i,2i)$,
\item $g_{1,+}(x)+g_{1,-}(x)=0$,\quad  for $x\in\er_+$,
\item $g_{2,+}(x)+g_{2,-}(x)=0$,\quad for $x\in\er_-$,
\end{itemize}
and we also need the following inequalities for the real parts of the
$g$-functions:
\begin{itemize}
\item $\Re(g_{1}(z))<0$,\quad for $z\in \Gamma_1\cup\Gamma_9\setminus\{0\}$,
\item $\Re(g_{2}(z))<0$,\quad for $z\in \Gamma_4\cup\Gamma_6\setminus\{0\}$,
\item $\Re(g_{1,+}(z)-g_{1,-}(z))<0$,\quad for $z\in (-2i,2i)$,
\item $\Re(g_{2,+}(z)-g_{2,-}(z))>0$,\quad for $z\in (-2i,2i)$,
\item $\Re(g_{1}(z)-g_{2}(z))<0$,\quad for $z\in \til\Gamma_2\cup\til\Gamma_8\setminus\{0\}$,
\item $\Re(g_{1}(z)-g_{2}(z))>0$,\quad for $z\in \til\Gamma_3\cup\til\Gamma_7\setminus\{0\}$.
\end{itemize}
Each of these relations can be straightforwardly checked from the definition of
the functions $g_1,g_2$.

%Note that
%\begin{equation}\label{asy of g1:min}
%g_1(\zeta)=\tilde \theta_1(\zeta)+(-\zeta)^{-1/2}+O(\zeta^{-3/2})
%\end{equation}
%as $\zeta \to \infty$, where $\til \theta_1$ is given in \eqref{tilde
%theta:min}. A similar estimate holds for $g_2$ and $\til\theta_2$.

Now we define
\begin{equation}\label{B to C:min}
\hh C(\zeta)= (I - i \hh\lambda (E_{3,1}-E_{4,2})) \hh
B(\zeta)\diag(e^{\hh\lambda g_1(\zeta)}, e^{\hh\lambda g_2(\zeta)},e^{-\hh
\lambda g_1(\zeta)},e^{-\hh\lambda g_2(\zeta)}).
\end{equation}
Then $\hh C$ satisfies the following RH problem.

\begin{rhp} We look for a $4\times 4$ matrix valued function $\hh C$ satisfying
%\paragraph{RH problem for $\hh C$.}
\begin{enumerate}
  \item[(1)] $\hh C(\zeta)$ is analytic for $\zeta\in\cee\setminus
  \Sigma_{\hh B}$.
  %see Figure \ref{fig:ContourB}.
  \item[(2)] $\hh C$ has the following jumps on $\Sigma_{\hh B}$:
  \begin{equation*}
  \hh C_{+}(\zeta)=\hh C_-(\zeta)J_{\hh C}(\zeta),
  \end{equation*}
  where $J_{\hh C}$ is defined by
  \begin{align*}
  J_{\hh C}(\zeta)&=
  E_{2,2}+E_{4,4}+E_{1,3}-E_{3,1},
  %\begin{pmatrix}
%  0 & 0 & 1 & 0\\
%  0 & 1 & 0 & 0\\
%  -1 & 0 & 0 & 0\\
%  0 & 0 & 0 & 1
%  \end{pmatrix},
  \qquad \text{for $\zeta \in (0,+\infty)$},
  \\
  J_{\hh C}(\zeta)&=(I+E_{3,1}e^{2\hh\lam g_1(\zeta)}),
\qquad \text{for $\zeta\in \Gamma_1\cup
  \Gamma_9$},\\
  J_{\hh C}(\zeta)&=(I+E_{4,2}e^{2\hh\lam g_2(\zeta)}),
  \qquad \text{for $\zeta\in \Gamma_4
  \cup \Gamma_6$},
  \\
  J_{\hh C}(\zeta)&=
  E_{1,1}+E_{3,3}+E_{2,4}-E_{4,2},
  %\begin{pmatrix}
%  1 & 0 & 0 & 0\\
%  0 & 0 & 0 & 1\\
%  0 & 0 & 1 & 0\\
%  0 & -1 & 0 & 0
%  \end{pmatrix},
  \qquad \text{for $\zeta \in (-\infty, 0)$}\\
  J_{\hh C}(\zeta)&=\begin{pmatrix}
  e^{\hh\lam (g_{1,+}(\zeta)-g_{1,-}(\zeta))} & e^{-\nu\pi i} & 0 & 0\\
  -e^{\nu\pi i} & 0 & 0 & 0\\
  0 & 0 & 0 & e^{\nu\pi i}\\
  0 & 0 & -e^{-\nu\pi i} & e^{-\hh\lam (g_{2,+}(\zeta)-g_{2,-}(\zeta))}
  \end{pmatrix},
  \qquad \text{for $\zeta\in(0,2i)$},\\
  J_{\hh C}(\zeta)&= (I + e^{\nu\pi i}e^{\hh\lam (g_1(\zeta)-g_2(\zeta))}(E_{3,4}-E_{2,1})), \qquad \text{for $\zeta\in\til\Gamma_2$},
  \\
  J_{\hh C}(\zeta)&= (I - e^{-\nu\pi i}e^{-\hh\lam (g_1(\zeta)-g_2(\zeta))}(E_{4,3}-E_{1,2})), \qquad \text{for $\zeta\in\til\Gamma_3$},\\
    %\end{align*}
  %\begin{align*}
  J_{\hh C}(\zeta)&=\begin{pmatrix}
  e^{\hh\lam (g_{1,+}(\zeta)-g_{1,-}(\zeta))} & e^{\nu\pi i} & 0 & 0\\
  -e^{-\nu\pi i} & 0 & 0 & 0\\
  0 & 0 & 0 & e^{-\nu\pi i}\\
  0 & 0 & -e^{\nu\pi i} & e^{-\hh\lam (g_{2,+}(\zeta)-g_{2,-}(\zeta))}
  \end{pmatrix},
    \qquad \text{for $\zeta\in(-2i,0)$},\\
  %\end{align*}
  %\begin{align*}
  J_{\hh C}(\zeta)&= (I + e^{\nu\pi i}e^{-\hh\lam (g_1(\zeta)-g_2(\zeta))}(E_{4,3}-E_{1,2})), \qquad \text{for
  $\zeta\in\til\Gamma_7$},\\
  J_{\hh C}(\zeta)&= (I - e^{-\nu\pi i}e^{\hh\lam (g_1(\zeta)-g_2(\zeta))}(E_{3,4}-E_{2,1})), \qquad \text{for
  $\zeta\in\til\Gamma_8$}.
  \end{align*}

  \item[(3)] As $\zeta\to\infty$, we have
  \begin{equation*}\label{asy of C:min}
  \hh C(\zeta)=\left(I+\frac{\hh C_1}{\zeta}+
  O\left(\frac{1}{\zeta^2}\right)\right)\diag((-\zeta)^{-1/4},\zeta^{-1/4},
  (-\zeta)^{1/4},\zeta^{1/4}) \Aa.
  \end{equation*}
\end{enumerate}
\end{rhp}

Now \eqref{d in B:min} transforms as follows:
\begin{equation}\label{d in C:min}
d =-i\sqrt{-s}(\hh C_1)_{1,4}.
\end{equation}

\subsection{Global and local parametrices}

\subsubsection*{Global parametrix}

The global parametrix is obtained from the RH problem for $\hh C$ by ignoring
all the exponentially decaying entries in the jump matrices:

\begin{rhp} We look for a $4\times 4$ matrix valued function $\hh C^{(\infty)}$ satisfying
%\paragraph{RH problem for $\hh C^{(\infty)}$.}
\begin{enumerate}
  \item[(1)] $\hh C^{(\infty)}(\zeta)$ is analytic for $\zeta\in\cee\setminus
  (\er\cup [-2i,2i])$.

  \item[(2)] $\hh C^{(\infty)}(\zeta)$ has the jumps
  \begin{equation}
  \label{jumps:Chatinfty}
  \begin{array}{l}
  %\label{jumps:Chatinfty1}
  \hh C^{(\infty)}_{+}(\zeta)=\hh C^{(\infty)}_-(\zeta)
  (E_{2,2}+E_{4,4}+E_{1,3}-E_{3,1}),
  %\begin{pmatrix}
%  0 & 0 & 1 & 0\\
%  0 & 1 & 0 & 0\\
%  -1 & 0 & 0 & 0\\
%  0 & 0 & 0 & 1
%  \end{pmatrix},
  \qquad \text{for $\zeta>0$},
  \\
  %\label{jumps:Chatinfty2}
  \hh C^{(\infty)}_{+}(\zeta)=\hh C^{(\infty)}_-(\zeta)
  (E_{1,1}+E_{3,3}+E_{2,4}-E_{4,2}),
  %\begin{pmatrix}
%  1 & 0 & 0 & 0\\
%  0 & 0 & 0 & 1\\
%  0 & 0 & 1 & 0\\
%  0 & -1 & 0 & 0
%  \end{pmatrix},
  \qquad \text{for $\zeta<0$},
  \\
  %\label{jumps:Chatinfty3}
  \hh C^{(\infty)}_{+}(\zeta)=\hh C^{(\infty)}_-(\zeta)
  \diag\left(\begin{pmatrix}0 & e^{-\nu\pi i} \\ -e^{\nu\pi i} & 0 \end{pmatrix},
  \begin{pmatrix}0 & e^{\nu\pi i} \\ -e^{-\nu\pi i} & 0 \end{pmatrix}
  \right),
  %\begin{pmatrix}
%  0 & e^{-\nu\pi i} & 0 & 0\\
%  -e^{\nu\pi i} & 0 & 0 & 0\\
%  0 & 0 & 0 & e^{\nu\pi i}\\
%  0 & 0 & -e^{-\nu\pi i} & 0
%  \end{pmatrix},
  \qquad \text{for $\zeta\in(0,2i)$},\\
  %\label{jumps:Chatinfty4}
  \hh C^{(\infty)}_{+}(\zeta)=\hh C^{(\infty)}_-(\zeta)
    \diag\left(\begin{pmatrix}0 & e^{\nu\pi i} \\ -e^{-\nu\pi i} & 0 \end{pmatrix},
  \begin{pmatrix}0 & e^{-\nu\pi i} \\ -e^{\nu\pi i} & 0 \end{pmatrix}
  \right),
%  \begin{pmatrix}
%  0 & e^{\nu\pi i} & 0 & 0\\
%  -e^{-\nu\pi i} & 0 & 0 & 0\\
%  0 & 0 & 0 & e^{-\nu\pi i}\\
%  0 & 0 & -e^{\nu\pi i} & 0
%  \end{pmatrix},
  \qquad \text{for $\zeta\in(-2i,0)$}.
  \end{array}\end{equation}

  \item[(3)] As $\zeta\to\infty$, we have
  \begin{equation}\label{Cinfty:asy:min}
  \hh C^{(\infty)}(\zeta)=\left(I+\frac{\hh C^{(\infty)}_1}{\zeta}+O\left(\frac{1}{\zeta^2}\right)
  \right)\diag((-\zeta)^{-1/4},\zeta^{-1/4},
  (-\zeta)^{1/4},\zeta^{1/4})
  \Aa.
  \end{equation}

  \item[(4)] We have \begin{equation}\begin{array}{ll}
  \hh C^{(\infty)}(\zeta)= O(|\zeta\mp 2i|^{-1/4}),&\qquad \textrm{as $\zeta\to \pm
  2i$},\\
  \label{Chatinfty:zero}\hh
  C^{(\infty)}(\zeta)\diag(1,\zeta^{\nu},1,\zeta^{-\nu})=O(1),&\qquad \textrm{as $\zeta\to 0$ with
  $\Re\zeta>0$.}\end{array}\end{equation}

\end{enumerate}
\end{rhp}

\begin{lemma}\label{lemma:smininfty:global}
There exists a solution $\hh C^{(\infty)}$ to the above RH problem. Moreover,
the $(1,4)$ entry of the matrix $\hh C_1^{(\infty)}$ in \eqref{Cinfty:asy:min}
is given by
\begin{equation}\label{Cinfty:14}
(\hh C^{(\infty)}_1)_{1,4} = i.
\end{equation}
\end{lemma}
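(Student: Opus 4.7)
The plan is to construct $\hh C^{(\infty)}(\zeta)$ in closed form and then expand at infinity to read off the $(1,4)$ entry of the residue matrix. The jump contour $\er \cup [-2i,2i]$ suggests working on the genus-zero Riemann surface $w^2 = \zeta^2+4$, which is uniformized by $\zeta = i(p + 1/p)$ mapping $\{|p|>1\}$ bijectively onto $\cee \setminus [-2i,2i]$, with $\pm 2i$ corresponding to $p = \mp 1$ and $\zeta = \infty$ to $p = \infty$. A direct inversion gives the large-$\zeta$ expansion $p = -i\zeta - i/\zeta + O(\zeta^{-3})$, which is what will eventually feed into the asymptotics.

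First I would absorb the $\nu$-dependent phases $e^{\pm\nu\pi i}$ in \eqref{jumps:Chatinfty}. Write $\hh C^{(\infty)}(\zeta) = \hh C^{(\infty,0)}(\zeta)\,\Delta(\zeta)$ for a diagonal $\Delta = \diag(\delta_1,\delta_2,\delta_3,\delta_4)$, analytic off $[-2i,2i]$ and tending to $I$ at infinity. The four scalar boundary conditions on the two sub-arcs $(0,2i)$ and $(-2i,0)$ (for each of the $(1,2)$ and $(3,4)$ blocks) reduce to the $\nu = 0$ jumps. Suitable $\delta_j$ can be built from fractional powers such as $(\zeta-2i)^{\pm\nu/2}(\zeta+2i)^{\mp\nu/2}$ (equivalently, powers of $p$); the delicate point is to get the signs of the phases right on each sub-interval and for each block, which forces $\delta_1,\delta_2$ to be two branches of the same scalar function exchanged across the cut (and similarly $\delta_3,\delta_4$ with opposite $\nu$). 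One simultaneously has to verify that the behavior of $\Delta$ near $0$ is compatible with the constraint $\hh C^{(\infty)}(\zeta)\diag(1,\zeta^{\nu},1,\zeta^{-\nu}) = O(1)$ of \eqref{Chatinfty:zero}, and that singularities at $\pm 2i$ are no worse than $(\zeta\mp 2i)^{-1/4}$.

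Second, the reduced $\nu = 0$ problem for $\hh C^{(\infty,0)}$ has purely permutational jumps: it swaps columns $(1,3)$ on $\er_+$, columns $(2,4)$ on $\er_-$, and interchanges the $(1,2)$ and $(3,4)$ blocks on $[-2i,2i]$. I would solve it by an explicit ansatz of the form
\begin{equation*}
\hh C^{(\infty,0)}(\zeta) = \diag(\phi_1,\phi_2,\phi_1^{-1},\phi_2^{-1})\,\Aa,
\end{equation*}
where $\phi_1,\phi_2$ are products of $(-\zeta)^{-1/4}$, $\zeta^{-1/4}$ and suitable rational powers of $p$ chosen so that each permutation jump is resolved by the corresponding jump of the fractional-power factor, the mixing matrix $\Aa$ playing precisely the same role as in \eqref{C infty}. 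The existence and uniqueness of this ansatz is guaranteed by the genus-zero structure.

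Finally, to extract $(\hh C^{(\infty)}_1)_{1,4}$, substitute $p = -i\zeta + O(1/\zeta)$ and $\delta_j = 1+O(1/\zeta)$ into the explicit formula and match against \eqref{Cinfty:asy:min}. Since $\Delta$ is diagonal and its $O(1/\zeta)$ correction contributes only to diagonal entries of $\hh C^{(\infty)}_1$, the off-diagonal entry $(1,4)$ is determined entirely by $\hh C^{(\infty,0)}$ and is therefore $\nu$-independent; a direct expansion then gives the value $i$, as required. The main obstacle is the first step — constructing $\Delta$ with exactly the right phase jumps on each of the four sub-arcs and with the correct mild behavior at $0$ and $\pm 2i$; once that is in place, the remaining construction and asymptotic computation are straightforward.
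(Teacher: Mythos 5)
The high-level strategy of factoring off a scalar $\nu$-dependent piece and solving the reduced $\nu=0$ problem explicitly does match the paper (which writes $\hh C^{(\infty)}(\zeta;\nu) = \mathcal K\,\hh C^{(\infty)}(\zeta;0)\,\diag(e^{h(v_j(\zeta))})$ with $h$ a Cauchy transform on the unit circle, and $\mathcal K = I+\alpha E_{3,1}+\beta E_{4,2}$ to restore the normalization). But your proposed ansatz for the $\nu=0$ core,
\begin{equation*}
\hh C^{(\infty,0)}(\zeta)=\diag(\phi_1,\phi_2,\phi_1^{-1},\phi_2^{-1})\,\Aa,
\end{equation*}
cannot satisfy the jump on $[-2i,2i]$. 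In this ansatz columns $1,3$ are supported on rows $1,3$ and columns $2,4$ on rows $2,4$ (because $\Aa$ only couples $(1,3)$ and $(2,4)$). The jump on $(0,2i)$, namely $J=\diag\bigl(\begin{smallmatrix}0&1\\-1&0\end{smallmatrix},\begin{smallmatrix}0&1\\-1&0\end{smallmatrix}\bigr)$ at $\nu=0$, requires $\hh C^{(\infty,0)}_+(\cdot,1)=-\hh C^{(\infty,0)}_-(\cdot,2)$; comparing the first entry forces $\phi_{1,+}\equiv 0$. So the block-sparse ansatz is structurally incompatible with a cut that mixes columns $(1,2)$ and $(3,4)$. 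This is exactly why the present global parametrix is genuinely harder than the $s\to+\infty$ one in \eqref{C infty}, where only the $(1,3)$ and $(2,4)$ mixings occur.

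Relatedly, your uniformization $\zeta=i(p+1/p)$ handles only the branch cut $[-2i,2i]$; the RH problem also has cuts along $\er_+$ and $\er_-$, so the underlying surface is a \emph{four}-sheeted genus-zero cover, not a two-sheeted one. The paper constructs this surface with sheets $\err_1=\err_3=\cee\setminus(\er_+\cup[-2i,2i])$ and $\err_2=\err_4=\cee\setminus(\er_-\cup[-2i,2i])$, uniformizes it by the four maps $v_k(\zeta)$ (with $v_3=-v_1$, $v_4=-v_2$), and writes $\hh C^{(\infty)}(\zeta;0) = (f_j(v_k(\zeta)))_{j,k=1}^4$ with $f_j(v) = -v/\sqrt{P(v)},\,v^2/\sqrt{P(v)},\,i/\sqrt{P(v)},\,iv^3/\sqrt{P(v)}$ for $P(v)=2v(v^4+1)$. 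That matrix is generically full, which is what lets it absorb all four column permutations. To repair your argument you would need to replace both the ansatz and the two-sheeted uniformization by their four-sheeted analogues; once that is done, extracting $(\hh C^{(\infty)}_1)_{1,4}=i$ by expansion at $\infty$ (noting that the $\nu$-corrections affect only columns $1,2$ at that order, so the $(1,4)$ entry is $\nu$-independent) proceeds as you outline.
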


\begin{proof}
%In the rest of this section we prove Lemma~\ref{lemma:smininfty:global}.
We will give an explicit construction of $\hh C^{(\infty)}$. Define the
functions
\begin{equation}\label{xibis12:min}
v_1(z) = \frac{\xi_1(z)+\sqrt{\xi_1^2(z)+4}}{2},\qquad v_2(z) =
\frac{\xi_2(z)+\sqrt{\xi_2^2(z)+4}}{2},
\end{equation}
where we recall the definition of $\xi_1,\xi_2$ in
\eqref{xi12a:min}--\eqref{xi12b:min}. Note that
\begin{eqnarray}
v_1(z) &=& (-z)^{-1/2}+O((-z)^{-5/2}),%-1/2(-z)^{-5/2}+O(z^{-9/2}),
\quad z\to\infty,\\
v_2(z) &=& z^{1/2}+O(z^{-3/2}),\qquad\qquad\ \ z\to\infty.%+1/2 z^{-3/2}+O(z^{-7/2}),\quad z\to\infty.
\end{eqnarray}
Also define
$$ v_3(z) = -v_1(z),\qquad v_4(z) = -v_2(z).
$$

We construct a compact Riemann surface $\err$ with four sheets $\err_j$,
$j=1,\ldots,4$. The sheets are defined by
$\err_1=\err_3=\cee\setminus(\er_+\cup [-2i,2i])$ and
$\err_2=\err_4=\cee\setminus(\er_-\cup [-2i,2i])$. We glue the sheets $\err_1$
and $\err_3$ in the usual crosswise way along the cut $\er_+$. Similarly we
glue $\err_2$ and $\err_4$ along $\er_-$, and we glue $\err_1$ and $\err_2$
(and also $\err_3$ and $\err_4$) along the cut $(-2i,2i)$. We add points at
infinity and at the finite branch points $-2i$ and $2i$ to make $\err$ a
compact Riemann surface.

We consider the function $v_k(z)$ to be living on the $k$th sheet $\err_k$ of
the Riemann surface, $k=1,\ldots,4$. Together these functions provide a
bijective holomorphic mapping of the Riemann surface $\err$ into the Riemann
sphere $\overline\cee$. The images of the four sheets of the Riemann surface
under this mapping are given by
\begin{equation*}\begin{array}{cccc}
v_1(\err_1) &= \{z\in\overline\cee\mid |z|<1\textrm{  and  }\Re z>0\},\\
v_2(\err_2) &= \{z\in\overline\cee\mid |z|>1\textrm{  and  }\Re z>0\},\\
v_3(\err_3) &= \{z\in\overline\cee\mid |z|<1\textrm{  and  }\Re z<0\},\\
v_4(\err_4) &= \{z\in\overline\cee\mid |z|>1\textrm{  and  }\Re z<0\}.
\end{array}\end{equation*} The cuts of the Riemann surface are mapped into $i\er\cup
\mathbb T$ with $\mathbb T$ the unit circle. The images of the branch points
are
\begin{equation*}\begin{array}{cc} v_1(\pm 2i) = v_2(\pm 2i) = e^{\pm\pi i/4},& \qquad v_3(\pm 2i) =
v_4(\pm 2i) = -e^{\pm\pi i/4}, \\ v_1(\infty) = v_3(\infty) = 0,& \qquad
v_2(\infty) = v_4(\infty) = \infty.\end{array}\end{equation*} The points at the
origin of the four sheets also play a special role; they are mapped to the
points $\pm 1$ and $\pm i$ in the $v$-plane.

Define the polynomial
\begin{equation}\label{def:P}
    P(v)=2v(v^4+1),
\end{equation}
and its square root
\[ \sqrt{P(v)}, \]
defined as an analytic function in the $v$-plane, with cuts along the union of
arcs $$i\er_+\cup \{e^{i\theta}\mid
\theta\in[-\pi/4,\pi/4]\cup[3\pi/4,5\pi/4]\},$$ and also satisfying
$\sqrt{P(v)}>0$ as $v\to\infty$ along the positive real axis.

For $\nu=0$, we define the global parametrix $\hh C^{(\infty)}(\zeta;\nu)=\hh
C^{(\infty)}(\zeta;0)$ by
\begin{equation}\label{globalparametrix}
\hh C^{(\infty)}(\zeta;0) =
\begin{pmatrix} f_j(v_k(\zeta)) \end{pmatrix}_{j,k=1}^{4},\qquad \textrm{for }\zeta \in \cee\setminus \Sigma_{\hh
B},
\end{equation}
where the functions $f_j(v)$, $j=1,\ldots,4$ are defined by
\begin{equation}\label{def:f1234}
f_1(v) = -\frac{v}{\sqrt{P(v)}},\quad f_2(v) = \frac{v^2}{\sqrt{P(v)}},\quad
f_3(v) = \frac{i}{\sqrt{P(v)}},\quad f_4(v) = \frac{iv^3}{\sqrt{P(v)}},
\end{equation}
with the above discussed branch of $\sqrt{P(v)}$.

For general $\nu$, we define the global parametrix $\hh
C^{(\infty)}(\zeta;\nu)$ by
\begin{equation}\label{Cinfty:nu}\hh C^{(\infty)}(\zeta;\nu) = \mathcal K \hh C^{(\infty)}(\zeta;0) \diag\left(
e^{h(v_1(\zeta))},e^{h(v_2(\zeta))},e^{h(v_3(\zeta))},e^{h(v_4(\zeta))}\right)
\end{equation} where $\mathcal K$ is a matrix of the form \begin{equation}\label{Cinfty:nu:bis}
\mathcal K=I +\alpha E_{3,1}+\beta E_{4,2}\end{equation} for suitable constants
$\alpha,\beta$. The function $h$ in \eqref{Cinfty:nu} serves to create the jump
entries $e^{\nu\pi i}$ and $e^{-\nu\pi i}$ in \eqref{jumps:Chatinfty}. Hence
$h$ must be analytic in $\cee\setminus\mathbb T$ and with the counterclockwise
orientation of $\mathbb T$ we must have $$h_+(v)- h_-(v) = \pm\nu\pi i,\qquad
\textrm{for }v\in\mathbb T_{\pm},$$ where
\begin{equation*}\mathbb T_+ := \mathbb T\cap\{v\mid \Im v>0\},\qquad
\mathbb T_- := \mathbb T\cap\{v\mid \Im v<0\}.\end{equation*} We construct the
function $h$ explicitly in a Cauchy integral form:
\begin{equation} \label{Cinfty:h}h(v)=-\frac{\nu}{2}\int_{\mathbb{T}_+}
\frac{1}{v-x}\ \ud x + \frac{\nu}{2}\int_{\mathbb T_-} \frac{1}{v-x}\ \ud x.
\end{equation}
The Plemelj formula guarantees that $h$ has the desired jumps on $\mathbb T_+$
and $\mathbb T_-$. Moreover $h$ tends to zero for both $v\to 0$ and
$v\to\infty$. In fact, the dominant term of $e^{h(v_j(\zeta))}$ as
$\zeta\to\infty$ is of the order $O(\zeta^{-1/2})$ and it is then routine to
find the constant matrix $\mathcal K$ in
\eqref{Cinfty:nu}--\eqref{Cinfty:nu:bis} so that \eqref{Cinfty:asy:min} holds.

From the construction \eqref{Cinfty:nu}--\eqref{Cinfty:nu:bis} we see that the
$(1,4)$ entry of $\hh C_1^{(\infty)}$ in \eqref{Cinfty:asy:min} is independent
of $\nu$. In fact, it is not too hard to calculate this entry explicitly,
leading to the value $i$ in \eqref{Cinfty:14}. Finally, \eqref{Chatinfty:zero}
is also clearly satisfied. \end{proof}

\subsubsection*{Local parametrices at $-2i$ and $2i$}

Near the points $-2i$ and $2i$ one can again construct local parametrices $\hh
C^{(-2i)}$, $\hh C^{(2i)}$ using Airy functions. We omit the details.

\subsubsection*{Local parametrix at the origin}

At the origin we construct a local parametrix $\hh C^{(0)}$ in a similar way as
in Section~\ref{subsection:localPII}. We require $\hh C^{(0)}$ to have the same
jumps as $\hh C$ on the contour
$B_{\rho}\cap(\Gamma_1\cup\Gamma_4\cup\Gamma_6\cup\Gamma_9)$. For
$\zeta\in (0,\rho i)$ we impose the jump
\begin{equation}\label{jumps:Chatinfty:mod1}
\hh C^{(0)}_+(\zeta) = \hh C^{(0)}_-(\zeta)\times\left\{
\begin{array}{ll}
J_{\hh C}(\zeta),& \textrm{if $\nu-1/2\not\in\zet_{\geq 0}$},\\
e^{\nu \pi i}(E_{3,4}-E_{2,1})+e^{-\nu\pi i}(E_{1,2}-E_{4,3}),& \textrm{if
$\nu-1/2\in\zet_{\geq 0}$},
\end{array}\right. \end{equation}
and for $\zeta\in (-\rho i,0)$ we put
\begin{equation}\label{jumps:Chatinfty:mod2}
\hh C^{(0)}_+(\zeta) = \hh C^{(0)}_-(\zeta)\times\left\{
\begin{array}{ll}
J_{\hh C}(\zeta),& \textrm{if $\nu-1/2\not\in\zet_{\geq 0}$},\\
e^{-\nu \pi i}(E_{3,4}-E_{2,1})+e^{\nu\pi i}(E_{1,2}-E_{4,3}),& \textrm{if
$\nu-1/2\in\zet_{\geq 0}$}.
\end{array}\right. \end{equation}
We again use transformations of the type \eqref{squareroot:transfo} to move to
a \lq square root version\rq\ of the RH problems for $\hh C^{(0)}$ and $\hh
C^{(\infty)}$. The local parametrix in the square root setting can be
constructed as in \cite[Sec.~5.6]{DKRZ}, see also \cite[Sec.~5.6]{DGZ}, and it can then be lifted back to the
original setting by using \eqref{squareroot:transfo:inverse}. This is very
similar to Section~\ref{subsection:localPII}. The main difference is that we
are now dealing with \lq Case~III\rq\ rather than \lq Case~I\rq\ in the
terminology of \cite[Sec.~5.6]{DKRZ}; see also Fig.~\ref{fig:phasediagram}. The
details are irrelevant for our purposes and are omitted.

\subsection{Fourth transformation: $\hh C \mapsto \hh D$}
We define the final transformation
\begin{equation}\label{def:R:Lun:min}
\hh D(\zeta)=\left\{
       \begin{array}{ll}
         \hh C(\zeta)(\hh C^{(2i)})^{-1}(\zeta), & \text{if  $|\zeta-2i|<\rho$,} \\
         \hh C(\zeta)(\hh C^{(-2i)})^{-1}(\zeta), & \text{if $|\zeta+2i|<\rho$,} \\
         \hh C(\zeta)(\hh C^{(0)})^{-1}(\zeta), & \text{if $|\zeta|<\rho$,} \\
         \hh C(\zeta)(\hh C^{(\infty)})^{-1}(\zeta), & \text{elsewhere.}
       \end{array}
     \right.
\end{equation}

\begin{figure}[t]
\vspace{-5mm}
\begin{center}
   \setlength{\unitlength}{1truemm}
   \begin{picture}(100,70)(-5,2)
       %\put(42.5,40){\line(1,0){27.5}}
       %\put(37.5,40){\line(-1,0){27.5}}
       \put(40,42.5){\line(0,1){5}}
       \put(40,37.5){\line(0,-1){5}}
       \put(42.3,41.15){\line(2,1){28}}
       \put(42.3,38.85){\line(2,-1){28}}
       \put(37.7,41.15){\line(-2,1){28}}
       \put(37.7,38.85){\line(-2,-1){28}}
       \put(41.4,52.1){\line(2,3){8.5}}
       \put(41.4,27.9){\line(2,-3){8.5}}
       \put(38.6,52.1){\line(-2,3){8.5}}
       \put(38.6,27.9){\line(-2,-3){8.5}}
       \put(40,40){\thicklines\circle*{1}}
       \put(40,30){\thicklines\circle*{1}}
       \put(40,50){\thicklines\circle*{1}}
       %\put(25,40){\thicklines\circle*{1}}
       %\put(55,40){\thicklines\circle*{1}}
       \put(42,35.5){$0$}
       %\put(70,41){$\er$}
       \put(43,29){$-2i$}
       \put(44,49){$2i$}
       %\put(22,36){$-2+\epsilon$}
       %\put(48,36){$2-\epsilon$}
       %\put(70,40){\thicklines\vector(1,0){.0001}}
       %\put(25,40){\thicklines\vector(1,0){.0001}}
       \put(40,47){\thicklines\vector(0,1){.0001}}
       \put(40,36){\thicklines\vector(0,1){.0001}}
       %\put(50,40){\thicklines\vector(1,0){.0001}}
       %\put(34,40){\thicklines\vector(1,0){.0001}}
       \put(60,50){\thicklines\vector(2,1){.0001}}
       \put(60,30){\thicklines\vector(2,-1){.0001}}
       \put(20,50){\thicklines\vector(2,-1){.0001}}
       \put(20,30){\thicklines\vector(2,1){.0001}}
       \put(45.2,58){\thicklines\vector(2,3){.0001}}
       \put(45.3,22){\thicklines\vector(2,-3){.0001}}
       \put(34.8,58){\thicklines\vector(-2,3){.0001}}
       \put(34.7,22){\thicklines\vector(-2,-3){.0001}}

       \put(41.2,32.3){\thicklines\vector(1,0){.0001}}
       \put(41.2,42.3){\thicklines\vector(1,0){.0001}}
       \put(41.2,52.3){\thicklines\vector(1,0){.0001}}

       \put(40,40){\circle{5}}
       \put(40,30){\circle{5}}
       \put(40,50){\circle{5}}

       %\put(60,40.5){$\Gamma_0$}
       \put(60,52.5){$\Gamma_1$}
       \put(50,62){$\til\Gamma_2$}
       \put(26,62){$\til\Gamma_3$}
       \put(20.5,50){$\Gamma_4$}
       %\put(20,40.5){$\Gamma_5$}
       \put(20,27){$\Gamma_6$}
       \put(26,15){$\til\Gamma_7$}
       \put(50,15){$\til\Gamma_8$}
       \put(60,25){$\Gamma_{9}$}
  \end{picture}
  \vspace{-14mm}
   \caption{Contour  $\Sigma_{\hh D}$ for the RH problem for $\hh D$ in the case $\nu-1/2\not\in\zet_{\geq 0}$.
   If $\nu-1/2\in\zet_{\geq 0}$ then there is an additional jump contour on the vertical line segment $(-\rho i,\rho i)$.}
   \label{fig:ContourD:min}
\end{center}
\end{figure}

Then $\hh D$ satisfies the following RH problem.
\begin{rhp} We look for a $4\times 4$ matrix valued function $\hh D$ satisfying
%\paragraph{RH problem for $\hh D$.}
\begin{enumerate}
\item[(1)] $\hh D$ is analytic in $\cee\setminus \Sigma_{\hh D}$, where $\Sigma_{\hh D}$ is
shown in Figure \ref{fig:ContourD:min}.

\item[(2)] $\hh D$ has jumps $\hh D_+(\zeta)=\hh D_-(\zeta)J_{\hh D}(\zeta)$ for
 $\zeta\in \Sigma_{\hh D}$, where
\begin{equation}\label{jump for D:min}
J_{\hh D}=\left\{
      \begin{array}{ll}
        \hh C^{(0)} \left(\hh C^{(\infty)} \right)^{-1}, & \text{on $|\zeta| = \rho$,} \\
        \hh C^{(2i)} \left(\hh C^{(\infty)} \right)^{-1}, & \text{on $|\zeta - 2i| = \rho$,} \\
        \hh C^{(-2i)} \left(\hh C^{(\infty)} \right)^{-1}, & \text{on $|\zeta + 2i| = \rho$,} \\
        \hh C^{(\infty)} J_{\hh C} \left(\hh C^{(\infty)}\right)^{-1}, & \text{on the rest of $\Sigma_{\hh D}$}.
      \end{array}
    \right.
\end{equation}
\item[(3)] As $\zeta \to
\infty$, we have
\begin{equation}\label{eq: D asy:min}
\hh D(\zeta)=I+\frac{\hh D_1}{\zeta}+O\left(\frac{1}{\zeta^2}\right).
\end{equation}
\item[(4)] $\hh D(\zeta)$ is bounded in a neighborhood of $\zeta=0$.
\end{enumerate}
\end{rhp}

The jump matrix for $\hh D$ satisfies
\begin{equation*}
J_{\hh D}(\zeta)=I+O(1/\hh\lambda), \qquad \text{as $\hh\lambda \to +\infty$},
\end{equation*}
uniformly for $z$ on the circles $|z| = \rho$, $|z+2i| = \rho$ and $|z-2i| =
\rho$, and the jumps on the remaining contours of $\Sigma_{\hh D}$ are
uniformly bounded and converging exponentially fast to the identity matrix as
$\hh\lam\to\infty$. We then again have estimates like in \eqref{RLun:small}--\eqref{R1Lun:small}.
%conclude that
%\begin{equation*}%\label{RLun:small:min}
%\hh D(\zeta)=I+O\left(\frac{1}{\hh\lambda(|\zeta|+1)}\right),
%\end{equation*}
%as $\hh\lambda\to +\infty$, uniformly for $\zeta\in\cee\setminus \Sigma_{\hh
%D}$.

It follows from \eqref{d in C:min}, $\hh\lam=(-s)^{3/2}$ and the above
constructions that
\begin{equation}\label{cctild:R:min} d(s)\ =\ -i\sqrt{-s}(\hh C^{(\infty)}_1)_{1,4}+O(s^{-1})\ =\ \sqrt{-s}+O(s^{-1}),
\end{equation}
where we used \eqref{Cinfty:14}. This ends the proof of \eqref{eq:asy of d:bis}
and thereby of Proposition~\ref{prop:large s solvability}. $\bol$

\section{Proof of Theorem~\ref{theorem:Painleve2modelrhp}}
\label{section:proofTheoremP2}

In this section we prove Theorem~\ref{theorem:Painleve2modelrhp} on the
connection of the residue matrix $M_1$ with the Hastings-McLeod solution to
Painlev\'e~II. We proceed in several steps.

\subsection{Symmetries of the Riemann-Hilbert problem for $M(\zeta)$}

First we will check that the residue matrix $M_1$ can be written as in
\eqref{M1:explicit}. To this end we use the symmetry relations of the model RH
problem; some of these relations will also be used in the next sections. In
what follows we use the elementary permutation matrix
\begin{equation}\label{J:matrix}
J = \begin{pmatrix} 0&1\\ 1&0\end{pmatrix}
\end{equation}
and we use $I$ for the $2\times 2$
identity matrix.

\begin{lemma}\label{lemma:symmetries} (Symmetries).
For any fixed $\nu>-1/2$ and $r_1,r_2,s,\tau$, we have the symmetry relations
\begin{align}\label{symmetry:conjugate}
\overline{M(\overline{\zeta};\overline{r_1},\overline{r_2},\overline{s},\overline{\tau})}
=
\begin{pmatrix} I & 0 \\ 0 & -I \end{pmatrix}
M(\zeta;r_1,r_2,s,\tau) \begin{pmatrix} I & 0 \\ 0 & -I
\end{pmatrix},
\\ \label{symmetry:special}
M(-\zeta;r_1,r_2,s,\tau) =
\begin{pmatrix} J & 0 \\ 0 & -J \end{pmatrix}
M(\zeta;r_2,r_1,s,\tau)  \begin{pmatrix} J & 0 \\ 0 & -J
\end{pmatrix},
\\ \label{symmetry:inversetranspose}
M^{-T}(\zeta;r_1,r_2,s,\tau) =
\begin{pmatrix} 0 & I \\ -I & 0 \end{pmatrix}
M(\zeta;r_1,r_2,s,-\tau) \begin{pmatrix} 0 & -I \\ I & 0 \end{pmatrix},
\end{align}
where the bar denotes the complex conjugation.
\end{lemma}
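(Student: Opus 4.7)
The plan is the standard uniqueness argument: for each of the three identities, I would define the right-hand side as a candidate matrix $\tilde M(\zeta)$, verify that it solves exactly the same Riemann--Hilbert problem as the left-hand side, and invoke uniqueness of the solution to RH problem~\ref{rhp:modelM} to conclude equality. In each case one must check the jump conditions on all ten rays $\Gamma_k$, the asymptotic expansion at infinity, and the local behavior at the origin.

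For the conjugation symmetry \eqref{symmetry:conjugate}, I would note that the map $\zeta \mapsto \bar\zeta$ sends each ray $\Gamma_k$ to its mirror image in the lower half-plane (namely $\Gamma_0, \Gamma_9, \Gamma_8, \Gamma_7, \Gamma_6$ for $k=0,1,2,3,4$) and reverses its orientation. The jump relation for $\tilde M$ therefore becomes $\tilde M_+ = \tilde M_- \, (\overline{J_k})$ on each reflected ray. Conjugation by $\mathrm{diag}(I,-I)$ identifies this with the jump prescribed in Figure~\ref{fig:modelRHP}; the key point is that complex conjugation interchanges the Stokes multipliers $e^{\nu\pi i}$ and $e^{-\nu\pi i}$ on $\Gamma_2,\Gamma_3,\Gamma_7,\Gamma_8$, which exactly matches the interchange induced by the reflection of rays. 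Simultaneously conjugating the parameters $r_1,r_2,s,\tau$ makes the $\theta_j$-functions and the mixing matrix $\mathcal{A}$ in \eqref{M:asymptotics}--\eqref{def:theta1} transform consistently.

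For the reflection symmetry \eqref{symmetry:special}, $\zeta \mapsto -\zeta$ sends $\Gamma_k$ to $\Gamma_{k+5}$ (indices mod 10). The decisive observation lies in the large-$\zeta$ asymptotics: under $\zeta \mapsto -\zeta$ one has $\theta_1(-\zeta)$ (built from $r_1$) becoming $\theta_2(\zeta)$ (built from $r_1$), and vice versa, which forces the swap $r_1 \leftrightarrow r_2$ on the right-hand side. Conjugation by $\mathrm{diag}(J,-J)$ implements the required swapping of columns 1$\leftrightarrow$2 and 3$\leftrightarrow$4 in the diagonal exponential factor, while the minus sign in the lower block compensates for the fact that $(-\zeta)^{\pm 1/4}$ and $\zeta^{\pm 1/4}$ exchange roles and pick up a factor of $\pm i$. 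Comparing the jump matrices on $\Gamma_k$ versus $\Gamma_{k+5}$ in Figure~\ref{fig:modelRHP} one verifies that they intertwine correctly with the conjugation by $\mathrm{diag}(J,-J)$.

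For the inverse-transpose symmetry \eqref{symmetry:inversetranspose}, one exploits that $M^{-T}$ satisfies the RH problem with jumps $(J_k^{-1})^T$ and with asymptotic data obtained by inverse-transposing \eqref{M:asymptotics}. Direct inspection of Figure~\ref{fig:modelRHP} shows that the $4\times 4$ permutation $\bigl(\begin{smallmatrix} 0 & I \\ -I & 0 \end{smallmatrix}\bigr)$ conjugates $(J_k^{-1})^T$ back to the corresponding original jump. The parameter $\tau$ must be negated because $\tau\zeta$ appears in positions $(1,1),(3,3)$ with sign $+$ and in positions $(2,2),(4,4)$ with sign $-$ in the exponential diagonal factor; the block swap $1\leftrightarrow 3$, $2\leftrightarrow 4$ induced by the permutation matrix therefore changes the effective sign of $\tau$. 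The identity $\mathcal{A}^{-T} = \mathcal{A}$ up to the required conjugation is checked directly from \eqref{mixing:matrix}.

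The main obstacle is the detailed bookkeeping of the ten jump matrices: one must verify that under each of the three transformations the jump produced on every ray matches the prescribed one exactly, including correct orientations, correct Stokes multipliers, and correct placements of the off-diagonal entries. The behavior at the origin (condition~(4) of RH problem~\ref{rhp:modelM}) is preserved under each transformation essentially by inspection, since the diagonal scaling structure $\mathrm{diag}(\zeta^{\pm\nu})$ is invariant under conjugation by $\mathrm{diag}(I,-I)$, intertwined correctly by $\mathrm{diag}(J,-J)$ (swapping the regions $\Omega_1\cup\Omega_8 \leftrightarrow \Omega_3\cup\Omega_6$), and compatible with inverse-transposition.
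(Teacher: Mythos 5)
Your proposal follows exactly the approach used in the paper: show that both sides of each identity solve the same Riemann--Hilbert problem and invoke uniqueness. The paper's own proof is only a two-sentence remark to this effect; your expansion of the jump-matrix bookkeeping, the transformation of $\theta_1,\theta_2$ under $\zeta\mapsto-\zeta$ forcing the $r_1\leftrightarrow r_2$ swap, and the sign of $\tau$ in the inverse-transpose case correctly identify the nontrivial points that the paper leaves implicit.
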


\begin{proof} One checks that the left and right hand sides of
\eqref{symmetry:conjugate} satisfy the same RH problem. Then
\eqref{symmetry:conjugate} follows from the uniqueness of the solution to this
RH problem. The same argument applies to \eqref{symmetry:special} and
\eqref{symmetry:inversetranspose}.
\end{proof}

\begin{corollary} (Symmetries of the residue matrix $M_1$).
For any fixed $\nu>-1/2$, $r_1=r_2=:r$, $s,\tau\in\er$, we have
\begin{align*}  \overline{M_1} = \begin{pmatrix} I & 0 \\ 0 & -I \end{pmatrix}
M_1 \begin{pmatrix} I & 0 \\ 0 & -I
\end{pmatrix},
\\ -M_1 =
\begin{pmatrix} J & 0 \\ 0 & -J \end{pmatrix}
M_1  \begin{pmatrix} J & 0 \\ 0 & -J
\end{pmatrix}.
\end{align*}
Consequently, $M_1$ takes the form \eqref{M1:explicit}
%, i.e.,
%\begin{equation}\label{M1:explicit}
%M_1 = \begin{pmatrix} a & b & ic & id \\
%-b & -a & id & ic \\
%ie & if & -a & b \\
%if & ie & -b & a
%\end{pmatrix}
%\end{equation}
where $a,b,c,d,e,f,g,h$ are real valued constants depending parametrically on
$\nu,r,s,\tau$. If $\tau=0$ then we have additionally
$$  -M_1^T = \begin{pmatrix} 0 & I \\ -I & 0 \end{pmatrix} M_1 \begin{pmatrix}
0 & -I
\\ I & 0 \end{pmatrix},
$$
and then in \eqref{M1:explicit} we have $g=a$ and $h=b$.
\end{corollary}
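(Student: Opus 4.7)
The plan is to derive the three displayed matrix identities for $M_1$ by substituting the asymptotic expansion \eqref{M:asymptotics} into the three symmetries of Lemma~\ref{lemma:symmetries} (after specializing $r_1 = r_2 = r$, $s,\tau \in \er$), reading off the coefficient of $1/\zeta$, and then decoding the resulting block-level constraints on $M_1$. The first step is to check that, when $r_1 = r_2$ and $s,\tau \in \er$, the outer factors of \eqref{M:asymptotics}---the diagonal of fractional powers $\mathcal{D}(\zeta)$, the mixing matrix $\mathcal{A}$, and the exponential diagonal $\mathcal{E}(\zeta)$---transform compatibly under each symmetry operation. For the conjugation symmetry this reduces to the identities $\overline{\mathcal{D}(\overline{\zeta})} = \mathcal{D}(\zeta)$ (principal branches), $\overline{\mathcal{E}(\overline{\zeta})} = \mathcal{E}(\zeta)$ (real coefficients), and $\overline{\mathcal{A}} = U\mathcal{A} U$ with $U := \diag(I,-I)$, a direct calculation. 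After these outer factors cancel, comparing $1/\zeta$-coefficients gives $\overline{M_1} = U M_1 U$. The analogous computation for the second symmetry uses that $\theta_1(-\zeta) = \theta_2(\zeta)$ and $\theta_2(-\zeta) = \theta_1(\zeta)$ (valid because $r_1 = r_2$), which causes the outer factors at $-\zeta$ to match those obtained by conjugating with $\diag(J,-J)$, yielding $-M_1 = \diag(J,-J)\, M_1\, \diag(J,-J)$.

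Next I would decode the block structure of $M_1$. Writing $M_1 = \begin{pmatrix} A & B \\ C & D \end{pmatrix}$ with $2 \times 2$ blocks, the first identity forces $A$ and $D$ to have real entries and $B$, $C$ to be purely imaginary. A short block-matrix expansion shows the second identity is equivalent to $JAJ = -A$, $JDJ = -D$, $JBJ = B$, and $JCJ = C$. The first pair forces each of $A, D$ into the shape $\begin{pmatrix} \alpha & \beta \\ -\beta & -\alpha \end{pmatrix}$ with real $\alpha,\beta$, while the second pair forces each of $B, C$ into a purely imaginary matrix of the form $\begin{pmatrix} i\gamma & i\delta \\ i\delta & i\gamma \end{pmatrix}$. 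Combining these constraints yields exactly the eight-parameter form \eqref{M1:explicit}.

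For the $\tau = 0$ statement I would invoke \eqref{symmetry:inversetranspose}, which collapses to a self-constraint on $M$ since $-\tau = \tau$ when $\tau = 0$. Inverting and transposing the expansion \eqref{M:asymptotics} gives $M^{-T}(\zeta) = \bigl(I - M_1^T/\zeta + O(\zeta^{-2})\bigr)$ times an outer product built from $\mathcal{D}(\zeta)^{-T}$, $\mathcal{A}^{-T}$, and $\mathcal{E}(\zeta)^{-T}$. A short calculation gives $\mathcal{A}^{-T} = \overline{\mathcal{A}}$, and together with the inverted power- and exponential-diagonals these combine to reproduce precisely the conjugation by $\begin{pmatrix} 0 & I \\ -I & 0 \end{pmatrix}$ appearing on the right-hand side of \eqref{symmetry:inversetranspose}. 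Extracting the $1/\zeta$ coefficient then gives $-M_1^T = \begin{pmatrix} 0 & I \\ -I & 0 \end{pmatrix} M_1 \begin{pmatrix} 0 & -I \\ I & 0 \end{pmatrix}$. A direct block computation shows the right-hand side equals $\begin{pmatrix} D & -C \\ -B & A \end{pmatrix}$, so comparison with $-M_1^T$ yields $D = -A^T$ (the conditions $B = B^T$ and $C = C^T$ being automatic from the form \eqref{M1:explicit}). Writing $D = -A^T$ in terms of the entries of \eqref{M1:explicit} forces $g = a$ and $h = b$.

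The main technical obstacle is the careful bookkeeping of the outer asymptotic factors under each symmetry operation, and in particular the inverse-transpose case, where one must verify that $\mathcal{A}^{-T}$, the inverted power-diagonal, and $\mathcal{E}(\zeta)^{-T}$ conspire to produce exactly the conjugation appearing on the right-hand side of \eqref{symmetry:inversetranspose}. Once this compatibility is established, the remainder of the argument reduces to elementary linear algebra.
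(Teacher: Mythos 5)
Your proposal is correct, and it supplies exactly the argument the paper implicitly expects: the corollary is stated in the paper without proof, as an immediate consequence of Lemma~\ref{lemma:symmetries}, obtained by substituting the asymptotic expansion \eqref{M:asymptotics} into each symmetry relation and reading off the $1/\zeta$ coefficient after verifying that the outer asymptotic factor transforms compatibly (which hinges on $r_1=r_2$, $s,\tau\in\er$ for the first two, and additionally $\tau=0$ for the third, precisely because \eqref{symmetry:inversetranspose} sends $\tau\mapsto-\tau$). The block-level bookkeeping — $\overline{\mathcal{A}}=U\mathcal{A}U$ with $U=\diag(I,-I)$, $\diag(J,-J)\,\mathcal{A}\,\diag(J,-J)=\mathcal{A}$ together with $\theta_1(-\zeta)=\theta_2(\zeta)$ when $r_1=r_2$, and $\mathcal{A}^{-T}=\overline{\mathcal{A}}=Q\mathcal{A}Q^{-1}$ with $Q=\begin{pmatrix}0&I\\-I&0\end{pmatrix}$ — is correct, and the resulting entrywise constraints decode exactly to the form \eqref{M1:explicit} and, when $\tau=0$, to $D=-A^T$, i.e.\ $g=a$ and $h=b$.
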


\subsection{Behavior of $M(\zeta)$ near the origin}

For the proof of Theorem~\ref{theorem:Painleve2modelrhp}, we will need detailed
asymptotics of $M(\zeta)$ near the origin. This is provided in the following
proposition.

\begin{proposition}\label{prop:behaviorMzero} (Behavior of $M$ near the
origin). Let $M(\zeta)$ be the solution to the model RH
problem~\ref{rhp:modelM}. Then we have, with all the branches being principal:
\textrm{}\begin{itemize}
\item If $\nu-1/2\not\in\zet_{\geq 0}$, then there exist an analytic matrix
valued function $E$ and constant matrices $A_k$ such that, for the regions
$\Omega_k$ in Fig.~\ref{fig:modelRHP},
\begin{equation}\label{singularbehzero:1}
M(\zeta) = E(\zeta)\diag(\zeta^{\nu},\zeta^{\nu},\zeta^{-\nu},\zeta^{-\nu})A_k,
\qquad \zeta\in\Omega_k,\quad k=0,\ldots,9.
\end{equation}
Letting $J_k$ denote the jump matrix for $M$ on
$\Gamma_k$, we have
\begin{equation}\label{singularbehzero:2} A_{k+1} = A_{k}J_{k+1}, \qquad k=0,\ldots,3,5,\ldots,9,
\end{equation}
where we identify $A_{10}\equiv A_0$, $J_{10}\equiv J_0$ and with, among
others,
\begin{equation}\label{singularbehzero:2bis}
A_1 =
D\begin{pmatrix} 0&0&-1 & 2\cos \nu\pi\\ 2\cos\nu\pi&1&e^{-\nu\pi i}&0 \\ 0&1&0&0 \\
0&0&1&0\end{pmatrix},\quad A_5 = \begin{pmatrix} 0&1&-2\cos \nu\pi&1\\ 1&e^{\nu\pi i}&0&-e^{-\nu\pi i} \\ 1&0&0&0 \\
0&1&0&1\end{pmatrix},
\end{equation}
with $D=\diag(e^{-\nu\pi i},e^{-\nu\pi i},-e^{\nu\pi i},e^{\nu\pi i})$.
\item If $\nu-1/2\in\zet_{\geq 0}$, then $M$ has logarithmic behavior at the
origin: There exist an analytic matrix-valued function $E$ and constant
matrices $A_k$ such that
\begin{equation}\label{singularbehzero:1zet}
M(\zeta) = E(\zeta)K(\zeta)A_k, \qquad \zeta\in\Omega_k,\quad k=0,\ldots,9,
\end{equation}
with \begin{equation}\label{singularbehzero:1zet:K}
K(\zeta):=\begin{pmatrix}\zeta^{\nu}&0&\frac{1}{\pi}\zeta^{\nu}\log \zeta&0\\
0&\zeta^{\nu}&0&\frac{1}{\pi}\zeta^{\nu}\log \zeta\\ 0&0&\zeta^{-\nu}&0\\
0&0&0&\zeta^{-\nu}\end{pmatrix}. \end{equation} The matrices $A_k$ still
satisfy \eqref{singularbehzero:2} and we have, among others,
\begin{equation}\label{singularbehzero:2biszet}
A_1 = \begin{pmatrix} \mp i&1&1\mp i&\pm i \\
0&0&1&\pm i\\ 0&i&\pm 1+i&0\\ 0&0&i&0 \end{pmatrix},\quad
A_5 = \begin{pmatrix} 0&1&1&0 \\
0&0&1&0\\ \mp 1&\pm 1-i&0&\pm 1-i\\ 0&\pm 1&0&\pm 1 \end{pmatrix},
\end{equation}
in the case where $e^{\nu\pi i}=\pm i$ respectively.
\end{itemize}
\end{proposition}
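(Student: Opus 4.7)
Proof plan. I would construct $E(\zeta)$ and the sector-constant matrices $A_k$ by a sector-by-sector factorization driven by the ten jumps $J_k$ and the local growth constraint (4) of RH problem~\ref{rhp:modelM}. First consider the generic case $\nu - 1/2 \notin \mathbb{Z}_{\geq 0}$. Set $D(\zeta) := \diag(\zeta^{\nu},\zeta^{\nu},\zeta^{-\nu},\zeta^{-\nu})$ with principal branches and define $E_k(\zeta) := M(\zeta) A_k^{-1} D(\zeta)^{-1}$ on $\Omega_k$. Demanding that all the $E_k$ patch to a single analytic $E$ near $0$ translates, on any ray $\Gamma_{k+1}$ that avoids the branch cut of $D$, into the purely algebraic identity $A_{k+1}=A_k J_{k+1}$; this is \eqref{singularbehzero:2}. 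The sole exception is $\Gamma_5 \subset (-\infty,0)$: there $D$ itself jumps as $D_+ = D_- \Lambda$ with $\Lambda = \diag(e^{2\nu\pi i},e^{2\nu\pi i},e^{-2\nu\pi i},e^{-2\nu\pi i})$, so the recurrence becomes $A_5 = \Lambda^{-1} A_4 J_5$; this is why the index $k=4$ is omitted in \eqref{singularbehzero:2}. Walking once around the origin yields a single cyclic consistency identity, which one verifies directly from the jumps in Figure~\ref{fig:modelRHP}.

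Next I would fix the overall freedom (the common left factor of all $A_k$, absorbed into $E$) using the growth conditions. In each sector $\Omega_k$, the stipulation that a particular column of $M$ be $O(\zeta^{\nu})$ rather than $O(\zeta^{-\nu})$ forces the corresponding column of $A_k$ to have its third and fourth entries vanish, since otherwise the $\zeta^{-\nu}$ parts of $DA_k$ would dominate as $\zeta\to 0$. Pushing these sector-by-sector zero constraints to $A_1$ and $A_5$ via the recurrence, and normalizing so that $E(0)$ is invertible, gives a linear system whose unique solution is \eqref{singularbehzero:2bis}. The entries $2\cos\nu\pi$ and $e^{\pm\nu\pi i}$ arise as $2\times 2$ minors in partial products of the Stokes-type jumps on $\Gamma_2,\Gamma_3,\Gamma_7,\Gamma_8$.

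For the resonant case $\nu - 1/2 \in \mathbb{Z}_{\geq 0}$ we have $e^{2\nu\pi i} = -1$, so $\Lambda = -I$ and $2\cos\nu\pi = 0$. Inspection shows that the linear system of the previous paragraph degenerates: the $\zeta^{\nu}$ and $\zeta^{-\nu}$ columns of $D$ now have the same monodromy, so one constraint on each $A_k$ disappears and the purely algebraic ansatz \eqref{singularbehzero:1} is no longer flexible enough to realize $M$. Replacing $D$ by the logarithmic local model $K(\zeta)$ of \eqref{singularbehzero:1zet:K} restores the missing rank: on $(-\infty,0)$ the entries $\zeta^{\nu}\log\zeta/\pi$ produce, in $K_-^{-1}K_+$, a nontrivial coupling of the $\zeta^{\nu}$ and $\zeta^{-\nu}$ columns that precisely compensates the $-I$ monodromy and matches the orbit of the $J_k$ around $0$. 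Repeating the two stages above with $K$ in place of $D$ yields \eqref{singularbehzero:2biszet}, with the sign choice $e^{\nu\pi i} = \pm i$ corresponding to the parity of $\nu - 1/2$.

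The main obstacle is combinatorial bookkeeping. One must simultaneously juggle ten jump matrices carrying $e^{\pm\nu\pi i}$ entries, the branch of $D$ (or $K$) across $\Gamma_5$, the sector-dependent growth conditions, and the orientation conventions. Once the framework is set up the computations are mechanical, but the mixing of the $\zeta^{\nu}$ and $\zeta^{-\nu}$ columns by the jumps on $\Gamma_2,\Gamma_3,\Gamma_7,\Gamma_8$ makes sign errors easy; I would verify the explicit forms of $A_1$ and $A_5$ by also computing, say, $A_6 = A_5 J_6$ and $A_0 = A_9 J_0$ and checking that both are compatible with the growth conditions in $\Omega_6$ and $\Omega_0$. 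In the logarithmic case there is an additional subtlety: one must check that the degenerate linear system is not merely rank-deficient but genuinely consistent once the logarithmic columns of $K$ are inserted, which is what ultimately selects between the two sign cases in \eqref{singularbehzero:2biszet}.
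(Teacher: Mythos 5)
Your plan coincides with the paper's approach: define $E$ sector by sector as $M A_k^{-1}\diag(\zeta^{-\nu},\zeta^{-\nu},\zeta^{\nu},\zeta^{\nu})$, read off the recurrence $A_{k+1}=A_kJ_{k+1}$ away from the branch cut, and handle $\Gamma_5$ separately because of the monodromy of the scalar factors. You also correctly identify the growth conditions as the mechanism that fixes the surviving freedom and the Jordan-type monodromy as the source of the logarithmic correction. Where your write-up is thin is precisely the step you call ``a single cyclic consistency identity, which one verifies directly.'' That consistency is not automatic: it amounts to proving that the total monodromy $J_6J_7J_8J_9J_0J_1J_2J_3J_4J_5$ is \emph{conjugate} to $\Lambda=\diag(e^{2\nu\pi i},e^{2\nu\pi i},e^{-2\nu\pi i},e^{-2\nu\pi i})$, i.e.\ that this $4\times4$ product has eigenvalues $e^{\pm2\nu\pi i}$ each of multiplicity two and is diagonalizable, and then that $A_5$ can be chosen with rows forming a left eigenbasis. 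The paper spends the bulk of the proof on exactly this: it exploits the symmetry $J_k=\diag(J,-J)J_{k+5}\diag(J,-J)$ to write the ten-fold product as the square of $\diag(J,-J)J_1J_2J_3J_4J_5$, computes the latter explicitly, and diagonalizes it. Without either that trick or a brute-force computation of the ten-fold product, the existence of $A_5$ (and hence of all $A_k$) is not established.

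A second point: your explanation of why the resonant case needs $K(\zeta)$ --- ``one constraint on each $A_k$ disappears and the purely algebraic ansatz is no longer flexible enough'' --- has the causality backwards. Losing constraints would give \emph{more} freedom, not less. The real obstruction is that when $e^{2\nu\pi i}=-1$ the five-fold product $\diag(J,-J)J_1\cdots J_5$ has a single eigenvalue $\mp i$ with only a two-dimensional eigenspace, so its square (the full monodromy) is $-I$ plus a nonzero nilpotent part; it is therefore \emph{not} diagonalizable, and no conjugation can turn it into $\Lambda=-I$. The logarithmic matrix $K(\zeta)$ is chosen precisely because its own monodromy across $\mathbb{R}_-$ is $-I$ plus a nilpotent upper-triangular block, matching the Jordan structure of the jump product, and $A_5$ is then chosen so that its rows form Jordan chains rather than eigenvectors. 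You should also add the final step --- showing the isolated singularity of $E$ at the origin is removable --- which in the paper uses the growth conditions (4) of the RH problem together with the specific zero pattern of the columns of $A_1^{-1}$ in the case $\nu>0$; this is short but cannot be omitted since a priori $E$ could have a pole.
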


\begin{remark} The factorization \eqref{singularbehzero:1} is not unique.
Indeed, for any non-singular $4\times 4$ block diagonal matrix
$$ K = \begin{pmatrix}*&*&0&0 \\
*&*&0&0 \\
0&0&*&* \\
0&0&*&*
\end{pmatrix},
$$
%$$ \mathcal K=\diag\left(\begin{pmatrix} *&*\\ *&*\end{pmatrix},\begin{pmatrix} *&*\\ *&*\end{pmatrix}\right),
%$$
one can substitute $E(\zeta)$ and $A_k$ in \eqref{singularbehzero:1} by
$E(\zeta)\mathcal K$ and $\mathcal K^{-1}A_k$, respectively.
\end{remark}

\begin{proof} The proof uses ideas from \cite{FIKN}, see also \cite{Claeys2,IK,IKO} among others, but the details will be
more involved since we are now dealing with $4\times 4$ matrices.

First we consider the case where $\nu-\frac 12\not\in\zet_{\geq 0}$. Then the
matrices $A_k$ in \eqref{singularbehzero:2}--\eqref{singularbehzero:2bis} are
invertible. We can then \emph{define} $E$ by \eqref{singularbehzero:1}, i.e.,
we put
\begin{equation}\label{E:twisting} E(\zeta)
=M(\zeta)A_k^{-1}\diag(\zeta^{-\nu},\zeta^{-\nu},\zeta^{\nu},\zeta^{\nu}),\qquad
\zeta\in\Omega_k.
\end{equation}
By construction, $E$ is analytic in $\cee\setminus\bigcup_{k=0}^{9}\Gamma_k$.
We now show that $E$ is indeed entire. The relations \eqref{singularbehzero:2}
show that $E$ is analytic also on $\bigcup_{k=0}^{9}\Gamma_k\setminus
\Gamma_5$. Moreover, on $\Gamma_5=\er_-$ oriented from left to right we have
\begin{equation}\label{singularbehzero:3}
E_+^{-1}(\zeta)E_-(\zeta) =
\diag(\zeta^{\nu},\zeta^{\nu},\zeta^{-\nu},\zeta^{-\nu})_+ A_4J_5A_5^{-1}
\diag(\zeta^{-\nu},\zeta^{-\nu},\zeta^{\nu},\zeta^{\nu})_-.
\end{equation}
Recall the matrix $J$ in \eqref{J:matrix}. By a straightforward calculation,
\begin{eqnarray}
\nonumber A_4J_5 A_5^{-1} &=& A_5 \left(J_6 J_7 J_8 J_9 J_0 J_1 J_2 J_3 J_4
J_5\right) A_5^{-1}
\\
\nonumber &=& A_5 \left(\diag(J,-J) J_1 J_2 J_3 J_4 J_5\diag(J,-J)J_1 J_2 J_3
J_4 J_5\right) A_5^{-1}
\\
\label{singularbehzero:4}&=& \left(A_5 \diag(J,-J)J_1 J_2 J_3 J_4 J_5
A_5^{-1}\right)^2,
\end{eqnarray}
where in the first step we used the defining relation \eqref{singularbehzero:2}
for $A_k$, and in the second step we used that
\begin{equation}\label{Jk:sym}J_k=\diag(J,-J)J_{k+5}\diag(J,-J),\qquad
k=0,\ldots,4.\end{equation} To further evaluate \eqref{singularbehzero:4}, one
calculates
\begin{equation}\label{singularbehzero:4bis}
\begin{pmatrix}J&0\\0&-J\end{pmatrix} J_1 J_2 J_3 J_4 J_5 =
\begin{pmatrix} -e^{\nu\pi i}&0&0&0\\ 1&0&0&-e^{-\nu\pi i}\\
0&-1&e^{-\nu\pi i}&-1 \\ -1&-e^{\nu\pi i}&0&e^{-\nu\pi i}-e^{\nu\pi
i}\end{pmatrix}.
\end{equation}
It is easy to see that this matrix has eigenvalues $e^{-\nu\pi i}$ and
$-e^{\nu\pi i}$, both with multiplicity two. Moreover, the matrix $A_5$ in
\eqref{singularbehzero:2bis} was chosen in such a way that its first, second,
third and fourth row are left eigenvectors corresponding to the eigenvalues
$e^{-\nu\pi i},e^{-\nu\pi i},-e^{\nu\pi i},-e^{\nu\pi i}$, respectively. Using
this in \eqref{singularbehzero:4} we get
\begin{eqnarray*}
A_4J_5A_5^{-1} &=& \left(\diag(e^{-\nu\pi i},e^{-\nu\pi i},-e^{\nu\pi i},-e^{\nu\pi i})\right)^2\\
&=& \diag(e^{-2\nu\pi i},e^{-2\nu\pi i},e^{2\nu\pi i},e^{2\nu\pi i})\\
&=&
\diag(\zeta^{-\nu},\zeta^{-\nu},\zeta^{\nu},\zeta^{\nu})_+\diag(\zeta^{\nu},\zeta^{\nu},\zeta^{-\nu},\zeta^{-\nu})_-.
\end{eqnarray*}
Inserting this result in \eqref{singularbehzero:3} we see that $E$ is analytic
also on $\er_-$, and therefore in $\cee\setminus\{0\}$.

It remains to show that the singularity at $0$ is removable. If $\nu<0$ then we
see from the definition \eqref{E:twisting} of $E(\zeta)$ and from the behavior
\eqref{Mzero:smaller} of $M$ near the origin that $E(\zeta) = O(\zeta^{2\nu})$
as $\zeta\to 0$, so (since $2\nu>-1$) the isolated singularity at $0$ is indeed
removable. %A similar argument applies for $\nu=0$.

If $\nu>0$ and $\zeta\to 0$ in $\Omega_1$ then we find in a similar way (using
\eqref{Mzero:greater} and \eqref{E:twisting}) that
$$ E(\zeta)
=C(\zeta)\diag(\zeta^{\nu},\zeta^{-\nu},\zeta^{-\nu},\zeta^{\nu})\begin{pmatrix}
* & *
& *& * \\ 0 & 0 & *& * \\ 0 & 0 & *& * \\
* & * & *& *
\end{pmatrix} \diag(\zeta^{-\nu},\zeta^{-\nu},\zeta^{\nu},\zeta^{\nu}),\qquad
\zeta\in\Omega_1,
$$
where $C(\zeta)=O(1)$ for $\zeta\to 0$ and we used the particular form of the
matrix $A_1^{-1}$, cf.~\eqref{singularbehzero:2bis}. It follows that $E(\zeta)$
is bounded near the origin in $\Omega_1$ and so $0$ cannot be a pole. Since $0$
cannot be an essential singularity either, the singularity is indeed removable.
This ends the proof of the proposition if $\nu-\frac 12\not\in\zet_{\geq 0}$.

Next we consider the case where $\nu-\frac 12\in\zet_{\geq 0}$. In this case,
\eqref{E:twisting} and \eqref{singularbehzero:3} are replaced by
\begin{equation}\label{E:twisting:zet} E(\zeta)
=M(\zeta)A_k^{-1}K(\zeta)^{-1},\qquad \zeta\in\Omega_k,
\end{equation}
and
\begin{equation}\label{singularbehzero:3zet} E_+^{-1}(\zeta)E_-(\zeta) =
K_+(\zeta) A_4J_5A_5^{-1} K_-(\zeta)^{-1},
\end{equation}
for $\zeta\in\er_-$. Now \eqref{singularbehzero:4bis} reduces to
\begin{equation}\label{singularbehzero:4biszet}
\begin{pmatrix}J&0\\0&-J\end{pmatrix} J_1 J_2 J_3 J_4 J_5 =
\begin{pmatrix} \mp i&0&0&0\\ 1&0&0&\pm i\\
0&-1&\mp i&-1 \\ -1&\mp i&0&\mp 2i\end{pmatrix},
\end{equation} in the case where $e^{\nu\pi i}=\pm i$ respectively. All the eigenvalues of this
matrix have the same value $e^{-\nu\pi i}=-e^{\nu\pi i}=\mp i$. The eigenvector
space corresponding to this eigenvalue is two-dimensional. The matrix $A_5$ in
\eqref{singularbehzero:2biszet} was chosen in such a way that its third and
first row, and similarly its fourth and second row, form Jordan chains in the
sense that
\begin{equation*}A_5 \begin{pmatrix}J&0\\0&-J\end{pmatrix} J_1 J_2 J_3 J_4 J_5 A_5^{-1} =
\mp\begin{pmatrix} i&0&1&0\\ 0&i&0&1\\
0&0&i&0 \\ 0&0&0&i\end{pmatrix}.
\end{equation*}
Hence by \eqref{singularbehzero:4},
$$ A_4J_5A_5^{-1} = \begin{pmatrix} i&0&1&0\\ 0&i&0&1\\
0&0&i&0 \\ 0&0&0&i\end{pmatrix}^2 = \begin{pmatrix} -1&0&2i&0\\ 0&-1&0&2i\\
0&0&-1&0 \\ 0&0&0&-1\end{pmatrix}.
$$
A straightforward calculation shows that this matrix can be written in the form
$$ A_4J_5A_5^{-1}=K_+(\zeta)^{-1}K_-(\zeta),
$$
recall \eqref{singularbehzero:1zet:K}. Inserting this in
\eqref{singularbehzero:3zet} shows that $E$ is analytic on $\er_-$. By
definition, $E$ is also analytic in $\cee\setminus\er_-$. Finally, one shows as
before that the singularity at the origin is removable, so $E$ is analytic in
the entire complex plane.
\end{proof}

We need the following symmetry relation for $E(\zeta)$.

\begin{lemma} (Symmetry of $E$). Assume that $r_1=r_2=:r$ and define $J$ in \eqref{J:matrix}.
With the notations in Proposition~\ref{prop:behaviorMzero}, we have the
symmetry relation
\begin{equation}\label{singularbehzero:9}
E(\zeta) = \begin{pmatrix}J&0\\0&-J\end{pmatrix}
E(-\zeta)\diag(1,1,-1,-1),\qquad \zeta\in\cee.
\end{equation}
\end{lemma}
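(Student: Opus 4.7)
The plan is to leverage that $E$ is entire (by Proposition~\ref{prop:behaviorMzero}) together with the symmetry \eqref{symmetry:special} of $M$. Set
\[
\wtil E(\zeta):=\begin{pmatrix}J&0\\0&-J\end{pmatrix}E(-\zeta)\diag(1,1,-1,-1).
\]
Since $E$ is entire, so is $\wtil E$. Hence \eqref{singularbehzero:9} is equivalent to $\wtil E\equiv E$ on all of $\cee$, and by analytic continuation it suffices to verify this identity on a single sector $\Omega_k$.

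Fix $\zeta\in\Omega_k$ with $k\in\{0,1,2,3,4\}$, so that $-\zeta\in\Omega_{k+5}$ lies in the opposite half-plane. I apply the defining formula \eqref{E:twisting} on each side and substitute $M(-\zeta)=\diag(J,-J)\,M(\zeta)\,\diag(J,-J)$ from \eqref{symmetry:special} (available since we are assuming $r_1=r_2$). Because $\diag(J,-J)^2=I$, the common factor $M(\zeta)$ cancels. Using the principal-branch identity $(-\zeta)^{\pm\nu}=e^{\mp i\nu\pi}\zeta^{\pm\nu}$ valid for $\Im\zeta>0$, the asserted equality collapses to the purely algebraic relation
\[
A_k=\diag(e^{-i\nu\pi},e^{-i\nu\pi},-e^{i\nu\pi},-e^{i\nu\pi})\,A_{k+5}\,\diag(J,-J),
\]
relating the two constant matrices from \eqref{singularbehzero:1}.

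This relation need only be checked for one value of $k$. Taking $k=0$ and using the explicit form of $A_5$ in \eqref{singularbehzero:2bis}, together with $A_0=A_1J_1^{-1}$ (from \eqref{singularbehzero:2}) and the explicit $A_1$, a direct matrix computation confirms the identity; the only nontrivial step is the simplification $2e^{-i\nu\pi}\cos(\nu\pi)-e^{-2i\nu\pi}=1$. That the relation then propagates to all remaining $k$ is automatic from the recursion $A_{k+1}=A_kJ_{k+1}$ combined with the jump symmetry \eqref{Jk:sym}, since
\[
\diag(J,-J)J_{k+1}=J_{k+6}\diag(J,-J)
\]
lets the diagonal factor $\diag(e^{-i\nu\pi},e^{-i\nu\pi},-e^{i\nu\pi},-e^{i\nu\pi})$ commute through the recursion in the required way.

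The one subtlety, which I expect to be the main technical obstacle, is the logarithmic case $\nu-\tfrac12\in\zet_{\geq 0}$, where the diagonal factor in the definition of $E$ is replaced by $K(\zeta)^{-1}$ from \eqref{singularbehzero:1zet:K}. One must then track how $K(\zeta)$ transforms under $\zeta\mapsto -\zeta$—which introduces the same type of off-diagonal contribution that appeared in the analysis of $K_+K_-^{-1}$ on $\er_-$ in the proof of Proposition~\ref{prop:behaviorMzero}—and then verify the analogous algebraic identity using the explicit form \eqref{singularbehzero:2biszet} of $A_5$. The structure of the argument is identical; only the bookkeeping is more delicate.
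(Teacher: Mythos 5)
Your proof is correct and follows essentially the same route as the paper: apply the factorization \eqref{E:twisting} together with the $M$-symmetry \eqref{symmetry:special}, reduce the claim to the purely algebraic identity
\[
A_0=\diag(e^{-i\nu\pi},e^{-i\nu\pi},-e^{i\nu\pi},-e^{i\nu\pi})\,A_5\,\diag(J,-J),
\]
and then propagate to all sectors using \eqref{singularbehzero:2} and \eqref{Jk:sym} (or, equivalently, by analytic continuation, since $E$ is entire). The only difference is how the base identity is verified: you check it by direct matrix multiplication from the explicit $A_1$ and $A_5$ in \eqref{singularbehzero:2bis}, via $A_0=A_1J_1^{-1}$, whereas the paper rewrites the equivalent quantity $A_0\diag(J,-J)A_5^{-1}$ as $A_5\bigl(\diag(J,-J)J_1J_2J_3J_4J_5\bigr)A_5^{-1}$ and diagonalizes it immediately by reusing the left-eigenvector property of the rows of $A_5$ already established in the proof of Proposition~\ref{prop:behaviorMzero}. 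Your computation is equally valid (and the identity $2e^{-i\nu\pi}\cos(\nu\pi)-e^{-2i\nu\pi}=1$ you flag is indeed what makes it close); the paper's version is slightly more economical because it avoids recomputing what the construction of $A_5$ already encodes. Your handling of the logarithmic case $\nu-\tfrac12\in\zet_{\geq 0}$ is deferred in the same spirit as the paper's.
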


\begin{proof} We will give the proof for $\nu-\frac 12\not\in\zet_{\geq 0}$; the other
case follows from similar considerations.

From the definition \eqref{singularbehzero:1} of $E(\zeta)$ and the symmetry
relation \eqref{symmetry:special} of the RH problem for $M(\zeta)$ it follows
that
\begin{multline}\label{singularbehzero:8} E(\zeta) =
\diag(J,-J) E(-\zeta)
\diag((-\zeta)^{\nu},(-\zeta)^{\nu},(-\zeta)^{-\nu},(-\zeta)^{-\nu})
\\ \times A_{k}\diag(J,-J) A_{k+5}^{-1}\diag(\zeta^{-\nu},\zeta^{-\nu},\zeta^{\nu},\zeta^{\nu}),
\end{multline}
if $\zeta\in\Omega_{k+5}$, for any $k=0,\ldots,4$. We will use this relation
for $k=0$. Then
\begin{eqnarray}\nonumber A_{0}\diag(J,-J)A_{5}^{-1} &=& A_5 \left( J_6J_7J_8J_9J_0\diag(J,-J)\right) A_5^{-1}\\
\nonumber &=&
A_5 \left( \diag(J,-J)J_1J_2J_3J_4J_5\right) A_5^{-1}\\
\label{hulpsym} &=& \diag(e^{-\nu\pi i},e^{-\nu\pi i},-e^{\nu\pi i},-e^{\nu\pi
i}),
\end{eqnarray}
where we used again the relations \eqref{singularbehzero:2}, \eqref{Jk:sym} and the fact that the  first,
second, third and fourth row of $A_5$ are left eigenvectors corresponding to
the eigenvalues $e^{-\nu\pi i},e^{-\nu\pi i}$, $-e^{\nu\pi i},-e^{\nu\pi i}$ of
the matrix \eqref{singularbehzero:4bis}, respectively. Inserting
\eqref{hulpsym} in \eqref{singularbehzero:8} (with $k=0$), and using the
principal branches of $\zeta^{\nu}$ and $\zeta^{-\nu}$ with $\zeta\in\Omega_5$,
we obtain the desired relation \eqref{singularbehzero:9} for
$\zeta\in\Omega_5$. From \eqref{Jk:sym} and \eqref{singularbehzero:8} it
follows that the relation \eqref{singularbehzero:9} remains valid in the other
sectors $\Omega_{k+5}$, $k=0,\ldots,4$ and hence in the entire complex plane.
\end{proof}

We are now ready to prove the following lemma.

\begin{lemma}\label{cor:residue} Assume that $r_1=r_2=:r$.
With the notations in Proposition~\ref{prop:behaviorMzero}, we have
\begin{equation}\label{singularbehzero:5} \frac{\partial M}{\partial \zeta}M^{-1}(\zeta) = \frac{\partial
E}{\partial \zeta}E^{-1}(\zeta)+
\frac{\nu}{\zeta}E(\zeta)\diag(1,1,-1,-1)E^{-1}(\zeta), \qquad \zeta\in\cee,
\end{equation}
if $\nu-\frac 12\not\in\zet_{\geq 0}$. The first term in the right hand side of \eqref{singularbehzero:5} is analytic,
whereas the second term behaves as
\begin{equation}\label{singularbehzero:6}
\frac{1}{\zeta} \diag\left(\begin{pmatrix}0&\nu \\ \nu&0 \end{pmatrix},
\begin{pmatrix}0&-\nu \\ -\nu&0 \end{pmatrix}\right)+O(1),\qquad \zeta\to 0.
\end{equation}
If $\nu-\frac 12\in\zet_{\geq 0}$ then the same conclusions hold except that,
in the right hand side of \eqref{singularbehzero:5}, we have
an extra term of order $O(z^{2\nu-1})$ as $z\to 0$.
\end{lemma}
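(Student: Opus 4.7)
The plan is to differentiate the factorization \eqref{singularbehzero:1} of Proposition~\ref{prop:behaviorMzero} directly and then isolate the singular part at the origin using the symmetry \eqref{singularbehzero:9}. Assume first $\nu-\tfrac{1}{2}\notin\zet_{\geq 0}$ and set $D(\zeta):=\diag(\zeta^{\nu},\zeta^{\nu},\zeta^{-\nu},\zeta^{-\nu})$, so that $M(\zeta)=E(\zeta)D(\zeta)A_k$ on $\Omega_k$. Since the constant factor $A_k$ cancels in the product $M'M^{-1}$, we get
\[
  M'(\zeta)M^{-1}(\zeta)=E'(\zeta)E^{-1}(\zeta)+E(\zeta)\bigl(D'(\zeta)D^{-1}(\zeta)\bigr)E^{-1}(\zeta),
\]
and the identity $D'D^{-1}=(\nu/\zeta)\diag(1,1,-1,-1)$ yields \eqref{singularbehzero:5}. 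The formula is independent of the sector $\Omega_k$, consistently with the fact that $M'M^{-1}$ has no jumps: since every $J_k$ is constant, $M'_+M_+^{-1}=M'_-J_kJ_k^{-1}M_-^{-1}=M'_-M_-^{-1}$ across each $\Gamma_k$.

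Analyticity of $E'E^{-1}$ requires invertibility of $E$ on all of $\cee$, which I would obtain by a Liouville argument for $\det M$. Each jump matrix $J_k$ in Figure~\ref{fig:modelRHP} is either unipotent or a signed permutation of determinant $1$ (direct inspection of $J_0$ and $J_5$ handles the two nontrivial cases), so $\det M$ is analytic in $\cee\setminus\{0\}$; the asymptotics \eqref{M:asymptotics} with $\det\Aa=\pm 1$ and $\det\diag((-\zeta)^{-1/4},\zeta^{-1/4},(-\zeta)^{1/4},\zeta^{1/4})\equiv 1$ show $\det M\to\pm 1$ at infinity; and the factorization $M=EDA_k$ with $\det D\equiv 1$ forces at most a finite-order pole at $0$. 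Hence $\det M$ is a rational function whose only possible zeros/poles are at the origin, and the limit at infinity forces $\det M\equiv\pm 1$. Consequently $\det E$ is a nonzero constant, so $E^{-1}$ is entire and $E'E^{-1}$ is analytic on $\cee$. To extract the residue, specialize \eqref{singularbehzero:9} to $\zeta=0$:
\[
  E(0)=\begin{pmatrix}J&0\\ 0&-J\end{pmatrix}E(0)\,\diag(1,1,-1,-1),
\]
which rearranges to $E(0)\diag(1,1,-1,-1)E^{-1}(0)=\diag(J,-J)$. Multiplying by $\nu/\zeta$ gives precisely the residue pattern \eqref{singularbehzero:6}, while $E'E^{-1}$ supplies the analytic $O(1)$ remainder.

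For the logarithmic case $\nu-\tfrac{1}{2}\in\zet_{\geq 0}$, I would factor $K(\zeta)=D(\zeta)L(\zeta)$ with $L(\zeta):=I+\frac{\log\zeta}{\pi}N$ and $N:=E_{1,3}+E_{2,4}$. Because $N^2=0$ we get $L^{-1}=I-(\log\zeta/\pi)N$ and $L'L^{-1}=(\pi\zeta)^{-1}N$; conjugating by $D$ yields $DL'L^{-1}D^{-1}=\pi^{-1}\zeta^{2\nu-1}N$, which is $O(\zeta^{2\nu-1})$ by the hypothesis $2\nu-1\in\zet_{\geq 0}$. The computation
\[
  M'M^{-1}=E'E^{-1}+E(D'D^{-1})E^{-1}+E(DL'L^{-1}D^{-1})E^{-1}
\]
therefore adds one extra term $E\cdot\pi^{-1}\zeta^{2\nu-1}N\cdot E^{-1}=O(\zeta^{2\nu-1})$ to the right-hand side of \eqref{singularbehzero:5}, while the analyticity of $E'E^{-1}$ and the residue identification via \eqref{singularbehzero:9} go through verbatim. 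The main subtle point in the whole argument is the structural identity $E(0)\diag(1,1,-1,-1)E^{-1}(0)=\diag(J,-J)$: this is where the symmetry \eqref{singularbehzero:9} does the real work in pinning down the prescribed block-antidiagonal form of the residue, whereas everything else reduces to formal differentiation and a standard Liouville-type determinant argument.
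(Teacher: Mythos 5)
Your proposal is correct and follows essentially the same route as the paper's proof: differentiate the factorization $M=E\,\diag(\zeta^{\nu},\zeta^{\nu},\zeta^{-\nu},\zeta^{-\nu})A_k$ (the constant $A_k$ cancels in $M'M^{-1}$), and extract the residue at the origin by substituting $\zeta=0$ into the symmetry relation \eqref{singularbehzero:9}, which gives $E(0)\diag(1,1,-1,-1)E^{-1}(0)=\diag(J,-J)$. Your two supplementary arguments — the Liouville argument (every $\det J_k=1$ and $\det\Aa=1$, so $\det M\equiv1$, hence $\det E$ is a nonzero constant and $E'E^{-1}$ is entire) and the nilpotent factorization $K=D\bigl(I+\tfrac{\log\zeta}{\pi}N\bigr)$ with $N^2=0$ for the logarithmic case — usefully fill in details that the paper leaves implicit (the paper simply asserts the analyticity of $E'E^{-1}$ and dismisses the logarithmic case as ``similar'').
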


\begin{proof} We restrict ourselves to the case where $\nu-\frac 12\not\in\zet_{\geq 0}$; the other case is similar.
The equation \eqref{singularbehzero:5} follows immediately from
\eqref{singularbehzero:1}. Next, we will show that the second term of
\eqref{singularbehzero:5} behaves as \eqref{singularbehzero:6} near $\zeta=0$.
To this end, we must show that \begin{equation}\label{singularbehzero:7}
E(0)\diag(1,1,-1,-1)E^{-1}(0) = \left(\begin{pmatrix}0&1 \\ 1&0
\end{pmatrix},
\begin{pmatrix}0&-1 \\ -1&0 \end{pmatrix}\right).
\end{equation}
This follows by substituting $\zeta=0$ in \eqref{singularbehzero:9}.
\end{proof}

\begin{remark}\label{remark:E0:pattern}
Substituting $\zeta=0$ in \eqref{singularbehzero:9} we get a number of
equations for the entries of $E(0)$. It is then straightforward to check that
$E(0)$ must be of the form \eqref{E0:pattern:0}.
%\begin{equation}\label{E0:pattern} E(0) =
%\begin{pmatrix} 1&0&-1&0\\1&0&1&0\\0&1&0&1\\0&-1&0&1
%\end{pmatrix}
%\begin{pmatrix}*&*&0&0 \\
%*&*&0&0 \\
%0&0&*&* \\
%0&0&*&*
%\end{pmatrix},
%\end{equation}
%for certain unspecified entries $*$.
\end{remark}

\subsection{Lax pair and compatibility conditions}

Now we proceed with the proof of Theorem~\ref{theorem:Painleve2modelrhp}. We
will derive the Lax pair corresponding to the RH problem for $M(\zeta)$.
Throughout this section we take a fixed $\nu>-1/2$, $\tau\in\er$ and we let
$r_1=r_2=1$. First we obtain a differential equation with respect to $\zeta$.

\begin{proposition}\label{prop:diffeq1}
We have the differential equation
\begin{equation}\label{diffeq1}
\frac{\partial M}{\partial\zeta}=UM
\end{equation}
where (recall \eqref{M1:explicit})
\begin{equation}\label{def:U:Lax}
U:=\begin{pmatrix} -c+\tau & d+\nu\zeta^{-1} & i & 0 \\ -d+\nu\zeta^{-1} & c-\tau & 0 & i \\
-i(-\zeta+g+a+s) & -i(b+h) & c+\tau & d-\nu\zeta^{-1} \\
-i(b+h) & -i(\zeta+g+a+s) & -d-\nu\zeta^{-1} & -c-\tau
\end{pmatrix}.
\end{equation}
%\begin{equation}\label{def:U:Lax:long}
%U:=\begin{pmatrix} 0 & \nu/\zeta & i & 0 \\ \nu/\zeta & 0 & 0 & i \\
%i(\zeta-s) & 0 & 0 & -\nu/\zeta \\
%0 & -i(\zeta+s) & -\nu/\zeta & 0
%\end{pmatrix}+M_1\begin{pmatrix} 0 & 0 & 0 & 0 \\ 0 & 0 & 0 & 0 \\
%i & 0 & 0 & 0 \\
%0 & -i & 0 & 0
%\end{pmatrix}-\begin{pmatrix} 0 & 0 & 0 & 0 \\ 0 & 0 & 0 & 0 \\
%i & 0 & 0 & 0 \\
%0 & -i & 0 & 0
%\end{pmatrix}M_1.
%\end{equation}
\end{proposition}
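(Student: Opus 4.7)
I set $U(\zeta) := (\partial_\zeta M)(\zeta)\, M^{-1}(\zeta)$ and aim to show that $U$ is a rational function with a simple pole only at $\zeta = 0$ and linear polynomial growth at $\zeta=\infty$, then read off $U$ explicitly by matching three pieces of data. First, because every jump matrix $J_k$ of RH problem~\ref{rhp:modelM} is independent of $\zeta$, differentiating $M_+ = M_- J_k$ on each $\Gamma_k$ gives $(\partial_\zeta M)_+ = (\partial_\zeta M)_- J_k$, and hence $U_+ = U_-$ on every ray. Thus $U$ extends analytically across all ten rays, so it is meromorphic on $\cee$ with possible singularities only at $0$ and $\infty$.

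For the behaviour at $\zeta=0$, Lemma~\ref{cor:residue} provides the decomposition
\[
    U(\zeta) \;=\; E'(\zeta)\, E^{-1}(\zeta) \;+\; \frac{\nu}{\zeta}\, E(\zeta)\diag(1,1,-1,-1)\, E^{-1}(\zeta) \;+\; (\text{regular at }0),
\]
with $E$ entire. The symmetry identity \eqref{singularbehzero:7} evaluates $E(0)\diag(1,1,-1,-1)E^{-1}(0)$ in closed form, so the residue of $U$ at $\zeta=0$ equals $\nu\bigl(E_{1,2}+E_{2,1}-E_{3,4}-E_{4,3}\bigr)$, matching the $\nu/\zeta$ entries in the proposed formula \eqref{def:U:Lax}. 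The exceptional case $\nu - \tfrac12 \in \zet_{\geq 0}$ only introduces an additional $O(\zeta^{2\nu-1})$ term, which is regular since $\nu\geq 1/2$.

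For the behaviour at infinity, I insert the factorisation $M = P(\zeta)\,\tilde D(\zeta)\,\Aa\,\tilde E(\zeta)$ from \eqref{M:asymptotics}, with $P = I + M_1/\zeta + M_2/\zeta^2 + \cdots$, $\tilde D = \diag((-\zeta)^{-1/4}, \zeta^{-1/4}, (-\zeta)^{1/4}, \zeta^{1/4})$, and $\tilde E$ the diagonal exponential factor. The product rule gives
\[
    U \;=\; P'P^{-1} \;+\; P\,\tilde D'\tilde D^{-1}\,P^{-1} \;+\; P\,\tilde D\,\Aa\,\tilde E'\tilde E^{-1}\,\Aa^{-1}\,\tilde D^{-1}\,P^{-1},
\]
where the first summand is $O(\zeta^{-2})$ and the second is $O(\zeta^{-1})$. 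For the third summand, the block structure of $\Aa$ reduces $\Aa\,\tilde E'\tilde E^{-1}\,\Aa^{-1}$ to two independent $2\times 2$ conjugations, and the subsequent conjugation by $\tilde D$ (using $r_1 = r_2 = 1$) produces a Laurent polynomial $V_1\,\zeta + V_0 + V_{-1}\,\zeta^{-1}$ with
\begin{align*}
    V_1 &= i(E_{3,1} - E_{4,2}), \\
    V_0 &= \tau(E_{1,1}-E_{2,2}+E_{3,3}-E_{4,4}) + i(E_{1,3}+E_{2,4}) - is(E_{3,1}+E_{4,2}).
\end{align*}
Conjugation by $P$ preserves $V_1\,\zeta$ and shifts the constant term to $V_0 + [M_1, V_1]$, so $U(\zeta) = V_1\,\zeta + \bigl(V_0 + [M_1,V_1]\bigr) + O(\zeta^{-1})$ at infinity.

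Combining the three analyses, Liouville's theorem forces
\[
    U(\zeta) \;=\; V_1\,\zeta \;+\; \bigl(V_0 + [M_1, V_1]\bigr) \;+\; \frac{\nu}{\zeta}\bigl(E_{1,2}+E_{2,1}-E_{3,4}-E_{4,3}\bigr).
\]
Substituting the explicit entries of $M_1$ from \eqref{M1:explicit} into $[M_1,V_1]$ is a routine $4\times 4$ computation, and a direct comparison with \eqref{def:U:Lax} confirms each entry. The main obstacle is purely computational --- the bookkeeping for the $\Aa$- and $\tilde D$-conjugations and for the commutator with $M_1$; the essential structural input is the symmetry of $E(0)$ provided by Lemma~\ref{cor:residue}, which pins down the residue at the origin without any additional equations.
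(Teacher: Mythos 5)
Your proposal is correct and follows essentially the same approach as the paper: identify $U := (\partial_\zeta M)M^{-1}$ as meromorphic on $\cee$ with only a possible pole at the origin (using $\zeta$-independence of the jumps), determine the pole from Lemma~\ref{cor:residue}, determine the polynomial part at infinity from the asymptotic expansion \eqref{M:asymptotics}, and close with Liouville. You have merely made explicit the intermediate bookkeeping (the $\Aa$- and $\tilde D$-conjugations and the commutator $[M_1,V_1]$) that the paper relegates to the phrase ``the terms which are polynomial in $\zeta$ are as in \eqref{def:U:Lax}.''
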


\begin{proof}
On account of \eqref{M:asymptotics}, we have for $\zeta\to\infty$,
\begin{multline}\label{diffeq1:RH}
\frac{\partial M}{\partial\zeta}M^{-1} =
\left(I+\frac{M_1}{\zeta}+\frac{M_2}{\zeta^2}+\ldots\right)
\begin{pmatrix} \tau & 0 & i(1-s\zeta^{-1}) & 0 \\ 0 & -\tau & 0 & i(1+ s\zeta^{-1}) \\
i(\zeta-s) & 0 & \tau & 0 \\
0 & -i( \zeta+ s) & 0 & -\tau
\end{pmatrix}\\
\left(I-\frac{M_1}{\zeta}+\frac{M_1^2-M_2}{\zeta^2}+\ldots\right)+O(\zeta^{-1}).
\end{multline}
Since the jump matrices in the RH problem for $M$ are independent of $\zeta$,
the matrix $\frac{\partial M(\zeta)}{\partial \zeta}M(\zeta)^{-1}$ is analytic
for $\zeta\in\cee\setminus\{0\}$, and it has a first order pole at $\zeta=0$
given by \eqref{singularbehzero:6}. The terms in \eqref{diffeq1:RH} which are
polynomial in $\zeta$ are as in \eqref{def:U:Lax} and so by a standard
application of Liouville's theorem we obtain
\eqref{diffeq1}--\eqref{def:U:Lax}.
\end{proof}

From the proof of Proposition~\ref{prop:diffeq1} we can deduce more. As already
mentioned, we know that the $\zeta^{-1}$-coefficient of \eqref{diffeq1:RH} must
have the form in \eqref{singularbehzero:6}.
%This yields the relation
%\begin{multline}\begin{pmatrix}0&\nu&0&0\\ \nu&0&0&0 \\ 0&0&0&-\nu\\ 0&0&-\nu&0
%\end{pmatrix} =
%\begin{pmatrix} 0 & 0 & -is & 0 \\ 0 & 0 & 0 & is \\
%0 & 0 & 0 & 0 \\
%0 & 0 & 0 & 0
%\end{pmatrix}+M_1\begin{pmatrix} 0 & 0 & i & 0 \\ 0 & 0 & 0 & i \\
%-is & 0 & 0 & 0 \\
%0 & -i s & 0 & 0
%\end{pmatrix}-\begin{pmatrix} 0 & 0 & i & 0 \\ 0 & 0 & 0 & i \\
%-is & 0 & 0 & 0 \\
%0 & -i s & 0 & 0
%\end{pmatrix}M_1\\+M_2\begin{pmatrix} 0 & 0 & 0 & 0 \\ 0 & 0 & 0 & 0 \\
%i & 0 & 0 & 0 \\
%0 & -i & 0 & 0
%\end{pmatrix}-M_1\begin{pmatrix} 0 & 0 & 0 & 0 \\ 0 & 0 & 0 & 0 \\
%i & 0 & 0 & 0 \\
%0 & -i & 0 & 0
%\end{pmatrix}M_1+\begin{pmatrix} 0 & 0 & 0 & 0 \\ 0 & 0 & 0 & 0 \\
%i & 0 & 0 & 0 \\
%0 & -i & 0 & 0
%\end{pmatrix}(-M_2+M_1^2)\\
%+M_1\diag(\tau,-\tau,\tau,-\tau)-\diag(\tau,-\tau,\tau,-\tau)M_1.
%\end{multline}
%Evaluating the top rightmost $2\times 2$ block of this equation and using
%\eqref{M1:explicit} yields
%$$ 0=\begin{pmatrix}-is & 0 \\ 0 & is\end{pmatrix}+
%i\begin{pmatrix}a&b\\-b&-a\end{pmatrix}+i\begin{pmatrix}g&-h\\
%h&-g\end{pmatrix}+
%\begin{pmatrix}c&d\\ d&c\end{pmatrix}\begin{pmatrix}i&0 \\
%0&-i\end{pmatrix}\begin{pmatrix}c&d\\
%d&c\end{pmatrix}+\begin{pmatrix}0& -2id\tau \\ 2id\tau &0\end{pmatrix},
%$$
%or equivalently
Evaluating the top rightmost $2\times 2$ block of this $\zeta^{-1}$-coefficient
we find after some calculations,
$$ i\begin{pmatrix}
a+c^2-d^2+g-s & b-h-2\tau d \\ -b+h+2\tau d & -a-c^2+d^2-g+s
\end{pmatrix} = 0.
$$
Hence we obtain the following result.

\begin{lemma}\label{lemma:threerelations}
The numbers $a,b,c,d,\ldots$ in \eqref{M1:explicit} satisfy the relations
\begin{align}\label{threeextrarelations:symm} & g=-a-c^2+d^2+s,
\\ \label{threeextrarelations:symm2} & h=b-2\tau d.
\end{align}
\end{lemma}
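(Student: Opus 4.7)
The plan is to complete the matching computation for $U(\zeta):=\left(\partial M/\partial\zeta\right)M^{-1}$ whose setup is already in place. By Proposition~\ref{prop:diffeq1} the polynomial part of $U(\zeta)$ is the matrix \eqref{def:U:Lax}, while Lemma~\ref{cor:residue} pins down the simple pole at $\zeta=0$: the residue is
\[
R_0:=\diag\!\left(\begin{pmatrix}0&\nu\\ \nu&0\end{pmatrix},\begin{pmatrix}0&-\nu\\ -\nu&0\end{pmatrix}\right).
\]
Since the jump matrices in the RH problem are $\zeta$-independent, $U(\zeta)$ is rational with singularities only at $0$ and $\infty$, so by Liouville $U(\zeta)=U_{\mathrm{pol}}(\zeta)+R_0/\zeta$, and hence the $\zeta^{-1}$ coefficient in the large-$\zeta$ expansion of the product in \eqref{diffeq1:RH} must equal $R_0$. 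Crucially, the top-right $2\times 2$ block of $R_0$ is the zero matrix.

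First I would write the middle factor of \eqref{diffeq1:RH} as $P(\zeta)=P_1\zeta+P_0+P_{-1}\zeta^{-1}$ and expand the triple product; the $\zeta^{-1}$ coefficient equals
\[
P_{-1}+[M_1,P_0]+[M_2,P_1]+P_1M_1^2-M_1P_1M_1.
\]
The key structural observation is that $P_1$ has only two nonzero entries, located in the lower-left $2\times 2$ block. An easy check shows that this forces the top-right $2\times 2$ blocks of both $[M_2,P_1]$ and $P_1M_1^2$ to vanish; in particular the unknown coefficients of $M_2$ drop out entirely. It remains to read off the top-right block of $P_{-1}+[M_1,P_0]-M_1P_1M_1$ using the explicit form \eqref{M1:explicit} of $M_1$.

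Carrying out that calculation (the only care needed is tracking the factors of $i$ coming from \eqref{M1:explicit} and from $P_0,P_1,P_{-1}$), the top-right block becomes
\[
i\begin{pmatrix} a+c^2-d^2+g-s & b-h-2\tau d\\ -b+h+2\tau d & -a-c^2+d^2-g+s\end{pmatrix}.
\]
Setting this block equal to zero and reading off the independent scalar equations (the symmetries built into \eqref{M1:explicit} force the $(2,3)$ and $(2,4)$ entries to be the negatives of the $(1,4)$ and $(1,3)$ entries respectively) yields exactly the two identities $g=-a-c^2+d^2+s$ and $h=b-2\tau d$. The hard part is really just organizational: confirming that no contribution involving $M_2$ survives in the top-right block, and that the remaining bookkeeping produces precisely these two relations rather than a more restrictive system.
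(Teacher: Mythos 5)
Your proposal reproduces the paper's argument exactly: one extracts the $\zeta^{-1}$-coefficient of the product in \eqref{diffeq1:RH}, notes that by \eqref{singularbehzero:6} its top-right $2\times 2$ block must vanish, and reads off the two scalar relations after observing that the $M_2$-dependent contributions $[M_2,P_1]$ and $P_1M_1^2$ have zero top-right block (and that the remaining $O(\zeta^{-1})$ correction $\partial_\zeta F\cdot F^{-1}=-\tfrac{1}{4\zeta}\diag(1,1,-1,-1)$ is diagonal and hence irrelevant there). The final block matrix you obtain is identical to the one displayed in the paper, so the argument is correct and is the same proof.
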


\begin{remark} By subtracting the $(1,2)$ and $(3,4)$ entries of the
$\zeta^{-1}$-coefficient in \eqref{diffeq1:RH}, we obtain the additional
relation \begin{equation}\label{threeextrarelations:symm3}
f = -2bc+c^2 d+2\tau
cd+2\tau^2 d-d^3-2sd+\nu.
\end{equation}
\end{remark}

Next we obtain a differential equation with respect to $s$:

\begin{proposition}\label{prop:diffeq2}
With the notations \eqref{M1:explicit}, we have the differential equation
\begin{equation}\label{diffeq2}
\frac{\partial M}{\partial s}=VM,
%\end{equation}
%where
%\begin{equation}\label{def:V}
\qquad\quad V:=2\begin{pmatrix} c & d & -i & 0 \\ d & c & 0 & i \\
i(-\zeta+a+g) & i(b-h) & -c & -d \\
i(h-b) & -i(\zeta+a+g) & -d & -c
\end{pmatrix}.
\end{equation}
%\begin{equation}\label{def:V:0} V:=\begin{pmatrix} 0 & 0 & -2i & 0 \\ 0 & 0 & 0 & 2i \\
%-2i\zeta & 0 & 0 & 0 \\
%0 & -2i\zeta & 0 & 0
%\end{pmatrix}
%+M_1\begin{pmatrix} 0 & 0 & 0 & 0 \\ 0 & 0 & 0 & 0 \\
%-2i & 0 & 0 & 0 \\
%0 & -2i & 0 & 0
%\end{pmatrix}-\begin{pmatrix} 0 & 0 & 0 & 0 \\ 0 & 0 & 0 & 0 \\
%-2i & 0 & 0 & 0 \\
%0 & -2i & 0 & 0
%\end{pmatrix}M_1.
%\end{equation}
\end{proposition}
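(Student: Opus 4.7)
The plan is to mirror the derivation of Proposition~\ref{prop:diffeq1} in the $s$-variable: show that $(\partial_s M)M^{-1}$ extends to an entire function of $\zeta$ with at most linear growth at infinity, identify its polynomial part by comparison with the asymptotic expansion \eqref{M:asymptotics}, and then invoke Liouville's theorem.

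First, because the jump matrices $J_k$ in RH problem~\ref{rhp:modelM} are independent of $s$, differentiating $M_+=M_-J_k$ in $s$ gives $(\partial_s M)_+=(\partial_s M)_-J_k$, so $(\partial_s M)M^{-1}$ has no jumps across any of the rays $\Gamma_k$. For analyticity at the origin I will invoke Proposition~\ref{prop:behaviorMzero}: the factorization $M(\zeta)=E(\zeta)\diag(\zeta^\nu,\zeta^\nu,\zeta^{-\nu},\zeta^{-\nu})A_k$ (or the analogous form involving $K(\zeta)$ when $\nu-1/2\in\zet_{\geq 0}$) has an entire factor $E$, and the constant connection matrices $A_k$ are determined solely by $\nu$ and the jump data, hence independent of $s$. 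Consequently
\[
(\partial_s M)M^{-1}=(\partial_s E)E^{-1},
\]
which extends analytically to $\zeta=0$.

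To find the polynomial part at infinity, write $M(\zeta)=F(\zeta)\,e^{\tilde\Theta(\zeta)}$, where $F(\zeta)=(I+M_1/\zeta+O(\zeta^{-2}))\diag((-\zeta)^{-1/4},\zeta^{-1/4},(-\zeta)^{1/4},\zeta^{1/4})\Aa$ is the algebraic prefactor from \eqref{M:asymptotics} and $\tilde\Theta=\diag(-\theta_1+\tau\zeta,\,-\theta_2-\tau\zeta,\,\theta_1+\tau\zeta,\,\theta_2-\tau\zeta)$. Since $(\partial_s F)F^{-1}=O(\zeta^{-1})$, only $F(\partial_s\tilde\Theta)F^{-1}$ contributes to the polynomial part. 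From \eqref{def:theta1} we have $\partial_s\theta_1=2(-\zeta)^{1/2}$ and $\partial_s\theta_2=2\zeta^{1/2}$. Computing $\Aa(\partial_s\tilde\Theta)\Aa^{-1}$ decouples into two independent $2\times 2$ conjugations on the index pairs $(1,3)$ and $(2,4)$ by the block structure of $\Aa$, and subsequent conjugation by $\diag((-\zeta)^{-1/4},\zeta^{-1/4},(-\zeta)^{1/4},\zeta^{1/4})$ removes every fractional power, leaving a matrix polynomial $W_1\zeta+W_0$ with
\[
W_0=-2i\,E_{1,3}+2i\,E_{2,4},\qquad W_1=-2i\,(E_{3,1}+E_{4,2}).
\]
The polynomial part of $(I+M_1/\zeta+\cdots)(W_1\zeta+W_0)(I-M_1/\zeta+\cdots)$ is then $W_1\zeta+W_0+[M_1,W_1]$, and plugging in the explicit form \eqref{M1:explicit} of $M_1$ and working out the commutator reproduces precisely the matrix $V$ of \eqref{diffeq2}.

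Finally, Liouville's theorem applied to the entire function $(\partial_s M)M^{-1}-V$, which tends to $0$ at infinity, yields $(\partial_s M)M^{-1}=V$. The main bookkeeping step is the commutator $[M_1,W_1]$ together with the cancellation of the fractional powers under the $\Aa$-conjugation; everything else is a direct parallel of Proposition~\ref{prop:diffeq1}.
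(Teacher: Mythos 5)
Your proposal is correct and follows essentially the same route as the paper: jump $s$-independence gives analyticity off the origin, the $s$-independence of the connection matrices $A_k$ in Proposition~\ref{prop:behaviorMzero} gives analyticity at the origin, and then the $\zeta\to\infty$ expansion plus Liouville's theorem determines the polynomial matrix $V$. The paper's middle matrix $\begin{pmatrix} 0 & 0 & -2i & 0 \\ 0 & 0 & 0 & 2i \\ -2i\zeta & 0 & 0 & 0 \\ 0 & -2i\zeta & 0 & 0 \end{pmatrix}$ in \eqref{diffeq2:RH} is exactly your $W_1\zeta+W_0$, and your explicit commutator identification $W_1\zeta+W_0+[M_1,W_1]=V$ is the correct bookkeeping that the paper leaves implicit.
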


\begin{proof} The matrix
$\frac{\partial M(\zeta)}{\partial s}M(\zeta)^{-1}$ is again
analytic for $\zeta\in\cee\setminus\{0\}$. It is
also analytic at $\zeta=0$, due to the fact that the matrices $A_k$ in \eqref{singularbehzero:1} and
\eqref{singularbehzero:1zet} are independent of $s$.
Moreover, \eqref{M:asymptotics} yields
\begin{equation}\label{diffeq2:RH}
\frac{\partial M}{\partial s}M^{-1} = \left(I+\frac{M_1}{\zeta}+\ldots\right)
\begin{pmatrix} 0 & 0 & -2i & 0 \\ 0 & 0 & 0 & 2i \\
-2i\zeta & 0 & 0 & 0 \\
0 & -2i\zeta & 0 & 0
\end{pmatrix}
\left(I-\frac{M_1}{\zeta}+\ldots\right),
\end{equation}
as $\zeta\to\infty$. The terms which are polynomial in $\zeta$ are given by $V$ in \eqref{diffeq2}
and so by Liouville's theorem we obtain \eqref{diffeq2}.
\end{proof}

Now we turn to the compatibility condition of the two differential equations
\eqref{diffeq1} and \eqref{diffeq2}. By computing the partial derivative
$\frac{\partial^2 M}{\partial\zeta
\partial s}= \frac{\partial^2 M}{\partial s \partial \zeta}$ in two different
ways, thereby making use of the relations \eqref{diffeq1} and \eqref{diffeq2},
one obtains the matrix relation \begin{equation}\label{compat:laxpair}
\frac{\partial U}{\partial s} = \frac{\partial V}{\partial\zeta} +VU- UV.
\end{equation}
Writing this matrix relation in entrywise form leads to the following lemma.

\begin{lemma}\label{lemma:compatibility}
The numbers $b,c,d$ in \eqref{M1:explicit}, viewed as functions of $s$, satisfy
the following system of coupled first order differential equations:
\begin{eqnarray}
\label{compat:1} c' &=& 4d^2+2s, \\
\label{compat:3} d' &=& 4(-b+cd+\tau d),\\
\label{compat:4}b'&=& -4b(c+\tau)+4d(c^2+2\tau c+2\tau^2)-4d^3-6sd
+2\nu,%\quad -4bc+4\tau b+4c^2d-4d^3-6sd+2\nu,
\end{eqnarray}
where the prime denotes the derivative with respect to $s$.
\end{lemma}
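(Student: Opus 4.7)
The plan is to extract the three scalar ODEs \eqref{compat:1}--\eqref{compat:4} from the matrix compatibility identity \eqref{compat:laxpair} by matching coefficients of powers of $\zeta$. Write
\begin{equation*}
U(\zeta) = U_1\zeta + U_0 + U_{-1}\zeta^{-1}, \qquad V(\zeta) = V_1\zeta + V_0,
\end{equation*}
where $U_1$, $V_1$, $U_{-1}$ are constant matrices, independent of $s$ and of the unknowns $a,b,c,d,g,h$. Then $\partial_\zeta V = V_1$ and $\partial_s U = \partial_s U_0$, so \eqref{compat:laxpair} is a Laurent-polynomial identity in $\zeta$ that must hold coefficient by coefficient.

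The $\zeta^2$-coefficient reads $[V_1,U_1]=0$, which is immediate since $V_1$ and $U_1$ are supported only on the $(3,1)$- and $(4,2)$-entries. The $\zeta^{-1}$-coefficient reads $[V_0, U_{-1}]=0$; this is a short $2\times 2$-block check using the off-diagonal block-$J$ form of $U_{-1}$ together with the symmetric/antisymmetric structure of the blocks of $V_0$, and no relation among the unknowns is needed. The $\zeta^1$-coefficient reads $[V_1, U_0] + [V_0, U_1] = 0$, which is likewise verified by inspection of the four non-trivial block-entries. Consequently all content of the compatibility identity is concentrated in the $\zeta^0$-coefficient,
\begin{equation}\label{zero:comp}
\partial_s U_0 \;=\; V_1 + [V_1, U_{-1}] + [V_0, U_0].
\end{equation}

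I then read off three scalar equations from \eqref{zero:comp}. At entry $(1,1)$ both $V_1$ and $[V_1, U_{-1}]$ vanish, so $-c' = ([V_0,U_0])_{1,1}$; a direct multiplication yields $([V_0,U_0])_{1,1} = -4d^2 - 2s$, giving \eqref{compat:1}. At entry $(1,2)$ the same observation produces $d' = ([V_0,U_0])_{1,2} = 4cd - 4\tau d - 4h$, and then the relation $h = b - 2\tau d$ from \eqref{threeextrarelations:symm2} converts this into \eqref{compat:3}. At entry $(3,2)$ one has $(\partial_s U_0)_{3,2} = -i(b'+h')$ and $([V_1,U_{-1}])_{3,2} = -4i\nu$, so a direct expansion of $([V_0, U_0])_{3,2}$ produces an equation involving $b'$, $h'$, $a$, $g$ and $h$. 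Differentiating $h = b - 2\tau d$ gives $h' = b' - 2\tau d'$; inserting this together with $g = -a - c^2 + d^2 + s$ and $h = b - 2\tau d$ from Lemma~\ref{lemma:threerelations} and the expressions for $c'$ and $d'$ obtained above, the auxiliary variables $a$, $g$, $h$ drop out and the equation collapses to \eqref{compat:4}. The main obstacle is purely algebraic bookkeeping: after the $4\times 4$ matrix multiplication yielding $([V_0, U_0])_{3,2}$ the terms involving $a+g$, $bc$, $ch$ and $(h-b)\tau$ must be tracked carefully, and the auxiliary relations of Lemma~\ref{lemma:threerelations} must be applied in the right order to reach the clean form \eqref{compat:4}.
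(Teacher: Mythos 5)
Your proposal is correct, and it is essentially the same argument as the paper's: both proofs extract the three ODEs by writing the compatibility identity \eqref{compat:laxpair} entrywise, reading off the $(1,1)$, $(1,2)$ and $(3,2)$ entries (the latter two yielding $d'=4(cd-\tau d-h)$ and $b'+h'=-4(2d(a+g)+bc+(h-b)\tau+ch+sd-\nu)$), and then eliminating the auxiliary unknowns $a,g,h$ by means of Lemma~\ref{lemma:threerelations}. What you add is a clean bookkeeping step: you explicitly separate the identity into its Laurent coefficients in $\zeta$, check that the $\zeta^{2}$, $\zeta^{1}$ and $\zeta^{-1}$ coefficients vanish identically (so that all content sits in the $\zeta^{0}$ equation \eqref{zero:comp}), and record $([V_1,U_{-1}])_{3,2}=-4i\nu$ as the source of the inhomogeneous term $2\nu$. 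This makes precise what the paper phrases loosely as "writing the compatibility in entrywise form gives $16$ coupled ODEs". One small imprecision: the final elimination producing \eqref{compat:4} only uses $d'=4(cd+\tau d-b)$ together with Lemma~\ref{lemma:threerelations}, not $c'$, so your remark that "the expressions for $c'$ and $d'$" are inserted is slightly overstated; this does not affect the correctness of the argument.
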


\begin{proof} One can write \eqref{compat:laxpair} in its entrywise form with the help of
\eqref{def:U:Lax} and \eqref{diffeq2}. This leads to a system of $4\times 4=16$
coupled first order differential equations. After some straightforward
calculations, one checks that these $16$ relations reduce to $4$ independent
conditions. The $(1,1)$ matrix entry yields \eqref{compat:1}, while the $(1,2)$
and $(3,2)$ entries yield
\begin{eqnarray}
\label{compat:3bis} d' &=& 4(c d-\tau d-h),\\
%\label{compat:4} b' &=& -8ad-4b c-2sd+2\nu,\\
\label{compat:4bis}b'+h'&=&-4(2d(a+g)+bc+(h-b)\tau +ch+sd-\nu).
\end{eqnarray} The relations \eqref{compat:3}--\eqref{compat:4} now
follow from \eqref{compat:3bis}--\eqref{compat:4bis} and
\eqref{threeextrarelations:symm}--\eqref{threeextrarelations:symm2}.
\end{proof}

\begin{lemma}\label{lemma:compat:derb}
The number $b$ in \eqref{M1:explicit} satisfies
\begin{equation}\label{compat:bpd:6} b' = (c+\tau)d'+4\tau^2 d
-4d^3-6sd+2\nu,
\end{equation}
where the prime denotes the derivative with respect to $s$.
\end{lemma}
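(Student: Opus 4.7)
The plan is a short algebraic manipulation: I would simply combine the three identities \eqref{compat:1}--\eqref{compat:4} from Lemma~\ref{lemma:compatibility} to eliminate the explicit $b$-term on the right-hand side of \eqref{compat:4} in favor of $d'$.

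First I would rewrite \eqref{compat:3} as
\[
d' \;=\; -4b + 4(c+\tau)d,
\]
so that multiplying by $(c+\tau)$ gives
\[
(c+\tau)d' \;=\; -4b(c+\tau) + 4(c+\tau)^2 d \;=\; -4b(c+\tau) + 4(c^2 + 2\tau c + \tau^2)d.
\]
Next I would isolate the $-4b(c+\tau)$ term appearing in \eqref{compat:4} by solving the above for it, namely $-4b(c+\tau) = (c+\tau)d' - 4(c^2 + 2\tau c + \tau^2)d$, and substitute into \eqref{compat:4}. The $4c^2 d + 8\tau c d$ contributions cancel exactly, leaving
\[
b' \;=\; (c+\tau)d' \;-\; 4\tau^2 d \;+\; 4\cdot 2\tau^2 d \;-\; 4d^3 \;-\; 6sd \;+\; 2\nu
\;=\; (c+\tau)d' + 4\tau^2 d - 4d^3 - 6sd + 2\nu,
\]
which is the claimed identity \eqref{compat:bpd:6}.

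There is no real obstacle here: both ingredients come directly from Lemma~\ref{lemma:compatibility}, which has already been established from the compatibility condition \eqref{compat:laxpair} of the Lax pair. The only thing to be careful about is the bookkeeping of the $\tau$-dependent terms, in particular checking that $(c+\tau)^2 = c^2 + 2\tau c + \tau^2$ combines correctly with the $2\tau^2$ in the coefficient $c^2 + 2\tau c + 2\tau^2$ appearing in \eqref{compat:4} to produce the clean residual $4\tau^2 d$. Thus the lemma is essentially a repackaging of \eqref{compat:3}--\eqref{compat:4} into a form in which $b'$ is expressed through $d'$ alone (modulo $c$, $d$, $s$, $\tau$, $\nu$), which is the form needed to derive the Painlev\'e~II equation in the subsequent argument.
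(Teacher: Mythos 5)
Your computation is correct and matches the paper's proof, which simply substitutes \eqref{compat:3} into \eqref{compat:4}; the $\tau$-bookkeeping works out exactly as you say. Note that your opening sentence mentions combining ``the three identities \eqref{compat:1}--\eqref{compat:4},'' but in fact only \eqref{compat:3} and \eqref{compat:4} are used, as your own calculation shows.
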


\begin{proof} This follows from substituting %\eqref{threeextrarelations:symm} and
\eqref{compat:3} in \eqref{compat:4}. \end{proof}

\begin{lemma}\label{lemma:dmodp2}
The number $d$ in \eqref{M1:explicit} satisfies
\begin{equation}\label{compat:f} d'' =
32d^3+32sd-16\tau^2 d-8\nu,
\end{equation}
where the prime denotes the derivative with respect to $s$.
\end{lemma}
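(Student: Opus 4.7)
The plan is to eliminate the auxiliary quantities $b$ and $c$ from the coupled system in Lemma~\ref{lemma:compatibility}, keeping only $d$ and $s$.

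First I would differentiate the relation $d' = 4(-b + cd + \tau d)$ from \eqref{compat:3} with respect to $s$. This yields
\[
d'' = 4\bigl(-b' + c' d + c d' + \tau d'\bigr) = 4\bigl(-b' + c' d + (c+\tau) d'\bigr).
\]
The $c' d$ term is handled immediately by \eqref{compat:1}, which gives $c' d = (4d^2 + 2s)d = 4d^3 + 2sd$. So the only remaining obstacle is to express the $-b' + (c+\tau) d'$ combination solely in terms of $d$ and $s$ (and the parameters $\tau,\nu$).

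This is exactly where Lemma~\ref{lemma:compat:derb} enters: the formula \eqref{compat:bpd:6} tells us that
\[
b' - (c+\tau) d' = 4\tau^2 d - 4d^3 - 6sd + 2\nu,
\]
and the $c$-dependence on the left hand side cancels out. Substituting this in gives
\[
d'' = 4\bigl(4d^3 + 2sd + (c+\tau) d' - b'\bigr) = 4\bigl(4d^3 + 2sd - 4\tau^2 d + 4d^3 + 6sd - 2\nu\bigr),
\]
which simplifies directly to $d'' = 32 d^3 + 32 sd - 16\tau^2 d - 8\nu$, as desired.

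In summary, the proof is a one-step algebraic manipulation: differentiate \eqref{compat:3}, and use \eqref{compat:1} together with the key identity \eqref{compat:bpd:6} to eliminate $b$, $b'$ and $c$. There is no genuine obstacle; the real work has already been done in establishing Lemmas~\ref{lemma:compatibility} and~\ref{lemma:compat:derb}, and in particular in noticing that the troublesome $b'-(c+\tau)d'$ combination (which by itself involves $c$) is actually free of $c$.
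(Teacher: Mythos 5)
Your proof is correct and follows the same route as the paper, which simply cites \eqref{compat:1}, \eqref{compat:3} and \eqref{compat:bpd:6}; you have just filled in the (straightforward) algebra explicitly, and your intermediate simplifications check out.
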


\begin{proof} This follows from \eqref{compat:1}, \eqref{compat:3} and
\eqref{compat:bpd:6}. \end{proof}

\textsc{Proof of Theorem~\ref{theorem:Painleve2modelrhp}.} It is easily seen
that any solution $d=d(s)$ to \eqref{compat:f} is of the form $$d = 2^{-1/3}
q\left(2^{2/3} (2s-\tau^2)\right),$$ where $q$ is a solution to the Painlev\'
e~II equation~\eqref{def:Painleve2}. This yields \eqref{d:Painleve2}.

Next we must show that $q(s)$ in \eqref{d:Painleve2} is indeed the
\emph{Hastings-McLeod solution} to the Painlev\'e~II equation, i.e., we must
establish \eqref{def:HastingMcLeod1}--\eqref{def:HastingMcLeod2}. If $\tau=0$
then these relations follow from the asymptotic behavior of $d$ in
\eqref{eq:asy of d} and \eqref{eq:asy of d:bis}. If $\tau\in\er\setminus\{0\}$
one could perform a similar analysis as in Sections~\ref{section:asyM} and
\ref{section:asyM:minusinfty} to show that the same asymptotics hold true. An
alternative proof follows from Section~\ref{subsection:exist:2}. There we will
use the Lax pair relations, with $q$ the Hastings-McLeod solution, to
\emph{define} the matrix $M(\zeta)$ if $\tau\neq 0$, and we will then prove
that the resulting matrix $M(\zeta)$ indeed satisfies the RH
problem~\ref{rhp:modelM}.

Finally, we need to prove the formula \eqref{c:Hamiltonean} for $c$. By
comparing \eqref{compat:1} with \eqref{Hamiltonean:der}, we obtain the
following relation for the \emph{derivative} of $c=c(s)$ with respect to $s$:
\begin{equation*}
c'(s) = -2^{-1/3}\frac{\ud}{\ud s}(u(2^{2/3} (2s-\tau^2)))+2s.
\end{equation*}
%with $\sigma$ as in \eqref{def:sigma}.
Then \eqref{c:Hamiltonean} follows by integrating this relation with respect to
$s$. The fact that the integration constant in \eqref{c:Hamiltonean} is zero,
follows from \eqref{eq:asy of c} if $\tau=0$. The case where $\tau\neq 0$ can
be obtained as in the previous paragraph. $\bol$

As in \cite{DG}, we can also obtain a differential equation with respect to the
variable~$\tau$.

\begin{proposition}\label{prop:diffeq3}
With the notations \eqref{M1:explicit}, we have the differential equation
\begin{equation}\label{diffeq3}
\frac{\partial M}{\partial\tau}=WM,\qquad
W:=\begin{pmatrix} \zeta & -2b & 0 & -2id \\ -2b & -\zeta & 2id & 0 \\
0 & -2if & \zeta & -2h \\
2if & 0 & -2h & -\zeta
\end{pmatrix}.
\end{equation}
\end{proposition}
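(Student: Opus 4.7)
The plan is to follow the Liouville-theorem argument used in Propositions~\ref{prop:diffeq1} and~\ref{prop:diffeq2}. First I would observe that none of the jump matrices $J_k$ in Figure~\ref{fig:modelRHP} depends on $\tau$, so $\frac{\partial M}{\partial\tau}M^{-1}$ has no jumps across $\bigcup_k\Gamma_k$. Moreover, in the local description near the origin supplied by Proposition~\ref{prop:behaviorMzero}, the connection matrices $A_k$ (and the logarithmic factor $K(\zeta)$ in the exceptional case) depend only on $\nu$, not on $\tau$; on each sector $\Omega_k$ one therefore has $\frac{\partial M}{\partial\tau}M^{-1}=\frac{\partial E}{\partial\tau}E^{-1}$ with $E$ entire, so any singularity at $\zeta=0$ is removable. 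Thus $\frac{\partial M}{\partial\tau}M^{-1}$ is entire and it remains only to identify its polynomial part from the expansion \eqref{M:asymptotics}.

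Differentiating \eqref{M:asymptotics} in $\tau$, only the rightmost diagonal exponential factor contributes, producing a term $\mathcal{A}\,\diag(\zeta,-\zeta,\zeta,-\zeta)\,\mathcal{A}^{-1}$ sandwiched between the outer diagonal pieces. The key algebraic observation is that $\mathcal{A}$ in \eqref{mixing:matrix} is block diagonal after the index reordering $(1,3,2,4)$, while $\diag(\zeta,-\zeta,\zeta,-\zeta)$ is constant on each of the two blocks $\{1,3\}$ and $\{2,4\}$; this diagonal matrix therefore commutes with $\mathcal{A}$, and trivially also with the outer diagonal factor $\diag((-\zeta)^{-1/4},\zeta^{-1/4},(-\zeta)^{1/4},\zeta^{1/4})$. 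Writing $M=P(\zeta)E(\zeta)$ with $P(\zeta)=I+M_1/\zeta+M_2/\zeta^2+O(\zeta^{-3})$, the computation collapses to
\begin{equation*}
\frac{\partial M}{\partial\tau}M^{-1}=P(\zeta)\,\diag(\zeta,-\zeta,\zeta,-\zeta)\,P(\zeta)^{-1}+O(\zeta^{-1})=\zeta D+[M_1,D]+O(\zeta^{-1}),
\end{equation*}
where $D:=\diag(1,-1,1,-1)$.

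By Liouville's theorem, $\frac{\partial M}{\partial\tau}M^{-1}$ equals $\zeta D+[M_1,D]$ identically. The last step is a direct entrywise check from \eqref{M1:explicit}: the commutator $[M_1,D]$ vanishes on the diagonal, and its off-diagonal entries coincide exactly with those of $W$ in \eqref{diffeq3}, while $\zeta D$ reproduces the diagonal entries $\pm\zeta$ of $W$. The one place that requires a moment of thought is the removability of the singularity at $\zeta=0$, which is really the only potential obstacle; but it is handled in the same way as in Proposition~\ref{prop:diffeq2}, and is in fact cleaner here because the $\tau$-derivative of the exponential factor produces $\zeta$ times a block-constant matrix rather than mixing the $\{1,3\}$ and $\{2,4\}$ blocks in the way the $s$-derivative does.
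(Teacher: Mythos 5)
Your proof is correct and follows the same Liouville-theorem strategy the paper uses for Propositions~\ref{prop:diffeq1} and~\ref{prop:diffeq2} (the paper does not actually spell out a proof of this proposition, but this is clearly the intended argument). Your two key observations are both accurate and are the right places to focus: (i) $\diag(\zeta,-\zeta,\zeta,-\zeta)$ commutes with $\Aa$ because $\Aa$ is block-diagonal in the index grouping $\{1,3\},\{2,4\}$ and $\diag(\zeta,-\zeta,\zeta,-\zeta)$ is scalar on each block, which makes the polynomial part of $\frac{\partial M}{\partial\tau}M^{-1}$ collapse to $\zeta D+[M_1,D]$ cleanly (unlike the $s$-derivative, which produces $\diag(-2(-\zeta)^{1/2},-2\zeta^{1/2},2(-\zeta)^{1/2},2\zeta^{1/2})$ and does not commute with $\Aa$); and (ii) the removability at $\zeta=0$ follows because the $A_k$ (and $K(\zeta)$ in the exceptional case) in Proposition~\ref{prop:behaviorMzero} are $\tau$-independent, so $\frac{\partial M}{\partial\tau}M^{-1}=\frac{\partial E}{\partial\tau}E^{-1}$ with $E$ analytic and invertible near $0$. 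The entrywise identification of $\zeta D+[M_1,D]$ with $W$ from the structured form \eqref{M1:explicit} then checks out.
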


The compatibility condition of the differential equations \eqref{diffeq1} and
\eqref{diffeq3} now reads
\begin{equation}\label{compat:laxpair2}
\frac{\partial U}{\partial \tau} = \frac{\partial W}{\partial\zeta} +WU- UW.
\end{equation}
For later use, we note that the $(1,2)$ entry of \eqref{compat:laxpair2}
yields, with the help of \eqref{threeextrarelations:symm3},
\begin{equation}\label{compat:laxpair2:b} \frac{1}{4\tau}\frac{\partial d}{\partial \tau}=b-cd-\tau d.
\end{equation}

\section{Proof of Theorem~\ref{theorem:solvability}}
\label{section:vanishing}

In this section we prove Theorem~\ref{theorem:solvability} on the solvability
of the RH problem~\ref{rhp:modelM} for $M(\zeta)$ for real values of
$r_1=r_2=:r$, $s,\tau$. The proof uses Fredholm operator theory and the
technique of a \lq vanishing lemma\rq\ if $\tau=0$. For $\tau\neq 0$ we will
follow \cite[Sec.~5]{DG}.

\subsection{Fredholm property}

In this section we show that for any $\nu>-1/2$ and $r_1,r_2,s,\tau\in\cee$
with $r_1,r_2$ having positive real part, the singular integral operator
associated to the RH problem~\ref{rhp:modelM} is Fredholm with Fredholm index
zero. This follows from a powerful general procedure that has been applied in
different settings in the literature \cite{FZ,Zhou,Zhou1}. For completeness we
give a brief account of those arguments that involve the particular structure
of our $4\times 4$ RH problem. Hereby we closely follow the exposition in
\cite[Sec.~2.3]{IKO}.

Let $\dee:=\{\zeta\in\cee\mid|\zeta|<1\}$. Denote the matrix in
\eqref{M:asymptotics} by
$$ H(\zeta):=\diag((-\zeta)^{-1/4},\zeta^{-1/4},(-\zeta)^{1/4},\zeta^{1/4})
\Aa\diag\left(e^{-\theta_1(\zeta)+\tau\zeta},e^{-\theta_2(\zeta)-\tau\zeta},e^{\theta_1(\zeta)+\tau\zeta},e^{\theta_2(\zeta)-\tau\zeta}\right).
$$
In what follows we assume for convenience that $\nu-1/2\not\in\zet_{\geq 0}$.
We define a new $4\times 4$ matrix valued function $M^{(1)}(\zeta)$ by
$$ M^{(1)}(\zeta) = \left\{\begin{array}{ll} M(\zeta)A_j^{-1}\diag(\zeta^{-\nu},\zeta^{-\nu},\zeta^{\nu},\zeta^{\nu}),
& \quad
\zeta\in\Omega_j\cap\dee, \\
M(\zeta) H^{-1}(\zeta), & \quad
\zeta\in\Omega_j\cap\overline{\dee}^{\small{c}},\end{array}\right.
$$
$j=0,\ldots,9$, with $A_j$ the matrices in \eqref{singularbehzero:1}. Here
$\overline{\dee}^{\small{c}}$ stands for the complement of the closed unit
disk.

By Proposition~\ref{prop:behaviorMzero} we have that $M^{(1)}(\zeta)$ is
analytic in $\dee$. Let $\Sigma:=\bigcup_{j=0}^9\Gamma_j\cup\partial \dee$ and
orient it as in Figure~\ref{fig:RHP:m}. Then $\Sigma$ is a \emph{complete}
contour, in the sense that $\cee\setminus \Sigma$ allows a decomposition as the
disjoint union of two sets: $\cee\setminus \Sigma=\Omega_+\cup\Omega_-$,
$\Omega_+\cap\Omega_-=\emptyset$, such that $\Sigma$ is the positively oriented
boundary of $\Omega_+$ and the negatively oriented boundary of $\Omega_-$.
Denote $\Sigma_j=\Omega_j\cap\partial\dee$ as shown in Figure~\ref{fig:RHP:m}.

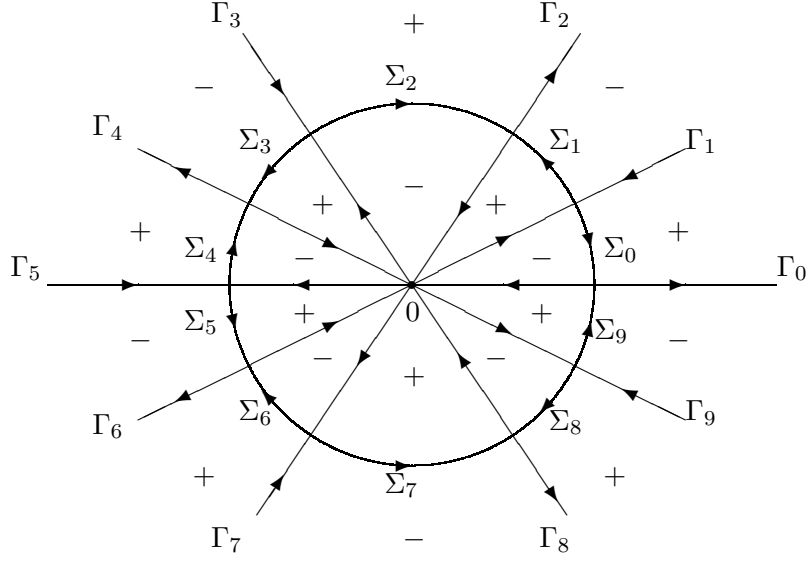
\begin{figure}[t]
%\vspace{14mm}
\begin{center}
   \setlength{\unitlength}{1.2truemm}
   \begin{picture}(100,70)(-5,2)
       \put(40,40){\line(1,0){40}}
       \put(40,40){\line(-1,0){40}}
       \put(40,40){\line(2,1){30}}
       \put(40,40){\line(2,-1){30}}
       \put(40,40){\line(-2,1){30}}
       \put(40,40){\line(-2,-1){30}}
       \put(40,40){\line(2,3){18.5}}
       \put(40,40){\line(2,-3){17}}
       \put(40,40){\line(-2,3){18.5}}
       \put(40,40){\line(-2,-3){17}}
       \put(40,40){\thicklines\circle*{1}}
       \put(39.3,36){$0$}
       \put(70,40){\thicklines\vector(1,0){.0001}}
       \put(50,40){\thicklines\vector(-1,0){.0001}}
       \put(10,40){\thicklines\vector(1,0){.0001}}
       \put(27,40){\thicklines\vector(-1,0){.0001}}
       \put(63,51.5){\thicklines\vector(-2,-1){.0001}}
       \put(51,45.5){\thicklines\vector(2,1){.0001}}

       \put(63,28.5){\thicklines\vector(-2,1){.0001}}
       \put(51,34.5){\thicklines\vector(2,-1){.0001}}

       \put(45,47.5){\thicklines\vector(-2,-3){.0001}}
       \put(56,64){\thicklines\vector(2,3){.0001}}

       \put(45,32.5){\thicklines\vector(-2,3){.0001}}
       \put(56,16){\thicklines\vector(2,-3){.0001}}

       \put(14,53){\thicklines\vector(-2,1){.0001}}
       \put(32,44){\thicklines\vector(2,-1){.0001}}

       \put(14,27){\thicklines\vector(-2,-1){.0001}}
       \put(32,36){\thicklines\vector(2,1){.0001}}

       \put(34,49){\thicklines\vector(-2,3){.0001}}
       \put(26,61){\thicklines\vector(2,-3){.0001}}

       \put(34,31){\thicklines\vector(-2,-3){.0001}}
       \put(26,19){\thicklines\vector(2,3){.0001}}

       \put(80,41){$\Gamma_0$}
       \put(70,55.0){$\Gamma_1$}
       \put(54,69){$\Gamma_2$}
       \put(18,69){$\Gamma_3$}
       \put(5,56.5){$\Gamma_4$}
       \put(-4,41){$\Gamma_5$}
       \put(5,23.5){$\Gamma_6$}
       \put(18,11){$\Gamma_7$}
       \put(54,11){$\Gamma_8$}
       \put(70,25){$\Gamma_{9}$}

       \put(61,43){$\small{\Sigma_0}$}
       \put(59.7,44){\thicklines\vector(1,-4){.0001}}
       \put(53,42){$-$}
       \put(68,45){$+$}
       \put(55,55){$\small{\Sigma_1}$}
       \put(54,54.2){\thicklines\vector(-1,1){.0001}}
       \put(48,48){$+$}
       \put(61,61){$-$}
       \put(37,62){$\small{\Sigma_2}$}
       \put(40,60){\thicklines\vector(1,0){.0001}}
       \put(39,50){$-$}
       \put(39,68){$+$}
       \put(21,55){$\small{\Sigma_3}$}
       \put(23.5,51.7){\thicklines\vector(-1,-1){.0001}}
       \put(29,48){$+$}
       \put(16,61){$-$}
       \put(15,43){$\small{\Sigma_4}$}
       \put(20.7,45){\thicklines\vector(1,4){.0001}}
       \put(27,42){$-$}
       \put(9,45){$+$}
       \put(15,35){$\small{\Sigma_5}$}
       \put(20.7,35){\thicklines\vector(1,-4){.0001}}
       \put(27,36){$+$}
       \put(9,33){$-$}
       \put(21,25){$\small{\Sigma_6}$}
       \put(23.5,28.3){\thicklines\vector(-1,1){.0001}}
       \put(29,31){$-$}
       \put(16,18){$+$}
       \put(37,17){$\small{\Sigma_7}$}
       \put(40,20){\thicklines\vector(1,0){.0001}}
       \put(39,29){$+$}
       \put(39,11){$-$}
       \put(55,24){$\small{\Sigma_8}$}
       \put(54,25.8){\thicklines\vector(-1,-1){.0001}}
       \put(48,31){$-$}
       \put(61,18){$+$}
       \put(60,34){$\small{\Sigma_{9}}$}
       \put(59.7,36){\thicklines\vector(1,4){.0001}}
       \put(53,36){$+$}
       \put(68,33){$-$}

       %\put(40,40){
       \bigcircle{40}{40}{20}

  \end{picture}
   \vspace{-10mm}
   \caption{Contour $\Sigma=\bigcup_{j=0}^9 (\Gamma_j\cup\Sigma_j)$
   for the RH problem for $M^{(1)}(\zeta)$. The figure also shows the decomposition of
   $\cee\setminus\Sigma$ into two disjoint regions $\Omega_+$ and $\Omega_-$.}
   \label{fig:RHP:m}
\end{center}
\end{figure}

Now $M^{(1)}(\zeta)$ satisfies the following RH problem.

\begin{rhp}\label{rhp:mFredholm} We look for a $4\times 4$ matrix valued function
$M^{(1)}(\zeta)$ (which also depends parametrically on $\nu>-1/2$ and on
$r_1,r_2,s,\tau\in\cee$) satisfying
\begin{itemize}
\item[(1)] $M^{(1)}(\zeta)$ is analytic for $\zeta\in\cee\setminus\Sigma$.
\item[(2)] $M^{(1)}_+(\zeta) = M^{(1)}_-(\zeta)J_{M^{(1)}}(\zeta)$ for $\zeta\in\Sigma$, where
\begin{align*}
J_{M^{(1)}}(\zeta)= & I,\qquad\qquad\qquad\qquad
\zeta\in(\Gamma_j\cap\dee)\cup\er,
\\
 J_{M^{(1)}}(\zeta)= &  H(\zeta)J_{M,j} H^{-1}(\zeta),\qquad\qquad
\zeta\in\Gamma_j\cap\overline{\dee}^c,\ j\not\in\{0,5\},
\\
J_{M^{(1)}}(\zeta)= &
\diag(\zeta^{\nu},\zeta^{\nu},\zeta^{-\nu},\zeta^{-\nu})A_j(\zeta)
H^{-1}(\zeta),\qquad \zeta\in\Sigma_j,\quad\textrm{$j$ even},
\\ J_{M^{(1)}}(\zeta)= &  H(\zeta)
A_j^{-1}(\zeta)\diag(\zeta^{-\nu},\zeta^{-\nu},\zeta^{\nu},\zeta^{\nu}),\qquad
\zeta\in\Sigma_j,\quad\textrm{$j$ odd},
\end{align*}
\item[(3)] $M^{(1)}(\zeta)=I+O(\zeta^{-1})$ for $\zeta\to\infty$.
\end{itemize}
\end{rhp}

We observe that $J_{M^{(1)}}-I$ decays exponentially as $\zeta\to\infty$ along
$\Sigma$. Also observe that the contour $\Sigma$ has $11$ points of
self-intersection, of which one is the origin and the other $10$ lie on the
unit circle. At each fixed point of self-intersection lying on the unit circle,
say $P$, order the contours that meet at $P$ counterclockwise, starting from
any contour that is oriented outwards from $P$. If we denote the limiting value
of the jump matrices over the $j$th contour at $P$ by $J_{M^{(1)},j}(P)$,
$j=1,\ldots,4$, then a little calculation shows that we have the cyclic
relation
$$ J_{M^{(1)},1}(P)J_{M^{(1)},2}^{-1}(P)J_{M^{(1)},3}(P)J_{M^{(1)},4}^{-1}(P) = I.
$$
The situation at the origin is trivial since all jump matrices there are the
identity.

Now we will obtain a factorization
$$ J_{M^{(1)}}(\zeta) =: (v_-(\zeta))^{-1}v_+(\zeta).
$$
Outside small neighborhoods of the $10$ points of self-intersection on
$\partial\dee$ we choose the trivial factorization $v_+=J_{M^{(1)}}$, $v_-=I$.
Using the cyclic relations we are then able \cite{FZ,Zhou} to choose a
factorization of $J_{M^{(1)}}(\zeta)$ in the remaining neighborhoods in such a
way that $v_+$ (or $v_-$) is continuous along the boundary of each connected
component of $\Omega_+$ (or $\Omega_-$, respectively). Standard arguments
\cite{FZ,Zhou} then imply the Fredholm property with Fredholm index zero.

%\subsection{Existence of $M(\zeta)$}
%\label{subsection:exist}

\subsection{Existence of $M(\zeta)$ if $\tau=0$}
\label{subsection:exist:1}

Now we show the existence of $M(\zeta)$ if $r_1,r_2>0$, $s\in\er$ and $\tau=0$.
Thanks to the Fredholm property in the previous section, the existence follows
if we can prove that the \lq homogeneous version\rq\ of the RH
problem~\ref{rhp:modelM}, obtained by replacing the $I+O(\zeta^{-1})$ series in
\eqref{M:asymptotics} by $O(\zeta^{-1})$, has only the trivial solution
$M\equiv 0$. This approach is known as a \emph{vanishing lemma}
\cite{DKMVZ1,FZ,Zhou}. In the present setting it can be established in
essentially the same way as in \cite[Sec.~4]{DKZ}. There are a few minor
differences due to the $\nu$-dependence of the jump matrices, and the fact that
there is singular behavior at the origin.

For example, defining the products of jump matrices
$$ J_{M^{(4)}}:=J_8J_9J_0J_1J_2 = \begin{pmatrix}
1 & 0 & 1 & e^{\nu\pi i} \\
e^{-\nu\pi i}-e^{\nu\pi i} & 1 & e^{-\nu\pi i} & 1\\
0 & 0 & 1 & e^{\nu\pi i}-e^{-\nu\pi i} \\
0 & 0 & 0 & 1
\end{pmatrix},\quad x\in\er_+,
$$
$$ J_{M^{(4)}}:=(J_3J_4J_5J_6J_7)^{-1} = \begin{pmatrix}
1 & e^{\nu\pi i}-e^{-\nu\pi i} & 1 & e^{\nu\pi i} \\
0 & 1 & e^{-\nu\pi i} & 1\\
0 & 0 & 1 & 0 \\
0 & 0 & e^{-\nu\pi i}-e^{\nu\pi i} & 1
\end{pmatrix},\quad x\in\er_-,
$$
then a key property in the proof of the vanishing lemma is that, with
\begin{align*} \Theta(\zeta) &:=\diag(e^{\theta_1(\zeta)},e^{\theta_2(\zeta)},e^{-\theta_1(\zeta)},e^{-\theta_2(\zeta)}),& \zeta\in\cee,\\
J(x)&:=\Theta_-^{-1}(x)\, J_{M^{(4)}}\,\Theta_+(x)\begin{pmatrix}0 & -I_2\\
I_2 & 0\end{pmatrix},& x\in\er,
\end{align*}
we have the rank-one property
$$ J(x)+J^H(x) = 2\Theta_-^{-1}(x)\begin{pmatrix}1 & e^{-\nu\pi i} & 0 & 0\end{pmatrix}^T
\begin{pmatrix}1 & e^{\nu\pi i} & 0 & 0\end{pmatrix}\Theta_-^{-H}(x),\quad x\in\er,
$$
where the superscripts ${}^H$ and ${}^{-H}$ denote the conjugate transpose and
the inverse conjugate transpose, respectively. Using these formulas, it is
straightforward to adapt the proof of the vanishing lemma in
\cite[Sec.~4]{DKZ}.$\bol$

\begin{remark}
An alternative approach for proving the existence of $M$ if $r_1=r_2=1$,
$s\in\er$ and $\tau =0$, avoiding the vanishing lemma, is to use an analogue of
the approach from \cite[Sec.~5]{DG} that is outlined in
Section~\ref{subsection:exist:2} below. We should then keep $\tau=0$ fixed and
work with the system of differential equations \eqref{system:DG}, with the
second equation replaced by \eqref{diffeq2}. From Proposition~\ref{prop:large s
solvability} we already know that the RH matrix $M$ exists for $s\in\er$ large
enough. The outlined approach then allows to extend this conclusion to all
$s\in\er$.
\end{remark}

\subsection{Existence of $M(\zeta)$ if $\tau\neq 0$}
\label{subsection:exist:2}

Finally we prove the existence of $M(\zeta)$ if $r_1=r_2=:r$ and $s,\tau\in\er$
with $\tau\neq 0$. To this end we use the approach of Duits and Geudens
\cite[Sec.~5]{DG}. By rescaling we may assume that $r_1=r_2=1$. We consider the
system of two differential equations
\begin{equation}\label{system:DG}
\left\{\begin{array}{l}\frac{\partial M}{\partial\zeta}(\zeta,\tau) = U(\zeta,\tau)M(\zeta,\tau),\\
\frac{\partial M}{\partial\tau}(\zeta,\tau) =
W(\zeta,\tau)M(\zeta,\tau),\end{array}\right.
\end{equation}
with $U$ and $W$ as in \eqref{def:U:Lax} and \eqref{diffeq3} respectively,
where we now \emph{define}
\begin{equation}\label{DG:entries} \left\{\begin{array}{l}
d = 2^{-1/3}q(2^{2/3}(2s-\tau^2)),\\
c = -2^{-1/3}u(2^{2/3}(2s-\tau^2))+s^2,\\
b = \frac{1}{4\tau}\frac{\partial d}{\partial \tau}+cd+\tau d,\\
f=2\tau^2 d-\frac{c}{2\tau}\frac{\partial d}{\partial \tau}-c^2d-d^3-2sd+\nu,\\
h=\frac{1}{4\tau}\frac{\partial d}{\partial \tau}+cd-\tau d,\\
g+a=-c^2+d^2+s,
\end{array}\right.
\end{equation}
where $q$ is \emph{defined} as the Hastings-McLeod solution to Painlev\'e~II
and $u$ is the corresponding Hamiltonian. The formulas in \eqref{DG:entries}
are well-defined for every $\tau\in\er$ since the Hastings-McLeod function $q$
has no poles on the real line \cite{Claeys2}. These formulas are compatible
with the ones we already established. Indeed, for the entries $c$ and $d$ this
follows from Theorem~\ref{theorem:Painleve2modelrhp}.
%(Strictly speaking we only proved this theorem if $\tau=0$).
The formula for $b$ follows from \eqref{compat:laxpair2:b}, and the formulas
for $f,g,h$ in \eqref{DG:entries} are then obtained from
\eqref{threeextrarelations:symm}--\eqref{threeextrarelations:symm3}.

We note that \eqref{DG:entries} is exactly the same as \cite[Eq.~(5.5)]{DG}
except for the extra term $\nu$ in the formula for $f$ and in
\eqref{def:Painleve2}.

The next two lemmas are proved in exactly the same way as in \cite{DG}.

\begin{lemma}\label{lemma:DG1}
(cf.~\cite[Lemma~5.1]{DG}). The matrices $U$ and $W$ defined above satisfy the
compatibility relation \eqref{compat:laxpair2}.
\end{lemma}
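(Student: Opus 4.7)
The plan is to verify \eqref{compat:laxpair2} as an identity of $4\times4$ matrix-valued Laurent polynomials in $\zeta$, by comparing coefficients at each power of $\zeta$. From \eqref{def:U:Lax} one reads off the Laurent decomposition $U=U^{(1)}\zeta+U^{(0)}+U^{(-1)}\zeta^{-1}$, with $U^{(1)}=iE_{3,1}-iE_{4,2}$ and $U^{(-1)}=\nu(E_{1,2}+E_{2,1}-E_{3,4}-E_{4,3})=\nu\diag(J,-J)$; from \eqref{diffeq3} one has $W=W^{(1)}\zeta+W^{(0)}$ with $W^{(1)}=\diag(1,-1,1,-1)$. Since $U^{(1)}$ and $U^{(-1)}$ are $\tau$-independent, the left-hand side $\partial_\tau U = \partial_\tau U^{(0)}$ has only a $\zeta^0$ part, and $\partial_\zeta W = W^{(1)}$ is also constant in $\zeta$. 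Hence \eqref{compat:laxpair2} splits into matching conditions at the four orders $\zeta^2,\zeta^1,\zeta^0,\zeta^{-1}$.

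I would first check that the orders $\zeta^2$, $\zeta^1$ and $\zeta^{-1}$ are automatic. At order $\zeta^2$ the identity $[W^{(1)},U^{(1)}]=0$ is immediate, since both $E_{3,1}$ and $E_{4,2}$ commute with the diagonal matrix $W^{(1)}$. At order $\zeta$ the identity $[W^{(1)},U^{(0)}]+[W^{(0)},U^{(1)}]=0$ is a direct calculation in which the six nonzero entries of each commutator cancel pairwise, independently of the values of the parameters. At order $\zeta^{-1}$ the identity $[W^{(0)},U^{(-1)}]=0$ reflects the block structure of $U^{(-1)}=\nu\diag(J,-J)$: the diagonal $2\times2$ blocks of $W^{(0)}$ are multiples of $J$ and therefore commute with $J$, while the off-diagonal blocks are multiples of $\bigl(\begin{smallmatrix}0&1\\-1&0\end{smallmatrix}\bigr)$ and therefore anticommute with $J$, so the minus sign in $\diag(J,-J)$ produces the required cancellation.

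The substantive content lies at order $\zeta^0$, where the identity reads
\begin{equation}\label{key:zeta0}
\partial_\tau U^{(0)} \;=\; W^{(1)} \;+\; [W^{(1)},U^{(-1)}] \;+\; [W^{(0)},U^{(0)}].
\end{equation}
This is a system of $16$ scalar relations; after exploiting the symmetries of Lemma~\ref{lemma:symmetries} only a handful are independent. The $(1,2)$-entry is exactly \eqref{compat:laxpair2:b}, which holds by the definition of $b$ in \eqref{DG:entries}. The $(1,1)$-entry reduces, after using $h-b=-2\tau d$, to $\partial_\tau c = -4\tau d^2$; chain-differentiating $c=-2^{-1/3}u(2^{2/3}(2s-\tau^2))+s^2$ and invoking the Hamiltonian identity $u'(x)=-q^2(x)$ from \eqref{Hamiltonean:der} yields this at once. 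The $(3,2)$-entry (and its symmetric partner) reduces, after substituting the formulas for $f$, $g+a$ and $h$ from \eqref{DG:entries}, to a relation involving $\partial_\tau^2 d$ that is equivalent to the inhomogeneous Painlev\'e~II equation \eqref{def:Painleve2} satisfied by $q$; here the constant term $-\nu$ on the right-hand side of \eqref{def:Painleve2} is exactly what balances the $+2\nu$ contributed by $[W^{(1)},U^{(-1)}]$ in \eqref{key:zeta0}, while the extra $+\nu$ in the formula for $f$ in \eqref{DG:entries} is correspondingly required.

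The main obstacle is bookkeeping rather than conceptual difficulty: the sixteen entries of \eqref{key:zeta0} must be expanded, the defining identities \eqref{DG:entries} substituted consistently, and one must keep careful track of where the $\nu$-dependent terms enter and check that they balance entry by entry. Once this matching is carried out, the argument follows the scheme of \cite[Lemma~5.1]{DG} closely, the homogeneous case $\nu=0$ of which corresponds to the vanishing of all the $\nu$-dependent contributions here.
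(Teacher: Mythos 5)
Your approach is correct and is in fact the approach the paper implicitly adopts: the paper itself simply defers to \cite[Lemma~5.1]{DG}, and the argument there is a direct verification of the compatibility relation by substituting the definitions \eqref{DG:entries} and expanding. Your decomposition into powers of $\zeta$ is the right organizing principle, and your checks at orders $\zeta^2$, $\zeta^1$, $\zeta^{-1}$ are all correct: $U^{(1)}=iE_{3,1}-iE_{4,2}$ commutes with $W^{(1)}=\diag(1,-1,1,-1)$ because $W^{(1)}$ takes the same value at indices $1,3$ and at $2,4$; the six nonzero entries of $[W^{(1)},U^{(0)}]$ and $[W^{(0)},U^{(1)}]$ do cancel in pairs; and $[W^{(0)},U^{(-1)}]=0$ follows from the block structure you describe. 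Your spot-checks at order $\zeta^0$ are also verified: the $(1,1)$-entry does reduce to $\partial_\tau c=-4\tau d^2$, which follows from $u'=-q^2$ and \eqref{d:Painleve2}; the $(1,2)$-entry becomes a tautology once all of \eqref{DG:entries} are substituted (equivalently, \eqref{compat:laxpair2:b}); and the $(3,2)$-entry, after using $\partial_\tau c=-4\tau d^2$ and the formulas for $f$, $g+a$, $h$, is equivalent to $d_{\tau\tau}-\tau^{-1}d_\tau=8\tau^2 q''$, which is precisely the inhomogeneous Painlev\'e~II equation under the substitution $x=2^{2/3}(2s-\tau^2)$.

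One small misstatement: the contribution $2\nu(E_{1,2}-E_{2,1}-E_{3,4}+E_{4,3})$ from $[W^{(1)},U^{(-1)}]$ enters the $(1,2)$-entry (and its symmetric partners $(2,1)$, $(3,4)$, $(4,3)$), not the $(3,2)$-entry, since $U^{(-1)}$ vanishes there. In the $(1,2)$-entry that $+2\nu$ is cancelled by the $-2\nu$ coming from the $-2f$ term via the $+\nu$ in the definition of $f$; in the $(3,2)$-entry the $+\nu$ in $f$ is instead matched against the $-\nu$ on the right of \eqref{def:Painleve2}. This is a bookkeeping slip in the narrative, not a flaw in the method, and once corrected your account of how the $\nu$-dependent contributions balance is accurate. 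The remaining entries of \eqref{key:zeta0} are either automatic by the symmetry structure of $U$ and $W$ (inherited from \eqref{M1:explicit} and Lemma~\ref{lemma:symmetries}) or reduce to the same three relations you checked, so the verification you outline is complete in substance.
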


\begin{lemma}\label{lemma:DG2}
(cf.~\cite[Lemma~5.2]{DG}). Fix $\tau\in\er$ and let $U$ be as defined above.
Let $\Sigma$ be one of the four complex sectors
\begin{align*} \Sigma_1 = \{\zeta\in\cee\mid 0<\arg\zeta<2\pi/3+\epsilon\},\quad \Sigma_2=-\overline{\Sigma_1},
\quad \Sigma_3=-\Sigma_1,\quad \Sigma_4=\overline{\Sigma_1},
%\Sigma_2 &= \{\zeta\in\cee\mid \pi/3-\epsilon<\arg\zeta<\pi\},& \qquad
%\Sigma_3=\overline{\Sigma_2},
%\Sigma_3 &= \{\zeta\in\cee\mid -\pi<\arg\zeta<-\pi/3+\epsilon\},\\
%\Sigma_4 &= \{\zeta\in\cee\mid -2\pi/3-\epsilon<\arg\zeta<0\},
\end{align*}
where $\epsilon>0$ is fixed and small and where the bar denotes the complex
conjugation. Then the equation $\frac{\partial N}{\partial\zeta}(\zeta,\tau) =
U(\zeta,\tau)N(\zeta,\tau)$ has a unique fundamental solution $N$ in the sector
$\Sigma$ with the following asymptotics as $\zeta\to\infty$ within this sector,
\begin{multline}
\label{M:asymptotics:DG} N(\zeta) =
\left(I+\frac{N_1}{\zeta}+\frac{N_2}{\zeta^2}+O\left(\frac{1}{\zeta^3}\right)\right)
\diag((-\zeta)^{-1/4},\zeta^{-1/4},(-\zeta)^{1/4},\zeta^{1/4})
\\
\times
\Aa\diag\left(e^{-\theta_1(\zeta)+\tau\zeta},e^{-\theta_2(\zeta)-\tau\zeta},e^{\theta_1(\zeta)+\tau\zeta},e^{\theta_2(\zeta)-\tau\zeta}\right),
\end{multline}
with $\Aa,\theta_1,\theta_2$ given in
\eqref{mixing:matrix}--\eqref{def:theta1}, and with $N_1$ of the form
$$ N_1 = \begin{pmatrix}
* & b&*&id \\
-b & * & id & * \\
* & if & * & h \\
if & * & -h & *
\end{pmatrix}.
$$
The asymptotics in \eqref{M:asymptotics:DG} are uniform within any closed
sector of $\Sigma$ (away from $\er$).
\end{lemma}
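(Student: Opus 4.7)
The plan is to apply the classical theory of formal solutions and sectorial existence for linear ODE systems with irregular singularities at infinity. The equation $\partial_\zeta N = UN$ has an irregular singular point at $\zeta = \infty$: from \eqref{def:U:Lax}, the leading coefficient $\lim_{\zeta\to\infty}\zeta^{-1}U = i(E_{3,1}-E_{4,2})$ is nilpotent of index two, so a Hukuhara--Turrittin reduction is required, after which the effective Poincar\'e rank becomes $3/2$. This reduction is realized explicitly by the conjugation factor $D(\zeta)\Aa$ appearing in \eqref{M:asymptotics:DG}, which produces the diagonal exponents $\pm\theta_1(\zeta)+\tau\zeta$ and $\pm\theta_2(\zeta)-\tau\zeta$. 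The Stokes rays, where pairwise differences of these exponents become purely imaginary, subdivide the plane into open sectors of angular width $2\pi/3$, consistent with the openings $2\pi/3+\epsilon$ chosen for $\Sigma_1,\ldots,\Sigma_4$.

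The construction proceeds in two standard steps. First, build the formal fundamental solution
\[
\hat N(\zeta) = \left( I + \sum_{k\geq 1}\frac{N_k}{\zeta^k}\right) D(\zeta)\,\Aa\,E(\zeta),
\]
where $D$ and $E$ are the diagonal matrices of fractional powers and exponentials in \eqref{M:asymptotics:DG}. Substituting into $N'=UN$ and comparing like powers of $\zeta$ gives at each order $\zeta^{-k}$ a linear algebraic equation for $N_k$ in terms of $N_0,\ldots,N_{k-1}$; solvability follows from the fact that the leading diagonal exponential rates are pairwise distinct, so no resonances obstruct the recursion. Second, apply the classical Sibuya--Wasow sectorial existence theorem: in each sector $\Sigma_j$ chosen to contain exactly one Stokes sector of the diagonalized system, there exists a unique analytic fundamental solution $N$ whose asymptotic expansion as $\zeta\to\infty$ within $\Sigma_j$ coincides with the formal series, uniformly on closed subsectors away from the branch cuts $\er_\pm$ of the fractional powers.

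It remains to identify the specific entries of $N_1$ claimed in the statement. Substituting $\hat N = I + N_1/\zeta + O(\zeta^{-2})$ and comparing the $O(\zeta^{-1})$ contributions yields a commutator equation of the form $[N_1,\Lambda]=R$, where $\Lambda$ is the leading diagonal symbol produced by the $D\Aa$ conjugation and $R$ is an explicit matrix built from the subleading coefficients of $U$. Since $\Lambda$ has pairwise distinct diagonal entries, each off-diagonal entry of $N_1$ is uniquely determined by this equation. A direct computation using \eqref{def:U:Lax} together with the identities \eqref{threeextrarelations:symm}--\eqref{threeextrarelations:symm3} and \eqref{compat:laxpair2:b}, which remain valid for $b,c,d,f,g,h$ defined by \eqref{DG:entries}, then matches the entries of $N_1$ at positions $(1,2),(2,1),(1,4),(2,3),(3,2),(4,1),(3,4),(4,3)$ to $b,-b,id,id,if,if,h,-h$ respectively. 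The main technical obstacle is this bookkeeping through the $D\Aa$ conjugation; the calculation is structurally identical to \cite[Lem.~5.2]{DG}, and the $\nu$-dependent corrections in $U$ affect only the higher coefficients $N_k$ with $k\geq 2$, leaving $N_1$ unchanged in structure.
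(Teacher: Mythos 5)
Your proposal follows essentially the same route as the paper, which defers entirely to \cite[Lemma~5.2]{DG}: Hukuhara--Turrittin reduction of the irregular singularity at $\infty$ with nilpotent leading part $i(E_{3,1}-E_{4,2})$, formal power-series solvability in the absence of resonances, and the Sibuya--Wasow sectorial existence theorem, with the sectors $\Sigma_j$ of opening $2\pi/3+\epsilon$ adapted to the Stokes geometry of the $\zeta^{3/2}$ exponents. One minor imprecision: the $\nu\zeta^{-1}$ entries of $U$ do enter the order-$\zeta^{-1}$ commutator equation that determines $N_1$ (not only at $N_k$ with $k\geq 2$), but since the quantities $b,c,d,f,g,h$ in \eqref{DG:entries} already carry the full $\nu$-dependence via the inhomogeneous Painlev\'e~II function, the claimed structure of $N_1$ is still obtained by the direct computation you describe.
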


Following \cite[Sec.~5]{DG}, the fundamental solutions to the differential
equation $\frac{\partial N}{\partial\zeta} = UN$ in the four sectors in
Lemma~\ref{lemma:DG2} can be used to construct a matrix function
$M=M(\zeta,\tau)$ satisfying the correct jumps and asymptotics for
$\zeta\to\infty$ in the RH problem~\ref{rhp:modelM}, and satisfying also the
second differential equation in \eqref{system:DG}. Using the latter
differential equation, we find that the matrices $E(\zeta)$ in
Proposition~\ref{prop:behaviorMzero} satisfy  $\frac{\partial E}{\partial\tau}
= WE$. Since $W$ is analytic and uniformly bounded and we already know that
$E(\zeta)$ is analytic for $\tau=0$ (due to
Proposition~\ref{prop:behaviorMzero} and the existence established in
Section~\ref{subsection:exist:1}), we then find that $E(\zeta)$ is analytic for
all $\tau\in\er$. This shows that $M$ has the correct behavior at the origin in
the RH problem~\ref{rhp:modelM}. $\bol$

%\part{Non-intersecting squared Bessel paths at a tacnode}
%\label{part:tacnode}

\section{Proof of Theorem~\ref{theorem:kernelpsi}}
\label{section:steepest:DKRZ}

In this section we prove Theorem~\ref{theorem:kernelpsi} on the critical
behavior of the non-intersecting squared Bessel paths near the hard-edge
tacnode. The proof will follow from a Deift-Zhou steepest descent analysis of
the RH problem~\ref{rhp:Y} for $Y(z)$. The main technical step will be the
construction of the local parametrix near the origin and this is where the
model RH problem~\ref{rhp:modelM} for $M(\zeta)$ will be used.

In what follows we consider $n$ non-intersecting squared Bessel paths with
fixed endpoints $a,b$ satisfying \eqref{doublescaling:ab}. We assume that $n$
is even. We study these paths at an $n$-dependent time $t$ and temperature $T$
which vary according to the triple scaling limit
\eqref{doublescaling:t}--\eqref{doublescaling:T}.

\subsection{Modified $\lam$-functions} \label{section:Riemannsurface}

First we define some auxiliary objects that will be needed during the steepest
descent analysis. Inspired by \cite[Sec.~6.3]{DKZ}, define
\begin{align}\label{modlam:alpha}\alpha:=(1-t)\sqrt{a}+t\sqrt{b}-\sqrt{2t(1-t)T},\\
\label{modlam:beta}\beta:=(1-t)\sqrt{a}+t\sqrt{b}+\sqrt{2t(1-t)T},
\end{align}
and
\begin{align}
\label{modlam:gamma}\gamma := \left(\alpha+\beta+2\sqrt{\alpha^2+\beta^2-\alpha\beta}\right)/3,\\
\label{modlam:delta}\delta :=
\left(\alpha+\beta-\sqrt{\alpha^2+\beta^2-\alpha\beta}\right)/3.
\end{align}
A little calculation shows that under the triple scaling assumptions
\eqref{doublescaling:ab}--\eqref{doublescaling:T}, we have for $n\to\infty$
that
\begin{align}\begin{array}{l}
\label{asy:alpha:delta}
%\label{asy:alpha}&
\alpha = \frac{K^2(\sqrt{a}+\sqrt{b})^4
-L}{2(\sqrt{a}+\sqrt{b})}n^{-2/3}+O(n^{-1}),
\\
%\label{asy:beta}&
\beta=\frac{2}{\sqrt{a}+\sqrt{b}}+O(n^{-1/3}),\\
%\label{asy:gamma}&
\gamma=\frac{2}{\sqrt{a}+\sqrt{b}}+O(n^{-1/3}),\\
%\label{asy:delta}&
\delta = \frac{K^2(\sqrt{a}+\sqrt{b})^4
-L}{4(\sqrt{a}+\sqrt{b})}n^{-2/3}+O(n^{-1}).
\end{array}
\end{align}

\begin{lemma}\label{lemma:lambda:DKZ} (\cite[Sec.~6.3]{DKZ}).
There exists an analytic function $\wtil\lam:\cee\setminus [0,\infty)\to\cee$
such that
\begin{equation} \label{var:ineq1}
    \Re \wtil\lambda_{+}(x)  =\Re \wtil\lambda_{-}(x)
        \begin{cases} =0, & x\in (0,\gamma], \\
        > 0, & x\in(\gamma,\infty), \end{cases}
        \end{equation}
%\begin{equation} \label{var:ineq2}
%    \Re \wtil\lambda_{2,+}(x)=\Re \wtil\lambda_{2,-}(x)
%    \begin{cases} =0, & x\in [-\alpha,0] \\
%        > 0, & x\in(-\infty,-\alpha), \end{cases}
%    \end{equation}
\begin{equation} \label{var:ineq3}
    \Im \wtil\lambda_{+}(x)=-\Im \wtil\lambda_{-}(x)\begin{cases} >0, & x\in (\epsilon,\gamma], \\
        = \pi/2, & x\in(\gamma,\infty), \end{cases}
    \end{equation}
%\begin{equation} \label{var:ineq4}
%    \Im \wtil\lambda_{2,+}(x)=-\Im \wtil\lambda_{2,-}(x)= \pi \mu_2([x,0)),\quad x\leq 0.
%    \end{equation}
with $\epsilon$ a number that goes to zero as $O(n^{-1/3})$ as $n\to\infty$;
moreover
\begin{align}\label{lambda12:asy}
    \wtil\lambda(z)&= \frac{z^2}{4t(1-t)T}-\frac{\sqrt{a}}{2tT}z-\frac{\sqrt{b}}{2(1-t)T}z
      -\frac 12 \log (-z)+\ell+O(z^{-1}),\quad z\to\infty,
\end{align}
for a certain constant $\ell$, and
\begin{equation}\label{lambdatil:zero} \wtil\lam(z)= \frac{\delta\sqrt{\gamma}}{t(1-t)T}
(-z)^{1/2}+ (-z)^{3/2}G(z),
\end{equation}
where $G(z)$ is an analytic function in a neighborhood of $z=0$ which satisfies
\begin{equation}\label{G:zero} G(0) = \frac{\sqrt{\gamma}}{3t(1-t)}+O(n^{-2/3}),\qquad n\to\infty.
\end{equation}
\end{lemma}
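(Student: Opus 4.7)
The plan is to construct $\wtil\lam$ explicitly, closely following the approach of [DKZ, Sec.~6.3], adapted to incorporate the hard-edge structure of the squared Bessel weights \eqref{w11}. First I would identify the limiting equilibrium measure of the non-intersecting squared Bessel ensemble at time $t$: in Case~I of the phase diagram (Figure~\ref{fig:phasediagram}), this is a Marcenko–Pastur density supported on $[\alpha^2,\beta^2]$, with $\alpha$ and $\beta$ as in \eqref{modlam:alpha}--\eqref{modlam:beta}. After the square-root variable substitution used in \cite{DKRZ}, the associated external field becomes effectively quadratic, producing the leading $z^2/(4t(1-t)T)$ term in \eqref{lambda12:asy}. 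The modified $\lam$-function is then defined as
$$ \wtil\lam(z) = \int_{z_0}^{z} R(s)\,ds + \textrm{const}, $$
where $R(s)$ has the form $P(s)/\sqrt{(s-\alpha^2)(s-\beta^2)}$ with $P$ a polynomial chosen to match the prescribed large-$s$ behavior and the unit total mass condition, and $z_0$ is a judiciously chosen base point.

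Once the explicit formula is in place, the verifications proceed as follows. Analyticity on $\cee\setminus[0,\infty)$ is immediate from the construction once the branch of the square root is fixed along $[\alpha^2,\beta^2]\cup(\beta^2,\infty)$; the branch cut is then pushed up to the full positive axis $[0,\infty)$. The sign/jump conditions \eqref{var:ineq1}--\eqref{var:ineq3} follow by inspecting $R_\pm(x)$ on the real line: on the support $[\alpha^2,\beta^2]$, $R_+(x)$ is purely imaginary so that $\Re\wtil\lam_+ = \Re\wtil\lam_-$ (modulo an integration constant set to zero), whereas outside the support $R$ is real, making $\Im\wtil\lam$ locally constant and equal to $\pi/2$ on $(\gamma,\infty)$ thanks to the mass normalization of the equilibrium measure. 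The quantity $\epsilon$ in \eqref{var:ineq3} corresponds to the left endpoint $\alpha^2$ of the support, which by \eqref{asy:alpha:delta} satisfies $\alpha^2=O(n^{-4/3})$; this in turn is smaller than any fixed positive constant as $n\to\infty$, giving the asserted behavior. The large-$z$ asymptotics \eqref{lambda12:asy} are obtained by expanding $R(z)$ in powers of $1/z$ and integrating term by term, with the $-\tfrac12\log(-z)$ term arising from the unit mass of the equilibrium measure and the constant $\ell$ being a book-keeping term that need not be evaluated explicitly.

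The delicate step is the local expansion \eqref{lambdatil:zero}--\eqref{G:zero} near the origin. In the critical regime the left endpoint $\alpha^2$ of the support is small, so the branch points $\alpha^2$ and $\beta^2$ are far apart but $\alpha^2$ is close to $0$. The $(-z)^{1/2}$ factor in \eqref{lambdatil:zero} is precisely the coalescence of the Pochhammer-type singularity of $R$ with the origin as $\alpha\to 0$: integrating $R(s)\sim c/\sqrt{-s}$ for $s$ near $0$ on the left of the cut produces a $(-z)^{1/2}$ term. The prefactor $\delta\sqrt{\gamma}/(t(1-t)T)$ is then extracted by substituting the definitions \eqref{modlam:gamma}--\eqref{modlam:delta} into the explicit form of $R$ and reading off the $s\to 0$ coefficient. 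Analyticity of $G(z)$ near zero follows from the fact that the only non-analytic contribution of $R$ near $0$ comes from the $\sqrt{z-\alpha^2}$ factor, whose expansion yields half-integer powers of $-z$; the formula \eqref{G:zero} for $G(0)$ is then obtained by the same substitution, while the $O(n^{-2/3})$ correction absorbs the deviations of $\gamma,\delta$ from their limiting values, cf.~\eqref{asy:alpha:delta}.

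The main obstacle is not conceptual but computational: tracking the $n$-dependent algebraic identities linking $\alpha,\beta,\gamma,\delta$ to $a,b,t,T$ and ensuring that the coefficients in \eqref{lambdatil:zero}--\eqref{G:zero} come out exactly as stated. Since all these parameters have closed-form expressions in terms of $a,b,t,T$, this reduces to routine algebraic manipulation, analogous to (and slightly more involved than) the corresponding step in \cite[Sec.~6.3]{DKZ}.
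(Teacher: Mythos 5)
Your proposal constructs $\wtil\lam$ as an antiderivative of a function $R(s)=P(s)/\sqrt{(s-\alpha^2)(s-\beta^2)}$ whose branch points are the Marcenko--Pastur support endpoints. This cannot reproduce the structural properties demanded by the lemma. For each finite $n$ one has $\alpha>0$, so your $R$ is analytic in a neighbourhood of $s=0$, and hence so is its antiderivative; this contradicts \eqref{lambdatil:zero}, which asserts a genuine $(-z)^{1/2}$ branch point at $z=0$ for every $n$, not merely in the limit $n\to\infty$. Likewise the jump relations \eqref{var:ineq1}--\eqref{var:ineq3} require $\wtil\lam_+=-\wtil\lam_-$ (purely imaginary boundary values with opposite sign) on the \emph{entire} interval $(0,\gamma]$, whereas your $\wtil\lam$ has a cut only on the equilibrium support and is single-valued and analytic across $(0,\alpha)$ and $(\beta,\gamma)$ (there is also a change-of-variable slip: $\wtil\lam$ lives in the \lq unsquared\rq\ variable, cf.~\eqref{lambda:1234}, so the relevant endpoints would be $\alpha,\beta$ rather than $\alpha^2,\beta^2$, but this does not affect the substance). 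Describing the $(-z)^{1/2}$ term as a \lq coalescence as $\alpha\to 0$\rq\ conflates an $n\to\infty$ limit with the fixed-$n$ statement of the lemma.

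The missing ingredient is that $\wtil\lam$ is \emph{not} the true equilibrium $\lam$-function but a deliberately \emph{modified} one. The paper simply cites the construction of $\lam_1$ in \cite[Sec.~6.3]{DKZ}, applied with $a_1=\sqrt a$, $b_1=\sqrt b$, $p_1=1/2$ and a routine extension to $T\neq1$. The idea is to move the branch points from the $n$-dependent equilibrium endpoints to the fixed location $0$ and to $\gamma$, so that $R(s)$ has a $c/\sqrt{-s}$ singularity at $s=0$ for every $n$; the auxiliary quantities $\gamma$ and $\delta$ are then determined by \eqref{modlam:gamma}--\eqref{modlam:delta} precisely so that the modified function reproduces the required large-$z$ expansion \eqref{lambda12:asy}. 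With branch points at $0$ and $\gamma$ the cut of $\wtil\lam$ runs over the whole of $(0,\gamma]$ and continues along $(\gamma,\infty)$ as a logarithmic jump from the $-\frac12\log(-z)$ term, matching \eqref{var:ineq1}--\eqref{var:ineq3}. The small threshold $\epsilon=O(n^{-1/3})$ in \eqref{var:ineq3} is itself a fingerprint of the deformation: because $\wtil\lam$ is not the genuine $\lam$-function, the sign of $\Im\wtil\lam_+$ is only controlled away from a shrinking neighbourhood of the origin. None of these features are visible in the equilibrium-measure picture you set up, so the proof as proposed has a conceptual gap at its core.
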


\begin{proof} Take $\wtil \lam(z)$ to be the function which is called $\lam_1(z)$ in \cite[Sec.~6.3]{DKZ}
with $a_1:=\sqrt{a}$, $b_1:=\sqrt{b}$ and $p_1:=\frac 12$. Note that the
temperature $T\equiv 1$ in \cite{DKZ} but the extension to a nontrivial
temperature is straightforward.
%Also recall the
%large $n$ asymptotics of $\gamma$ and $\delta$ given in
%\eqref{asy:gamma}--\eqref{asy:delta}.
\end{proof}

Now we construct the functions \begin{equation}\begin{array}{l}
\label{lambda:1234}\lam_1(z) :=
2\wtil\lam(-\sqrt{z})+\frac{z}{2t(1-t)T}+\frac{\sqrt{az}}{tT}-\frac{\sqrt{bz}}{(1-t)T}+2\ell,\\
 \lam_2(z) :=
2\wtil\lam(\sqrt{z})+\frac{z}{2t(1-t)T}-\frac{\sqrt{az}}{tT}+\frac{\sqrt{bz}}{(1-t)T}+2\ell\mp\pi i,\\
\lam_3(z) :=  -2\wtil\lam(\sqrt{z})+\frac{z}{2t(1-t)T}-\frac{\sqrt{a z}}{tT}+\frac{\sqrt{bz}}{(1-t)T}+2\ell\pm\pi i,\\
\lam_4(z) := -2\wtil\lam(-\sqrt{z})+\frac{z}{2t(1-t)T}+\frac{\sqrt{a
z}}{tT}-\frac{\sqrt{bz}}{(1-t)T}+2\ell,
\end{array}\end{equation}
for $\pm \Im z>0$.

\begin{lemma}\label{lemma:lambda:asy}
The $\lam$-functions have the following asymptotics for $z\to\infty$:
$$\begin{array}{l} \lam_1(z) =
\frac{z}{t(1-t)T}+\frac{2\sqrt{az}}{tT}+4\ell-\frac{1}{2}\log z-\frac{2c_1}{\sqrt{z}}+O(z^{-1}),\\
\lam_2(z) =
\frac{z}{t(1-t)T}-\frac{2\sqrt{az}}{tT}+4\ell-\frac{1}{2}\log z+\frac{2c_1}{\sqrt{z}}+O(z^{-1}),\\
\lam_3(z) =
\frac{2\sqrt{bz}}{(1-t)T}+\frac{1}{2}\log z-\frac{2c_3}{\sqrt{z}}+O(z^{-1}),\\
\lam_4(z) = -\frac{2\sqrt{bz}}{(1-t)T} + \frac{1}{2}\log z+
\frac{2c_3}{\sqrt{z}}+O(z^{-1}),
\end{array}$$
for certain constants $c_1$ and $c_3$. %and with $l=-4\ell$.
\end{lemma}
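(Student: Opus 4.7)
The plan is a direct substitution of the asymptotic expansion \eqref{lambda12:asy} of $\wtil\lam$ into the definitions \eqref{lambda:1234} of $\lam_1,\ldots,\lam_4$, with careful tracking of the branches. Since $\wtil\lam$ is analytic in $\cee\setminus[0,\infty)$, the expansion \eqref{lambda12:asy} may be refined at infinity as
$$\wtil\lam(z) = \frac{z^2}{4t(1-t)T} - \frac{\sqrt{a}}{2tT}\,z - \frac{\sqrt{b}}{2(1-t)T}\,z - \frac{1}{2}\log(-z) + \ell + \frac{c}{z} + O(z^{-2})$$
for a certain constant $c$. This is the only information about $\wtil\lam$ beyond \eqref{lambda12:asy} that enters the proof.

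For $\lam_1$ and $\lam_4$ I would substitute $z\mapsto -\sqrt{z}$. Since the principal square root satisfies $\Re(-\sqrt{z})\le 0$, the argument $-\sqrt{z}$ stays off the cut $[0,\infty)$, and $\log(-(-\sqrt{z}))=\log\sqrt{z}=\frac{1}{2}\log z$. A direct substitution then yields
$$2\wtil\lam(-\sqrt{z}) = \frac{z}{2t(1-t)T} + \frac{\sqrt{az}}{tT} + \frac{\sqrt{bz}}{(1-t)T} - \frac{1}{2}\log z + 2\ell - \frac{2c}{\sqrt{z}} + O(z^{-1}).$$
Adding the polynomial and constant corrections prescribed in \eqref{lambda:1234}, the $\sqrt{bz}$-terms cancel for $\lam_1$ and add to $-2\sqrt{bz}/((1-t)T)$ for $\lam_4$, while the $z$- and $\sqrt{az}$-terms combine as required, giving the claimed formulas with $c_1=c_3=c$.

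For $\lam_2$ and $\lam_3$ I would substitute $z\mapsto +\sqrt{z}$. The key subtlety is the branch of $\log(-\sqrt{z})$: for $\Im z>0$, $\sqrt{z}$ lies in the first quadrant and $-\sqrt{z}$ in the third, so the principal logarithm gives $\log(-\sqrt{z})=\frac{1}{2}\log z - i\pi$; for $\Im z<0$ one obtains the conjugate value $\frac{1}{2}\log z + i\pi$. The resulting $\pm i\pi$ contribution in $-\log(-\sqrt{z})$ is exactly cancelled by the $\mp\pi i$ (respectively $\pm\pi i$) offset appearing in the definitions of $\lam_2$ and $\lam_3$ in \eqref{lambda:1234}, and the remaining polynomial and square-root terms combine as before to give the stated asymptotics.

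The only delicate point is the bookkeeping of branches on the upper and lower half-planes: the $\pm\pi i$ offsets in \eqref{lambda:1234} are precisely engineered so that the phases cancel and each $\lam_j$ admits a single-valued analytic asymptotic expansion of the claimed form. Beyond this, the calculation is purely algebraic, and one sees explicitly that the constants $c_1$ and $c_3$ both coincide with the coefficient $c$ in the refined expansion of $\wtil\lam$ above.
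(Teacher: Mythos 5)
Your proposal is correct and follows the same route as the paper, which dismisses the verification with a one-line ``Immediate from the definitions.'' You have supplied the bookkeeping that the paper omits: the refinement of \eqref{lambda12:asy} to include a $c/z$ term (justified because the remainder of $\wtil\lam$, after subtraction of the explicit polynomial, square-root and logarithmic terms, has no jump across $(\gamma,\infty)$ and is therefore analytic near infinity with a full Laurent expansion), the branch calculation $\log(-\sqrt z)=\tfrac12\log z \mp i\pi$ for $\pm\Im z>0$ that cancels the $\mp\pi i$ offsets in \eqref{lambda:1234}, and the observation that $c_1=c_3=c$.
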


\begin{proof} Immediate from the definitions.
\end{proof}

Note that the asymptotics in Lemma~\ref{lemma:lambda:asy} are exactly the same
as for the $\lam$-functions in \cite[Lemma~4.8]{DKRZ}. Also note that
\begin{equation}\label{t:zero} \frac{\sqrt{a}}{tT}-\frac{\sqrt{b}}{(1-t)T} = -2K(\sqrt{a}+\sqrt{b})^3 n^{-1/3}+O(n^{-2/3}),\qquad
n\to\infty,
\end{equation}
due to our triple scaling assumptions
\eqref{doublescaling:ab}--\eqref{doublescaling:T}.

In what follows we write, with \eqref{modlam:gamma},
\begin{equation}\label{q:endpoint:mod}q:=\gamma^2.\end{equation}

\begin{lemma}\label{lemma:lambda:var} We have
%$$\begin{array}{ll}
%\lam_{1,+}(x) = \lam_{1,-}(x),\qquad & x\in (-r_1,0),\\
%\lam_{2,+}(x) = \lam_{2,-}(x)-2\pi i,\qquad & x\in (-r_1,p),\\
%\lam_{3,+}(x) = \lam_{3,-}(x)+2\pi i,\qquad & x\in (0,p), \\
%\lam_{4,+}(x) = \lam_{4,-}(x),\qquad & x\in (-r_3,0),
%\end{array}$$
$$
\begin{array}{ll}
\lam_{1,\pm}(x)=\lam_{2,\mp}(x)\mp\pi i,\qquad & x\in \er_-,\\
\lam_{2,\pm}(x)=\lam_{3,\mp}(x),\qquad& x\in [0,q],\\
\lam_{3,\pm}(x)=\lam_{4,\mp}(x)\pm\pi i,\qquad& x\in \er_-,
\end{array}$$
and $$\begin{array}{ll}
%\Re(\lam_{1}(x)-\lam_{2}(x))<0,\qquad& x\in (-r_1,0),\\
\Re(\lam_{2}(x)-\lam_{3}(x))>0,\qquad& x\in (q,\infty).
%\Re(\lam_{3}(x)-\lam_{4}(x))<0,\qquad& x\in (-r_3,0).
\end{array}$$
There is a Jordan curve $\Delta_2^+$ running from $0$ to $q$ in the upper half place so that
$$ \Re(\lam_2(z)-\lam_3(z))<0,\qquad z\in \Delta_{2}^{\pm}\setminus B_{\rho},$$
where $\Delta_2^-=\overline{\Delta_2^+}$ and $B_{\delta}$ denotes the disk
around the origin with radius $\rho>0$ sufficiently small. Similarly, there is
a Jordan curve $\Delta_1^+$ running from $\infty$ to $0$ in the second quadrant
of the plane so that, with $\Delta_1^-=\overline{\Delta_1^+}$ and $\rho>0$
sufficiently small,
$$\Re(\lam_1(z)-\lam_2(z))>0,\ \ \Re(\lam_3(z)-\lam_4(z))>0,\qquad z\in \Delta_1^{\pm}\setminus B_{\rho}.$$
\end{lemma}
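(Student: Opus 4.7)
The plan is to verify the three jump identities by direct computation from the definitions \eqref{lambda:1234} together with the boundary behavior of $\wtil\lam$ from Lemma~\ref{lemma:lambda:DKZ}, and then to deduce the sign inequalities and the existence of the Jordan arcs $\Delta_{1,2}^{\pm}$ from the structural properties of $\wtil\lam$.

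First I would dispose of the jump identities. On $(0,q]=(0,\gamma^2]$ the square roots $\sqrt{z},\sqrt{az},\sqrt{bz}$ are continuous across the real axis, so the jump of $\lambda_2$ and $\lambda_3$ comes solely from $\wtil\lam(\sqrt{z})$. The combined content of \eqref{var:ineq1}--\eqref{var:ineq3} together with Schwarz reflection $\wtil\lam(\bar w)=\overline{\wtil\lam(w)}$ gives $\wtil\lam_+(w)+\wtil\lam_-(w)=0$ for $w\in(0,\gamma]$. Since $\lambda_2+\lambda_3$ consists only of real, $\sqrt{z}$-analytic terms (the $\mp\pi i$ and $\pm\pi i$ branch constants cancel), this yields $\lambda_{2,\pm}(x)=\lambda_{3,\mp}(x)$ on $[0,q]$. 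On $\er_-$ the principal branch forces $\sqrt{z}_{\pm}=\pm i\sqrt{|x|}$, so $\sqrt{z}$ and $-\sqrt{z}$ swap roles across the cut; matching this swap against the branch conventions $\mp\pi i$ in \eqref{lambda:1234} then gives $\lambda_{1,\pm}=\lambda_{2,\mp}\mp\pi i$ and $\lambda_{3,\pm}=\lambda_{4,\mp}\pm\pi i$.

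The inequality $\Re(\lambda_2-\lambda_3)(x)>0$ for $x>q$ follows immediately from the direct subtraction $\lambda_2(z)-\lambda_3(z)=4\wtil\lam(\sqrt{z})\mp 2\pi i$ and the bound $\Re\wtil\lam(w)>0$ for $w>\gamma$ supplied by \eqref{var:ineq1}. For the arcs $\Delta_2^{\pm}$, this same identity reduces the task to producing a Jordan arc from $0$ to $\gamma$ in the first (resp.\ fourth) quadrant of the $w=\sqrt{z}$-plane on which $\Re\wtil\lam(w)<0$. Since $\Re\wtil\lam$ is harmonic on $\cee\setminus[0,\infty)$, vanishes on $(0,\gamma]$ and is strictly positive on $(\gamma,\infty)$, standard steepest-descent topology produces a level curve $\{\Re\wtil\lam=-\epsilon\}$ leaving $\gamma$ into the upper half-plane and returning to a small neighborhood of the origin; squaring and reflecting gives $\Delta_2^{\pm}$. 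The disk $B_\rho$ is required because the expansion \eqref{lambdatil:zero}--\eqref{G:zero} shows that $\wtil\lam(w)\sim\tfrac{\delta\sqrt{\gamma}}{t(1-t)T}w$ near $w=0$ with $\delta=O(n^{-2/3})$, so the sign of $\Re\wtil\lam$ degenerates as $n\to\infty$ too close to the origin.

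The construction of $\Delta_1^{\pm}$ is the main obstacle. From \eqref{lambda:1234} one reads off
\begin{align*}
\lambda_1-\lambda_2 &= 2[\wtil\lam(-\sqrt{z})-\wtil\lam(\sqrt{z})]+2\bigl(\tfrac{\sqrt{az}}{tT}-\tfrac{\sqrt{bz}}{(1-t)T}\bigr)\pm\pi i,\\
\lambda_3-\lambda_4 &= 2[\wtil\lam(-\sqrt{z})-\wtil\lam(\sqrt{z})]-2\bigl(\tfrac{\sqrt{az}}{tT}-\tfrac{\sqrt{bz}}{(1-t)T}\bigr)\pm\pi i.
\end{align*}
Inserting the large-$w$ expansion \eqref{lambda12:asy} shows that for $z\to\infty$ in the second quadrant one has $\lambda_1-\lambda_2\sim \tfrac{4\sqrt{a}}{tT}\sqrt{z}$ and $\lambda_3-\lambda_4\sim \tfrac{4\sqrt{b}}{(1-t)T}\sqrt{z}$, both of which have strictly positive real part because $\sqrt{z}$ lies in the upper first quadrant with argument in $(\pi/4,\pi/2)$. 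A level-curve argument then furnishes a Jordan arc in the second quadrant, from $\infty$ to the boundary of $B_\rho$, along which both real parts remain positive, after which the arc can be closed to the origin inside $B_\rho$. The subtlety compared with $\Delta_2^{\pm}$ is twofold: one must arrange that the \emph{same} arc witnesses both inequalities, and in the triple-scaling regime the linear coefficient $\tfrac{\sqrt{a}}{tT}-\tfrac{\sqrt{b}}{(1-t)T}$ is only of order $n^{-1/3}$ by \eqref{t:zero}, so the two inequalities effectively coalesce and the arc must be chosen uniformly in $n$ with sufficient room away from the zero set of $\Re\bigl[\wtil\lam(-\sqrt{z})-\wtil\lam(\sqrt{z})\bigr]$.
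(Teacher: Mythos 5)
Your proof is correct and follows the same approach as the paper's: the jump identities are verified by direct substitution into \eqref{lambda:1234} using the oddness $\wtil\lam_+ = -\wtil\lam_-$ on $(0,\gamma]$ (which, as you note, follows from \eqref{var:ineq1} combined with Schwarz symmetry), and the inequalities and arc constructions reduce to the sign properties of $\wtil\lam$ supplied by Lemma~\ref{lemma:lambda:DKZ}. The paper's own proof is simply a one-line reference to the definitions and to \cite[Lemmas~7.8, 7.9]{DKZ} for the level-curve constructions, so your more explicit discussion of the large-$z$ asymptotics of $\lam_1-\lam_2$, $\lam_3-\lam_4$ and the excision of $B_\rho$ fills in detail that the paper delegates to that reference.
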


\begin{proof} The equalities follow from the definitions. The inequalities are
intrinsic in our definition of $\wtil\lam$ in the proof of
Lemma~\ref{lemma:lambda:DKZ}, with the help of \cite[Lemmas~7.8, 7.9]{DKZ}.
\end{proof}

%With $U$ and $V$ as in the previous lemma, we choose Jordan curves
%$\Delta_1^{\pm}$ going from $\infty$ to $0$ in $V$ and $\Delta_2^{\pm}$ going
%from $0$ to $q$ in $U$, with $\pm\Im z>0$ for $z\in\Delta_j^{\pm}$, as
An illustration of the Jordan curves $\Delta_1^{\pm}$ and $\Delta_2^{\pm}$ in shown in Figure~\ref{fig:squaredBessel:rhpT}.

\subsection{The transformations $Y\mapsto X\mapsto U\mapsto T$}
\label{section:transfoYT}

In \cite{DKRZ}, it is shown how to apply a steepest descent analysis to the RH
problem~\ref{rhp:Y} for $Y(z)$. The authors apply there a series of
transformations
$Y\mapsto X\mapsto U\mapsto T$. %We briefly recall this construction.
We can apply exactly the same transformations here except that we replace the
$\lam$-functions in \cite{DKRZ} by the modified $\lam$-functions
\eqref{lambda:1234}.
%The first transformation $Y\mapsto X$ is defined by
%\begin{equation}\label{def:X}
%X(z) = C_1Y(z)\diag(A_1(z),A_2(z))
%\end{equation}
%with $C_1$ a certain constant matrix and with
%\begin{eqnarray}\label{firsttransfo3}
%A_1(z)&=& \tau_1^{-\alpha}z^{-\alpha/2}\begin{pmatrix} -\frac{1}{\pi
%i}y_2'(\tau_1^2z) &
%y_1'(\tau_1^2z) \\
%-\frac{1}{\pi i}\tau_1^{-1}y_2(\tau_1^2z) & \tau_1^{-1}y_1(\tau_1^2z)
%\end{pmatrix},
%\\ \label{firsttransfo4}A_2(z)&=& 2\tau_3^{\alpha}z^{\alpha/2}\begin{pmatrix} y_4(\tau_3^2z) &
%-\pi i y_3(\tau_3^2z) \\
%-\tau_3 y_4'(\tau_3^2z) & \pi i \tau_3 y_3'(\tau_3^2z)
%\end{pmatrix},
%\end{eqnarray}
%with $\tau_1=n\sqrt{a}/t$ and $\tau_3=n\sqrt{b}/(1-t)$.
%
%The second transformation $X\mapsto U$ is defined as
%\begin{align}\label{def:U}
%U(z)=C_2X(z)\diag\left(e^{n\left(\lam_1(z)-\frac{z}{t(1-t)}\right)},
%e^{n\left(\lam_2(z)-\frac{z}{t(1-t)}\right)},e^{n\lam_3(z)},e^{n\lam_4(z)}\right),
%\end{align}
%where $C_2$ is a certain constant matrix. In contrast to \cite{DKRZ} we now use
%the modified $\lam$-functions.
%
%
%The third transformation $U\mapsto T$ is defined (see Case~I of \cite{DKRZ}) as
%\begin{equation}\label{def:T}
%T(z) =\left\{\begin{array}{ll}
%U(z)\left(I\mp e^{n(\lam_2(z)-\lam_3(z))}E_{32}\right)&\\
%U(z)\left(I\pm e^{\pm i\pi\alpha}e^{-n(\lam_{1}(z)-\lam_2(z))}E_{12}
%\pm e^{\pm i\pi\alpha}e^{-n(\lam_{3}(z)-\lam_4(z))}E_{34}\right)&\\
%U(z),&\end{array}\right.
%\end{equation}
%for $z$ in the domain bounded by $\Delta_2^{\pm}$ and $[0,q]$, the domain
%bounded by $\Delta_1^{\pm}$ and $(-\infty,0]$, and the remaining part of the
%complex plane, respectively.
The resulting matrix $T$ satisfies the following RH problem (see Case~I in
\cite[Proposition~5.4]{DKRZ}):

\begin{figure}[t]
\vspace{-13mm}
\begin{center}
   \setlength{\unitlength}{1truemm}
   \begin{picture}(100,70)(-5,2)
       \put(40,40){\line(1,0){55}}
       \put(40,40){\line(-1,0){40}}
       \put(93,36.6){$\er$}
       \put(95,40){\thicklines\vector(1,0){1}}
       \put(40,40){\thicklines\circle*{1}}
       \put(39.3,36){$0$}
       \qbezier(40,40)(30,50)(0,55)
       \qbezier(40,40)(30,30)(0,25)
       \put(18,53){$\Delta_1^{+}$}
       \put(18,25){$\Delta_1^{-}$}
       %\put(18,41.5){$\Delta_1$}
       \put(20,50.3){\thicklines\vector(4,-1){1}}
       \put(20,29.6){\thicklines\vector(4,1){1}}
       \put(20,40){\thicklines\vector(1,0){1}}
       \put(60,40){\thicklines\circle*{1}}
       \put(59.3,36){$q$}
       \qbezier(40,40)(50,55)(60,40)
       \qbezier(40,40)(50,25)(60,40)
       \put(48,49){$\Delta_2^+$}
       \put(48,28){$\Delta_2^-$}
       %\put(48,41.5){$\Delta_2$}
       \put(50,47.5){\thicklines\vector(1,0){1}}
       \put(50,32.5){\thicklines\vector(1,0){1}}
       \put(50,40){\thicklines\vector(1,0){1}}

  \end{picture}
   \vspace{-21mm}
   \caption{Jump contours in the RH problem for $T(z)$ in the analysis of the non-intersecting squared Bessel
   paths in the multi-critical case.}
   \label{fig:squaredBessel:rhpT}
\end{center}
\end{figure}
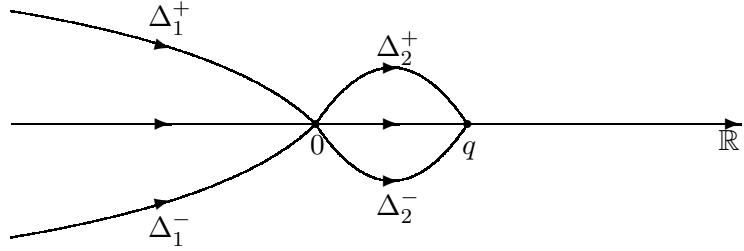

\begin{rhp}\label{rhp:T} We look for a $4\times 4$ matrix valued function $T(z)$ satisfying
\begin{itemize}
\item[(1)] $T(z)$ is analytic for $z\in\cee\setminus\left(\er\cup\Delta_1^{\pm}\cup\Delta_2^{\pm}\right)$,
where the contours are as in Figure~\ref{fig:squaredBessel:rhpT}.
\item[(2)] For each of the oriented contours in Figure~\ref{fig:squaredBessel:rhpT},
$T$ has a jump $T_+(z) = T_-(z)J_T(z)$ where \begin{align*} J_T(x) &=
\diag\left(1,\begin{pmatrix}0&1\\-1&0\end{pmatrix},1\right), \qquad \textrm{
for }x\in (0,q),
\\ J_T(x) &= I+e^{-n(\lam_{2,+}(x)-\lam_{3,+}(x))}E_{2,3},\qquad \textrm{ for } x\in\er_+\setminus [0,q],
\\ J_T(x) &=
\diag\left(\begin{pmatrix} 0&1 \\ -1&0\end{pmatrix},\begin{pmatrix} 0&1 \\
-1&0\end{pmatrix}\right), \qquad \textrm{ for }x\in (-\infty,0),
\\ J_T(z) &= I+e^{n(\lam_2(z)-\lam_3(z))}E_{3,2},\qquad \textrm{ for }z\in\Delta_2^{\pm},
\\ J_T(z) &= I-e^{\pm \alpha\pi i}e^{-n(\lam_1(z)-\lam_2(z))}E_{1,2}-e^{\pm \alpha\pi
i}e^{-n(\lam_3(z)-\lam_4(z))}E_{3,4},\qquad \textrm{ for } z\in\Delta_1^{\pm},
\end{align*}
where the notation $E_{j,k}$ is defined in Section~\ref{subsection:AtoB}.
\item[(3)] As $z\to\infty$ we have
\begin{equation*} T(z) =
\left(I+O\left(\frac{1}{z}\right)\right)\diag(z^{-1/4},z^{1/4},z^{-1/4},z^{1/4})
\frac{1}{\sqrt{2}}\diag\left(\begin{pmatrix} 1&i \\ i &
1\end{pmatrix},\begin{pmatrix} 1&i \\ i & 1\end{pmatrix}\right)
\end{equation*}
uniformly for $z\in\cee\setminus\er$.
\item[(4)] $T(z)$ behaves for $z\to 0$ as
 \begin{equation}\label{zero behavior of T} \begin{array}{ll}
  T(z)\diag(|z|^{\alpha/2},|z|^{-\alpha/2},|z|^{\alpha/2},|z|^{-\alpha/2}))=O(1),&
\qquad \textrm{if $\alpha>0$},\\
  T(z)\diag((\log |z|)^{-1},1,(\log |z|)^{-1},1)=O(1),&\qquad \textrm{if $\alpha=0$},
  \\
  T(z)=O(z^{\alpha/2}),~~T^{-1}(z)=O(z^{\alpha/2}),&\qquad \textrm{if $-1<\alpha<0$,}
  \end{array}\end{equation}
  where we assume that $z\to 0$ in the region between $\Delta_{1}^{+}$ and $\Delta_{2}^{+}$.
\end{itemize}
\end{rhp}

\subsection{Global parametrix} \label{section:global}

The global parametrix $P^{(\infty)}(z)$ is the solution to the RH problem
obtained by setting all the exponentially small entries in the RH problem for
$T(z)$ equal to zero (see Lemma~\ref{lemma:lambda:var}):
%It is a good approximation to $T(z)$ away from $z=0$ and
%$z=q$. It solves the following RH problem:
\begin{rhp}\label{rhp:Pinfty:alpha} We look for a $4\times 4$ matrix valued function $P^{(\infty)}(z)$ satisfying
\begin{itemize}
\item[(1)] $P^{(\infty)}(z)$ is analytic for $z\in\cee\setminus(\er_-\cup [0,q])$.
\item[(2)] For $x\in\er_-\cup [0,q]$ we have the jump
\begin{align*} P^{(\infty)}_+(x) = P^{(\infty)}_-(x)
\diag\left(1,\begin{pmatrix}0&1\\-1&0\end{pmatrix},1\right), \quad x\in (0,q),
\\ P^{(\infty)}_+(x) = P^{(\infty)}_-(x)\diag\left(\begin{pmatrix} 0&1 \\ -1&0\end{pmatrix},\begin{pmatrix} 0&1 \\
-1&0\end{pmatrix}\right),\quad x\in\er_-.
\end{align*}
\item[(3)] As $z\to\infty$ we have uniformly for $z\in\cee\setminus\er$,
\begin{equation*} P^{(\infty)}(z) =
\left(I+O\left(\frac{1}{z}\right)\right)\diag(z^{-1/4},z^{1/4},z^{-1/4},z^{1/4})
\frac{1}{\sqrt{2}}\diag\left(\begin{pmatrix} 1&i \\ i & 1\end{pmatrix},
\begin{pmatrix} 1&i \\ i & 1\end{pmatrix}\right).
\end{equation*}
\item[(4)] $P^{(\infty)}(z)$ has at most fourth-root singularities at the special points
$q$ and $0$.
\end{itemize}
\end{rhp}

The global parametrix was constructed in a more general setting in
\cite[Sec.~5.4]{DKRZ}. In the present case the construction can be made fully
explicit:

\begin{lemma}
The solution to the RH problem \ref{rhp:Pinfty:alpha} for $P^{(\infty)}$ is
given explicitly by
\begin{multline}\label{Pinfty:explicit:tilde}
P^{(\infty)}(z) = C\diag(z^{-1/4},z^{1/4},z^{-1/4},z^{1/4}) \frac{1}{\sqrt{2}}
\diag\left(\begin{pmatrix} 1&i \\ i & 1\end{pmatrix},
\begin{pmatrix} 1&i \\ i & 1\end{pmatrix}\right)
\\
\times \wtil \Aa^{-1}
\diag(\gamma_2^{-1}(\sqrt{z}),\gamma_1^{-1}(\sqrt{z}),\gamma_1(\sqrt{z}),\gamma_2(\sqrt{z}))
\wtil A,
\end{multline}
where
\begin{equation}\label{mixing:matrix2}
\wtil \Aa:=\frac{1}{\sqrt{2}}\begin{pmatrix} 1 & 0 & 0 & i \\
0 & 1 & -i & 0 \\
0 & -i & 1 & 0 \\
i & 0 & 0 & 1 \\
\end{pmatrix},
\end{equation}
\begin{equation}\label{Pinfty:gammas}
 C := I+\frac{i\sqrt{q}}{4}(E_{2,3}-E_{4,1}),\qquad \gamma_1(z):=\left(\frac{z}{z-\sqrt{q}}\right)^{1/4},\quad
\gamma_2(z):=\left(\frac{z}{z+\sqrt{q}}\right)^{1/4}.
\end{equation}
\end{lemma}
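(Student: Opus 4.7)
The plan is to verify directly that the matrix on the right-hand side of \eqref{Pinfty:explicit:tilde} satisfies each of the four conditions of RH problem~\ref{rhp:Pinfty:alpha}. I would decompose the formula into three factors
\[
P^{(\infty)}(z)=C\cdot N(z)\cdot\bigl[\wtil\Aa^{-1}D(\sqrt z)\wtil\Aa\bigr],
\]
where $N(z)=\diag(z^{-1/4},z^{1/4},z^{-1/4},z^{1/4})\frac{1}{\sqrt 2}\diag\bigl(\begin{smallmatrix}1&i\\i&1\end{smallmatrix},\begin{smallmatrix}1&i\\i&1\end{smallmatrix}\bigr)$ is the \emph{trivial outer model} (which already solves the Bessel-type RH problem on $\er_-$ without the $[0,q]$ cut), and $D(w)=\diag(\gamma_2^{-1}(w),\gamma_1^{-1}(w),\gamma_1(w),\gamma_2(w))$ is a uniformizing scalar factor on the obvious genus-$0$ Riemann surface. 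The essential observation is that $\gamma_1(w)$ has its only branch cut on $[0,\sqrt q]$ while $\gamma_2(w)$ has its cut on $[-\sqrt q,0]$, so $\gamma_1(\sqrt z)$ and $\gamma_2(\sqrt z)$ are analytic on $\cee\setminus(\er_-\cup[0,q])$, matching the analyticity requirement.

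Next I would check the two jumps. On $(0,q)$ neither $N(z)$ nor $\sqrt z$ jumps, and only $\gamma_1(\sqrt z)$ is discontinuous. A direct branch computation gives $\gamma_{1,+}/\gamma_{1,-}=-i$, so that $D_+(\sqrt z)=D_-(\sqrt z)\diag(1,-i,i,1)$ on $(0,q)$. The matrix $\wtil\Aa$ in \eqref{mixing:matrix2} is precisely constructed so that its middle $(2,3)$ block diagonalizes the $2\times 2$ permutation $\bigl(\begin{smallmatrix}0&1\\-1&0\end{smallmatrix}\bigr)$ with eigenvalues $\pm i$; hence conjugation by $\wtil\Aa$ turns this diagonal jump into the required block $\diag(1,J_2,1)$. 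On $\er_-$, the factor $N(z)$ by itself produces the block $\diag(J,J)$ jump, while the combined transformations $\sqrt z_+=-\sqrt z_-$ and $\gamma_1(-w)=\gamma_2(w)$ (up to a unit factor) swap the roles of $\gamma_1$ and $\gamma_2$ inside $D$. I will check that after conjugation by $\wtil\Aa$ these $\gamma$-contributions combine into an identity jump, so that the overall $\er_-$-jump reduces to that produced by $N(z)$ alone.

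For condition~(3), I would expand $\gamma_i(w)=1\mp\frac{\sqrt q}{4w}+O(w^{-2})$ as $w\to\infty$, so that $D(\sqrt z)=I+\frac{\sqrt q}{4\sqrt z}\diag(1,-1,-1,1)+O(z^{-1})$. Substituting into the formula, the subleading $O(1/\sqrt z)$ correction in $P^{(\infty)}$ involves $N(z)\wtil\Aa^{-1}\diag(1,-1,-1,1)\wtil\Aa$; written in the original basis this produces an off-diagonal contribution matching the ansatz $\frac{i\sqrt q}{4}(E_{2,3}-E_{4,1})$, and the correction matrix $C$ is designed precisely to cancel it so that $P^{(\infty)}(z)N(z)^{-1}=I+O(1/z)$. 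Condition~(4) is immediate from the definition \eqref{Pinfty:gammas}: each $\gamma_j$ contributes at most a fourth-root singularity at the endpoints $0$ and $q$.

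The main obstacle is the jump verification on $\er_-$, where all four factors ($z^{\pm 1/4}$, the inner mixing matrix, $\sqrt z$, and the $\gamma_i(\sqrt z)$) contribute simultaneously. This requires a careful bookkeeping of the branch conventions (principal branch for each fractional power, and the convention that $\sqrt z$ lies in the right half-plane) and then verifying that the \emph{net} jump in $\wtil\Aa^{-1}D(\sqrt z)\wtil\Aa$ is trivial, so the $\diag(J,J)$ jump comes entirely from $N(z)$. The remaining verifications are routine once the correct branch choices are fixed.
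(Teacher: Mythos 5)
Your overall strategy---verify each condition directly after factoring the formula as $C\cdot N(z)\cdot\bigl[\wtil\Aa^{-1}D(\sqrt z)\wtil\Aa\bigr]$---is the same as the paper's (which offers only ``Straightforward verification''). However, your plan contains a genuine error on the $\er_-$ jump and several sign slips that would derail a careful execution.

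The $\er_-$ jump is the crux, and your claim that ``after conjugation by $\wtil\Aa$ these $\gamma$-contributions combine into an identity jump'' is false: the matrix $M(z):=\wtil\Aa^{-1}D(\sqrt z)\wtil\Aa$ does \emph{not} extend continuously across $\er_-$, and the overall jump of $P^{(\infty)}$ is \emph{not} the product of the jump of $N$ with a separate trivial jump of $M$. What actually holds is the intertwining relation
\begin{equation*}
M_-(x)\;=\;J_N\,M_+(x)\,J_N^{-1},\qquad x<0,\qquad J_N:=\diag(\sigma,\sigma),\ \ \sigma=\begin{pmatrix}0&1\\-1&0\end{pmatrix},
\end{equation*}
and this, combined with $N_-^{-1}N_+=J_N$, gives $J_{P^{(\infty)}}=M_-^{-1}J_N M_+=J_N$. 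The intertwining follows from two facts you need to verify explicitly: (i) the swap $\sqrt z_+=-\sqrt z_-$ together with $\gamma_1(-w)=\gamma_2(w)$ yields $D_+=SD_-S$ where $S$ is the permutation exchanging positions $1\leftrightarrow 2$ and $3\leftrightarrow 4$ (and $SD S^{-1}=J_N D J_N^{-1}$ for diagonal $D$); and (ii) the crucial algebraic identity $\wtil\Aa J_N=J_N\wtil\Aa$, i.e.\ $\wtil\Aa$ commutes with the $\er_-$ jump matrix. Without (ii), conjugation by $\wtil\Aa$ would \emph{not} preserve the $J_N$-intertwining and the jump would come out wrong, so this step cannot be waved through as ``routine once branches are fixed.''

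There are also sign slips. On $(0,q)$ you correctly find $\gamma_{1,+}/\gamma_{1,-}=-i$ but then deduce $J_D=\diag(1,-i,i,1)$, which is inverted: since $D=\diag(\gamma_2^{-1},\gamma_1^{-1},\gamma_1,\gamma_2)$, position~2 carries $\gamma_1^{-1}$ whose jump ratio is $(\gamma_{1,+}/\gamma_{1,-})^{-1}=i$, giving $J_D=\diag(1,i,-i,1)$; the required middle-block jump $\sigma$ is $\wtil\Aa_{\mathrm{mid}}^{-1}\diag(i,-i)\wtil\Aa_{\mathrm{mid}}$, not $\wtil\Aa_{\mathrm{mid}}^{-1}\diag(-i,i)\wtil\Aa_{\mathrm{mid}}$ (which equals $-\sigma$). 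Your large-$w$ expansions are likewise reversed: $w/(w-\sqrt q)>1$ for $w\to+\infty$, so $\gamma_1(w)=1+\tfrac{\sqrt q}{4w}+O(w^{-2})$ and $\gamma_2(w)=1-\tfrac{\sqrt q}{4w}+O(w^{-2})$, and the correct expansion is $D(\sqrt z)=I+\tfrac{\sqrt q}{4\sqrt z}\diag(1,-1,1,-1)+O(z^{-1})$---not the $\diag(1,-1,-1,1)$ you write, which does not even follow from your own (reversed) $\gamma_i$-expansions. These errors propagate directly into the determination of the correction matrix $C$ and the matching of condition~(3), so the entire bookkeeping from the $(0,q)$ jump onward needs to be redone with the signs tracked carefully.
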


\begin{proof}
Straightforward verification.
\end{proof}

\subsection{Local parametrix at the point $q$} \label{subsection:localAiry}

Inside a fixed small disk around the point $q$ we construct a local parametrix
$P^{(q)}(z)$ to the RH problem with the help of Airy functions. Such a
construction is standard and well-known and we do not go into the details.

\subsection{Local parametrix at the origin} \label{section:PhaseII}

In this section we construct a local parametrix near the origin. To this end we
will use the model RH problem for $M(\zeta)$. The construction will also use
the \lq squaring trick\rq\ of Its et al. \cite{IKO}.

\subsubsection*{Transformation of the Riemann-Hilbert problem for $M(\zeta)$}

Recall the model RH problem~\ref{rhp:modelM} for $M(\zeta)$. We put
$\nu:=\alpha+1/2$ and we set \begin{equation}\label{N:def} N(\zeta) :=
D^{-1}\diag\left(\begin{pmatrix}0&1\\
1&0\end{pmatrix},1,1\right)M(\zeta)\diag\left(\begin{pmatrix}0&1\\
1&0\end{pmatrix},1,1\right)D,
\end{equation}
where
$$ D = \diag(-i,1,1,-i).
$$
The jumps for $N$ are shown in Figure~\ref{fig:modelRHP:2}.

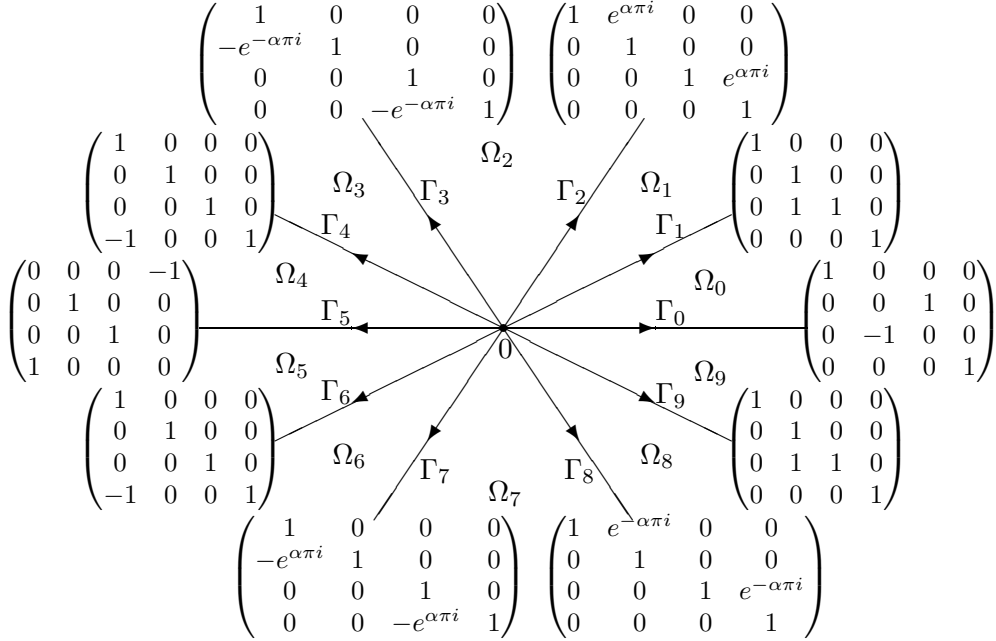
\begin{figure}[t]
\vspace{14mm}
\begin{center}
   \setlength{\unitlength}{1truemm}
   \begin{picture}(100,70)(-5,2)
       \put(40,40){\line(1,0){40}}
       \put(40,40){\line(-1,0){40}}
       \put(40,40){\line(2,1){30}}
       \put(40,40){\line(2,-1){30}}
       \put(40,40){\line(-2,1){30}}
       \put(40,40){\line(-2,-1){30}}
       \put(40,40){\line(2,3){18.5}}
       \put(40,40){\line(2,-3){17}}
       \put(40,40){\line(-2,3){18.5}}
       \put(40,40){\line(-2,-3){17}}
       \put(40,40){\thicklines\circle*{1}}
       \put(39.3,36){$0$}
       \put(60,40){\thicklines\vector(1,0){.0001}}
       \put(20,40){\thicklines\vector(-1,0){.0001}}
       \put(60,50){\thicklines\vector(2,1){.0001}}
       \put(60,30){\thicklines\vector(2,-1){.0001}}
       \put(20,50){\thicklines\vector(-2,1){.0001}}
       \put(20,30){\thicklines\vector(-2,-1){.0001}}
       \put(50,55){\thicklines\vector(2,3){.0001}}
       \put(50,25){\thicklines\vector(2,-3){.0001}}
       \put(30,55){\thicklines\vector(-2,3){.0001}}
       \put(30,25){\thicklines\vector(-2,-3){.0001}}

       \put(60,41){$\Gamma_0$}
       \put(60,52.5){$\Gamma_1$}
       \put(47,57){$\Gamma_2$}
       \put(29,57){$\Gamma_3$}
       \put(16,52.5){$\Gamma_4$}
       \put(16,41){$\Gamma_5$}
       \put(16,30.5){$\Gamma_6$}
       \put(29,20){$\Gamma_7$}
       \put(48,20){$\Gamma_8$}
       \put(60,30){$\Gamma_{9}$}

       \put(65,45){$\small{\Omega_0}$}
       \put(58,58){$\small{\Omega_1}$}
       \put(37,62){$\small{\Omega_2}$}
       \put(17.5,58){$\small{\Omega_3}$}
       \put(10,46){$\small{\Omega_4}$}
       \put(10,34){$\small{\Omega_5}$}
       \put(17.5,22){$\small{\Omega_6}$}
       \put(38,17){$\small{\Omega_7}$}
       \put(58,22){$\small{\Omega_8}$}
       \put(65,33){$\small{\Omega_{9}}$}

       \put(78.5,40){$\small{\begin{pmatrix}1&0&0&0\\ 0&0&1&0\\ 0&-1&0&0\\ 0&0&0&1 \end{pmatrix}}$}
       \put(69,57){$\small{\begin{pmatrix}1&0&0&0\\ 0&1&0&0\\ 0&1&1&0\\ 0&0&0&1 \end{pmatrix}}$}
       \put(45,74){$\small{\begin{pmatrix}1& e^{\al\pi i}&0&0\\ 0&1&0&0\\ 0&0&1&e^{\al\pi i}\\ 0&0&0&1 \end{pmatrix}}$}
       \put(-1,74){$\small{\begin{pmatrix}1&0&0&0\\ - e^{-\al\pi i}&1&0&0\\ 0&0&1&0\\ 0&0& -e^{-\al\pi i}&1 \end{pmatrix}}$}
       \put(-16,57){$\small{\begin{pmatrix}1&0&0&0\\ 0&1&0&0\\ 0&0&1&0\\ -1&0&0&1\end{pmatrix}}$}
       \put(-26,40){$\small{\begin{pmatrix}0&0&0&-1\\ 0&1&0&0\\ 0&0&1&0\\ 1&0&0&0 \end{pmatrix}}$}
       \put(-16,23){$\small{\begin{pmatrix}1&0&0&0\\ 0&1&0&0\\ 0&0&1&0\\ -1&0&0&1 \end{pmatrix}}$}
       \put(4,6){$\small{\begin{pmatrix}1&0&0&0\\ - e^{\al\pi i}&1&0&0\\ 0&0&1&0\\ 0&0& -e^{\al\pi i}&1 \end{pmatrix}}$}
       \put(45,6){$\small{\begin{pmatrix}1& e^{-\al\pi i}&0&0\\ 0&1&0&0\\ 0&0&1& e^{-\al\pi i}\\ 0&0&0&1 \end{pmatrix}}$}
       \put(69,23){$\small{\begin{pmatrix}1&0&0&0\\ 0&1&0&0\\ 0&1&1&0\\ 0&0&0&1\end{pmatrix}}$}

  \end{picture}
   \vspace{0mm}
   \caption{Jump
   matrices in the RH problem for  $N = N(\zeta)$.}
   \label{fig:modelRHP:2}
\end{center}
\end{figure}

The asymptotics of $N$ as $\zeta \to \infty$ is given by
\begin{multline}
\label{N:asymptotics} N(\zeta) =
\left(I+\frac{N_1}{\zeta}+\frac{N_2}{\zeta^2}+O\left(\frac{1}{\zeta^3}\right)\right)
\diag(\zeta^{-1/4},(-\zeta)^{-1/4},(-\zeta)^{1/4},\zeta^{1/4})
\\
\times \wtil
\Aa\diag\left(e^{-\theta_2(\zeta)-\tau\zeta},e^{-\theta_1(\zeta)+\tau\zeta},e^{\theta_1(\zeta)+\tau\zeta},e^{\theta_2(\zeta)-\tau\zeta}\right)
\end{multline}
with $\wtil \Aa$ as in \eqref{mixing:matrix2}. The behavior around infinity can
be rewritten as
\begin{multline}
\label{N:asymptotics2} N(\zeta) =
\diag(\zeta^{-1/4},(-\zeta)^{-1/4},(-\zeta)^{1/4},\zeta^{1/4}) \wtil \Aa
\left(I+\frac{\wtil N_{1,\pm}}{\zeta^{1/2}}+\frac{\wtil
N_{2,\pm}}{\zeta}+O\left(\frac{1}{\zeta^{3/2}}\right)\right) \\ \times
\diag\left(e^{-\theta_2(\zeta)-\tau\zeta},e^{-\theta_1(\zeta)+\tau\zeta},e^{\theta_1(\zeta)+\tau\zeta},e^{\theta_2(\zeta)-\tau\zeta}\right)
\end{multline}
with the $\pm$ sign as $\zeta \to \infty$ within the upper/lower half plane.
Here
\[
\wtil N_{1,\pm}=\wtil \Aa^{-1} \diag \left(1,e^{\mp \pi i/4},0,0 \right) N_1
\diag \left(0,0,e^{\mp \pi i/4},1 \right) \wtil \Aa.
\]

For further use, we record the symmetry relation
\begin{multline}\label{symmetry:N}
N(-\zeta;r_1,r_2,s,\tau) = \diag\left(\begin{pmatrix} 0 & 1 \\ -1 & 0
\end{pmatrix},\begin{pmatrix} 0 & 1 \\ -1 & 0
\end{pmatrix}\right)\\
\times N(\zeta; r_2,r_1,s,\tau)\diag\left(\begin{pmatrix} 0 & -1 \\ 1 & 0
\end{pmatrix},\begin{pmatrix} 0 & -1 \\ 1 & 0
\end{pmatrix}\right).
\end{multline}
This easily follows from \eqref{symmetry:special}. Note that the order of $r_1$
and $r_2$ differs on both sides of the equality.

\subsubsection*{Construction of the local parametrix}

Now we construct the local parametrix $P^{(0)}$ around the origin. In \cite{DKZ} we constructed
$P^{(0)}$ inside a shrinking disk $B_{\rho}$ of radius $\rho=n^{-1/3}$ around the origin, see also
\cite{DG}. In the present setting, we will be able to work inside a \emph{fixed} disk $B_{\rho}$ with radius $\rho>0$
fixed but sufficiently small. The fact that we have a fixed (rather than a
shrinking) disk will greatly simplify some of the technical details.

\begin{rhp}\label{rhp:P0} We look for a $4\times 4$ matrix valued function $P^{(0)}$ such that
\begin{itemize}
\item[(1)] $P^{(0)}(z)$ is analytic for $z\in B_\rho\setminus
(\er\cup\Delta_1^{\pm}\cup \Delta_2^{\pm})$.
\item[(2)] For $z\in B_\rho\cap (\er\cup\Delta_1^{\pm}\cup \Delta_2^{\pm})$,
$P^{(0)}$ has the same jumps as $T$, see RH problem~\ref{rhp:T}.
\item[(3)] Uniformly on the circle $|z|=\rho$ we have for $n\to
\infty$ that
\begin{equation}\label{matching:P0}
P^{(0)}(z)=P^{(\infty)}(z)(I+O(n^{-1/3})).\end{equation}
 \item[(4)] The behavior of $P^{(0)}(z)$ for $z\to 0$ is the same as for
 $T(z)$, see \eqref{zero behavior of T}.
\end{itemize}
\end{rhp}

We need some functions $f(z)$, $r_1(z)$, $r_2(z)$ and constants $s$, $\tau$. We will
use a significantly simpler construction that in \cite{DKZ} and \cite{DG}. Note that there
are many parameters $f,r_1,r_2,s,\tau$ and there is a certain freedom in how to define them.

We set (recall \eqref{def:kappa})
\begin{align}\label{r12stau}
& \kappa=2(\sqrt{a}+\sqrt{b}),\qquad f(z)=\kappa\sqrt{z},\qquad r_1(z)=3 G(\sqrt{z})\kappa^{-3/2},\\
\nonumber & r_2(z)=3 G(-\sqrt{z})\kappa^{-3/2},\qquad
s=\frac{\delta\sqrt{\gamma}}{t(1-t)T}\kappa^{-1/2},\qquad \tau =
\left(\frac{\sqrt{a}}{tT}-\frac{\sqrt{b}}{(1-t)T}\right)\kappa^{-1},
\end{align}
recall \eqref{lambdatil:zero}. Thanks to \eqref{lambdatil:zero} and
\eqref{lambda:1234} we have
\begin{align}\label{lambdas:modelrhp}\begin{array}{rl} -\frac{2}{3}r_2(z)f(z)^{3/2}-2sf(z)^{1/2} -\tau f(z) &=
-\lam_1(z)+h(z),\\
-\frac{2}{3}r_1(z)(-f(z))^{3/2}-2s(-f(z))^{1/2} +\tau f(z)&=-\lam_2(z)+h(z)\mp\pi i,\\
\frac{2}{3}r_1(z)(-f(z))^{3/2}+2s(-f(z))^{1/2} +\tau f(z)&=-\lam_3(z)+h(z)\pm\pi i,\\
\frac{2}{3}r_2(z)f(z)^{3/2}+2sf(z)^{1/2} -\tau
f(z)&=-\lam_4(z)+h(z),\end{array}\end{align} with
$h(z):=\frac{z}{2t(1-t)T}+2\ell$.

The following limits exist:
\begin{align}\label{sn:limit}
&s^*:=\lim_{n\to\infty} n^{2/3}s = \lim_{n\to\infty}
n^{2/3}\frac{\delta\sqrt{\gamma}\kappa^{-1/2}}{t(1-t)}=\frac{K^2(\sqrt{a}+\sqrt{b})^4
-L}{2},\\
\nonumber &\tau^*:=\lim_{n\to\infty} n^{1/3}\tau = -K(\sqrt{a}+\sqrt{b})^2.
\end{align}
The first formula follows from
\eqref{doublescaling:ab}--\eqref{doublescaling:T} and \eqref{asy:alpha:delta}.
The second one follows from \eqref{t:zero}. From \eqref{G:zero} we also find
that for $z$ in a neighborhood of the origin,
$$ \lim_{n\to\infty} r_j(z)= \frac{\sqrt{\gamma}\kappa^{-3/2}}{t(1-t)}+O(z^{1/2})= 1+O(z^{1/2}),\qquad j=1,2.%O(n^{-2/3})+O(z).
$$

Now we define the local parametrix $P^{(0)}(z)$ at the origin by
\begin{equation}\label{def:P0} P^{(0)}(z) = E(z)N(n^{2/3}f(z);r_1(z),r_2(z),n^{2/3}s,n^{1/3}\tau)\Lam(z),
\end{equation}
with $N(\zeta)$ defined in \eqref{N:def}, and with
\begin{align}\label{def:Ez}
E(z) = P^{(\infty)}(z) \wtil
\Aa^{-1}\diag(\zeta^{1/4},(-\zeta)^{1/4},(-\zeta)^{-1/4},\zeta^{-1/4}),\qquad
\zeta:= n^{2/3}f(z),\\
\label{def:Lam}
\Lam(z)=\exp\left(-n\left(\frac{z}{2t(1-t)T}+2\ell\right)\right)\diag(e^{n\lam_1(z)},e^{n\lam_2(z)},e^{n\lam_3(z)},e^{n\lam_4(z)}).
\end{align}

\begin{lemma}\label{lemma:jumpsE}
The matrix-valued function $E(z)$ in \eqref{def:Ez} is analytic for
$z\in\cee\setminus\er_-$. For $x<0$ it has the jump
\begin{equation}\label{jump:E} E_+(x) = E_-(x)\diag\left(\begin{pmatrix} 0&1 \\ -1&0\end{pmatrix},\begin{pmatrix} 0&1 \\
-1&0\end{pmatrix}\right),\qquad x<0.
\end{equation}
\end{lemma}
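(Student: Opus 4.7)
The function $E(z)$ is the product of three factors: the global parametrix $P^{(\infty)}(z)$, the constant mixing matrix $\wtil \Aa^{-1}$, and the diagonal matrix $D(\zeta):=\diag(\zeta^{1/4},(-\zeta)^{1/4},(-\zeta)^{-1/4},\zeta^{-1/4})$ evaluated at $\zeta=n^{2/3}f(z)=n^{2/3}\kappa\sqrt{z}$. The plan is to isolate which rays can support a jump, then verify continuity on one ray and compute the jump on the other. Since $\sqrt{z}$ (principal branch) is analytic off $\er_-$ and sends $\cee\setminus\er_-$ into the right half $\zeta$-plane, the factors $\zeta^{\pm 1/4}$ are analytic off $\er_-$, while the factors $(-\zeta)^{\pm 1/4}$ have a branch cut along the positive real axis. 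The matrix $P^{(\infty)}$ has jumps on $(0,q)$ and $\er_-$. Hence the only candidate jumps of $E$ are on $\er_+$ (inside $B_\rho$) and on $\er_-$.

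On $\er_+$ one should check that the jumps coming from $P^{(\infty)}$ on $(0,q)$ and from the branch cuts of $(-\zeta)^{\pm 1/4}$ cancel. Using the principal-branch convention, a direct computation gives $D_+(\zeta(x))=D_-(\zeta(x))\diag(1,-i,i,1)$ for $x\in(0,\rho)$, since only the second and third diagonal entries pick up a phase. Combined with $P^{(\infty)}_+(x)=P^{(\infty)}_-(x)J_0$ for $J_0=\diag(1,\begin{pmatrix}0&1\\-1&0\end{pmatrix},1)$, the jump of $E$ equals $D_-^{-1}\wtil\Aa\,J_0\,\wtil\Aa^{-1}D_-\diag(1,-i,i,1)$. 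The key identity to verify by an explicit $4\times4$ matrix multiplication is
\[
\wtil\Aa\, J_0\, \wtil\Aa^{-1}=\diag(1,i,-i,1),
\]
after which commutativity of diagonal matrices reduces the jump to $\diag(1,i,-i,1)\diag(1,-i,i,1)=I$. Hence $E$ extends analytically across $(0,\rho)$.

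On $\er_-$, the square root itself jumps ($\sqrt{z}_+=-\sqrt{z}_-$), so $\zeta_+=-\zeta_-$. Evaluating the four principal fractional powers at the two purely imaginary arguments $\pm in^{2/3}\kappa\sqrt{|x|}$ yields
\[
D_+(\zeta_+)=D_-(\zeta_-)\diag(e^{i\pi/4},e^{-i\pi/4},e^{i\pi/4},e^{-i\pi/4}).
\]
Meanwhile $P^{(\infty)}_+(x)=P^{(\infty)}_-(x)J_{\text{neg}}$ with $J_{\text{neg}}=\diag(\begin{pmatrix}0&1\\-1&0\end{pmatrix},\begin{pmatrix}0&1\\-1&0\end{pmatrix})$. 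A short matrix calculation shows
\[
\wtil\Aa\, J_{\text{neg}}\, \wtil\Aa^{-1}=J_{\text{neg}},
\]
and conjugation by the diagonal $D_-$ scales the four off-diagonal entries of $J_{\text{neg}}$ by the ratios $e^{\pm i\pi/4}$, which are precisely inverse to the phases appearing in the jump of $D$. The product therefore collapses back to $J_{\text{neg}}$, giving the claimed jump \eqref{jump:E}.

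The only potentially delicate point is keeping careful track of signs in the principal-branch arguments of $\zeta^{\pm1/4}$ and $(-\zeta)^{\pm1/4}$ on both $\er_+$ and the upper/lower sides of $\er_-$; modulo that bookkeeping, the proof reduces to the two elementary conjugation identities above for the constant matrix $\wtil\Aa$, which are essentially what forced the choice of $\wtil\Aa$ in \eqref{mixing:matrix2} in the first place.
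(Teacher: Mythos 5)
Your proof is correct and follows the same route as the paper: observe that the only possible jumps of $E$ are on $\er_+$ (from $P^{(\infty)}$ and from $(-\zeta)^{\pm 1/4}$) and on $\er_-$, then verify that the jumps cancel on $\er_+$ and produce the claimed matrix on $\er_-$; the paper merely asserts "one checks that the jumps cancel" and "a similar calculation," whereas you supply the explicit phase bookkeeping and the two conjugation identities $\wtil\Aa\, J_0\, \wtil\Aa^{-1}=\diag(1,i,-i,1)$ and $\wtil\Aa\, J_{\mathrm{neg}}\, \wtil\Aa^{-1}=J_{\mathrm{neg}}$, which I have verified.
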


\begin{proof}
It follows from the definition \eqref{def:Ez} of $E(z)$ and the fact that
$P^{(\infty)}(z)$ is analytic for $z\in\cee\setminus\er$ that $E(z)$ is also
analytic for $z\in\cee\setminus\er$. For $x\in\er_+$, one checks that the jumps
of $P^{(\infty)}$ and of the rightmost factor in \eqref{def:Ez} cancel each
other out so that $E(x)$ is analytic for $x\in\er_+$. A similar calculation
yields the jump \eqref{jump:E} of $E(x)$ for $x\in\er_-$. (Alternatively, the
lemma could be proved with the help of the explicit formula
\eqref{Pinfty:explicit:tilde}.)

\end{proof}

We claim that the matrix $P^{(0)}(z)$ in \eqref{def:P0} satisfies the RH problem~\ref{rhp:P0}.
The jumps of $P^{(0)}$ on $\er_+\cup\Delta_1^{\pm}\cup \Delta_2^{\pm}$ are
easily checked. We are assuming here that the curves $\Delta_2^+$,
$\Delta_1^+$, $\Delta_1^-$, $\Delta_2^{-}$ are chosen near the origin so that
they are mapped to the rays $\Gamma_1$, $\Gamma_2$, $\Gamma_8$, $\Gamma_9$
respectively under the map $z\mapsto \sqrt{z}$. For $x\in\er_-$ we have
\begin{eqnarray*}
P^{(0)}_{+}(x) &=& E_+(x)N_+(n^{2/3}f(x);r_1(x),r_2(x),n^{2/3}s,n^{1/3}\tau)\Lam_+(x) \\
&=& E_-(x)\left(\begin{pmatrix} 0&1 \\ -1 & 0\end{pmatrix},
\begin{pmatrix} 0&1 \\ -1 & 0\end{pmatrix}\right) N_+(n^{2/3}f(x);r_1(x),r_2(x),n^{2/3}s,n^{1/3}\tau)\Lam_+(x)
\\
&=& E_-(x)N_-(n^{2/3}f(x);r_1(x),r_2(x),n^{2/3}s,n^{1/3}\tau) \left(\begin{pmatrix} 0&1 \\ -1 & 0\end{pmatrix},
\begin{pmatrix} 0&1 \\ -1 & 0\end{pmatrix}\right)\Lam_+(x)\\
&=& P^{(0)}_{-}(x)\diag\left(\begin{pmatrix} 0&1 \\ -1 & 0\end{pmatrix},
\begin{pmatrix} 0&1 \\ -1 & 0\end{pmatrix}\right),
\end{eqnarray*}
where in the second line we used \eqref{jump:E} and in the third line we used
\eqref{symmetry:N} and the fact that $r_1(z)=r_2(-z)$. This yields the required
jump on $\er_-$.

The matching condition \eqref{matching:P0} follows from \eqref{def:P0},
\eqref{lambdas:modelrhp} and \eqref{N:asymptotics2}. The fact that $P^{(0)}(z)$
has the correct behavior for $z\to 0$ follows from $\alpha=\nu-1/2$,
\eqref{def:P0}, \eqref{N:def} and a careful inspection of
\eqref{singularbehzero:1}--\eqref{singularbehzero:2biszet} and
\eqref{E0:pattern:0}.

\subsection{Final transformation of the Riemann-Hilbert problem}
\label{section:finaltransfo}

\begin{figure}[t]
\vspace{-13mm}
\begin{center}
   \setlength{\unitlength}{1truemm}
   \begin{picture}(100,70)(-5,2)
       \put(62.5,40){\line(1,0){32.5}}
       \put(93,36.6){$\er$}
       \put(79,40){\thicklines\vector(1,0){1}}
       \put(40,40){\thicklines\circle*{1}}
       \put(39.3,34.5){$0$}
       \qbezier(38.5,42)(30,49)(0,55)
       \qbezier(38.5,38)(30,31)(0,25)
       \put(18,53){$\Delta_1^{+}$}
       \put(18,25){$\Delta_1^{-}$}
       %\put(18,41.5){$\Delta_1$}
       \put(20,50.3){\thicklines\vector(4,-1){1}}
       \put(20,29.6){\thicklines\vector(4,1){1}}
       %\put(20,40){\thicklines\vector(1,0){1}}
       \put(60,40){\thicklines\circle*{1}}
       \put(59.3,34.5){$q$}
       \qbezier(41.5,42)(50,53)(58.5,42)
       \qbezier(41.5,38)(50,27)(58.5,38)
       \put(48,49){$\Delta_2^+$}
       \put(48,28){$\Delta_2^-$}
       %\put(48,41.5){$\Delta_2$}
       \put(50,47.5){\thicklines\vector(1,0){1}}
       \put(50,32.5){\thicklines\vector(1,0){1}}
       %\put(50,40){\thicklines\vector(1,0){1}}

       \put(40,40){\circle{5}}
       \put(60,40){\circle{5}}

  \end{picture}
   \vspace{-21mm}
   \caption{Jump contour $\Sigma_R$ in the RH problem for $R(z)$.}
   \label{fig:squaredBessel:rhpR}
\end{center}
\end{figure}
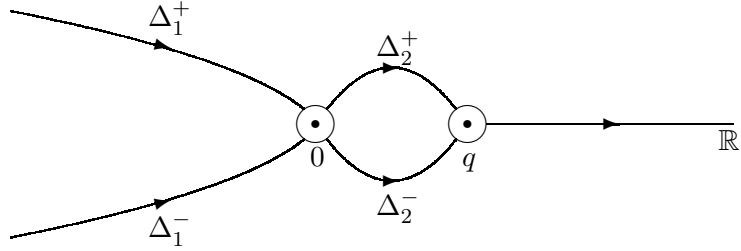

Using the global parametrix $P^{(\infty)}$ and the local parametrices $P^{(q)}$
and $P^{(0)}$, we define the final transformation $T \mapsto R$ by
\begin{equation}\label{def:R}
R(z) = \left\{
    \begin{array}{ll}
    T(z)(P^{(0)})^{-1}(z),& \quad \textrm{if }|z|<\rho, \\
    T(z)(P^{(q)})^{-1}(z),& \quad \textrm{if }|z-q|<\rho, \\
    T(z)(P^{(\infty)})^{-1}(z),& \quad \textrm{elsewhere}.
\end{array}
\right.
\end{equation}

\begin{lemma}\label{lemma:R}
With the contour $\Sigma_R$ as in Figure~\ref{fig:squaredBessel:rhpR}, we have
$R_+=R_-J_R$ on $\Sigma_R$ where, for a suitable constant $c>0$, the jump
matrices behave as
\begin{align*}
    J_R(z) & = I + O(n^{-1}), \qquad \text{on the circle $|z-q|=\rho$}, \\
    J_R(z) & = I + O(n^{-1/3}), \qquad \text{on the circle $|z|=\rho$}, \\
    J_R(z) & = I + O\left(\frac{e^{-c n^{1/3}}}{1+|z|}\right), \qquad \text{uniformly for $z$
    on the other parts of $\Sigma_R$}.
    \end{align*}
\end{lemma}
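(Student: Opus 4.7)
The strategy is to split $\Sigma_R$ into three types of pieces and bound $J_R$ on each separately, using the definition \eqref{def:R} of $R$, the jumps of $T$ from RH problem~\ref{rhp:T}, the matching conditions of the local parametrices, and the sign/growth properties of the $\lambda$-functions gathered in Lemmas~\ref{lemma:lambda:asy} and \ref{lemma:lambda:var}.

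On the circle $|z-q|=\rho$, one has $J_R=P^{(q)}(P^{(\infty)})^{-1}$. Since $P^{(q)}$ is built from Airy functions in the standard way at the soft edge $q$ (via a conformal map to the model Airy parametrix), a classical computation yields $P^{(q)}(z)=P^{(\infty)}(z)(I+O(n^{-1}))$ uniformly on the circle, giving the first estimate. On the circle $|z|=\rho$ one has $J_R=P^{(0)}(P^{(\infty)})^{-1}$, and the second estimate follows directly from the matching condition \eqref{matching:P0} proved during the construction of $P^{(0)}$: there the identities \eqref{lambdas:modelrhp} match the exponents $\theta_j(\zeta)\pm\tau\zeta$ at $\zeta=n^{2/3}f(z)$ with $\lambda_j(z)-h(z)$, the prefactor \eqref{def:Ez} absorbs the remaining diagonal pieces of the leading $\zeta\to\infty$ behavior of $N$, and the $O(n^{-1/3})$ rate comes from the $\wtil N_{1,\pm}/\zeta^{1/2}=O(n^{-1/3})$ term in the expansion \eqref{N:asymptotics2}.

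On the remaining parts of $\Sigma_R$, $J_R=P^{(\infty)} J_T(P^{(\infty)})^{-1}$ with $J_T-I$ purely exponential. From Lemma~\ref{lemma:lambda:var}, $\Re(\lambda_2-\lambda_3)>0$ on $(q,\infty)$, $\Re(\lambda_2-\lambda_3)<0$ on $\Delta_2^\pm\setminus B_\rho$, and $\Re(\lambda_1-\lambda_2),\,\Re(\lambda_3-\lambda_4)>0$ on $\Delta_1^\pm\setminus B_\rho$. Since the disks $B_\rho(0)$ and $B_\rho(q)$ are of fixed radius, each of these real parts is bounded below in absolute value by a positive constant on the corresponding arc, and Lemma~\ref{lemma:lambda:asy} upgrades this to a uniform lower bound proportional to $1+|z|$ along the unbounded arcs of $\mathbb R_+$ and $\Delta_1^\pm$. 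Since $P^{(\infty)}$ and its inverse have at worst fourth-root singularities at $0,q$ and grow only like $|z|^{1/4}$ at infinity (all of which are excluded or easily absorbed), we obtain $J_R=I+O(e^{-cn}/(1+|z|))$ for some $c>0$, which is strictly stronger than the stated $e^{-cn^{1/3}}/(1+|z|)$ bound.

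The only genuinely new ingredient is Step~2, where the matching at the origin relies on the model RH problem~\ref{rhp:modelM} for $M(\zeta)$ and its reincarnation $N(\zeta)$ with the correctly adjusted parameters \eqref{r12stau}; but this matching has already been installed during the construction of $P^{(0)}$, so the main obstacle reduces to checking that the scaling limits \eqref{sn:limit} together with the identities \eqref{lambdas:modelrhp} are compatible --- which was the very reason for introducing the modified $\lambda$-functions of Section~\ref{section:Riemannsurface}. The remaining estimates on the two circles and on the exponentially small arcs are routine applications of the Deift--Zhou steepest descent method once these sign inequalities are in place.
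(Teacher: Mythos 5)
Your proof is correct and follows essentially the same route as the paper: split $\Sigma_R$ into the two circles and the exponentially small arcs, invoke the standard Airy matching at $q$, invoke the matching condition \eqref{matching:P0} at $0$ (with the $O(n^{-1/3})$ rate traced to the $\wtil N_{1,\pm}/\zeta^{1/2}$ term evaluated at $|\zeta|\asymp n^{2/3}$, valid precisely because $\rho$ is fixed), and use the sign inequalities of Lemma~\ref{lemma:lambda:var} together with the large-$z$ growth from Lemma~\ref{lemma:lambda:asy} on the remainder. The paper itself only comments on the circle $|z|=\rho$, treating the other two as standard, so your write-up is a legitimate elaboration of the same argument. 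Your side remark that the exponential bound on the arcs outside the fixed disks is in fact $O(e^{-cn})$ rather than $O(e^{-cn^{1/3}})$ is also correct — the weaker rate in the statement is an artifact of the shrinking-disk setup in \cite{DKZ,DG} and is simply a sufficient (not sharp) bound here.
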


\begin{proof}
Only the statement about the circle $|z|=\rho$ requires further clarification.
This follows from \eqref{matching:P0}. Recall that we have here a disk with
\emph{fixed} radius $\rho$, in contrast to the shrinking disk in \cite{DKZ,DG}.
%We obtain here
%$$ J_R(z) = I+\frac{1}{n^{1/3}}P^{(\infty)}(z) N_1 (P^{(\infty)}(z))^{-1}+O\left(\frac{1}{n^{2/3}}\right),
%$$
%with
%\begin{equation}\label{def:N1} N_1 = \frac{1}{2}\begin{pmatrix} 1 & 0 & i & 0 \\
%0 & 1 & 0 & -i \\
%i & 0 & 1 & 0 \\
%0 & -i & 0 & 1 \\
%\end{pmatrix}
%\begin{pmatrix}\ep &0&0&0\\0&1&0&0\\ 0&0&0&0\\0&0&0&0 \end{pmatrix}M_1
%\begin{pmatrix}0 &0&0&0\\0&0&0&0\\ 0&0&\ep&0\\0&0&0&1 \end{pmatrix}
%\begin{pmatrix} 1 & 0 & -i & 0 \\
%0 & 1 & 0 & i \\
%-i & 0 & 1 & 0 \\
%0 & i & 0 & 1 \\
%\end{pmatrix}
%\end{equation}
%and where the choice of the root $\ep=(-1)^{1/4}$ depends on wether $\zeta$ is
%in the upper or lower half plane.
%
\end{proof}

Note that $R(z)$ has no jumps in the disk around the origin. From \eqref{def:R}
and the behavior of $T(z)$ and $P^{(0)}(z)$ for $z\to 0$, we also find that
$R(z)$ can have no pole at $z=0$ and therefore the singularity at the origin is
removable. Hence $R(z)$ is analytic at $z=0$.

By standard arguments, see e.g.\ \cite{Dei}, we obtain from Lemma~\ref{lemma:R} the estimate
\begin{equation}\label{R:estimate}
R(z)=I+ O\left( \frac{1}{n^{1/3}(1+|z|)}\right),
\end{equation}
as $n \to \infty$, uniformly for $z \in\cee \setminus \Sigma_R$.

\subsection{Proof of Theorem~\ref{theorem:kernelpsi}}
\label{section:proofTheoremKernel}

In this section we prove Theorem~\ref{theorem:kernelpsi}. We start from the
expression for the kernel in \eqref{kernel:Y:0}. Unfolding the transformations
$Y\mapsto X\mapsto U\mapsto T$ we obtain from \cite[Eq.~(6.1)]{DKRZ} that
\begin{multline}\label{kernel in terms of T}
K_n(x,y)=\frac{x^{\alpha/2}y^{-\alpha/2}}{2\pi i(x-y)}\begin{pmatrix} 0 &
-e^{n\lam_{2,-}(y)} & e^{n\lam_{3,-}(y)} & 0
\end{pmatrix}
%\nonumber \\
%&~~~~\times
T_{+}^{-1}(y)T_{+}(x)
\\ \times\begin{pmatrix}
0 & e^{-n\lam_{2,+}(x)}  & e^{-n\lam_{3,+}(x)} & 0
\end{pmatrix}^T.
\end{multline}

If $x,y$ lie within radius $\rho$ of the origin,  then it follows from
$T(z)=R(z)P^{(0)}(z)$ and the definition \eqref{def:P0} of $P^{(0)}$ that
\begin{multline}\label{kernel:R}
K_{n}(x,y)=\frac{x^{\alpha/2}y^{-\alpha/2}}{2\pi i(x-y)}\begin{pmatrix}0 & -1 &
1 & 0\end{pmatrix}
N_+^{-1}(n^{2/3}f(y);n^{2/3}s,n^{1/3}\tau) E_+^{-1}(y)R_{+}^{-1}(y)\\
\times R_{+}(x)E_+(x) N_+(n^{2/3}f(x);n^{2/3}s,n^{1/3}\tau)
\begin{pmatrix}
0 & 1 & 1 & 0
\end{pmatrix}^T.
\end{multline}
Now we put \begin{equation}\label{xy:uv}
x=\frac{u}{\kappa^2 n^{4/3}},\qquad y=\frac{v}{\kappa^2 n^{4/3}},\end{equation}
for fixed $u,v>0$. Using \eqref{r12stau} it then follows that
$$ n^{2/3}f(x)\to \sqrt{u},\qquad n^{2/3}f(y)\to \sqrt{v},
$$
and we also recall from \eqref{sn:limit} that
$$ n^{2/3}s\to s^*,\qquad n^{1/3}\tau\to \tau^*.
$$
%where we use the notation in
%\eqref{doublescaling:s}--\eqref{doublescaling:tau}, see also \eqref{sn:limit}.
Furthermore, we have from \eqref{R:estimate} and Cauchy's formula that
\begin{align}\label{RR:estimate}
R^{-1}(y)R(x)=I+O\left(\frac{x-y}{n^{1/3}}\right) =
I+O\left(\frac{u-v}{n^{5/3}}\right).\end{align} The constants implied by the $O$-symbols are independent of
$u$ and $v$ when $u,v$ are restricted to compact subsets of $\er_+$. 

Next we estimate the matrix $E(z)$ in \eqref{def:Ez}. We claim that the transformed matrix \begin{equation}\label{En:tilde}
\wtil E(z) := E(z)\diag\left(\begin{pmatrix}1&i\\ i& 1\end{pmatrix},\begin{pmatrix}1&i\\
i& 1\end{pmatrix}\right)\diag(\zeta^{-1/4},\zeta^{1/4},\zeta^{-1/4},\zeta^{1/4}),\qquad \zeta=n^{4/3}z,
\end{equation}
is analytic near $z=0$. Indeed it has no jumps,
by virtue of Lemma~\ref{lemma:jumpsE}, and moreover it behaves as $\mathcal O(z^{-3/4})$ as $z\to 0$ so there is no pole at $z=0$. 
%Hence $\wtil E(z)$ is indeed analytic.

From \eqref{En:tilde}, \eqref{xy:uv} and \eqref{def:Ez} we get the estimates $\wtil E(x)=O(n^{1/2})$ and $\wtil E^{-1}(y)=O(n^{1/2})$. 
%from \eqref{def:Ez} we
%have that $E(x)=O(n^{1/6})$, $E(y)=O(n^{1/6})$ and
By the analyticity of $\wtil E(z)$ and Cauchy's formula we then find
\begin{align*} \wtil E^{-1}(y)\wtil E(x)
= I+O\left(n(x-y)\right) =
I+O\left(\frac{u-v}{n^{1/3}}\right)
\end{align*}
as $n\to\infty$, uniformly for $u,v\in\er^+$. Combining this with \eqref{RR:estimate} we find
$$
\lim_{n\to\infty} \wtil E^{-1}(y) R_{+}^{-1}(y)R_{+}(x)\wtil E(x) = I.
$$

To use the above estimate, we should first express the matrix $E$ in \eqref{kernel:R}
in terms of its transformed counterpart $\wtil E$ in \eqref{En:tilde}.
This substitution releases an extra factor which multiplies from the left or right the matrix $N$ or $N^{-1}$, respectively, in \eqref{kernel:R}.
By combining this with the above estimates we find
%Inserting this in \eqref{kernel:R} we obtain
\begin{multline}\label{kernel:R:bis}
\lim_{n\to\infty} \frac{1}{\kappa^2n^{4/3}} K_{n}\left(\frac{u}{\kappa^2
n^{4/3}},\frac{v}{\kappa^2 n^{4/3}}\right) =
\frac{u^{\alpha/2}v^{-\alpha/2}}{2\pi i(u-v)}\begin{pmatrix}0 & -1 & 1 &
0\end{pmatrix}
\what N_+^{-1}(\sqrt{v};s^*,\tau^*) \\
\times \what N_+(\sqrt{u};s^*,\tau^*)
\begin{pmatrix}
0 & 1 & 1 & 0
\end{pmatrix}^T,
\end{multline}
with
\begin{equation*}
\what N\left(\zeta^{1/2}\right) := \diag(\zeta^{1/4},\zeta^{-1/4},\zeta^{1/4},\zeta^{-1/4})
\diag\left(\begin{pmatrix}1&-i\\ -i& 1\end{pmatrix},\begin{pmatrix}1&-i\\
-i& 1\end{pmatrix}\right)N\left(\zeta^{1/2}\right),
\end{equation*}
for $\zeta=u,v$. Equivalently, by \eqref{N:def} and \eqref{Mhat},

$$ \what N\left(\zeta^{1/2}\right) = \diag(-i,1,1,i)\what M(\zeta)\diag\left(\begin{pmatrix}0& 1\\ -i&0\end{pmatrix},1,-i\right).
$$
Inserting this in \eqref{kernel:R:bis} we get
\begin{multline*}
\lim_{n\to\infty} \frac{1}{\kappa^2n^{4/3}} K_{n}\left(\frac{u}{\kappa^2
n^{4/3}},\frac{v}{\kappa^2 n^{4/3}}\right) =
\frac{u^{\alpha/2}v^{-\alpha/2}}{2\pi i(u-v)}\begin{pmatrix}-1 & 0 & 1 &
0\end{pmatrix}
\what M_+^{-1}(v;s^*,\tau^*) \\
\times \what M_+(u;s^*,\tau^*)
\begin{pmatrix}
1 & 0 & 1 & 0
\end{pmatrix}^T.
\end{multline*}
%where we recall that $\what M(z)=\what M\left(z;s,t,\nu\right)$ is defined in \eqref{Mhat}
%with $M(\zeta)=M(\zeta;s,t, \nu)$ denoting the unique solution to RH
%problem \ref{rhp:modelM} with parameters given in \eqref{eq: parameters}.  
This finally proves \eqref{kernel at tacnode}.

%By inserting the definition of $N$ in \eqref{N:def}, we finally obtain
%\eqref{kernel at tacnode}.

\end{document}